\newtheorem{thm}{Theorem}[section]
\newtheorem{cor}[thm]{Corollary}
\newtheorem{lem}[thm]{Lemma}
\newtheorem{prop}[thm]{Proposition}
\theoremstyle{definition}
\newtheorem{defn}[thm]{Definition}
\newtheorem{rem}[thm]{Remark}
\numberwithin{equation}{section}
\newcommand{\beas}{\begin{eqnarray*}}
\newcommand{\eeas}{\end{eqnarray*}}
\newcommand{\bes} {\begin{equation*}}
\newcommand{\ees} {\end{equation*}}
\newcommand{\be} {\begin{equation}}
\newcommand{\ee} {\end{equation}}
\newcommand{\bea} {\begin{eqnarray}}
\newcommand{\eea} {\end{eqnarray}}
\newcommand{\ra} {\rightarrow}
\newcommand{\txt} {\textmd}
\newcommand{\C} {\mathbb{C}^{n}}
\begin{document}
\title[Hecke-Bochner identity and eigenfunctions]{Hecke-Bochner identity and eigenfunctions associated to Gelfand pairs on the Heisenberg group}


\author[AMIT Samanta]{AMIT Samanta}
\address{Department of Mathematics, Indian Institute of Science, Bangalore 560012}

\email[Amit Samanta]{amit@math.iisc.ernet.in}

\thanks{This work is supported in 
part by grant from UGC Centre for Advanced Study and in part by research fellowship of the Indian Institute of Science.}

\begin{abstract}
Let $\mathbb{H}^{n}$ be the $(2n+1)$-dimensional Heisenberg group, and let $K$ be a compact subgroup of $U(n)$, such that $(K,\mathbb{H}^{n})$ is a Gelfand pair. Also assume that the $K$-action on $\C$ is polar. We prove a Hecke-Bochner identity associated to the Gelfand pair $(K,\mathbb{H}^{n})$. For the special case $K=U(n)$, this was proved by Geller \cite{G}, giving a formula for the Weyl transform of a function $f$ of the type $f=Pg$, where $g$ is a radial function, and $P$ a bigraded solid $U(n)$-harmonic polynomial. Using our general Hecke-Bochner identity we also characterize (under some conditions) joint eigenfunctions of all differential operators on $\mathbb{H}^{n}$ that are invariant under the action of $K$ and the left action of $\mathbb{H}^{n}$.  
\vspace*{0.1in}

\begin{flushleft}
MSC 2010 : Primary 22E30; Secondary 22E25, 43A80, 35H20 \\
\vspace*{0.1in}

Keywords: Polar action, $K$-type functions, Heisenberg group, Weyl transform, Weyl correspondence, Gelfand pairs, generalized $K$-spherical functions, Hecke-Bochner identity, joint eigenfunctions.  
\end{flushleft}

\end{abstract}

\maketitle

\section{Introduction}

This paper is concerned with two fundamental problems in Harmonic analysis on the Heisenberg group, $\mathbb{H}^n$. The first one is the Hecke-Bochner identity and the second one is a characterization of joint eigenfunctions for a certain family of invariant differential operators on $\mathbb{H}^n$. We first briefly recall the known results in this direction.

The Hecke-Bochner identity on $\mathbb{R}^n$ states that (see \cite{SW}, Theorem-3.10, page-158) the Fourier transform of a function $f=Pg$, where $P$ is a homogeneous solid $SO(n)$-harmonic polynomial (of degree $k$ say) and $g$ is radial, is given by $\widehat{Pg}=Ph$, where $h$ is a radial function given by $$h(r)=i^{-
k}\int_{s=0}^{\infty}g(s)\frac{J_{\frac{n}{2}+k-1}(rs)}{(rs)^{\frac{n}
{2}+k-1}}s^{n+k-1}ds,$$ where $J_{\frac{n}{2}+k-1}$ is the Bessel's function of order 
$\frac{n}{2}+k-1$.
Secondly, any eigenfunction $\varphi$ of $\triangle$, the Laplacian on $\mathbb{R}^n$, with eigenvalue $-\lambda^2$ is given by the integral representation $$\varphi(x)=\int_{S^{n-1}}e^{i\lambda x\cdot\omega}dT(\omega),$$ where $T$ is a certain analytic functional. See Helgason (\cite{H1}, Theorem 2.1, page-5) for $n=2$ and Hashizume et al \cite{HKMO} for general case. Both these results can be interpreted in terms of harmonic analysis on the Gelfand pair $\big(\mathbb{R}^n\ltimes SO(n),SO(n)\big)$. Note that a solid homogeneous harmonic polynomial of degree $k$ is an element which transforms according to a class one representation of $SO(n)$. Next, the Laplacian $\triangle$ is the generator of $\mathbb{R}^n\ltimes SO(n)$ invariant differential operators on $\mathbb{R}^n$. This point of views have a natural generalization to other homogeneous spaces.

In the context of Riemannian symmetric spaces $X=G/K$, Helgason (\cite{HS2}, Corollary 7.4) characterized all $K$-finite joint eigenfunctions for $D(G/K)$. The characterization of arbitrary joint eigenfunctions for $D(G/K)$ was done by Helgason (\cite{HS1}, Chapter IV, Corollary 1.6) when $\textup{rank} X=1$ and by Kashiwara et al \cite{KKMOOT} in the general case. A Hecke-Bochner type identity was established, when $X$ is of rank one, by Bray \cite{Br}. For general case see \cite{H2}, Chapter-III, Corollary 5.5. 
  
In this paper, we consider these two questions on the Heisenberg group associated to 
the Gelfand pair $(K,\mathbb{H}^{n})$, where $K\subset U(n)$ and the $K$-action on $\C$ is polar. We prove a Hecke-Bochner type identity (Theorem \textbf{\ref{eqn 
t7.4}}), giving a  formulae for the Weyl transform of a function which transforms 
according to a class one representation of $K$. We will see that the formulae involves 
generalized $K$-spherical functions, as in the case of Euclidean spaces and Riemannian 
symmetric spaces. For the special case $K=U(n)$ this was already proved by Geller 
(\cite{G}, Theorem 4.2). Let $\mathcal{L}_{K}(\textbf{h}_n)$ be the algebra of all 
differential operators on $\mathbb{H}^{n}$ that are invariant under the action of $K$ 
and the left action of $\mathbb{H}^{n}$. Any joint eigenfunction of all 
$D\in\mathcal{L}_{K}(\textbf{h}_n)$ has to be of the form $f(z,t)=e^{i\lambda t}g(z)$ 
for some complex number $\lambda$. Following the view point of Thangavelu in \cite{T}, under the assumptions that $\lambda$ is non-zero 
real and $e^{-(|\lambda|-\epsilon)|z|^{2}}|g(z)|\in L^{p}(\C)$ for some $\epsilon>0$ 
and $1\leq p\leq\infty$, we characterize all $K$-finite joint eigenfunctions $f(z,t)$ 
of all $D\in\mathcal{L}_{K}(\textbf{h}_{n})$, in terms of the representations of the 
Heisenberg group (Theorem \textbf{\ref{eqn t8.3}}). We extend this result for arbitrary (with the same growth condition) joint eigenfunctions, when dim$V^{M}_{\delta}=1$ for all class one representations $\delta$ of $K$; here $M$ is the stabilizer of a $K$-regular point, $V_{\delta}$ is the (finite dimensional) Hilbert space where the representation $\delta$ is realized and $V_{\delta}^{M}$ is the space of $M$-fixed vectors in $V_{\delta}$. This can be put in a different form, giving an integral representation of eigenfunctions, which for $K=U(n)$ is precisely Theorem 4.1 in \cite{T}. We also obtain a different integral representation with an explicit kernel.

The plan of the paper is as follows. In section 2., we recall the definition of polar action of 
$K\subset SO(n)$ on $\mathbb{R}^{n}$, develop a system of polar 
coordinates and state some results about polar actions. In section 3., we show that 
the Kostant-Rallis Theorem holds for polar actions i.e each $K$-harmonic polynomial is 
determined by its values on a regular $K$-orbit. We also discuss the class one 
representations of $K$ realized on the space of $K$-harmonic polynomials and on the 
space of their restriction to a regular $K$-orbit. In section 4., for a class one 
representation $\delta$ of $K$, we consider $\delta$ type 
Hom$(V_{\delta},V_{\delta})$-valued functions $G$ i.e $G:
\mathbb{R}^{n}\rightarrow$Hom$(V_{\delta},V_{\delta})$ such that $G(k\cdot 
x)=\delta(k)G(x)$. 
We show that such a $G$ can be written in a 
special form, which, for the case $K=SO(n)$, is equivalent to considering a 
function of type $Pg$, where $P$ is a solid homogeneous $SO(n)$-harmonic polynomial of 
certain degree and $g$ is radial. In section 5., we mainly recall some basic facts 
related to the Heisenberg group, its representations and Weyl transform. We also state some 
results about Gelfand pairs and bounded $K$- spherical functions from \cite{BJR}. 
Section 6., deals with the Weyl transform of $K$-invariant functions. In section 7., we 
prove the main results of this paper. We start with defining generalized $K$-
spherical functions, prove a Hecke-Bochner type identity for the Weyl transform. Using 
this we prove the uniqueness (upto a right multiplication by a constant matrix) of 
generalized $K$-spherical functions. We also give a formulae of generalized $K$-
spherical function in terms of the representations of Heisenberg group. This formulae 
together with the uniqueness of generalized $K$-spherical functions will imply  
characterizations of $K$-finite joint eigenfunctions (with the usual growth condition) of all $D\in\mathcal{L}_{K}
(\textbf{h}_{n})$, which we present in section 8. Section 9. deals with square integrable (modulo the center) joint eigenfunctions. In the final section, we discuss the special case when dim$V^{M}_{\delta}=1$ for all class one representations $\delta$ of $K$.

\section{Polar actions and coordinates}
In this section we recall polar actions and develop a system of polar coordinates on 
the spaces upon they act. References for this section are Conlon \cite{C}, Dadok 
\cite{D} and Lander \cite{L}. Let $K$ be a compact connected subgroup of $SO(n)$ which 
acts naturally on $\mathbb{R}^{n}$. Let $\mathfrak{k}$ be the Lie algebra of $K$. We 
denote the inner product on $\mathbb{R}^{n}$ by $(.,.)$. Let $N_{x}:=\{k\cdot x :k\in 
K\}$ be the $K$-orbit through $x$, and $K_{x}:=\{k:k\cdot x=x\}$ be the isotropy 
subgroup of $x$, hence $N_{x}\cong K/K_{x}$. A $K$-orbit of maximal dimension is 
called a $regular~~ orbit$, and any point on a regular orbit is called a $regular 
~~point.$ A $K$-orbit through a point $x$ is called a $principal ~~orbit$ if $K_x$ is a subgroup of a conjugate of any other isotropy subgroup. 
Clearly any principal orbit is also a regular orbit. The action of $K$ on 
$\mathbb{R}^{n}$ is called $polar ~~action$ if there is a linear subspace $T$ of 
$\mathbb{R}^{n}$ which meets every $K$-orbit and is orthogonal to the $K$-orbit at 
every point i.e $(\mathfrak{k}\cdot x,T)=0$ for all $x\in T$. Such a linear subspace 
$T$ is called a $K-transversal~~ domain.$ This is precisely the condition $(A)$ in the 
introduction of \cite{C}. Then dim $(T)=$ dim $(\mathbb{R}^{n})-$ dim. of a regular 
orbit (\cite{C}, Proposition 1.1). Therefore if we take a regular point $x\in T$ then 
clearly $A_{x}=T$ where $A_{x}=\{y\in\mathbb{R}^{n}:(y,\mathfrak{k}\cdot x)=0\}.$ 
Consequently $A_{x}$ meets all the orbits orthogonally. Hence the above definition of polar 
action is equivalent to that of Dadok \cite{D}. Also, for polar action any orbit of 
maximal dimension is principal (\cite{C}, Proposition 2.2). Therefore, regular orbits 
and principal orbits are equivalent for polar action.

From now on we always assume that $K$ is a compact connected subgroup of $SO(n)$ whose 
action on $\mathbb{R}^{n}$ is polar. We state some results from Conlon \cite{C} and 
derive some easy consequences. Since regular orbits and principal orbits are same, we 
only use the word ``regular orbit" instead of using both. As mentioned above we have,

\begin{prop}\label{eqn p2.1}
\textup{(Conlon \cite{C}, Proposition 1.1)} Let $N\subset \mathbb{R}^{n}$ be a K-orbit 
of maximal dimension. Then \textup{dim(}N\textup{)} $=$ 
\textup{dim(}$\mathbb{R}^{n}$\textup{)}$-$\textup{dim(}T\textup{)}.
\end{prop}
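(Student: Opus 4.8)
The plan is to read off the orbit dimension from the tangent space to an orbit and then to play the two defining properties of the transversal domain $T$ against each other: $T$ meets every $K$-orbit, and at every point $x\in T$ it is orthogonal to the orbit, i.e. $(\mathfrak{k}\cdot x,T)=0$. First I would record that for any $x\in\mathbb{R}^{n}$ the tangent space to the orbit $N_{x}=K\cdot x$ at $x$ is $\mathfrak{k}\cdot x:=\{X\cdot x:X\in\mathfrak{k}\}$, obtained by differentiating the $K$-action, so that $\dim N_{x}=\dim(\mathfrak{k}\cdot x)$. Since the isotropy groups along a single orbit are conjugate ($K_{k\cdot x}=kK_{x}k^{-1}$), the orbit dimension is constant on each orbit; and because $T$ meets every orbit, the maximal orbit dimension is already attained at some point of $T$. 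Thus it suffices to show, for a point $x\in T$ lying on a maximal orbit, that $\dim(\mathfrak{k}\cdot x)=\dim(\mathbb{R}^{n})-\dim T$.

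For the upper bound I would invoke the orthogonality condition, valid for every $x\in T$: it says precisely that $\mathfrak{k}\cdot x$ is contained in the orthogonal complement $T^{\perp}$, whence $\dim(\mathfrak{k}\cdot x)\le\dim T^{\perp}=\dim(\mathbb{R}^{n})-\dim T$. In particular every orbit has dimension at most $\dim(\mathbb{R}^{n})-\dim T$, so $\dim N\le\dim(\mathbb{R}^{n})-\dim T$.

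For the matching lower bound I would consider the smooth action map $\Phi:K\times T\to\mathbb{R}^{n}$, $\Phi(k,t)=k\cdot t$. Because $T$ meets every $K$-orbit, $\Phi$ is surjective, so its image has full measure; a smooth map cannot have an image of full measure while its differential has rank $<n$ everywhere (Sard's theorem), so $d\Phi$ attains rank $n$ at some point. The equivariance $\Phi(k_{0}k,t)=k_{0}\cdot\Phi(k,t)$ shows that the rank of $d\Phi$ at $(k,t)$ equals its rank at $(e,t)$, so I may assume full rank at some $(e,t_{0})$ with $t_{0}\in T$. Since $T$ is a linear subspace, the image of $d\Phi_{(e,t_{0})}$ is $\mathfrak{k}\cdot t_{0}+T$, and full rank forces $\mathfrak{k}\cdot t_{0}+T=\mathbb{R}^{n}$. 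The orthogonality $\mathfrak{k}\cdot t_{0}\perp T$ makes this sum direct, so $\dim(\mathfrak{k}\cdot t_{0})=\dim(\mathbb{R}^{n})-\dim T$. As $\dim(\mathfrak{k}\cdot t_{0})\le\dim N$, this gives $\dim N\ge\dim(\mathbb{R}^{n})-\dim T$, and combining the two inequalities completes the proof.

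I expect the main obstacle to lie in the lower bound: justifying that full rank of $d\Phi$ is attained at a point that can be transported into $T$ by equivariance, and checking that the orthogonality built into the polar hypothesis makes the sum $\mathfrak{k}\cdot t_{0}+T$ \emph{direct}, so that the dimension count is an exact equality rather than merely an inequality. Everything else is a routine identification of orbit tangent spaces together with the surjectivity of the action map.
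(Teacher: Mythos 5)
The paper itself offers no proof of this proposition; it is quoted verbatim from Conlon (Proposition 1.1 of \cite{C}), so there is no in-paper argument to compare against line by line. Your proof is correct and self-contained. The upper bound is exactly right: for $x\in T$ the polar condition $(\mathfrak{k}\cdot x,T)=0$ gives $\mathfrak{k}\cdot x\subseteq T^{\perp}$, and since every orbit meets $T$ and orbit dimension is constant along an orbit, every orbit has dimension at most $\dim(\mathbb{R}^{n})-\dim(T)$. The lower bound is also sound: surjectivity of $\Phi(k,t)=k\cdot t$ plus Sard's theorem forces a point of full rank; the equivariance $\Phi(k_{0}k,t)=k_{0}\cdot\Phi(k,t)$ transports such a point to one of the form $(e,t_{0})$; the image of $d\Phi_{(e,t_{0})}$ is $\mathfrak{k}\cdot t_{0}+T$ because $T$ is linear; and orthogonality makes that sum direct, so $\dim(\mathfrak{k}\cdot t_{0})=\dim(\mathbb{R}^{n})-\dim(T)$, which bounds $\dim(N)$ from below by maximality. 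A pleasant by-product of your Sard step is that it needs no a priori knowledge that $\dim K+\dim T\geq n$, nor any openness or density statement about the set of regular points, which is how arguments of this kind are often organized (and is closer in spirit to what Conlon does); in that sense your route is slightly more economical than the standard one, at the cost of invoking Sard rather than purely linear-algebraic and semicontinuity considerations.
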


\begin{thm}\label{eqn t2.2}
\textup{(Conlon \cite{C}, Theorem II)} Let $T\subset \mathbb{R}^{n}$ be a K-
transversal domain. Then there is a finite collection ${P_{1}, P_{2},\cdots, P_{r}}$ 
of hyperplanes in T, together with positive integers m(i), $i=1,2,\cdots, r$, such 
that for each $x\in T$,
$$\textup{dim(}N_{x}\textup{)}=\textup{dim(}\mathbb{R}^{n}\textup{)}-\textup{dim(}T\textup{)}-\sum_{i\in 
I_{x}}m(i),$$
where $I_{x}=\{i:x\in P_{i}\}.$
\end{thm}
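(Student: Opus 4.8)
The plan is to convert the statement into a linear-algebra statement about a family of commuting self-adjoint operators on a fixed vector space. Fix a regular point $x_{0}\in T$, so that $A_{x_{0}}=T$, whence $\mathfrak{k}\cdot x_{0}=T^{\perp}$, and by Proposition \ref{eqn p2.1} $\dim(\mathfrak{k}\cdot x_{0})=\dim(\mathbb{R}^{n})-\dim(T)$. For $x\in\mathbb{R}^{n}$ let $\mu_{x}\colon\mathfrak{k}\to\mathbb{R}^{n}$, $\mu_{x}(X)=Xx$, be the infinitesimal orbit map; its image is the tangent space to $N_{x}$ and its kernel is $\mathfrak{k}_{x}$, so $\dim(N_{x})=\dim\mathfrak{k}-\dim\mathfrak{k}_{x}=\mathrm{rank}\,\mu_{x}$. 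Since the orbits are orthogonal to $T$, we have $\mu_{x}(\mathfrak{k})\subseteq T^{\perp}$ for every $x\in T$, and the maximal rank $\dim(T^{\perp})$ is attained exactly on the regular set.

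Because $x_{0}$ is a regular (hence principal) point, the slice representation of $K_{x_{0}}$ on the normal space $\nu_{x_{0}}(N_{x_{0}})=(\mathfrak{k}\cdot x_{0})^{\perp}=T$ is trivial, which is the defining property of a principal isotropy subgroup; thus $K_{x_{0}}$ fixes $T$ pointwise and $\mathfrak{k}_{x_{0}}\subseteq\mathfrak{k}_{x}$ for every $x\in T$, with equality on the regular set. Choosing an $\mathrm{Ad}(K_{x_{0}})$-invariant complement in $\mathfrak{k}=\mathfrak{k}_{x_{0}}\oplus\mathfrak{m}$, the restriction $\mu_{x}|_{\mathfrak{m}}\colon\mathfrak{m}\to T^{\perp}$ carries all the information: $\dim(N_{x})=\mathrm{rank}(\mu_{x}|_{\mathfrak{m}})$ and $\mu_{x_{0}}|_{\mathfrak{m}}$ is an isomorphism. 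Define $\Phi_{x}:=(\mu_{x_{0}}|_{\mathfrak{m}})^{-1}\circ(\mu_{x}|_{\mathfrak{m}})\in\mathrm{End}(\mathfrak{m})$; it depends linearly on $x\in T$, satisfies $\Phi_{x_{0}}=\mathrm{id}$, and $\dim(N_{x})=\dim\mathfrak{m}-\dim\ker\Phi_{x}$. Transporting the Euclidean inner product of $T^{\perp}$ to $\mathfrak{m}$ via $\mu_{x_{0}}|_{\mathfrak{m}}$, I would check that each $\Phi_{x}$ is self-adjoint: a short computation reduces this to the identity $\langle x,[X,Y]x_{0}\rangle=0$ for $X,Y\in\mathfrak{m}$, which holds automatically because $[X,Y]\in\mathfrak{k}$ forces $[X,Y]x_{0}\in\mathfrak{k}\cdot x_{0}=T^{\perp}\perp T$.

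The heart of the matter, and the step I expect to be the main obstacle, is to show that the family $\{\Phi_{x}\}_{x\in T}$ is \emph{commuting}. This is precisely the polar input: the principal orbits of a polar action are isoparametric submanifolds, whose shape operators commute (equivalently, the normal bundle is flat), and under the identifications above the $\Phi_{x}$ are essentially these shape operators in the directions of $T$. Granting commutativity, the self-adjoint operators $\Phi_{x}$ are simultaneously diagonalized by a single orthonormal basis of $\mathfrak{m}$, and since each $\Phi_{x}$ is linear in $x$ the common eigenvalues are linear functionals of $x$. One thus obtains a decomposition $\mathfrak{m}=\bigoplus_{\alpha}\mathfrak{m}_{\alpha}$ with $\Phi_{x}|_{\mathfrak{m}_{\alpha}}=\alpha(x)\,\mathrm{id}$ for functionals $\alpha\in T^{*}$, normalized by $\alpha(x_{0})=1$ (since $\Phi_{x_{0}}=\mathrm{id}$); in particular every $\alpha$ is nonzero, and distinct $\alpha$ cut out distinct hyperplanes.

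It then remains only to read off the statement. Put $P_{\alpha}:=\ker\alpha$; these are genuine hyperplanes of $T$ (as $\alpha(x_{0})=1\neq0$), and the normalization $\alpha(x_{0})=1$ makes distinct eigenvalue functionals correspond to distinct hyperplanes, so relabelling gives a finite list $P_{1},\dots,P_{r}$ with multiplicities $m(i):=\dim\mathfrak{m}_{\alpha_{i}}>0$. For $x\in T$ one has $\ker\Phi_{x}=\bigoplus_{\alpha(x)=0}\mathfrak{m}_{\alpha}=\bigoplus_{i\in I_{x}}\mathfrak{m}_{\alpha_{i}}$, hence
\[
\dim(N_{x})=\dim\mathfrak{m}-\dim\ker\Phi_{x}=\big(\dim(\mathbb{R}^{n})-\dim(T)\big)-\sum_{i\in I_{x}}m(i),
\]
which is the asserted formula. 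The additivity of the drop in orbit dimension is thus exactly the additivity of nullities across common eigenspaces, and it is the commutativity of the shape operators (which I would cite from \cite{C} or the Palais--Terng theory of isoparametric submanifolds) that makes this work: for a generic self-adjoint linear family $\{\Phi_{x}\}$ the polynomial $\det\Phi_{x}$ would vanish on a hypersurface that is not a union of hyperplanes, so the polar hypothesis enters in an essential way here.
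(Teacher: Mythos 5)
The paper itself offers no proof of this statement: it is quoted verbatim as Theorem II of Conlon \cite{C}, so there is no internal argument to compare yours against. Judged on its own merits, your reconstruction is essentially correct, and it is closer to being self-contained than you give it credit for. The one step you defer to the literature --- commutativity of the family $\{\Phi_{x}\}_{x\in T}$ --- needs neither the Palais--Terng theory of isoparametric submanifolds nor a citation of \cite{C} (the latter would be uncomfortable, since the statement being proved is itself a theorem of \cite{C}). It follows from exactly the computation you already made for self-adjointness, run with a general $y\in T$ in place of $x_{0}$. Since each $\Phi_{x}$ is self-adjoint, $[\Phi_{x},\Phi_{y}]=0$ is equivalent to $\Phi_{x}\Phi_{y}$ being self-adjoint, i.e.\ to the symmetry in $(U,V)$ of
\[
\langle \Phi_{x}\Phi_{y}U,V\rangle_{\mathfrak{m}}=\langle \Phi_{y}U,\Phi_{x}V\rangle_{\mathfrak{m}}=\langle Uy,Vx\rangle .
\]
Because $U,V\in\mathfrak{m}\subset\mathfrak{so}(n)$ are skew-symmetric,
\[
\langle Uy,Vx\rangle-\langle Ux,Vy\rangle=-\langle y,[U,V]x\rangle ,
\]
and $[U,V]\in\mathfrak{k}$, $x\in T$ give $[U,V]x\in\mathfrak{k}\cdot x\perp T\ni y$, so the right-hand side vanishes by the polar condition; your self-adjointness identity is precisely the case $y=x_{0}$. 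With commutativity established this way, the rest of your argument goes through as written: the reduction $\dim(N_{x})=\dim\mathfrak{m}-\dim\ker\Phi_{x}$ via $\mathfrak{k}_{x_{0}}\subseteq\mathfrak{k}_{x}$ (which you can also obtain directly from Lemma \textbf{\ref{eqn l2.6}} of the paper, i.e.\ $K_{x_{0}}=K_{T}$, instead of invoking triviality of the slice representation), the simultaneous diagonalization, the linearity of the eigenvalue functionals $\alpha$, the normalization $\alpha(x_{0})=1$ guaranteeing that distinct functionals cut out distinct hyperplanes $P_{i}=\ker\alpha_{i}$, and the additivity of nullities across the common eigenspaces. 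Conceptually your proof is a linear-algebra rendering of the focal-point picture (the $\alpha_{i}$ and $m(i)$ are the eigenvalue functionals and multiplicities of the shape operators of a principal orbit), which is in the spirit of, but more elementary than, Conlon's variational-completeness argument.
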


\begin{defn}\label{eqn d2.3}
Each $P_{i}$ as above is called a $singular~~ variety$ of multiplicity $m(i)$, and 
each connected component of $T\smallsetminus\cup P_{i}$ a $Weyl~~ domain$ in $T$. The 
$Weyl~~ group$ $W = W(K,T)$ is the group of transformations of $T$ consisting of those 
$k\in K$ such that $k\cdot T=T.$
\end{defn}

\begin{thm}\label{eqn t2.4}
\textup{(Conlon \cite{C}, Theorem III)} If $T$ is a K-transversal domain, then the 
orthogonal reflection of T in each singular variety $P_{i}$ exists, W is a finite 
group generated by all such reflections, and W permutes simply transitively the set of 
Weyl domains in T. If $x\in T$ lies on no singular variety, then W permutes simply 
transitively the set $N_{x}\cap T$.
\end{thm}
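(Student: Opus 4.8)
The plan is to reduce the theorem to the structure of the finite set $N_{x}\cap T$ at a regular point together with a local analysis at each singular variety. I would first record two preliminaries. For $x\in\mathbb{R}^{n}$ write $A_{x}=(\mathfrak{k}\cdot x)^{\perp}=\{y:(y,\mathfrak{k}\cdot x)=0\}$ for the normal space to $N_{x}$ at $x$; since $\mathfrak{k}\cdot(kx)=k(\mathfrak{k}\cdot x)$ and $k\in SO(n)$ preserves the inner product, one has the equivariance $A_{kx}=kA_{x}$ for all $k\in K$. Also $N_{x}\cap T$ is finite: the orbit $N_{x}$ is compact because $K$ is, and for $y\in N_{x}\cap T$ the polar condition gives $T_{y}N_{x}=\mathfrak{k}\cdot y\perp T$, so $N_{x}$ meets $T$ transversally in isolated points.

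Next I would show that $W$ acts simply transitively on $N_{x}\cap T$ for $x$ regular and lying on no singular variety. For transitivity, if $x,y\in T$ are regular and $y=kx$, then $A_{x}=T=A_{y}$ while $A_{y}=A_{kx}=kA_{x}=kT$; hence $kT=T$, so $k$ represents an element of $W$ carrying $x$ to $y$, and $N_{x}\cap T=Wx$. For freeness, suppose $w\in W$ fixes $x$; a representative $k$ lies in $K_{x}$, and since for polar actions regular orbits are principal, the principal isotropy acts trivially on its slice. Here the slice is exactly $A_{x}=T$: indeed $T\subseteq A_{x}$ by the polar condition, and $\dim A_{x}=n-\dim N_{x}=\dim T$ by Proposition \ref{eqn p2.1}. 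Thus $k$ fixes $T$ pointwise and $w=1$. Consequently $|W|=|N_{x}\cap T|<\infty$, so $W$ is finite.

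The geometric heart is to produce, for each $P_{i}$, an element of $W$ acting as the orthogonal reflection $s_{i}$ in $P_{i}$. I would argue locally at a generic point $p\in P_{i}$ lying on no other singular variety, where by Theorem \ref{eqn t2.2} the orbit dimension drops by exactly $m(i)$. The slice representation of $K_{p}$ on $A_{p}$ is again polar with section $T$, and its only wall near $p$ is $P_{i}$; the content I need is that a nearby regular orbit then meets $T$ in exactly two points, $y$ and a partner $y^{*}$. The element of $W$ interchanging them fixes the generic points of $P_{i}$ (which lie on strictly smaller orbits and so are not moved within the slice) and reverses the line normal to $P_{i}$ in $T$; an isometry of $T$ fixing the hyperplane $P_{i}$ pointwise and reversing its normal is precisely the reflection $s_{i}$, and it is realized by an element of $K$ normalizing $T$. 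This two-point count, equivalently the statement that the slice Weyl group at $P_{i}$ has order two, is the main obstacle; I expect to handle it by induction on $\dim T$, the single-wall slice representation furnishing the base case.

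Finally, with $s_{1},\dots,s_{r}\in W$ available, set $W_{0}=\langle s_{1},\dots,s_{r}\rangle\subseteq W$. Every element of $W$ is an isometry of $T$ preserving the orbit-dimension function, hence permutes the hyperplanes $P_{i}$ and so the (finitely many) Weyl domains. Since adjacent Weyl domains across a facet lying on $P_{i}$ are interchanged by $s_{i}$, chamber-walking shows that $W_{0}$, and a fortiori $W$, is transitive on Weyl domains. The stabilizer in $W$ of a Weyl domain $C_{0}$ is trivial: if $v\in W$ preserves $C_{0}$, then, $v$ having finite order $N$, the average $\frac{1}{N}\sum_{j=0}^{N-1}v^{j}x$ of the iterates of a regular point $x\in C_{0}$ is a $v$-fixed point in the convex open set $C_{0}$, hence regular, so $v=1$ by the freeness just proved. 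Transitivity together with trivial stabilizers gives simple transitivity on the Weyl domains, and for any $v\in W$, choosing $w_{0}\in W_{0}$ with $w_{0}C_{0}=vC_{0}$ forces $w_{0}^{-1}v=1$, i.e. $W=W_{0}$. Thus $W$ is finite, generated by the reflections $s_{i}$, and permutes both the Weyl domains and, by the second paragraph, the set $N_{x}\cap T$ simply transitively.
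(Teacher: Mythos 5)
The paper itself offers no proof of this statement: it is imported verbatim from Conlon \cite{C} (Theorem III), so your attempt has to stand entirely on its own. Much of your scaffolding does stand. The finiteness of $N_{x}\cap T$, the identity $A_{x}=T$ for regular $x\in T$ (via Proposition \ref{eqn p2.1}), the resulting transitivity of $W$ on $N_{x}\cap T$, and the freeness via triviality of the principal slice representation (this last step is essentially a re-proof of Lemma \ref{eqn l2.6}, which you could simply have invoked) are all correct; so are the chamber-walking and averaging arguments of your last paragraph, which correctly yield $W=W_{0}$, simple transitivity on Weyl domains, and finiteness \emph{once the reflections $s_{i}$ are in hand}.

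The genuine gap is exactly where you flag it: the existence of a reflection $s_{i}\in W$ in each singular variety $P_{i}$. This is the actual content of Conlon's theorem, and you do not prove it; you reduce it to the claim that the slice Weyl group at a generic $p\in P_{i}$ has order two, and then write that you ``expect to handle it by induction on $\dim T$.'' Three things are missing. First, that the slice representation of $K_{p}$ on $A_{p}$ is again polar with section $T$ is itself a nontrivial lemma (of Palais--Terng type) and is not available anywhere in the paper. Second, the parenthetical claim that the element interchanging $y$ and $y^{*}$ fixes generic points of $P_{i}$ because they ``lie on strictly smaller orbits'' is not an argument: lying on a smaller orbit does not prevent a point from being moved. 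One needs, say, that the element fixes $p$ (using finiteness of $N_{p}\cap T$ and proximity of $y,y^{*}$ to $p$), and then that it fixes the $K_{p}$-fixed subspace of $A_{p}$, which contains a neighborhood of $p$ in $P_{i}$, whence it fixes all of $P_{i}$ by linearity. Third, the proposed induction on $\dim T$ cannot close as stated, because the section of the slice representation is again $T$, of the \emph{same} dimension -- nothing decreases. The standard route is instead to split $A_{p}$ into the $K_{p}$-fixed subspace and its orthogonal complement, reducing to a polar representation with one-dimensional section $\ell$; there the two-point count is elementary: every orbit in the complement meets $\ell$, so the unit sphere of the complement carries at most two orbits, and since two disjoint nonempty closed orbits cannot cover a connected sphere, the orbit of $v$ contains $-v$, and any group element realizing $v\mapsto -v$ preserves $\ell=\mathbb{R}v$ and acts on it as $-\mathrm{id}$. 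Until this step is supplied, the existence of the reflections, the statement that $W$ is generated by them, and the transitivity of $W$ on Weyl domains all remain unproven.
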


Fix a Weyl domain $T^{+}$ in $T$. As an easy consequence of the above three results we 
get the following corollary.

\begin{cor}\label{eqn c2.5}
All the points of $T^{+}$ are regular, and each regular $K-$orbit intersects $T^{+}$ 
exactly at one point.
\end{cor}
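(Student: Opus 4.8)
The plan is to extract everything from the dimension formula of Theorem \ref{eqn t2.2} together with the two simple transitivity statements of Theorem \ref{eqn t2.4}, the only genuine extra input being the positivity of the multiplicities $m(i)$. First I would dispose of regularity: by Definition \ref{eqn d2.3} a point $x\in T^{+}$ lies on no singular variety, so $I_{x}=\emptyset$ and Theorem \ref{eqn t2.2} gives $\dim(N_{x})=\dim(\mathbb{R}^{n})-\dim(T)$. By Proposition \ref{eqn p2.1} this is precisely the dimension of a maximal orbit, so $N_{x}$ is a regular orbit and $x$ is a regular point.

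The key preliminary for the second assertion is the converse direction of the same computation. If $N$ is any regular orbit and $x\in N\cap T$, then $\dim(N_{x})=\dim(N)=\dim(\mathbb{R}^{n})-\dim(T)$, so Theorem \ref{eqn t2.2} forces $\sum_{i\in I_{x}}m(i)=0$; since every $m(i)$ is a positive integer this yields $I_{x}=\emptyset$, i.e. $x$ lies on no singular variety. Hence every point of $N\cap T$ sits inside some Weyl domain, and this is exactly what licenses the application of the simple transitivity of $W$ on $N_{x}\cap T$ in Theorem \ref{eqn t2.4}.

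For existence of an intersection with the fixed domain $T^{+}$, note that $T$ meets every $K$-orbit, so I may pick $x\in N\cap T$; by the above it lies in some Weyl domain $D$, and simple transitivity of $W$ on the Weyl domains gives $w\in W$ with $w\cdot D=T^{+}$. Since each element of $W$ is realized by some $k\in K$, it preserves the orbit, $w\cdot N=N$, whence $w\cdot x\in N\cap T^{+}$. For uniqueness, suppose $x,y\in N\cap T^{+}$; both lie off all singular varieties, so Theorem \ref{eqn t2.4} supplies $w\in W$ with $w\cdot x=y$. Then $y\in T^{+}\cap(w\cdot T^{+})$, and as distinct Weyl domains are disjoint connected components this forces $w\cdot T^{+}=T^{+}$; simple transitivity of $W$ on the Weyl domains then gives $w=\mathrm{id}$, so $x=y$.

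The main point to get right is the bookkeeping between the two simply transitive $W$-actions, on $N\cap T$ and on the set of Weyl domains, together with the fact that elements of $W$ descend from $K$ (so they fix each orbit setwise and permute the Weyl domains). The positivity of the $m(i)$ is the small but essential ingredient pinning every point of $N\cap T$ off the singular varieties; without it Theorem \ref{eqn t2.4} could not be brought to bear.
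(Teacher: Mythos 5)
Your proof is correct and follows exactly the route the paper intends: the corollary is stated there as an ``easy consequence'' of Proposition \ref{eqn p2.1}, Theorem \ref{eqn t2.2} and Theorem \ref{eqn t2.4}, and your argument supplies precisely those details (positivity of the $m(i)$ to place orbit--transversal intersection points off the singular varieties, then the two simply transitive $W$-actions for existence and uniqueness). No gaps.
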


\begin{lem}\label{eqn l2.6}
If $x\in T$ is regular then $K_{x}=K_{T},$ where $K_{T}:=\{k\in K:k\cdot q=q,~~\forall 
q\in T\}$ is the stabilizer of $T$.
\end{lem}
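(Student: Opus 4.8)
The plan is to establish the two inclusions $K_T \subseteq K_x$ and $K_x \subseteq K_T$ separately. The first is immediate: any $k \in K_T$ fixes every point of $T$, and since $x \in T$ we have in particular $k \cdot x = x$, so $k \in K_x$. The content of the lemma is the reverse inclusion, for which I would exploit the simply transitive action of the Weyl group $W$ on $N_x \cap T$ furnished by Theorem \ref{eqn t2.4}.

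First I would record that, since $x$ is regular, its orbit has maximal dimension, so by Theorem \ref{eqn t2.2} the set $I_x$ is empty and $x$ lies on no singular variety; hence Theorem \ref{eqn t2.4} applies to $x$ and tells us that $W$ permutes $N_x \cap T$ simply transitively. The key intermediate step is to show that every $k \in K_x$ preserves $T$, so that it actually induces an element $\bar k$ of $W$. For this I use that for a regular point $x\in T$ we have $A_x = T$, i.e. $T$ is exactly the orthogonal complement of the tangent space $\mathfrak{k}\cdot x$ to the orbit at $x$. Since $k \cdot x = x$, a short computation with the orbit map gives $k\cdot(\mathfrak{k}\cdot x) = (\mathrm{Ad}(k)\mathfrak{k})\cdot x = \mathfrak{k}\cdot x$, so $k$ preserves $\mathfrak{k}\cdot x$; as $k \in SO(n)$ is orthogonal, it then preserves the orthogonal complement $A_x = T$. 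Thus $k\cdot T = T$ and $k$ descends to a transformation $\bar k \in W$.

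Finally, the induced element $\bar k \in W$ fixes $x$, which lies in $N_x \cap T$. Since $W$ acts simply transitively, in particular freely, on $N_x \cap T$ by Theorem \ref{eqn t2.4}, the only element of $W$ fixing a point of $N_x \cap T$ is the identity; hence $\bar k = \mathrm{id}$, which means $k\cdot q = q$ for every $q \in T$, i.e. $k \in K_T$. This yields $K_x \subseteq K_T$ and completes the proof.

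I expect the main obstacle to be the intermediate step that $K_x$ stabilizes $T$: one must correctly identify $T = A_x$ with the normal space to the orbit at the regular point $x$ and verify that the linear, orthogonal action of a $k$ fixing $x$ preserves the orbit's tangent space $\mathfrak{k}\cdot x$, and therefore its orthogonal complement. Once $k$ is seen to induce a genuine element of $W$, the freeness of the $W$-action on $N_x \cap T$ closes the argument at once.
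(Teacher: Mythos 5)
Your proof is correct, but it is not the route the paper takes: the paper offers no argument of its own for Lemma \ref{eqn l2.6} and simply refers the reader to the proof of Proposition 1.1 in Conlon \cite{C}. Your argument, by contrast, is self-contained given what is already quoted in Section 2: the easy inclusion $K_T\subseteq K_x$; the observation that a regular $x\in T$ lies on no singular variety (by Proposition \ref{eqn p2.1} and Theorem \ref{eqn t2.2}, since the $m(i)$ are positive); the computation $k\cdot(\mathfrak{k}\cdot x)=(\mathrm{Ad}(k)\mathfrak{k})\cdot x=\mathfrak{k}\cdot x$ for $k\in K_x$, which together with orthogonality of $k$ and the identity $A_x=T$ (stated in the paper for regular $x\in T$) shows $k\cdot T=T$; and finally the simply transitive (hence free) action of $W$ on $N_x\cap T$ from Theorem \ref{eqn t2.4}, which forces the induced transformation $\bar k\in W$ fixing $x$ to be the identity of $W$, i.e.\ $k|_T=\mathrm{id}_T$, so $k\in K_T$. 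Each of these steps is sound, including the slightly delicate point that ``$\bar k=\mathrm{id}$ in $W$'' really means $k$ fixes $T$ pointwise, since $W$ is by definition a group of transformations of $T$. What your approach buys is that the lemma becomes a corollary of results already stated in the paper, sparing the reader a trip to Conlon; what the paper's citation buys is logical economy of a different kind, since Conlon's Proposition 1.1 argument (essentially the triviality of the slice representation at a principal orbit) is what underlies the whole package of facts you invoke, and citing it avoids any appearance of circularity between Proposition \ref{eqn p2.1}, Theorem \ref{eqn t2.4}, and the lemma. Your derivation is free of such circularity as long as Theorems \ref{eqn t2.2} and \ref{eqn t2.4} are taken as black boxes, which is exactly how the paper presents them.
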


\begin{proof}
See the proof of Proposition 1.1 in Conlon \cite{C}.
\end{proof}

Let $M=K_{T}$ be as defined in the above lemma. Define the ``polar coordinate 
mapping''
$$\phi:T^{+}\times K/M\longrightarrow \mathbb{R}^{n}~\textup{ by }~\phi(r,kM)=k\cdot 
r.$$
Clearly $\phi$ is well defined and by Corollary \textbf{\ref{eqn c2.5}}, its image is 
precisely the set of all regular points. If $k_{1}\cdot r_{1}=k_{2}\cdot r_{2}$ for 
$k_{1},k_{2}\in K$ and $r_{1},r_{2}\in T^{+}$, then $K$- orbits through $r_{1}$ and 
$r_{2}$ are same. By Corollary \textbf{\ref{eqn c2.5}}, $r_{1}=r_{2}=r$ (say). 
Consequently $k_{1}^{-1}k_{2}$ fixes $r$ and hence belongs to $K_{T}$ by Lemma 
\textbf{\ref{eqn l2.6}}. Therefore $(r_{1},k_{1}M)=(r_{2},k_{2}M)$. So, we have proved 
the following proposition.

\begin{prop}\label{eqn p2.7}
The polar coordinate mapping $\phi$, defined above, is a bijection of $T^{+}\times 
K/M$ onto the set of regular points in $\mathbb{R}^{n}$ whose complement has measure 
zero.
\end{prop}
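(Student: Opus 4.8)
The bijectivity of $\phi$ onto the set of regular points is already contained in the discussion preceding the statement: surjectivity onto the regular set follows from Corollary \ref{eqn c2.5} (every regular orbit meets $T^{+}$, and at exactly one point), while injectivity follows by combining Corollary \ref{eqn c2.5} with Lemma \ref{eqn l2.6}. Hence the only genuinely new assertion is that the complement of the regular set in $\mathbb{R}^{n}$ is Lebesgue-null, and this is where I would direct the proof.

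My plan is to describe the non-regular set explicitly through the transversal slice $T$ and then estimate its dimension. Since $T$ is a $K$-transversal domain it meets every $K$-orbit, so $\mathbb{R}^{n}=K\cdot T$ and every point lies on the orbit of some $t\in T$. By Theorem \ref{eqn t2.2}, the orbit $N_{t}$ fails to have maximal dimension precisely when $t$ lies on at least one singular variety $P_{i}$, that is, when $I_{t}\neq\emptyset$. Consequently the set of non-regular points is the finite union
\[
\mathbb{R}^{n}\setminus\phi(T^{+}\times K/M)=\bigcup_{i=1}^{r}K\cdot(P_{i}\cap T),
\]
and it suffices to prove that each $K\cdot(P_{i}\cap T)$ has measure zero.

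For this I would regard $K\cdot(P_{i}\cap T)$ as the image of the smooth map $\Psi_{i}:K\times(P_{i}\cap T)\to\mathbb{R}^{n}$, $\Psi_{i}(k,p)=k\cdot p$. Each $P_{i}$ is a hyperplane in $T$, so $\dim(P_{i}\cap T)=\dim T-1$, while for $p\in P_{i}$ Theorem \ref{eqn t2.2} gives $\dim N_{p}\le\dim\mathbb{R}^{n}-\dim T-m(i)$. Because $T$ meets the orbits orthogonally, the orbit directions at a point of $P_{i}\cap T$ are transverse to $P_{i}\cap T\subset T$, so the rank of $d\Psi_{i}$ is everywhere at most
\[
\bigl(\dim\mathbb{R}^{n}-\dim T-m(i)\bigr)+(\dim T-1)=\dim\mathbb{R}^{n}-m(i)-1\le\dim\mathbb{R}^{n}-2,
\]
using $m(i)\ge 1$. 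A smooth map whose rank is everywhere strictly less than $n=\dim\mathbb{R}^{n}$ has every point critical, so its image is a set of critical values and hence null by Sard's theorem. Thus each $K\cdot(P_{i}\cap T)$ is null, and so is their finite union.

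The step I expect to be the real obstacle is the rank estimate for $d\Psi_{i}$, specifically the transversality claim that lets the orbit dimension and the slice dimension simply add. One must use the polar condition $(\mathfrak{k}\cdot x,T)=0$, which makes the tangent space of each orbit orthogonal to $T$, so that after transport by $k$ the orbit tangent and the image of $T_{p}(P_{i}\cap T)$ span a space of the expected dimension; on deeper strata (points lying on several singular varieties) the orbit dimension only drops further, so the bound is preserved. Everything else---surjectivity, injectivity, and the passage from ``rank $<n$ everywhere'' to ``null image''---is routine.
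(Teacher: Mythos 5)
Your proposal is correct, and it is worth comparing carefully with what the paper actually does. For the bijectivity you take exactly the paper's route: the paper's entire proof of this proposition is the paragraph preceding the statement (well-definedness, surjectivity onto the regular set via Corollary \ref{eqn c2.5}, injectivity via Corollary \ref{eqn c2.5} combined with Lemma \ref{eqn l2.6}), and the assertion that the complement of the regular set is Lebesgue-null is not proved there at all — it is implicitly invoked as a standard fact about compact group actions (the principal orbit theorem). What you add, and what the paper does not contain, is a self-contained argument for that null-complement claim: writing the non-regular set as $\bigcup_{i=1}^{r}K\cdot P_{i}$ via Conlon's Theorem \ref{eqn t2.2}, and disposing of each piece by a rank estimate plus Sard's theorem. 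This stays entirely inside the toolkit the paper has already assembled, which is a genuine gain in completeness. One remark: the step you single out as the ``real obstacle'' — the transversality claim — is not actually needed. For an \emph{upper} bound on the rank of $d\Psi_{i}$ you only need subadditivity of dimension: the image of $d\Psi_{i}$ at $(k,p)$ is $k\cdot\bigl(\mathfrak{k}\cdot p+T_{p}P_{i}\bigr)$, whose dimension is at most $\dim N_{p}+(\dim T-1)\le\bigl(n-\dim T-m(i)\bigr)+(\dim T-1)=n-m(i)-1\le n-2$, with no orthogonality input whatsoever; the polar condition would only matter if you wanted a \emph{lower} bound on the rank. So your argument is in fact simpler than you feared, and it is sound as written.
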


For a regular point $x$, if $x=\phi(r,kM)$ for $r\in T^{+}$ and $k\in K$ then we 
simply write $x=(r,kM)$ and call this the polar coordinates of $x$. It is clear from 
the definition of $\phi$, that $k_{1}\cdot (r,k_{2}M)=(r,k_{1}k_{2}M)$, $r\in T^{+}$ 
and $k_{1},k_{2}\in K$.

\begin{rem}\label{eqn r2.70}
Let $K=SO(n)$.
Consider the $K$-regular point $e_1=(1,0,0,\cdots,0)\in\mathbb{R}^{n}$. A $K$-transversal domain $T$ can be chosen to be $T=A_{e_1}=\{(x,0,0,\cdots,0)\in\mathbb{R}^n:x\in\mathbb{R}\}$, and $T^{+}=\{(r,0,0,\cdots,0)\in\mathbb{R}^n:r>0\}$, which can be identified with $(0.\infty)$. If $M$ is the stabilizer of $e_1$, via the map $kM\rightarrow k\cdot e_1$, we have the identification $K/M=K\cdot e_1=S^{n-1}$. Therefore, by the above proposition, it follows that each regular point $x\in\mathbb{R}^{n}$ can be written uniquely as $x=(r,\omega)=r\omega$, $r>0,\omega\in S^{n-1}$, which gives the usual polar coordinate system on $\mathbb{R}^{n}$.
\end{rem}

We conclude this section by relating polar actions and symmetric space 
actions, due to Dadok \cite{D}. First we define a symmetric space action.

\begin{defn}\label{eqn d2.8}
The action of a connected subgroup $G$ of $SO(n)$ with Lie algebra $\mathfrak{g}$ on 
$\mathbb{R}^{n}$ is called a symmetric space action if there is a real semisimple Lie 
algebra $\mathfrak{u}$ with Cartan decomposition 
$\mathfrak{u}=\mathfrak{k}^{\prime}+\mathfrak{p}$, a Lie algebra isomorphism $A:
\mathfrak{g}\longrightarrow\mathfrak{k}^{\prime}$, and a real vector space isomorphism 
$L:\mathbb{R}^{n}\longrightarrow \mathfrak{p}$ such that $L(X\cdot y)=[A(X),L(y)]$ for 
all $X\in \mathfrak{g}$ and $y\in \mathbb{R}^{n}$. Here $[.,.]$ denotes the Lie 
algebra bracket on $\mathfrak{g}$.
\end{defn}

\begin{rem}\label{eqn r2.9}
Let the action of $G$ be a symmetric space action. If $U$ is a connected Lie group 
with Lie algebra $\mathfrak{u}$, and $K^{\prime}$ is a connected subgroup of $U$ with 
Lie algebra $\mathfrak{k}^{\prime}$, then the action of $G$ on $\mathbb{R}^{n}$ is 
isomorphic to that of $\textup{Ad}(K^{\prime})$ on $\mathfrak{p}$, i.e if we identify 
$\mathbb{R}^{n}$ and $\mathfrak{p}$ via the map $L$, then $G$-orbits and $\textup{Ad}
(K^{\prime})$-orbits coincide.
\end{rem}

The relation between a polar and a symmetric space action is provided by the following 
proposition.

\begin{prop}\label{eqn p2.10}
\textup{(Dadok \cite{D}, Proposition 6)} Let $K$ be a connected, compact subgroup of 
$SO(n)$ whose action on $\mathbb{R}^{n}$ is polar. Then there exists a connected 
subgroup $G$ of $SO(n)$ whose action on $\mathbb{R}^{n}$ is a symmetric space action 
and whose orbits coincide with those of $K$.
\end{prop}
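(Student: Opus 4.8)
The plan is to reduce the statement to a classification problem governed by the \emph{generalized Weyl group} of the polar action, and then to match this data against the list of isotropy representations of symmetric spaces. The starting observation is that, for a polar action, the orbit structure is completely encoded by the transversal domain $T$ together with the finite reflection group $W=W(K,T)$ acting on it and the multiplicities $m(i)$ of the singular varieties. Indeed, by Proposition \textbf{\ref{eqn p2.7}} every regular orbit meets $T$, by Theorem \textbf{\ref{eqn t2.4}} the set $N_x\cap T$ is a single $W$-orbit, and by Theorem \textbf{\ref{eqn t2.2}} the multiplicities $m(i)$ record how the isotropy jumps along each singular variety $P_i$. Consequently, if a \emph{symmetric space action} (Definition \textbf{\ref{eqn d2.8}}) can be produced whose transversal domain, reflection group and multiplicities agree with $(T,W,\{m(i)\})$, then by Remark \textbf{\ref{eqn r2.9}} its orbits coincide with those of $K$, which is exactly what we want.

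First I would reduce to the case where the $K$-module $\mathbb{R}^n$ is irreducible. A reducible polar representation decomposes as an orthogonal sum of polar representations, and a product of symmetric space actions is again a symmetric space action (the Cartan decomposition of a product being the sum of the factors); so orbit equivalence for the irreducible pieces yields it for the whole. Thus assume $\mathbb{R}^n$ is $K$-irreducible. The cohomogeneity one case ($\dim T=1$) is immediate: $W$ is generated by a single reflection and the action is orbit equivalent to the isotropy action of a rank one symmetric space, so I set that aside.

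For cohomogeneity at least two, the heart of the matter is to show that $W$, viewed through its action on $T$, is the Weyl group of a \emph{crystallographic} root system and that the multiplicities $m(i)$ are among those realized by a symmetric space. The singular varieties $P_i$ play the role of the reflecting hyperplanes, and the combinatorics of Weyl domains from Theorem \textbf{\ref{eqn t2.4}} make $(T,W)$ into a Coxeter complex. The key structural input --- which is where the real work lies --- is that for an irreducible polar representation of cohomogeneity $\geq 3$ this Coxeter group is crystallographic (no $H_3$, $H_4$, or exotic dihedral factors survive), while the cohomogeneity two case, where $W$ is a priori a dihedral group $I_2(m)$, must be analyzed by hand to exclude the non-crystallographic values of $m$. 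Once the data $(W,\{m(i)\})$ is known to be that of a restricted root system with admissible multiplicities, the classification of irreducible Riemannian symmetric spaces produces a pair $(\mathfrak{u}=\mathfrak{k}'+\mathfrak{p},\,\mathrm{Ad}(K')|_{\mathfrak{p}})$ realizing it, and transporting back along the isomorphism of Definition \textbf{\ref{eqn d2.8}} gives the required subgroup $G\subset SO(n)$.

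The main obstacle is precisely this crystallographic/admissibility step: there is no soft argument forcing an abstract polar reflection group to be a Weyl group, so one is driven to an exhaustive case analysis over the compact connected subgroups $K\subset SO(n)$ and their irreducible representations, computing in each case whether the action is polar and, if so, its reflection group and multiplicities, and then comparing with the tables of symmetric spaces. This case-by-case verification --- ruling out any ``exotic'' polar representation not accounted for by a symmetric space --- is the substance of Dadok's argument \cite{D}, and is the reason we invoke his Proposition 6 rather than reprove it here.
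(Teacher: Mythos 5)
Your proposal ultimately does exactly what the paper does: the paper states this result as a citation (Dadok \cite{D}, Proposition 6) and offers no proof of its own, and you likewise defer the substantive classification work to Dadok, prefacing it with a fair sketch of his strategy (reduction to the irreducible case, analysis of the reflection group $W$ and multiplicities, and comparison with the classification of symmetric space isotropy representations). Since both treatments rest on the same invocation of Dadok's theorem, there is nothing to fault and nothing genuinely different to compare.
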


\section{$K$-Harmonic polynomials}
Throughout this section we assume that $K$ is a connected compact subgroup of $SO(n)$ 
whose action on $\mathbb{R}^{n}$ is polar, $T$ a $K$-transversal domain, and 
$M=K_{T}$, the centralizer of $T$. Let $S$ denote the space of polynomials on 
$\mathbb{R}^{n}$, $I\subset S$ the set of $K$- invariants in $S$ and $I_{+}$ the set 
of polynomials in $I$ without the constant term. Let $H\subset S$ denote the set of 
$K$-harmonic polynomials, that is, polynomials annihilated by the constant coefficient 
differential operators on $\mathbb{R}^{n}$ defined by elements in $I_{+}$. For more 
details about $K$- harmonic polynomials see Helgason \cite{H1}, Chapter III. The 
following result is proved there (Theorem 1.1).

\begin{thm}\label{eqn t3.1}
S=IH, that is, each polynomial $p$ on $\mathbb{R}^{n}$ has the form 
$p=\sum_{k}i_{k}h_{k}$ where $i_{k}$ is $K$-invariant and $h_{k}$ is $K$-harmonic.
\end{thm}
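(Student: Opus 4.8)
The plan is to prove $S=IH$ by the classical Fischer inner product argument, which works for any compact linear group and requires nothing beyond the definitions of $I_{+}$ and $H$ given above; in particular the polar hypothesis plays no role in this particular statement (it is needed only for the refinements in the later sections).

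First I would equip the space $S$ of (complex) polynomials with the Fischer inner product, defined on monomials by $\langle x^{\alpha}, x^{\beta}\rangle = \alpha!\,\delta_{\alpha\beta}$, equivalently by $\langle p, q\rangle = \big(\partial(\overline{p})\,q\big)(0)$, where $\partial(p)$ denotes the constant-coefficient differential operator obtained from $p$ by substituting $\partial/\partial x_{j}$ for $x_{j}$. This inner product is graded, so homogeneous components of distinct degrees are orthogonal, and its decisive feature is the adjunction relation
$$\langle p\,s,\, q\rangle = \langle s,\, \partial(\overline{p})\,q\rangle \qquad (p,s,q\in S),$$
expressing that multiplication by $p$ is adjoint to the differential operator $\partial(\overline{p})$. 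This is verified at once on monomials and extends by (anti)linearity.

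Next I would identify $H$ as an orthogonal complement. Since $K\subset SO(n)$ acts by real orthogonal transformations, the invariant ideal $I_{+}$ is stable under complex conjugation, so $\overline{q}\in I_{+}$ whenever $q\in I_{+}$. By the adjunction relation, a polynomial $h$ is orthogonal to the ideal $I_{+}S$ precisely when $\langle s,\, \partial(\overline{q})\,h\rangle = 0$ for all $q\in I_{+}$ and all $s\in S$; by non-degeneracy of the inner product this forces $\partial(\overline{q})\,h = 0$ for every $q\in I_{+}$, which, using the conjugation-stability of $I_{+}$, is exactly the condition $h\in H$. Hence $H = (I_{+}S)^{\perp}$, and because the pairing respects the grading and each graded piece is finite dimensional, I obtain the orthogonal direct sum $S = I_{+}S \oplus H$.

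Finally I would deduce $S=IH$ by induction on degree. Given $p\in S$, write $p = u + h$ with $u\in I_{+}S$ and $h\in H$; expanding $u = \sum_{j} q_{j}s_{j}$ with each $q_{j}\in I_{+}$ homogeneous of positive degree, the factors $s_{j}$ may be taken of strictly smaller degree than $p$, so by the inductive hypothesis $s_{j}\in IH$. Then $q_{j}s_{j}\in IH$ because $I$ is a subalgebra, and $h\in H\subset IH$, giving $p\in IH$; the base case of constants is immediate. The one step carrying the real content is the identification $H = (I_{+}S)^{\perp}$: the adjunction relation together with the conjugation-stability of $I_{+}$ is precisely what makes the harmonic (kernel) condition dual to membership in the invariant ideal, and everything else is bookkeeping with the grading.
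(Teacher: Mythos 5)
Your proof is correct and complete. The paper itself offers no argument for this theorem---it is quoted from Helgason \cite{H1}, Chapter III, Theorem 1.1---and your Fischer-product argument (multiplication by $p$ adjoint to $\partial(\overline{p})$, the identification $H=(I_{+}S)^{\perp}$ via conjugation-stability of $I_{+}$, and induction on degree) is essentially the proof given in that reference; your observation that polarity of the $K$-action plays no role in this particular statement is likewise accurate.
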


Since the action of $K$ is polar by Proposition \textbf{\ref{eqn p2.10}}, there is a 
connected subgroup $G$ of $SO(n)$ whose action on $\mathbb{R}^{n}$ is a symmetric 
space action and whose orbits coincide with those of $K$. Let $L$, $K^{\prime}$ and 
$\mathfrak{p}$  are as in Definition \textbf{\ref{eqn d2.8}}. Therefore, by Remark 
\textbf{\ref{eqn r2.9}}, if we identify $\mathbb{R}^{n}$ and $\mathfrak{p}$ via the 
map $L$, then $K$ and $\textup{Ad}(K^{\prime})$orbits coincide. Hence $I$ and $I_{+}$ 
are same for both actions and consequently so is $H$. So, for polar actions we have 
the following version of Kostant-Rallis Theorem (see Helgason \cite{H2}, Chapter III, 
Theorem 2.4).

\begin{thm}\label{eqn t3.2}
Each $K$-harmonic polynomial is determined by its values on a regular $K$-orbit.
\end{thm}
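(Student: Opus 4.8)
The plan is to reduce the assertion to the classical Kostant--Rallis theorem for a Riemannian symmetric pair and then transfer the conclusion through the identification of orbits provided by Dadok. By Proposition \ref{eqn p2.10} and Remark \ref{eqn r2.9}, after identifying $\mathbb{R}^{n}$ with $\mathfrak{p}$ via $L$, the $K$-orbits coincide with the $\textup{Ad}(K')$-orbits, the transversal domain $T$ corresponds to a maximal abelian subspace $\mathfrak{a}\subset\mathfrak{p}$, and (as already observed in the excerpt) the spaces $I$, $I_{+}$, and hence $H$, are literally the same for the two actions. Thus it suffices to prove the statement for the isotropy representation of the symmetric pair, where a regular $K$-orbit $N_{x_0}$ through a regular point $x_0\in T^{+}$ becomes a regular $\textup{Ad}(K')$-orbit.

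First I would record the graded structure $H=\bigoplus_{k}H_{k}$: the defining equations $\partial(p)h=0$, $p\in I_{+}$, are homogeneous, so each homogeneous component of a harmonic polynomial is again harmonic. Next I would invoke the $K$-type decomposition of $H$: for a class one representation $\delta$ the Kostant--Rallis theorem gives $\dim\textup{Hom}_{K}(V_{\delta},H)=\dim V_{\delta}^{M}$, that is, $\delta$ occurs in $H$ with multiplicity equal to the dimension of the space of $M$-fixed vectors, where $M=K_{T}$ is the isotropy of $x_0$.

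The heart of the argument is to show that restriction to the single orbit $N_{x_0}\cong K/M$ is injective on $H$. Fixing $\delta$ and a copy $\Phi\in\textup{Hom}_{K}(V_{\delta},H)$, $K$-equivariance together with the $M$-invariance of $x_0$ forces the restriction $\kappa M\mapsto\Phi(v)(\kappa\cdot x_0)$ to equal the matrix coefficient $\kappa\mapsto\langle\delta(\kappa^{-1})v,w_{\Phi}\rangle$, where $w_{\Phi}\in(V_{\delta}^{*})^{M}$ is the evaluation functional $u\mapsto\Phi(u)(x_0)$. Hence $\Phi(V_{\delta})$ restricts to zero on $N_{x_0}$ precisely when $w_{\Phi}=0$, and the full $\delta$-isotypic part of $H$ restricts injectively to $K/M$ if and only if the evaluation map $\textup{ev}_{x_0}\colon\textup{Hom}_{K}(V_{\delta},H)\to(V_{\delta}^{*})^{M}$, $\Phi\mapsto w_{\Phi}$, is injective. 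Since both spaces have dimension $\dim V_{\delta}^{M}$, it is enough to see that this map is an isomorphism, which is exactly the Kostant--Rallis identification of $\delta$-harmonics with $M$-fixed data realized through their values at the regular point $x_0\in T$. Summing over $\delta$ and using the orthogonality of distinct isotypic components in $L^{2}(K/M)$ then yields injectivity of $h\mapsto h|_{N_{x_0}}$ on all of $H$.

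I expect the main obstacle to be precisely this last step---bridging from the literal Kostant--Rallis conclusion, which concerns restriction to the Cartan subspace $\mathfrak{a}=T$, to determination by a single regular orbit. The delicate case is $\dim V_{\delta}^{M}>1$, where a given $K$-type appears in harmonic components of several degrees, and one must verify that these copies restrict to linearly independent functions on the orbit; this is exactly the bijectivity of the evaluation map $\textup{ev}_{x_0}$ above. Once that isomorphism is extracted from the symmetric-space Kostant--Rallis theorem (Helgason \cite{H2}, Chapter III, Theorem 2.4), the remaining assembly is routine.
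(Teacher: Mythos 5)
Your overall strategy coincides with the paper's proof: by Proposition \textbf{\ref{eqn p2.10}} and Remark \textbf{\ref{eqn r2.9}} the polar action is orbit-equivalent to a symmetric space action, orbit-equivalence forces $I$, $I_{+}$ and hence $H$ to coincide for the two actions, and the statement is then inherited from the symmetric-space Kostant--Rallis theorem. The paper stops there, because it quotes Helgason (\cite{H2}, Chapter III, Theorem 2.4) in exactly the form of the statement being proved, namely that for a symmetric space action each harmonic polynomial is determined by its values on a regular orbit. Your proposal, by contrast, imports Kostant--Rallis only in the multiplicity form $\dim\textup{Hom}_{K}(V_{\delta},H)=\dim V_{\delta}^{M}$ and then tries to upgrade it to the orbit statement via the evaluation map $\textup{ev}_{x_{0}}:\textup{Hom}_{K}(V_{\delta},H)\rightarrow (V_{\delta}^{*})^{M}$. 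Your reformulation is correct as far as it goes: restriction to $N_{x_{0}}$ is injective on the $\delta$-isotypic part of $H$ if and only if $\textup{ev}_{x_{0}}$ is injective, and the passage from the isotypic pieces to all of $H$ via orthogonality in $L^{2}(K/M)$ is fine. But the dimension count proves only ``injective if and only if surjective'', not that either holds, and the sentence claiming the isomorphism ``is exactly the Kostant--Rallis identification'' asserts precisely what has to be proved. Since you concede in your last paragraph that you cannot supply this step, the proposal as written has a genuine gap at the decisive point.

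The gap can be closed in two ways. The paper's way: read the cited theorem in its orbit form, after which your entire detour through multiplicities and evaluation maps is unnecessary. Alternatively, within your own scheme, surjectivity of $\textup{ev}_{x_{0}}$ does not require Theorem \textbf{\ref{eqn t3.2}} at all: given $f\in\mathcal{E}_{\delta}(K/M)$, extend it to a polynomial on $\mathbb{R}^{n}$ (the Helgason exercise quoted in Section 3), write that polynomial as $\sum_{k} i_{k}h_{k}$ using Theorem \textbf{\ref{eqn t3.1}}, restrict to $N_{x_{0}}$, where each invariant $i_{k}$ becomes the constant $i_{k}(x_{0})$, and take $\delta$-isotypic components; this shows that restriction maps $H_{\delta}$ onto $\mathcal{E}_{\delta}(K/M)$, i.e. $\textup{ev}_{x_{0}}$ is surjective, and then your multiplicity formula together with the dimension count makes it bijective, in particular injective, which completes your argument. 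If you follow this second route, beware of a circularity: in the paper's own development (and in Helgason's exposition) the multiplicity statement $\dim F_{\delta}=l(\delta)$ is deduced \emph{from} Theorem \textbf{\ref{eqn t3.2}} together with Theorem \textbf{\ref{eqn t3.1}}, so the multiplicity formula must be taken from a source that proves it independently of the orbit statement, e.g. the original paper of Kostant and Rallis.
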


Now we briefly describe the class one representations of $K$ realized on the space $H$ and on the space of their restriction to a regular $K$-orbit. This is similar to the symmetric space theory (see Helgason \cite{H2}, 
page-236,237 and 298,299; \cite{H1}, page-533). For $x\in T$ regular, consider the embedding $K/M=N_{x}\subset\mathbb{R}^{n}$ via the 
map $kM\longrightarrow k\cdot x$. Then as is well known (Helgason \cite{H1}, Exercise 
A1 (iv), page-73) each $K$-finite function on $K/M$ is the restriction of a polynomial 
$p\in S$ which by Theorem \textbf{\ref{eqn t3.1}} can be taken to be harmonic. Thus, 
by Theorem \textbf{\ref{eqn t3.2}} we see that the restriction mapping 
$h\longrightarrow h\mid _{N_{x}}$ is a bijection of $H$ onto the space of $K$-finite 
functions in $\mathcal{E}(K/M)$ (the space of smooth functions on $K/M$). Let 
$\widehat{K}_{M}$ be the set of all inequivalent unitary irreducible representation of 
$K$ having $M$ fixed vector. If $\delta\in\widehat{K}_{M}$, let $H_{\delta}$ 
(respectively $\mathcal{E}_{\delta}(K/M)$) denote the space of $K$-finite functions in 
$H$ (respectively $\mathcal{E}(K/M)$) of type $\delta$. Then the restriction mapping 
maps $H_{\delta}$ onto $\mathcal{E}_{\delta}(K/M)$. Let $V_{\delta}$ be the (finite 
dimensional) Hilbert space on which $\delta$ is realized and let 
$V_{\delta}^{M}\subset V_{\delta}$ be the space of $M$-fixed vectors. Let $v_{1}, 
v_{2}, \cdots , v_{d(\delta)}$ be an orthonormal basis of $V_{\delta}$ such that 
$v_{1}, v_{2}, \cdots , v_{l(\delta)}$ span $V_{\delta}^{M}$. Then the functions
$$kM\longrightarrow \langle v_{j},\delta(k)v_{i}\rangle~ 1\leq j\leq d(\delta), ~1\leq 
i\leq l(\delta)$$ form a basis of $\mathcal{E}_{\delta}(K/M)$ (Theorem 3.5, chapter V, 
Helgason \cite{H1}), and
\bea\label{eqn20} \mathcal{E}_{\delta}
(K/M)=\bigoplus_{i=1}^{l(\delta)}\mathcal{E}_{\delta,i}(K/M),\eea
where $\mathcal{E}_{\delta,i}(K/M)$ is the space of functions
$$F_{v,i}(K/M)=\langle v,\delta(k)v_{i}\rangle,~v\in V_{\delta}.$$
The map $v\longrightarrow F_{v,i}$ is an isomorphism of $V_{\delta}$ onto 
$\mathcal{E}_{\delta,i}(K/M)$ commuting with the action of $K$.
Consequently $H_{\delta}$ decomposes into $l(\delta)$ copies of $\delta$. Thus we 
write
\bea\label{eqn21}H_{\delta}=\bigoplus_{i=1}^{l(\delta)}H_{\delta,i},\eea
where the action of $K$ on each $H_{\delta,i}$ is
equivalent to $\delta$ (by decomposing $H_{\delta}$ first into homogeneous components 
we can assume that the $H_{\delta,i}$ consists of homogeneous polynomials of degree 
say $d_{i}(\delta)$), and the vector space
$F_{\delta}=\textup{Hom}_{K}(V_{\delta},H_{\delta})$ of linear maps $\eta$ of 
$V_{\delta}$ into $H_{\delta}$ satisfying \bea\label{eqn22} \eta(\delta(k)v)=k\cdot 
(\eta(v))~~~k\in K,~v\in V_{\delta}\eea
has dimension $l(\delta)$. 

\begin{rem}\label{eqn r3.3}
Let $K=SO(n)$. Let $e_1$, $T$, $T^{+}$, $M$ be as in the Remark \textbf{\ref{eqn r2.70}}. Also we have the identification $K/M=S^{n-1}$. In this special case, note that, the space $H$ consists of all polynomials $P$ such that $\triangle P=0$, where $\triangle=\Sigma\partial^2/\partial x^2_i$ is the usual Laplacian on $\mathbb{R}^{n}$. Let $\mathcal{H}_m$ denotes the space of all $m$th degree homogeneous polynomials in $H$ and $\mathcal{S}_{m}$ denotes the space of restrictions of elements of $\mathcal{H}_m$ to $S^{n-1}$. The elements of $\mathcal{H}_m$ are called solid harmonics of degree $m$, and those of $\mathcal{S}_m$ are called spherical harmonics of degree $m$. The $K$-action on $K/M=S^{n-1}$ defines a unitary representation on $L^{2}(S^{2n-1})$. Clearly each $\mathcal{S}_m$ is a $K$-invariant subspace. Let $\delta_m$ denotes the restriction of $\delta$ to $\mathcal{S}_m$. In fact these describe all inequivalent, irreducible, unitary representations in $\widehat{K}_M$. Note that according to our general notation, $H_{\delta_m}=\mathcal{H}_m$, $\mathcal{E}_{\delta_m}(K/M)=\mathcal{S}_m$, and $l(\delta_m)=$dim$V^{M}_{\delta_m}=1$. 
Let $v^m$ be the unique (upto constant multiple) unit $M$-fixed vector in $V_{\delta_m}$. Then the one dimensional vector space $F_{\delta_m}=$Hom$(V_{\delta_m},\mathcal{H}_m)$ is spanned by the linear map $\eta_{\delta_m}:V_{\delta_m}\rightarrow\mathcal{H}_m$, where for $v\in V_{\delta_m}$, $\eta_{\delta_m}(v)$ is the unique element in $\mathcal{H}_m$ whose restriction to $S^{n-1}$ is $Y_{v}(kM):=\langle v,\delta_m(k)v^m\rangle\in\mathcal{S}_m$ i.e $\eta_{\delta_m}(v)(x)=|x|^mY_v(x/|x|)$.
\end{rem}

\section{K-Type functions in matrix form}
We assume that $K$ is a connected, compact subgroup of $SO(n)$ whose action on 
$\mathbb{R}^{n}$ is polar. We use all the notation from the previous two sections. For 
two finite dimensional vector spaces $V$ and $W$ denote the space of all linear maps 
from $V$ into $W$, by Hom$(V,W)$. For two positive integers $p$ and $q$ denote the 
space of all $p\times q$ matrices with complex entries by $\mathcal{M}_{p\times q}$. 
If $A$ is a set and $f:A\longrightarrow\mathcal{M}_{p\times q}$ a function, then we 
define $f_{ij}:A\longrightarrow \mathbb{R}^{n}$ by $f_{ij}(a)=(i,j)$th entry of 
$f(a)$, for $a\in A$. For $\delta\in \widehat{K}_{M}$ define $\mathcal{X}^{\delta}
(\mathbb{R}^{n})$  to be the set of all functions $$F:
\mathbb{R}^{n}\longrightarrow\textup{Hom}(V_{\delta}^{M},V_{\delta})$$ satisfying the 
condition
\bea\label{eqn31}F(k\cdot x)=\delta(k)F(x)~\forall~ x\in \mathbb{R}^{n},~ k\in K,\eea 
and $\mathcal{Y}^{\delta}(\mathbb{R}^{n})$ to be the set of all functions $$G:
\mathbb{R}^{n}\longrightarrow\textup{Hom}(V_{\delta},V_{\delta})$$ satisfying the 
conditions \bea\label{eqn32}G(k\cdot x)=\delta(k)G(x),~~ 
G(x)\delta(m)=G(x)~\forall~x\in\mathbb{R}^{n},~ k\in K,~m\in M.\eea Here the 
multiplications are the compositions of linear maps. Proposition \textbf{3.1} below 
says that the sets $\mathcal{X}^{\delta}(\mathbb{R}^{n})$ and $\mathcal{Y}^{\delta}
(\mathbb{R}^{n})$ can be identified. Also, define $\mathcal{E}^{\delta}
(\mathbb{R}^{n})$ to be the space of all smooth functions in $\mathcal{X}^{\delta}(\mathbb{R}^{n})$. 
Choose an orthonormal ordered basis $\textbf{b}=\{v_{1},v_{2},\cdots, v_{d(\delta)}\}$ 
for $V_{\delta}$, so that $\textbf{b}^{M}=\{v_{1},v_{2},\cdots, v_{l(\delta)}\}$ form 
an ordered basis for $V_{\delta}^{M}$. Identify $\delta$ with its matrix 
representation with respect to the basis $\textbf{b}$. Then we can identify 
$\mathcal{X}^{\delta}(\mathbb{R}^{n})$ with the space of all functions $$F:
\mathbb{R}^{n}\longrightarrow \mathcal{M}_{d(\delta)\times  l(\delta)}$$ satisfying 
(\ref{eqn31}) (but now, the multiplications are simply matrix multiplications), via 
the matrix representation with respect to bases $\textbf{b}$ for $V_{\delta}$ and 
$\textbf{b}^{M}$ for $V_{\delta}^{M}$. Similarly identify $\mathcal{Y}^{\delta}(\mathbb{R}^{n})$ and $\mathcal{E}^{\delta}(\mathbb{R}^{n})$ 
with their corresponding matrix representations with respect to bases $\textbf{b}$ and 
$\textbf{b}^{M}$. Through out this paper, we use these identifications with respect to 
the basis $\textbf{b}$ and $\textbf{b}^{M}$. Define
$$Y^{\delta}:K/M\longrightarrow\mathcal{M}_{d(\delta)\times l(\delta)}$$
by 
$$Y^{\delta}_{ij}(kM)= \delta_{ij}(k)=\langle\delta(k)v_{j},v_{i}\rangle, ~ 1 \leq i 
\leq d(\delta),~ 1 \leq j \leq l(\delta).$$ If $\stackrel{\vee}{\delta}$ denote the 
contragredient representation, choose $V_{\stackrel{\vee}{\delta}}=V_{\delta}^{*}$ (the dual 
vector space of $V_{\delta}$) with inner product $\langle,\rangle$ defined by $\langle 
v^{*},w^{*}\rangle=\langle w,v\rangle$, $v,w\in V_{\delta}$. Take the orthonormal 
ordered basis $\textbf{b}^{*}$ of $V_{\stackrel{\vee}{\delta}}$ to be the dual basis 
$\{v_{1}^{*},v_{2}^{*},\cdots,v_{d(\delta)}^{*}\}$. Then 
$\textbf{b}^{*M}=\{v_{1}^{*},v_{2}^{*},\cdots,v_{l(\delta)}^{*}\}$ will be a basis for 
$V_{\stackrel{\vee}{\delta}}^{M}$. Identify $\stackrel{\vee}{\delta}$ with its matrix representation 
with respect to the basis $\textbf{b}^{*}$. Then $\stackrel{\vee}{\delta}_{ij}
(k)=\overline{\delta_{ij}(k)}$. Therefore $\{Y^{\delta}_{ij}(kM):1\leq i\leq 
d(\delta),1\leq j\leq l(\delta) \}$ form a basis for $\mathcal{E}_{\stackrel{\vee}{\delta}}
(K/M)$. For more details about contragredient representation see Helgason \cite{H1}, 
page-393,533. Now, take an ordered basis $\textbf{e}=\{\eta_{1},\eta_{2},\cdots,
\eta_{l(\delta)}\}$ for $F_{\stackrel{\vee}{\delta}}=\textup{Hom}_{K}
(V_{\stackrel{\vee}{\delta}},H_{\stackrel{\vee}{\delta}})$. Define $$P^{\delta}:
\mathbb{R}^{n}\longrightarrow\mathcal{M}_{d(\delta)\times l(\delta)}$$ by 
$$P^{\delta}_{ij}(x)=\eta_{j}(v^{*}_{i})(x),~1\leq i\leq d(\delta),~1\leq j\leq 
l(\delta).$$ Since $\eta_{j}(\stackrel{\vee}{\delta}(k)v_{i}^{*})=k\cdot(\eta_{j}(v_{i}^{*}))$, 
using the fact that $\stackrel{\vee}{\delta}_{ij}(k)=\overline{\delta_{ij}(k)}$ and 
$\stackrel{\vee}{\delta}$ is unitary, one can show that $P^{\delta}(k\cdot 
x)=\delta(k)P^{\delta}(x).$ Hence $P^{\delta}\in \mathcal{E}^{\delta}(\mathbb{R}^{n}).
$ Define $$\Upsilon_{\delta}:\mathbb{R}^{n}\longrightarrow\mathcal{M}_{l(\delta)\times 
l(\delta)}$$ by
\bea\label{eqn 4.000}\Upsilon_{\delta}(x)=[P^{\delta}(x)]^{\star}[P^{\delta}(x)].\eea 
Here $\star$ denotes the matrix adjoint. Clearly $\Upsilon_{\delta}$ is $K$-invariant.

\begin{rem}\label{eqn r4.01}
Let $K=SO(n)$. We describe $Y^{\delta}$, $P^{\delta}$ and $\Upsilon_{\delta}$ in this special case. Let $e_1$, $T$, $T^{+}$, $M$ be as in the Remark \textbf{\ref{eqn r2.70}}, and $\mathcal{H}_m,\mathcal{S}_m,\delta_m,v^m,\eta_{\delta_m}$ be as in Remark \textbf{\ref{eqn r3.3}}. Choose an ordered orthonormal basis $\{v_1,v_2,\cdots v_{d(m)}\}$ for $V_{\delta_m}$, such that $\{v_1=v^m\}$ is the orthonormal basis for $V_{\delta_m}^{M}$. Then $\{Y^{\delta_m}_{i1}(kM)=\langle\delta_m(k)v_1,v_i\rangle:1\leq i\leq d(m)\}$ forms an orthogonal basis for $\mathcal{E}_{\stackrel{\vee}{\delta}_m}(K/M)=\overline{\mathcal{S}}_{m}=\mathcal{S}_m$, and $\Sigma_{i=1}^{d(\delta)}\big|Y^{\delta_m}_{i1}(kM)\big|^2=1$. 
Take $\{\eta_{\stackrel{\vee}{\delta}_m}\}$ as a basis for $F_{\stackrel{\vee}{\delta}_m}$ Then, by Remark \textbf{\ref{eqn r3.3}}, for $x=(r,kM)=(r,\omega)$, \beas P^{\delta_m}_{i1}(x)=\eta_{\stackrel{\vee}{\delta}_m}(v_i^{*})(x)=r^m\langle v_i^{*},\stackrel{\vee}{\delta}_m(k)v_1^{*}\rangle=r^m\langle\delta_m(k)v_1,v_i\rangle=r^{m}Y^{\delta_m}_{i1}(kM)=|x|^{m}Y^{\delta_m}_{i1}(\omega)\eeas i.e $P^{\delta_m}_{i1}$ is the unique element in $\mathcal{H}_m$ whose restriction to $S^{n-1}$ is $Y^{\delta_m}_{ij}$.

From the above discussion we can prove the following : 
Take $P_m^i\in\mathcal{H}_m$, and $Y_m^i\in\mathcal{S}_m$ to be their restrictions to $S^{n-1}$ so that $\{Y_{m}^{i}:i=1,2,\cdots,d(m)\}$ forms an orthonormal basis for $\mathcal{S}_m$.  Then it is possible to choose orthonormal ordered bases  $\textbf{b}=\{v_1,v_2,\cdots,v_{d(m)}\}$ for $V_{\delta_m}$ and $\textbf{b}^M=\{{v_1\}}$ for $V_{\delta_m}^{M}$, so that, with respect to these bases, $Y^{\delta_m}:S^{n-1}\rightarrow\mathcal{M}_{d(m)\times 1}$ is given by \bea\label{eqn 4.001}Y^{\delta_m}(\omega)=\sqrt{\frac{|S^{n-1}|}{d(m)}}\bigg[Y^1_m(\omega),Y^2_m(\omega),\cdots, Y^{d(m)}_m(\omega)\bigg]^t,\omega\in S^{n-1}.\eea We can choose a basis $\textbf{e}$ for $F_{\check{\delta}_m}$ so that, $P^{\delta_m}:\mathbb{R}^{n}\rightarrow\mathcal{M}_{d(m)\times 1}$ is given by   \bea\label{eqn 4.002}P^{\delta_m}(x)=\sqrt{\frac{|S^{n-1}|}{d(m)}}\bigg[P^1_m(x),P^2_m(x),\cdots, P^{d(m)}_m(x)\bigg]^t,x\in \mathbb{R}^n.\eea In particular, \bea\label{eqn 4.003}P^{\delta_m}(x)=|x|^mY^{\delta_m}(x/|x|).\eea Also, we have $$\Upsilon_{\delta_m}(x)=\frac{|S^{n-1}|}{d(m)}\sum_{i=1}^{d(m)}\overline{P^i_m(x)}P^i_m(x)=\frac{|S^{n-1}|}{d(m)}|x|^{2m}\sum_{i=1}^{d(m)}\big|Y^i_m(\omega)\big|^2=|x|^{2m}.$$ 


\end{rem}

\begin{prop}\label{eqn p4.1}
Each $G\in \mathcal{Y}^{\delta}(\mathbb{R}^{n})$ is determined by its restriction on 
$V_{\delta}^{M}$.
\end{prop}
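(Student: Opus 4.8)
The plan is to rely entirely on the second defining relation in (\ref{eqn32}), namely $G(x)\delta(m)=G(x)$ for all $m\in M$; the $K$-equivariance $G(k\cdot x)=\delta(k)G(x)$ plays no role in this particular statement. The assertion ``$G$ is determined by its restriction on $V_\delta^M$'' means that the linear map $G(x):V_\delta\to V_\delta$ is recovered from its restriction to the subspace $V_\delta^M\subset V_\delta$ for every $x$; equivalently, the map $G\mapsto G|_{V_\delta^M}$ is injective on $\mathcal{Y}^\delta(\mathbb{R}^n)$.

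First I would introduce the averaging operator
\be
P_M=\int_M \delta(m)\,dm,
\ee
with $dm$ the normalized Haar measure on $M$. Since $\delta$ is unitary and $M$ is compact, a standard argument shows that $P_M$ is the orthogonal projection of $V_\delta$ onto $V_\delta^M$: it fixes every $M$-invariant vector and annihilates $(V_\delta^M)^{\perp}$. Integrating the relation $G(x)\delta(m)=G(x)$ against $dm$, and using that the right-hand side is independent of $m$, then yields the key identity
\be
G(x)\,P_M=G(x)\qquad\text{for all }x\in\mathbb{R}^n.
\ee

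With this in hand the conclusion is immediate: for any $w\in V_\delta$ we have $G(x)w=G(x)(P_M w)$, and since $P_M w\in V_\delta^M$, the value $G(x)w$ depends only on $G(x)|_{V_\delta^M}$. Hence two elements of $\mathcal{Y}^\delta(\mathbb{R}^n)$ with the same restriction to $V_\delta^M$ coincide everywhere, which is exactly the stated injectivity. There is no genuine obstacle in this argument; the only point needing care is the identification of $\int_M\delta(m)\,dm$ with the orthogonal projection onto $V_\delta^M$, which is the usual averaging fact for unitary representations of compact groups. As a byproduct, the identity $G(x)=G(x)P_M$ also exhibits the inverse of the restriction map, sending $F\in\mathcal{X}^\delta(\mathbb{R}^n)$ to $x\mapsto F(x)\circ P_M$, which is the identification of $\mathcal{Y}^\delta(\mathbb{R}^n)$ with $\mathcal{X}^\delta(\mathbb{R}^n)$ alluded to just before the proposition.
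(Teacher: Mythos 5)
Your proof is correct and is essentially the paper's own argument in basis-free form: the paper also integrates the relation $G(x)\delta(m)=G(x)$ over $M$ and uses that $\int_M\delta_{pj}(m)\,dm=0$ for $j$ indexing $(V_\delta^M)^\perp$, which in matrix coordinates is exactly your identity $G(x)P_M=G(x)$ with $P_M=\int_M\delta(m)\,dm$ the orthogonal projection onto $V_\delta^M$. The only cosmetic difference is that the paper phrases the conclusion as the vanishing of the last $d(\delta)-l(\delta)$ columns of the matrix of $G$, whereas you state it operator-theoretically.
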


\begin{proof}
If $G\in\mathcal{Y}^{\delta}(\mathbb{R}^{n})$, then $G$ is identified with its 
$\big(d(\delta)\times d(\delta)\big)$ matrix with respect to the fixed basis 
$\textbf{b}$. Hence it is enough to show that all the entries in last $\big(d(\delta)-
l(\delta)\big)$ columns of $G$ are zero. Since $G(x)\delta(m)=G(x)$ for all $m\in M$, 
equating the matrix entries on both sides we get, for $1\leq i,j\leq d(\delta)$,
\bea\label{eqn33}G_{ij}(x)=\sum_{p=1}^{d(\delta)}G_{ip}(x)\delta_{pj}(m),~\forall m\in 
M.\eea Since $v_{j}\in (V_{\delta}^{M})^{\bot}$ for $j\geq l(\delta)$, 
$\int_{M}\delta(m)v_jdm=0$ if $j\geq l(\delta)$. So,
\bea\label{eqn34} \int_{M}\delta_{pj}(m)dm=\int_{M}\langle\delta(m)v_{j},v_{p}\rangle 
dm=0,~ 1\leq p\leq d(\delta),~j\geq l(\delta).\eea Therefore for $j\geq l(\delta)$, 
integrating both side of (\ref{eqn33}) over $M$ we get  the desired result.
\end{proof}

\begin{lem}\label{eqn l4.2}
Suppose F is in $\mathcal{E}^{\delta}(\mathbb{R}^{n})$. Then there is a unique 
function $G_{0}:T^{+}\longrightarrow\mathcal{M}_{l(\delta)\times l(\delta)}$ such that 
for all regular points $x=(r,kM)$,
$$F(x)=Y^{\delta}(kM)G_{0}(r).$$
\end{lem}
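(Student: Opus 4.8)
The plan is to reconstruct $G_0$ from the single slice of $F$ lying over the transversal $T^+$ and then to propagate it across each regular orbit by the covariance relation \eqref{eqn31}. Since Proposition \ref{eqn p2.7} guarantees that every regular $x$ has \emph{unique} polar coordinates $x=(r,kM)=k\cdot r$ with $r\in T^{+}$, it suffices to define $G_0(r)$ using the base point $k=e$ and then to check that the asserted identity holds on the whole orbit $K\cdot r$.

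First I would examine $F$ at a point $r\in T^{+}$ itself. Because $M=K_{T}$ fixes every point of $T$, in particular $m\cdot r=r$ for all $m\in M$, the covariance \eqref{eqn31} forces $\delta(m)F(r)=F(m\cdot r)=F(r)$. Reading this columnwise, each of the $l(\delta)$ columns of the matrix $F(r)$ is an $M$-fixed vector, hence lies in $V_{\delta}^{M}=\mathrm{span}\{v_{1},\dots,v_{l(\delta)}\}$. In the chosen basis $\textbf{b}$ this means the last $d(\delta)-l(\delta)$ rows of $F(r)$ vanish, so I may write $F(r)=\begin{pmatrix}G_0(r)\\ 0\end{pmatrix}$ with $G_0(r)$ the top $l(\delta)\times l(\delta)$ block; this defines the candidate $G_0:T^{+}\to\mathcal{M}_{l(\delta)\times l(\delta)}$, smooth because $F$ is. Noting that $Y^{\delta}(eM)$ has entries $\delta_{ij}(e)=\langle v_{j},v_{i}\rangle$, it equals the $d(\delta)\times l(\delta)$ matrix $\begin{pmatrix}I_{l(\delta)}\\ 0\end{pmatrix}$, and therefore $F(r)=Y^{\delta}(eM)\,G_0(r)$.

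Finally I would extend to a general regular point $x=(r,kM)=k\cdot r$. Applying \eqref{eqn31} with base point $r$ gives $F(x)=\delta(k)F(r)=\delta(k)\,Y^{\delta}(eM)\,G_0(r)$, and since $Y^{\delta}_{ij}(kM)=\delta_{ij}(k)=(\delta(k)Y^{\delta}(eM))_{ij}$ for $1\le j\le l(\delta)$, this is exactly $Y^{\delta}(kM)\,G_0(r)$, as required. Uniqueness is immediate: specializing any valid $G_0$ to $k=e$ yields $Y^{\delta}(eM)G_0(r)=F(r)$, and $Y^{\delta}(eM)=\begin{pmatrix}I_{l(\delta)}\\ 0\end{pmatrix}$ is injective, so $G_0(r)$ is pinned down by $F$. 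There is no serious obstacle here; the only points demanding care are the passage from $\delta(m)F(r)=F(r)$ to the vanishing of the lower rows (i.e.\ that $M$-invariance of a vector is precisely its membership in $V_{\delta}^{M}$ relative to the basis $\textbf{b}$), and the appeal to uniqueness of polar coordinates to see that the orbitwise formula unambiguously defines a single function of $r$.
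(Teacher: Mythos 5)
Your proof is correct, and it takes a genuinely different (and more elementary) route than the paper's. The paper proceeds by averaging: it writes $F(x)=\int_{K}\delta(\sigma)^{-1}F(\sigma\cdot x)\,d\sigma$, inserts an additional average over $M$ to get $G_{0}^{\prime}(r)=\int_{K}\int_{M}\delta(\sigma m)^{-1}F(r,\sigma M)\,d\sigma\,dm$, and then kills the last $d(\delta)-l(\delta)$ rows of $G_{0}^{\prime}(r)$ via the orthogonality relation $\int_{M}\delta_{ij}(\sigma m)\,dm=0$ for $j> l(\delta)$, i.e. via the fact that $\int_{M}\delta(m)\,dm$ is the orthogonal projection onto $V_{\delta}^{M}$. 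You instead work pointwise on the transversal: since $M=K_{T}$ fixes each $r\in T^{+}$, covariance forces $\delta(m)F(r)=F(r)$, so the columns of $F(r)$ are $M$-fixed vectors, and the vanishing of the lower rows follows directly from $V_{\delta}^{M}=\mathrm{span}\{v_{1},\dots,v_{l(\delta)}\}$ and orthonormality of $\textbf{b}$; covariance and the uniqueness of polar coordinates (Proposition \ref{eqn p2.7}) then propagate the identity $F(x)=Y^{\delta}(kM)G_{0}(r)$ over each regular orbit. Both arguments rest on the same structural facts --- the splitting of $V_{\delta}$ into $V_{\delta}^{M}$ and its orthocomplement, and the observation that $Y^{\delta}(kM)$ consists of the first $l(\delta)$ columns of $\delta(k)$ --- but yours replaces the integral/orthogonality computation by the isotropy of points of $T^{+}$, which shortens the proof and makes the role of $M$ transparent; the paper's averaging formula has the mild advantage of producing $G_{0}$ as an explicit orbital integral of $F$ without first singling out the slice $F|_{T^{+}}$. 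The uniqueness arguments are essentially equivalent: the paper invokes the left inverse $[Y^{\delta}(kM)]^{\star}$, while you specialize to $k=e$, where $Y^{\delta}(eM)$ is the identity block over a zero block and hence visibly injective.
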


\begin{proof}
First note that the uniqueness follows from the fact that $Y^{\delta}(kM)$ has a left 
inverse namely $[Y^{\delta}(kM)]^{\star}$. Since $F(\sigma\cdot x)=\delta(\sigma)F(x)$ 
for all $\sigma\in K$, we can write (for $x=(r,kM)$ regular)
\beas F(x)&=&\int_{K}\delta(\sigma)^{-1}F(\sigma\cdot 
x)d\sigma\\&=&\int_{K}\delta(\sigma)^{-1}F\left(\sigma\cdot(r,kM)\right)d\sigma\\&=&\int_{K}\delta(\sigma)^{-1}F(r,
\sigma kM)d\sigma\\&=& \int_{K}\delta(\sigma k^{-1})^{-1}F(r,\sigma M)d\sigma\\&=& 
\delta(k)\int_{K}\delta(\sigma)^{-1}F(r,\sigma M )d\sigma \\&=& 
\delta(k)\int_{K}\int_{M}\delta(\sigma m)^{-1}F(r,\sigma M )d\sigma dm 
=\delta(k)G_{0}^{\prime}(r),\eeas where $$G_{0}^{\prime}
(r)=\int_{K}\int_{M}\delta(\sigma m)^{-1}F(r,\sigma M )d\sigma dm.$$ 
Now, $$\delta_{ij}(\sigma m)=\sum_{p=1}^{d(\delta)}\delta_{ip}(\sigma)\delta_{pj}(m).
$$ Integrating both sides over $M$ and using (\ref{eqn34}) we get (for each $\sigma\in 
K$), $$\int_{M}\delta_{ij}(\sigma m)dm=0,~ 1\leq i\leq d(\delta),~j\geq l(\delta).$$ 
Since $\delta(\sigma m)^{-1}=\overline{\delta(\sigma m)}^{t}$, all the entries in last 
$\big(d(\delta)-l(\delta)\big)$ rows of the matrix $\int_{M}\delta(\sigma m)^{-1}dm$ 
are zero for all $\sigma\in K$, and consequently so is for the $d(\delta)\times 
l(\delta)$ matrix
$$G_{0}^{\prime}(r)=\int_{K}\int_{M}\delta(\sigma m)^{-1}F(r,\sigma M )d\sigma dm$$ ( 
note that $F$ is a $\big(d(\delta)\times l(\delta)\big)$ matrix). Therefore, if we 
define $G_{0}:T^{+}\longrightarrow\mathcal{M}_{l(\delta)\times l(\delta)}$ by 
$$(G_{0})_{ij}(r)=(G^{\prime}_{0})_{ij}(r),~1\leq i,j\leq l(\delta),$$ then 
$$\delta(k)G_{0}^{\prime}(r)=Y^{\delta}(kM)G_{0}(r),$$ since first $l(\delta)$ columns 
in the matrix $\delta(k)$ are precisely the columns in $Y^{\delta}(kM).$ Hence the 
proof.
\end{proof}

\begin{cor}\label{eqn c4.3}
Let $F\in \mathcal{E}^{\delta}(\mathbb{R}^{n})$. Then the $j$th column of $F$ is 
determined by $F_{1j}$. In particular $F$ is determined by its first row.
\end{cor}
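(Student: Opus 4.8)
The plan is to use only the defining covariance relation $F(k\cdot x)=\delta(k)F(x)$ of $\mathcal{E}^{\delta}(\R)$ together with the Schur orthogonality relations for the irreducible unitary representation $\delta$. One could instead start from the matrix factorization $F(x)=Y^{\delta}(kM)G_{0}(r)$ supplied by Lemma \textbf{\ref{eqn l4.2}} and argue that the $j$th column of $F$ depends only on the $j$th column of $G_{0}$, but the covariance relation alone already pins down each column from its top entry. The underlying idea is that the $(1,j)$ entry of the covariance relation expresses the single function $F_{1j}$, read off along the $K$-orbit of $x$, as a linear combination of the entries $F_{ij}$ of the \emph{whole} $j$th column, with the matrix coefficients $\delta_{1i}$ as coefficients; since those coefficients are orthogonal in $L^{2}(K)$, the combination can be inverted.

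Concretely, I would fix $j$ with $1\le j\le l(\delta)$ and take the $(1,j)$ entry of $F(\sigma\cdot x)=\delta(\sigma)F(x)$, obtaining
\[
F_{1j}(\sigma\cdot x)=\sum_{i=1}^{d(\delta)}\delta_{1i}(\sigma)\,F_{ij}(x),\qquad \sigma\in K,\ x\in\R .
\]
Next I would multiply by $\overline{\delta_{1i}(\sigma)}$ and integrate over $K$ against normalized Haar measure. By the Schur orthogonality relations, $\int_{K}\overline{\delta_{1i}(\sigma)}\,\delta_{1i'}(\sigma)\,d\sigma=\tfrac{1}{d(\delta)}\,\delta_{ii'}$ (Kronecker delta), so every term on the right drops out except $i'=i$, leaving
\[
F_{ij}(x)=d(\delta)\int_{K}\overline{\delta_{1i}(\sigma)}\,F_{1j}(\sigma\cdot x)\,d\sigma ,\qquad 1\le i\le d(\delta).
\]

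This last formula recovers every entry $F_{ij}$ of the $j$th column from the single function $F_{1j}$, which is exactly the assertion that the $j$th column is determined by $F_{1j}$; letting $j$ range over $1,\dots,l(\delta)$ then shows that $F$ is determined by its first row $(F_{11},\dots,F_{1l(\delta)})$. Since the covariance relation holds on all of $\R$, the reconstruction formula is global and no passage to regular points is required. The one point that genuinely requires care is the orthogonality step: it is precisely the irreducibility of $\delta$, through Schur orthogonality, that makes the matrix coefficients $\delta_{1i}$ in a fixed row mutually orthogonal and hence guarantees that a single row carries enough information to reconstruct the full column. For a reducible $\delta$ this would fail; beyond correctly tracking the normalization constant $1/d(\delta)$, I do not anticipate any real obstacle.
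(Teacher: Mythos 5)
Your proof is correct, but it takes a genuinely different route from the paper's. The paper deduces the corollary from Lemma \textbf{\ref{eqn l4.2}}: writing $F(x)=Y^{\delta}(kM)G_{0}(r)$ in polar coordinates on the set of regular points, it argues that if the first row of $F$ vanishes then $\sum_{p=1}^{l(\delta)}Y^{\delta}_{1p}(kM)(G_{0})_{pj}(r)=0$ for all regular $(r,kM)$, hence $(G_{0})_{pj}\equiv 0$ by the linear independence of the matrix coefficients $Y^{\delta}_{1p}$, so each column of $F$ vanishes on the dense set of regular points and therefore everywhere. You instead invert the covariance relation directly by Schur orthogonality, obtaining the explicit, globally valid reconstruction formula $F_{ij}(x)=d(\delta)\int_{K}\overline{\delta_{1i}(\sigma)}\,F_{1j}(\sigma\cdot x)\,d\sigma$. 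Your argument is more elementary and in some respects stronger: it needs no polar coordinates, no regular points or density argument, and in fact no polarity of the $K$-action at all --- only irreducibility and unitarity of $\delta$ --- and it is constructive rather than a pure uniqueness statement. What the paper's route buys is uniformity of exposition: the decomposition $F=Y^{\delta}G_{0}$ of Lemma \textbf{\ref{eqn l4.2}} is the workhorse for the results that follow (Remark \textbf{\ref{eqn r4.4}}, Lemma \textbf{\ref{eqn l4.5}}, Proposition \textbf{\ref{eqn p4.7}}), so the corollary comes essentially for free within machinery the paper must develop anyway. Both arguments ultimately rest on Schur orthogonality --- in the paper it is hidden in the linear independence of the functions $Y^{\delta}_{1p}$ on $K/M$ --- but yours applies it directly and tracks the constant $d(\delta)$ correctly.
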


\begin{proof}
Let $F\in \mathcal{E}^{\delta}(\mathbb{R}^{n})$ be such that all the entries in first 
row are identically zero. We have to show that $F\equiv 0$. Let $G_{0}$ be as in the 
previous lemma. We have
$$\sum_{p=1}^{l(\delta)}Y^{\delta}_{1p}(kM)(G_{0})_{pj}(r)=0~\textup{for all regular 
points}~(r,kM).$$ Since $Y^{\delta}_{1p}$s are linearly independent, for each $r\in 
T^{+}$, $(G_{0})_{pj}(r)=0,~p=1,2,\cdots l(\delta)$; and consequently the $j$th column 
of $F$ is zero on the set of regular points. The set of regular points being dense in 
$\mathbb{R}^{n}$, we are done.
\end{proof}

\begin{rem}\label{eqn r4.4}
Using the above arguments, one can show the following : For $F\in \mathcal{E}^{\delta}
(\mathbb{R}^{n})$, let $V_{F}$ denote the finite dimensional vector space spanned by 
$F_{ij}$'s, and $V^{i}_{F}$ denote the space spanned by the entries of $i$th row in 
$F$. Let $m(\delta)$ be the number of linearly independent columns in $F$. Also assume 
that first $m(\delta)$ columns are linearly independent. Then $\{F_{ij}:j=1,2,
\cdots,m(\delta)\}$ form a basis for $V^{i}_{F}$; and $\{F_{ij}:i=1,2,\cdots 
d(\delta);j=1,2,\cdots,m(\delta)\}$ form a basis for $V_{F}$. In particular, 
dim$V_{F}=m(\delta)l(\delta)$.
\end{rem}

\begin{lem}\label{eqn l4.5}
There is a unique function 
$J_{\delta}:T^{+}\longrightarrow\mathcal{M}_{l(\delta)\times l(\delta)}$ such that for 
all regular point $x=(r,kM)$ in $\mathbb{R}^{n}$,
\bea\label{eqn35}P^{\delta}(x)=Y^{\delta}(kM)J_{\delta}(r).\eea
Also for each $r\in T^{+}$, $J^{\delta}(r)$ is invertible, and consequently for all 
regular point $x=(r,kM)$
\bea\label{eqn36}Y^{\delta}(kM)=P^{\delta}(x)[J_{\delta}(r)]^{-1}.\eea
\end{lem}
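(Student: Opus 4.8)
The plan is to apply Lemma \ref{eqn l4.2} to the special function $P^{\delta}$. Recall that we showed $P^{\delta}\in\mathcal{E}^{\delta}(\mathbb{R}^{n})$, so the previous lemma applies verbatim and yields a unique $J_{\delta}:T^{+}\longrightarrow\mathcal{M}_{l(\delta)\times l(\delta)}$ satisfying (\ref{eqn35}) for all regular $x=(r,kM)$. This immediately gives existence and uniqueness of $J_{\delta}$, so the only genuine content to establish is the invertibility of $J_{\delta}(r)$ for each fixed $r\in T^{+}$; once invertibility is in hand, equation (\ref{eqn36}) follows by multiplying (\ref{eqn35}) on the right by $[J_{\delta}(r)]^{-1}$, together with the fact that $Y^{\delta}(kM)[J_{\delta}(r)][J_{\delta}(r)]^{-1}=Y^{\delta}(kM)$.

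To prove invertibility, I would fix a regular $r\in T^{+}$ and argue by contradiction: suppose $J_{\delta}(r)$ is not invertible. Then there is a nonzero column vector $w\in\mathbb{C}^{l(\delta)}$ with $J_{\delta}(r)w=0$. Multiplying (\ref{eqn35}) on the right by $w$ gives $P^{\delta}(x)w=Y^{\delta}(kM)J_{\delta}(r)w=0$ for every $k\in K$, i.e. the single column $P^{\delta}(x)w$ vanishes at every point of the entire regular orbit $N_{r}=\{k\cdot r:k\in K\}$. Now $P^{\delta}w$ is a fixed $\mathbb{C}^{d(\delta)}$-valued function whose entries are linear combinations $\sum_{j}P^{\delta}_{ij}w_{j}$ of the harmonic polynomials $P^{\delta}_{ij}=\eta_{j}(v^{*}_{i})$; in particular each entry lies in $H_{\check{\delta}}$ (it is $K$-harmonic, being in the image of the maps $\eta_{j}\in F_{\check{\delta}}$). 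The key leverage is Theorem \ref{eqn t3.2}, the Kostant-Rallis theorem for polar actions: a $K$-harmonic polynomial is determined by its restriction to a single regular orbit. Since each entry of $P^{\delta}w$ is $K$-harmonic and vanishes on all of $N_{r}$, it must vanish identically on $\mathbb{R}^{n}$, so $P^{\delta}(x)w\equiv 0$ as a function on $\mathbb{R}^{n}$.

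It then remains to derive a contradiction from $P^{\delta}w\equiv 0$ with $w\neq 0$. This amounts to showing that the columns of $P^{\delta}$, as $\mathbb{C}^{d(\delta)}$-valued polynomial functions, are linearly independent over $\mathbb{C}$. I expect this to be the main obstacle, and it is where the structure of $P^{\delta}$ must be used. The $j$th column of $P^{\delta}$ is the vector $\big(\eta_{j}(v^{*}_{1}),\dots,\eta_{j}(v^{*}_{d(\delta)})\big)^{t}$, and the maps $\eta_{1},\dots,\eta_{l(\delta)}$ form the chosen ordered basis $\textbf{e}$ of $F_{\check{\delta}}=\textup{Hom}_{K}(V_{\check{\delta}},H_{\check{\delta}})$. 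A relation $\sum_{j}w_{j}\,\text{(column $j$)}\equiv 0$ says precisely that $\sum_{j}w_{j}\eta_{j}$ sends every basis vector $v^{*}_{i}$ to the zero polynomial, hence $\sum_{j}w_{j}\eta_{j}=0$ as an element of $F_{\check{\delta}}$; since the $\eta_{j}$ are linearly independent by choice, $w=0$, the desired contradiction.

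An alternative route to the same conclusion, which I would mention for robustness, uses the matrix $\Upsilon_{\delta}$ defined in (\ref{eqn 4.000}). From (\ref{eqn35}) and the left-inverse property $[Y^{\delta}(kM)]^{\star}Y^{\delta}(kM)=I_{l(\delta)}$ (this follows from the orthonormality relations, since the columns of $Y^{\delta}(kM)$ are the first $l(\delta)$ columns of the unitary matrix $\delta(k)$), one computes
\be
\Upsilon_{\delta}(x)=[P^{\delta}(x)]^{\star}[P^{\delta}(x)]=[J_{\delta}(r)]^{\star}[Y^{\delta}(kM)]^{\star}Y^{\delta}(kM)[J_{\delta}(r)]=[J_{\delta}(r)]^{\star}J_{\delta}(r).
\ee
Thus $J_{\delta}(r)$ is invertible if and only if the positive semidefinite matrix $\Upsilon_{\delta}(x)=[J_{\delta}(r)]^{\star}J_{\delta}(r)$ is positive definite, i.e. if and only if $P^{\delta}(x)$ has rank $l(\delta)$ (equivalently, linearly independent columns) at the regular point $x$. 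This reduces invertibility of $J_{\delta}(r)$ to the same column-independence statement, now localized at the single point $x$ rather than over all of $\mathbb{R}^{n}$, and it is again settled by the linear independence of the $\eta_{j}$ combined with Theorem \ref{eqn t3.2}.
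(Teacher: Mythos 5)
Your proof is correct and follows essentially the same route as the paper: existence and uniqueness of $J_{\delta}$ from Lemma \ref{eqn l4.2}, then invertibility by contradiction, using the Kostant--Rallis theorem (Theorem \ref{eqn t3.2}) to pass from vanishing on a single regular orbit to a linear dependence among the harmonic polynomials $P^{\delta}_{ij}$, which is impossible. The one small difference is an improvement: the paper equates only the first row of (\ref{eqn35}) and must implicitly use that the entries $P^{\delta}_{1j}=\eta_{j}(v^{*}_{1})$ are already linearly independent (a Schur-type fact, since an intertwiner out of the irreducible $V_{\stackrel{\vee}{\delta}}$ killing one nonzero vector is zero), whereas by working with all rows you reduce to $\sum_{j}w_{j}\eta_{j}$ annihilating every basis vector $v^{*}_{i}$, so that $w=0$ follows directly from $\{\eta_{j}\}$ being a basis of $F_{\stackrel{\vee}{\delta}}$.
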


\begin{proof}
First part follows from Lemma \textbf{\ref{eqn l4.2}}, since $P^{\delta}\in 
\mathcal{E}^{\delta}(\mathbb{R}^{n})$. Now, if possible, let for some $r_{0}$ in 
$T^{+}$, $J_{\delta}(r_{0})$ be not invertible i.e $\textup{det}(J_{\delta}(r_{0}))=0$ 
(where det stands for the determinant). This implies that the columns of $J_{\delta}
(r_{0})$ namely $\big[J_{1j}(r_{0}),J_{2j}(r_{0}),\cdots J_{l(\delta)j}(r_{0})\big]^t$, $1\leq 
j\leq l(\delta)$ are linearly dependent as vectors in $\mathbb{C}^{l(\delta)}$. 
Equating the entries of first row in (\ref{eqn35}) for $x=(r_{0},kM)$ we get ($1\leq 
j\leq l(\delta)$),
$$P^{\delta}_{1j}(x)=Y^{\delta}_{11}(kM)J_{1j}(r_{0})+ Y^{\delta}_{12}(kM)J_{2j}
(r_{0})+\cdots +Y^{\delta}_{1l(\delta)}(kM)J_{l(\delta)j}(r_{0}).$$ Therefore 
$P^{\delta}_{1j}$s are linearly dependent when restricted to the orbit through $r_{0}$ 
and hence by Kostant-Rallis Theorem (Theorem \textbf{\ref{eqn t3.2}}) 
$P^{\delta}_{1j}$s are linearly dependent which is a contradiction. Therefore 
$J^{\delta}(r)$ is invertible for all $r\in T^{+}$.
\end{proof}

\begin{rem}\label{eqn r4.6}

\indent {\rm{\textbf{(i)}}} Let $x=(r,kM)$ be a regular point. Since $Y^{\delta}(kM)$ 
has a left inverse, and $J_{\delta}(r)$ is invertible $P^{\delta}(x)$ also has a left 
inverse.

\indent {\rm{\textbf{(ii)}}} The function $J_{\delta}$ is related to 
$\Upsilon_{\delta}$ (see (\ref{eqn 4.000})) by \bea\label{eqn37}[J_{\delta}(r)]^{\star}[J_{\delta}
(r)]=\Upsilon_{\delta}(r),~\forall~r\in T^{+}.\eea

\indent {\rm{\textbf{(iii)}}} Let $K=SO(n)$. 
Let $Y^{\delta_m}$, $P^{\delta_m}$ be as in (\ref{eqn 4.001}) and (\ref{eqn 4.002}). Then we have seen that $$P^{\delta_m}(x)=r^mY^{\delta_m}(\omega),~x=r\omega,~r>0,\omega\in S^{n-1}.$$ Therefore $J_{\delta_m}:T^{+}=(0,\infty)\rightarrow\mathcal{M}_{1\times1}$ is given by $J_{\delta_m}(r)=r^m$. 
\end{rem}

The next proposition follows by using (\ref{eqn36}) in Lemma \textbf{\ref{eqn l4.2}}, 
and by \textbf{(i)} in the previous remark.

\begin{prop}\label{eqn p4.7}
Suppose F is in $\mathcal{E}^{\delta}(\mathbb{R}^{n})$. Then there is a unique (on the 
set of regular points) $K$-invariant function $G:
\mathbb{R}^{n}\longrightarrow\mathcal{M}_{l(\delta)\times l(\delta)}$ such that for 
all regular points $x$ (hence for almost every $x$),
$$F(x)=P^{\delta}(x)G(x).$$
\end{prop}


Throughout this paper we use the following convention : when we say that a matrix-valued function is a polynomial we mean that each entry of the function is a polynomial. 

\begin{cor}\label{eqn c4.8}
Let $F\in \mathcal{E}^{\delta}(\mathbb{R}^{n})$ be a polynomial.
Then there is a unique $K$-invariant polynomial $G:
\mathbb{R}^{n}\longrightarrow\mathcal{M}_{l(\delta)\times l(\delta)}$ such that for 
all $x\in\mathbb{R}^{n}$,
$$F(x)=P^{\delta}(x)G(x).$$
\end{cor}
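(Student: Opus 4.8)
The plan is to reinterpret the statement as an $I$-module generation fact and deduce it from Theorem~\ref{eqn t3.1} together with complete reducibility of $K$. The columns of $F$ and of $P^{\delta}$ are polynomial maps $\mathbb{R}^n\to V_\delta$ transforming by $\delta$, so I would first identify the space of such maps with a space of invariants. Writing $S$ for the polynomial ring and letting $K$ act on $S\otimes V_\delta$ by $(k\cdot(f\otimes w))(x)=f(k^{-1}x)\otimes\delta(k)w$, a polynomial map $F\colon\mathbb{R}^n\to V_\delta$ satisfies $F(k\cdot x)=\delta(k)F(x)$ exactly when the corresponding element of $S\otimes V_\delta$ is $K$-invariant. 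Thus every column of a polynomial $F\in\mathcal{E}^{\delta}(\mathbb{R}^n)$ lies in $(S\otimes V_\delta)^K$, and the harmonic covariants lie in $(H\otimes V_\delta)^K$.

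The key step is to prove $(S\otimes V_\delta)^K = I\cdot (H\otimes V_\delta)^K$. For this I would consider the multiplication map $\mu\colon I\otimes H\otimes V_\delta \to S\otimes V_\delta$, $\mu(i\otimes h\otimes w)=(ih)\otimes w$; it is $K$-equivariant when $K$ acts trivially on $I$, and it is surjective because $S=IH$ by Theorem~\ref{eqn t3.1}. Since $K$ is compact, the functor of taking $K$-invariants is exact, so $\mu$ restricts to a surjection $(I\otimes H\otimes V_\delta)^K\twoheadrightarrow(S\otimes V_\delta)^K$; and because $K$ acts trivially on $I$ we have $(I\otimes H\otimes V_\delta)^K=I\otimes(H\otimes V_\delta)^K$. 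Combining these gives $(S\otimes V_\delta)^K=I\cdot(H\otimes V_\delta)^K$.

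Next I would check that the columns $P^{\delta,(1)},\dots,P^{\delta,(l(\delta))}$ of $P^{\delta}$ form a $\mathbb{C}$-basis of $(H\otimes V_\delta)^K$. Each column is harmonic, its entries $\eta_j(v_i^{*})$ lying in $H_{\check\delta}\subset H$, and is $\delta$-covariant, so it lies in $(H\otimes V_\delta)^K$; the columns are linearly independent since at a regular point $x=(r,kM)$ one has $P^{\delta}(x)=Y^{\delta}(kM)J_\delta(r)$ with $J_\delta(r)$ invertible (Lemma~\ref{eqn l4.5}) and $Y^{\delta}(kM)$ left-invertible, so $P^{\delta}(x)$ is left-invertible and $\sum_m c_m P^{\delta,(m)}=0$ forces all $c_m=0$. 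Finally $\dim(H\otimes V_\delta)^K=\dim\mathrm{Hom}_K(V_{\check\delta},H)$ equals the multiplicity of $\check\delta$ in $H$, namely $l(\check\delta)=l(\delta)$, so these $l(\delta)$ independent columns are a basis.

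Putting the pieces together proves the corollary: each column $F^{(j)}$ lies in $(S\otimes V_\delta)^K=I\cdot(H\otimes V_\delta)^K$, hence $F^{(j)}=\sum_m g_{mj}\,P^{\delta,(m)}$ with $g_{mj}\in I$; setting $G=(g_{mj})$ gives a $K$-invariant polynomial matrix with $F=P^{\delta}G$ as an identity of polynomials, valid for all $x\in\mathbb{R}^n$. Uniqueness follows from Proposition~\ref{eqn p4.7}: on the dense set of regular points $P^{\delta}(x)$ has a left inverse by Remark~\ref{eqn r4.6}(i), so $G$ is determined there and, being polynomial, everywhere. The algebraic heart is simply the upgrade of $S=IH$ to its $\delta$-covariant form; the only delicate points, and where I would be most careful, are the bookkeeping between $\delta$ and its contragredient $\check\delta$ and the use of exactness of $K$-invariants.
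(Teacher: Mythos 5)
Your proof is correct, but it takes a genuinely different route from the paper's. The paper starts from Proposition \ref{eqn p4.7}: it takes the a priori merely $K$-invariant (not yet polynomial) $G$ defined on the regular set, and then shows each entry of $G$ coincides with a polynomial by expanding the scalar entry $F_{11}\in S_{\check\delta}=IH_{\check\delta}$ (from Theorem \ref{eqn t3.1}) as $\sum_{i,j} I_{ij}P^{\delta}_{ij}$ with $K$-invariant polynomial coefficients, and matching the two expansions orbit by orbit; the matching is justified by the Kostant--Rallis theorem (Theorem \ref{eqn t3.2}), which guarantees the $P^{\delta}_{ij}$ stay linearly independent when restricted to a single regular orbit. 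You instead prove the matrix identity in one stroke by upgrading $S=IH$ to its covariant form $(S\otimes V_{\delta})^{K}=I\cdot(H\otimes V_{\delta})^{K}$, using equivariance of the multiplication map and exactness of $K$-invariants (averaging over the compact group), and then identifying the columns of $P^{\delta}$ as a basis of $(H\otimes V_{\delta})^{K}$ via the multiplicity count $\dim\mathrm{Hom}_{K}(V_{\check\delta},H)=l(\check\delta)=l(\delta)$ from Section 3 together with left-invertibility of $P^{\delta}(x)$ at a regular point; your uniqueness argument coincides with the paper's. What your approach buys: a polynomial $G$ is produced directly, with no intermediate non-polynomial $G$ and no orbit-by-orbit coefficient matching, and it isolates the structural fact that the type-$\delta$ polynomial covariants form an $I$-module generated by the columns of $P^{\delta}$. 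What the paper's approach buys: it is more elementary given the lemmas already in place, needing only the scalar decomposition $S_{\check\delta}=IH_{\check\delta}$ and Kostant--Rallis, with no appeal to exactness of the invariants functor or to the multiplicity computation. Note, though, that the two proofs share the same logical foundation: your multiplicity count (through Section 3) and the left-invertibility you invoke (through Lemma \ref{eqn l4.5} and Remark \ref{eqn r4.6}) both ultimately rest on the same Kostant--Rallis theorem that the paper uses directly.
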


\begin{proof}
Since the set of regular points is dense in $\mathbb{R}^{n}$, uniqueness follows from 
Remark \textbf{\ref{eqn r4.6}} \textbf{(i)}. Now let $G$ be as in the previous 
proposition. It is enough to show that each entry of $G$ is equal to a polynomial on 
the set of regular points. Consider $F_{11}$. For all regular points $x$ we have 
\bea\label{eqn38}F_{11}(x)=\sum_{p=1}^{l(\delta)}P^{\delta}_{1p}(x)G_{p1}(x).\eea
By Theorem \textbf{\ref{eqn t3.1}} we have $$S_{\stackrel{\vee}{\delta}}=IH_{\stackrel{\vee}{\delta}},$$
where $S_{\stackrel{\vee}{\delta}}\subset S$ denotes the space of all polynomials of type $\stackrel{\vee}{\delta}$. 
Clearly $F_{11}\in S_{\stackrel{\vee}{\delta}}$. Therefore there exists $K$-invariant polynomials 
$I_{ij}$ such that for all $x\in\mathbb{R}^{n}$ \bea\label{eqn39}F_{11}
(x)=\sum_{i=1}^{d(\delta)}\sum_{j=1}^{l(\delta)}I_{ij}(x)P^{\delta}_{ij}(x).\eea
Since $P^{\delta}_{ij}$s are linearly independent, by Kostant-Rallis Theorem (Theorem 
\textbf{\ref{eqn t3.2}}) so are their restrictions to any regular orbit. Comparing  
equations (\ref{eqn38}) and (\ref{eqn39}), restricted to a orbit passing through a 
regular point $x$, we get $G_{p1}(x)=I_{p1}(x)$ for all $p=1,2,\cdots l(\delta)$. 
Similar proof works for other entries of $G$. 
\end{proof}

\section{Heisenberg group : representations, Weyl transform, spherical functions}
The Heisenberg group $\mathbb{H}^{n}$ is the Lie group with underlying manifold 
$\mathbb{C}^{n}\times\mathbb{R}$ and group operation $$(z,t)
(z',t')=(z+z',t+t'+\textup{Im} (z.\bar{z'})).$$ For the following see Geller \cite{G}. 
For real non-zero $\lambda$, let $$\mathcal{H}^{\lambda}=\{u ~\textup{holomorphic on} 
~\mathbb{C}^{n}: \int_{\mathbb{C}^{n}}|u(w)|^{2}d\widetilde{w}^{\lambda}=||u||
^{2}<\infty\},$$ where the measure $d\widetilde{w}^{\lambda}$ is given by
$$d\widetilde{w}^{\lambda}=(2|\lambda|/\pi)^{n}e^{-2|\lambda||w|^{2}}dwd\bar{w}.$$
The space $\mathcal{H}^{\lambda}$ is a Hilbert space and an orthonormal basis is given 
by $\{u_{\nu}^{\lambda}:\nu\in \mathbb{Z}_{+}^{n}\}$, where $\mathbb{Z}_{+}^{n}$ is 
the set of non-negative $n$-tuple, and $$u_{\nu}^{\lambda}(w)=[(2|
\lambda|)^{1/2}w]^{\nu}
(\nu !)^{1/2}.$$
(Here $\nu 
!=\Pi_{j=1}^{n}\nu_{j}!
$ and 
$w^{\nu}=\Pi_{j=1}^{n}w_{j}^{\nu_{j}}.)$ 
Let $\mathcal{O}
(\mathcal{H}^{\lambda})$ 
denote the set of all linear operators in $\mathcal{H}^{\lambda}$ whose domain of 
definition contains $\mathcal{P}(\mathbb{C}^{n})$, the space of holomorphic polynomial 
on $\mathbb{C}^{n}$. For $\lambda>0$, define $\overline{W}^{\lambda}_{j},$ 
$W^{\lambda}_{j}\in\mathcal{O}(\mathcal{H}^{\lambda})$ as follows: if $P\in\mathcal{P}
(\mathbb{C}^{n})$, $$\overline{W}^{\lambda}_{j}P(w)=2|\lambda|
w_{j}P(w)~\textup{and}~W^{\lambda}_{j}P(w)=\frac{\partial P}{\partial w_{j}}(w),$$ 
while if $\lambda<0$ the situation is reversed (In Geller \cite{G}, the notation 
$W^{+}_{j\lambda}$, $W_{j\lambda}$ are used for $\overline{W}^{\lambda}_{j}$, 
$W^{\lambda}_{j}$ respectively). We have the commutation relations \bea\label{eqn41}
[\overline{W}^{\lambda}_{j},W^{\lambda}_{k}]=-2\delta_{jk}\lambda I,~ 
[W^{\lambda}_{j},W^{\lambda}_{k}]=0,~[\overline{W}^{\lambda}_{j},
\overline{W}^{\lambda}_{k}]=0,\eea where $I$ denote the identity operator. Let 
$\overline{W}^{\lambda}=(\overline{W}^{\lambda}_{1},\overline{W}^{\lambda}_{1},\cdots 
\overline{W}^{\lambda}_{n})$,\\ $W^{\lambda}=(W^{\lambda}_{1},W^{\lambda}_{1},\cdots 
W^{\lambda}_{n})$; and for $z\in\mathbb{C}^{n}$, let $z\cdot \overline{W}^{\lambda}$, 
$z\cdot W^{\lambda}$ denote the operators 
$z_{1}\overline{W}^{\lambda}_{1}+z_{2}\overline{W}^{\lambda}_{2}+\cdots 
z_{n}\overline{W}^{\lambda}_{n}$ and $z_{1}W^{\lambda}_{1}+z_{2}W^{\lambda}_{2}+\cdots 
z_{n}W^{\lambda}_{n}$ respectively. Then $i(-z\cdot \overline{W}^{\lambda}+\bar{z}\cdot 
W^{\lambda})$  being self-adjoint, $$V_{z}^{\lambda}=\textup{exp}(-z\cdot 
\overline{W}^{\lambda}+\bar{z}\cdot W^{\lambda})$$ extends to a unitary operator on 
$\mathcal{H}^{\lambda}$ which satisfy 
\bea\label{eqn42}V^{\lambda}_{z}V^{\lambda}_{w}=\textup{exp}(2i\lambda ~\textup{Im}
(z\cdot\bar{w}))V^{\lambda}_{z+w},\eea and has an explicit formulae given as follows: 
if $u\in\mathcal{H}_{\lambda}$,
\beas(V^{\lambda}_{z}u)(w)&=& u(w+\bar{z})\textup{exp}[-2\lambda(w\cdot z+|z|
^{2}/2)]~\textup{for}~\lambda >0 \\&=& u(w-z)\textup{exp}[2\lambda(-w\cdot\bar{z}+|z|
^{2}/2)]~\textup{for}~\lambda<0 .\eeas
In view of (\ref{eqn42}) we have a representation $\Pi^{\lambda}$ of $\mathbb{H}^{n}$ 
on $\mathcal{H}^{\lambda}$, given by $\Pi^{\lambda}(z,t)=e^{i\lambda 
t}V^{\lambda}_{z}.$ Explicitly $\Pi^{\lambda}$ is given as follows: if 
$u\in\mathcal{H}^{\lambda}$, \beas(\Pi^{\lambda}(z,t)u)(w)&=& u(w+\bar{z})\textup{exp}
[-2\lambda(w\cdot z+|z|^{2}/2)]e^{i\lambda t}~\textup{for}~\lambda >0 \\&=& u(w-
z)\textup{exp}[2\lambda(-w\cdot\bar{z}+|z|^{2}/2)]e^{i\lambda 
t}~\textup{for}~\lambda<0 .\eeas
In fact, these are all the unitary irreducible representations of $\mathbb{H}^{n}$ 
which are non-trivial on the center. Note that $\Pi^{\lambda}(z,0)=V^{\lambda}_{z}$. 
We will write $\Pi^{\lambda}(z)$ instead of $\Pi^{\lambda}(z,0)$.
Since $V^{\lambda}_{z}$ is unitary, we can define a map $\mathcal{G}^{\lambda}:
\mathscr{S}(\mathbb{C}^{n})\longrightarrow\mathcal{O}(\mathcal{H}^{\lambda})$ by
$$\mathcal{G}^{\lambda}f=\int_{\mathbb{C}^{n}}f(z)V_{z}^{\lambda}dzd\bar{z}=\int_{\mathbb{C}^{n}} 
f(z)\Pi^{\lambda}(z)dzd\bar{z}.$$
The operator $\mathcal{G}^{\lambda}f$ is called the Weyl transform of $f$. Let $\mathcal{S}_{2}
(\mathcal{H}^{\lambda})$ stand for the Hilbert space of Hilbert-Schmidt operators 
on $\mathcal{H}^{\lambda}$ with the inner product $\langle T,S\rangle=\textup{tr}
(TS^{*})$. Let $||.||_{\textup{HS}}$ denote the Hilbert-Schmidt norm.
Now we state the Plancherel theorem for Weyl transform. 

\begin{thm}\label{eqn t5.1}
\textup{(Geller \cite{G}, Theorem 1.2)} If $f\in\mathscr{S}(\mathbb{C}^{n})$, then 
$\mathcal{G}^{\lambda}f\in\mathcal{S}_{2}(\mathcal{H}^{\lambda})$ and
$$||f||_{2}^{2}=\pi^{-n}(2|\lambda|)^{n}||\mathcal{G}^{\lambda}f||_{\textup{HS}}^{2}.
$$
The map $\mathcal{G}^{\lambda}$ may then be extended as a constant multiple of a unitary map 
from $L^{2}(\mathbb{C}^{n})$ onto $\mathcal{S}_{2}(\mathcal{H}^{\lambda})$. A polarization of the above formula gives $$\langle f,g\rangle=\pi^{-n}(2|\lambda|)^n\big\langle \mathcal{G}^{\lambda}f,\mathcal{G}^{\lambda}g\big\rangle,$$ where $f,g\in L^2(\C)$.
\end{thm}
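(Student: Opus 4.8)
The plan is to reduce the Plancherel identity to an orthogonality-and-completeness property of the matrix coefficients of the operators $V_z^{\lambda}$ relative to the orthonormal basis $\{u_{\nu}^{\lambda}\}$ of $\mathcal{H}^{\lambda}$. Set $\phi_{\mu\nu}^{\lambda}(z)=\langle V_z^{\lambda}u_{\nu}^{\lambda},u_{\mu}^{\lambda}\rangle$. First I would move the defining integral past the inner product---legitimate for $f\in\mathscr{S}(\C)$, since $z\mapsto V_z^{\lambda}u_{\nu}^{\lambda}$ is continuous and bounded into $\mathcal{H}^{\lambda}$---to obtain the matrix entries
\bes
\langle(\mathcal{G}^{\lambda}f)\,u_{\nu}^{\lambda},\,u_{\mu}^{\lambda}\rangle=\int_{\C}f(z)\,\phi_{\mu\nu}^{\lambda}(z)\,dz\,d\bar z .
\ees
Using $\|A\|_{\textup{HS}}^{2}=\sum_{\mu,\nu}|\langle Au_{\nu}^{\lambda},u_{\mu}^{\lambda}\rangle|^{2}$, this writes $\|\mathcal{G}^{\lambda}f\|_{\textup{HS}}^{2}$ as the sum over $\mu,\nu$ of the squared moduli of the $L^{2}(\C)$-pairings of $f$ against the family $\{\phi_{\mu\nu}^{\lambda}\}$.

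The crux is the Moyal-type orthogonality relation
\bes
\int_{\C}\phi_{\mu\nu}^{\lambda}(z)\,\overline{\phi_{\mu'\nu'}^{\lambda}(z)}\,dz\,d\bar z=\Big(\frac{\pi}{2|\lambda|}\Big)^{n}\delta_{\mu\mu'}\delta_{\nu\nu'},
\ees
together with the completeness of $\{\phi_{\mu\nu}^{\lambda}\}$ in $L^{2}(\C)$. The orthogonality I would derive either from the orthogonality relations for the square-integrable (modulo the center) representation $\Pi^{\lambda}$, of which $V_z^{\lambda}=\Pi^{\lambda}(z,0)$ is the restriction to the $z$-variable, or directly from the explicit action of $V_z^{\lambda}$ on holomorphic monomials, which reduces the integral to a standard Gaussian computation and fixes the constant $(\pi/2|\lambda|)^{n}$ in the normalization of $dz\,d\bar z$ used here. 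Granting orthogonality and completeness, the rescaled functions $(2|\lambda|/\pi)^{n/2}\phi_{\mu\nu}^{\lambda}$ form an orthonormal basis of $L^{2}(\C)$, so Parseval applied to the first display yields $\|\mathcal{G}^{\lambda}f\|_{\textup{HS}}^{2}=(\pi/2|\lambda|)^{n}\|f\|_{2}^{2}$, i.e. $\|f\|_{2}^{2}=\pi^{-n}(2|\lambda|)^{n}\|\mathcal{G}^{\lambda}f\|_{\textup{HS}}^{2}$.

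It remains to pass from $\mathscr{S}(\C)$ to $L^{2}(\C)$. As $\mathscr{S}(\C)$ is dense and $\mathcal{G}^{\lambda}$ is, up to the constant, an isometry into $\mathcal{S}_{2}(\mathcal{H}^{\lambda})$, it extends to a bounded map on all of $L^{2}(\C)$ obeying the same identity and having closed range; the range is everything because the rank-one operators $u_{\mu}^{\lambda}\otimes u_{\nu}^{\lambda}$, an orthonormal basis of $\mathcal{S}_{2}(\mathcal{H}^{\lambda})$, all lie in it---indeed $\mathcal{G}^{\lambda}\big(\overline{\phi_{\mu\nu}^{\lambda}}\big)$ equals $(\pi/2|\lambda|)^{n}\,u_{\mu}^{\lambda}\otimes u_{\nu}^{\lambda}$ by the orthogonality relation. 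The inner-product identity then follows by polarizing the norm identity. The hard part will be the completeness of $\{\phi_{\mu\nu}^{\lambda}\}$: the orthogonality is a clean computation, but completeness is the genuinely representation-theoretic ingredient---equivalent to the injectivity of the Weyl transform---and is the step where I would most naturally invoke the cited theorem of Geller rather than reprove the full special-Hermite-function expansion.
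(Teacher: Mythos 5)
The paper does not actually prove this theorem: it is imported verbatim from Geller (\cite{G}, Theorem 1.2), so there is no internal argument to compare yours against line by line. On its own merits, your route is the standard one and is essentially sound. The matrix-entry identity, the Moyal orthogonality with constant $\big(\pi/(2|\lambda|)\big)^{n}$ (which checks out against Geller's normalization: e.g.\ $\phi^{\lambda}_{00}(z)=e^{-|\lambda||z|^{2}}$, whose squared $L^{2}$ norm is exactly $(\pi/2|\lambda|)^{n}$), the Parseval step, the surjectivity via $\mathcal{G}^{\lambda}\big(\overline{\phi^{\lambda}_{\mu\nu}}\big)=(\pi/2|\lambda|)^{n}\,u^{\lambda}_{\mu}\otimes u^{\lambda}_{\nu}$ together with closedness of the range of an isometry, and the polarization at the end are all correct.

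The one point needing care is the step you yourself flag: completeness of $\{\phi^{\lambda}_{\mu\nu}\}$ in $L^{2}(\C)$. Invoking ``the cited theorem of Geller'' here would be circular, since completeness is equivalent to injectivity of $\mathcal{G}^{\lambda}$ on $L^{2}$, which is part of the very Plancherel statement you are proving. The non-circular fix is either (i) the classical completeness of the special Hermite (Laguerre) expansion, which is established independently of the Weyl-transform Plancherel theorem, or (ii) the Fourier--Wigner argument: the map $(u,v)\mapsto\langle V^{\lambda}_{z}u,v\rangle$ is, after a linear change of variables, a partial Euclidean Fourier transform applied to $u\otimes\bar{v}$, hence a constant multiple of a unitary map from $L^{2}\otimes L^{2}$ onto $L^{2}(\C)$; a unitary image of the orthonormal basis $\{u^{\lambda}_{\nu}\otimes\overline{u^{\lambda}_{\mu}}\}$ is automatically an orthonormal basis, which gives completeness and in fact subsumes the orthogonality computation as well. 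With either substitution your proof is complete and self-contained.
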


For $f,g\in L^{2}(\mathbb{C}^{n})$, define the twisted convolution 
$$f\times^{\lambda}g(z)=\int_{\mathbb{C}^{n}}f(z-w)g(w)e^{2i\lambda~\textup{Im}
(z\cdot\bar{w})}dw.$$ Then it is well-known that, $f\times^{\lambda}g\in L^{2}
(\mathbb{C}^{n})$ and $$\mathcal{G}^{\lambda}
(f\times^{\lambda}g)=\mathcal{G}^{\lambda}(f)\mathcal{G}^{\lambda}(g).$$
Next, we extend this definition to a suitable subset of $\mathscr{S}^{\prime}
(\mathbb{C}^{n})$, the space of tempered distributions on $\mathbb{C}^{n}$ (see Geller 
\cite{G}, page 624-625). We say that $T\in\mathscr{S}^{\prime}(\mathbb{C}^{n})$ is 
Weyl transformable if there exist $R\in\mathcal{O}(\mathcal{H^{\lambda}})$ such that 
$$T(f)=\pi^{-n}(2|\lambda|)^{n}\sum_{\nu\in\mathbb{Z}_{+}^{n}}
\left(Ru_{\nu}^{\lambda},
(\mathcal{G^{\lambda}}f)u_{\nu}^{\lambda}\right)~\forall~f\in\mathscr{S}
(\mathbb{C}^{n}),$$ where the series converges absolutely. It is shown in \cite{G} 
that if such an $R$ exists then it is unique. In this case we call $R$ to be the Weyl 
transform of $T$ and write $\mathcal{G}^{\lambda}(T)=R$. It is clear from the 
polarization of Plancherel Theorem (Theorem \textbf{\ref{eqn t5.1}}) that this 
definition agrees with the previous definition of Weyl transform if $T$ is given by an 
$L^{2}$-function. In the course of proving the uniqueness of $R$, Geller proved that, 
if we fix a $\gamma\in\mathbb{Z}^{n}_{+}$, then for each $\alpha,
\beta\in\mathbb{Z}^{n}_{+}$ there exist $f_{\alpha\beta}\in\mathscr{S}(\C)$ such that 
$\mathcal{G}^{\lambda}
(f_{\alpha\beta})u^{\lambda}_{\gamma}=\delta_{\alpha\gamma}u^{\lambda}_{\beta}$.  
Taking $\beta=\alpha$, in particular we have the following: Fix 
$\gamma\in\mathbb{Z}^{n}_{+}$. Then for each $\alpha\in\mathbb{Z}^{n}_{+}$, there 
exists $f_{\alpha}\in\mathscr{S}(\C)$ such that $\mathcal{G}^{\lambda}
(f_{\alpha})u^{\lambda}_{\gamma}=\delta_{\alpha\gamma}u^{\lambda}_{\alpha}$. From this 
fact, the next proposition follows easily. 

\begin{prop}\label{eqn p5.2}
Let $\{T_{j}\}$ be a sequence of tempered distributions which converge to a tempered 
distribution $T$ in the topology of $\mathscr{S}^{\prime}(\C)$, i.e $T_{j}(f)\ra T(f)$ 
for all $f\in\mathscr{S}(\C)$. Assume that all $T_{j}$'s  and $T$ are Weyl 
transformable. Then for any $u,v\in\mathcal{P}(\mathbb{C}^{n})$, 
$\langle\mathcal{G}^{\lambda}(T_{j})u,v\rangle \ra \langle\mathcal{G}^{\lambda}
(T)u,v\rangle$.
\end{prop}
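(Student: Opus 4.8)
The plan is to reduce the statement to a single scalar matrix coefficient and then let the hypothesis $T_{j}(f)\ra T(f)$ do all the work. Since $u,v\in\mathcal{P}(\C)$ and each $u_{\nu}^{\lambda}$ is a scalar multiple of the monomial $w^{\nu}$, both $u$ and $v$ are \emph{finite} linear combinations of the orthonormal basis vectors $u_{\nu}^{\lambda}$. Writing $u=\sum_{\gamma}a_{\gamma}u_{\gamma}^{\lambda}$ and $v=\sum_{\beta}b_{\beta}u_{\beta}^{\lambda}$ (finite sums) and using the linearity of the operators $\mathcal{G}^{\lambda}(T_{j})$ and $\mathcal{G}^{\lambda}(T)$ on $\mathcal{P}(\C)$ together with the sesquilinearity of $\langle\cdot,\cdot\rangle$, we have $\langle\mathcal{G}^{\lambda}(T_{j})u,v\rangle=\sum_{\gamma,\beta}a_{\gamma}\overline{b_{\beta}}\,\langle\mathcal{G}^{\lambda}(T_{j})u_{\gamma}^{\lambda},u_{\beta}^{\lambda}\rangle$, a finite linear combination with coefficients independent of $j$. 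Hence it suffices to prove that $\langle\mathcal{G}^{\lambda}(T_{j})u_{\gamma}^{\lambda},u_{\beta}^{\lambda}\rangle\ra\langle\mathcal{G}^{\lambda}(T)u_{\gamma}^{\lambda},u_{\beta}^{\lambda}\rangle$ for each fixed pair $\gamma,\beta\in\mathbb{Z}_{+}^{n}$.

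Fix such a pair. The central step is to select one Schwartz function that isolates precisely this coefficient in the defining series, simultaneously for $T$ and for all the $T_{j}$. Taking $\alpha=\gamma$ in Geller's family recalled above, we obtain $f_{\gamma\beta}\in\mathscr{S}(\C)$ whose Weyl transform is the rank-one operator sending $u_{\gamma}^{\lambda}$ to $u_{\beta}^{\lambda}$ and annihilating $u_{\nu}^{\lambda}$ for $\nu\neq\gamma$; that is, $\mathcal{G}^{\lambda}(f_{\gamma\beta})u_{\nu}^{\lambda}=\delta_{\nu\gamma}u_{\beta}^{\lambda}$ for every $\nu$ (evaluating on $u_{\gamma}^{\lambda}$ recovers the stated identity $\mathcal{G}^{\lambda}(f_{\gamma\beta})u_{\gamma}^{\lambda}=u_{\beta}^{\lambda}$). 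For any Weyl transformable $S\in\mathscr{S}^{\prime}(\C)$ with $R=\mathcal{G}^{\lambda}(S)$, substituting $f=f_{\gamma\beta}$ into the defining identity collapses the series to its single surviving term $\nu=\gamma$:
$$S(f_{\gamma\beta})=\pi^{-n}(2|\lambda|)^{n}\sum_{\nu}\big\langle Ru_{\nu}^{\lambda},\,\mathcal{G}^{\lambda}(f_{\gamma\beta})u_{\nu}^{\lambda}\big\rangle=\pi^{-n}(2|\lambda|)^{n}\big\langle Ru_{\gamma}^{\lambda},u_{\beta}^{\lambda}\big\rangle.$$
Applying this to $S=T_{j}$ and to $S=T$ gives $T_{j}(f_{\gamma\beta})=\pi^{-n}(2|\lambda|)^{n}\langle\mathcal{G}^{\lambda}(T_{j})u_{\gamma}^{\lambda},u_{\beta}^{\lambda}\rangle$ and $T(f_{\gamma\beta})=\pi^{-n}(2|\lambda|)^{n}\langle\mathcal{G}^{\lambda}(T)u_{\gamma}^{\lambda},u_{\beta}^{\lambda}\rangle$. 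Since $f_{\gamma\beta}$ is a fixed element of $\mathscr{S}(\C)$ not depending on $j$, the hypothesis $T_{j}(f_{\gamma\beta})\ra T(f_{\gamma\beta})$ immediately yields $\langle\mathcal{G}^{\lambda}(T_{j})u_{\gamma}^{\lambda},u_{\beta}^{\lambda}\rangle\ra\langle\mathcal{G}^{\lambda}(T)u_{\gamma}^{\lambda},u_{\beta}^{\lambda}\rangle$, which completes the reduction and hence the proof.

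The only delicate point is the choice in the second step: the argument hinges on having a \emph{single} test function whose Weyl transform equals the rank-one operator on the full orthonormal basis (not merely its value on $u_{\gamma}^{\lambda}$), so that the defining series reduces to one term \emph{uniformly} for all the distributions involved. This is exactly what Geller's construction provides. Once this $f_{\gamma\beta}$ is fixed, no interchange of limit and infinite summation is required—the series has a single nonzero term—so the convergence follows directly from the $\mathscr{S}^{\prime}(\C)$-convergence $T_{j}\ra T$.
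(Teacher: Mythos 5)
Your proof is correct and is precisely the argument the paper intends when it says the proposition "follows easily" from Geller's family $f_{\alpha\beta}$: you reduce to matrix coefficients by finite linearity and use $f_{\gamma\beta}$ to collapse the defining series of Weyl transformability to the single term $\langle \mathcal{G}^{\lambda}(\cdot)u_{\gamma}^{\lambda},u_{\beta}^{\lambda}\rangle$, so that the hypothesis $T_{j}(f_{\gamma\beta})\ra T(f_{\gamma\beta})$ gives the conclusion. You also correctly flag the one genuine subtlety -- that $\mathcal{G}^{\lambda}(f_{\gamma\beta})$ must annihilate \emph{all} $u_{\nu}^{\lambda}$ with $\nu\neq\gamma$, not merely act correctly on $u_{\gamma}^{\lambda}$ -- which is exactly what Geller's construction (Weyl transforms equal to matrix units) supplies.
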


Define $\mathcal{F}^{\prime}:\mathscr{S}(\mathbb{C}^{n})\longrightarrow\mathscr{S}
(\mathbb{C}^{n})$ by $$(\mathcal{F}^{\prime}f)(\zeta)=\int_{\mathbb{C}^{n}} \exp(-
z\cdot \bar{\zeta}+\bar{z}\cdot\zeta)f(z)dzd\bar{z}.$$
This is a modification of the usual Euclidean Fourier transform $\mathcal{F}$, the 
relation being that $(\mathcal{F}^{\prime}f)(\zeta)=(\mathcal{F}f)(-2i\zeta)$. So we 
can extend $\mathcal{F}^{\prime}$ as a continuous, linear, one-to-one mapping of 
$\mathscr{S}^{\prime}(\mathbb{C}^{n})$ onto $\mathscr{S}^{\prime}(\mathbb{C}^{n})$. 
Let $T\in\mathscr{S}^{\prime}(\mathbb{C}^{n})$ be such that $\mathcal{F}^{\prime-1}T$ 
is Weyl transformable. Then we define the Weyl correspondence $\mathcal{W}^{\lambda}$ 
of $T$ by $$\mathcal{W}^{\lambda}(T)=\mathcal{G}^{\lambda}(\mathcal{F}^{\prime-1}T).$$ 
On $\mathbb{H}^{n}$, the differential operators $$T=\frac{\partial}{\partial t} ,~ 
Z_{j}=\frac{\partial}{\partial\bar{z}_{j}}-iz_{j}\frac{\partial}{\partial t},~ 
\overline{Z}_{j}=\frac{\partial}{\partial z_{j}}+i\bar{z}_{j}\frac{\partial}{\partial t}$$ 
are the left invariant vector fields corresponding to the one parameter family of 
subgroups $\Gamma_{0}=\{(0,s):s\in\mathbb{R}\}$, 
$\Gamma_{j}=\{se_{j},0:s\in\mathbb{R}\}$ and 
$\overline{\Gamma}_{j}=\{s\bar{e}_{j},0:s\in\mathbb{R}\}$ respectively, where 
$\{e_{1},e_{2},\cdots ,e_{n}\}$ be the usual basis for $\mathbb{C}^{n}$. In \cite{G}, 
page-651, the notation differ slightly. Geller uses $\overline{Z}_{j}$ for our operator 
$Z_{j}$ (and $Z_{j}$ for $\overline{Z}_{j}$). These  form a basis for $\mathcal{L}
(\mathfrak{h}_{n})$, the set of all left invariant differential operators on 
$\mathbb{H}^{n}$. Here $\mathfrak{h}_{n}$ is the Lie algebra of $\mathbb{H}^{n}$. For 
each $D\in\mathcal{L}(\mathfrak{h}_{n})$, let $D^{\lambda}$ denote the operator on 
$\mathbb{C}^{n}$ obtained by replacing each copy of $\partial/\partial t$ in $D$ by $-
i\lambda$. Define $$\mathcal{L}^{\lambda}
(\mathbb{C}^{n})=\{D^{\lambda}:D\in\mathcal{L}(\mathfrak{h}_{n})\},
~\textup{and}~\mathcal{R}^{\lambda}(\mathbb{C}^{n})
=\{D^{-\lambda}:D\in\mathcal{L}(\mathfrak{h}_{n})\}.$$ Then 
$$L^{\lambda}_{j}=\frac{\partial}{\partial\bar{z}_{j}}-\lambda z_{j},~~ 
\overline{L}^{\lambda}_{j}=\frac{\partial}{\partial z_{j}}+\lambda \bar{z}_{j}$$ form a 
basis for $\mathcal{L}^{\lambda}(\mathbb{C}^{n})$, and 
$$R^{\lambda}_{j}=\frac{\partial}{\partial\bar{z}_{j}}+\lambda z_{j},~~ 
\overline{R}^{\lambda}_{j}=\frac{\partial}{\partial z_{j}}-\lambda \bar{z}_{j}$$ form a 
basis for $\mathcal{R}^{\lambda}(\mathbb{C}^{n})$. In \cite{G}, page-619, these are 
denoted by $\widetilde{\mathscr{Z}}_{j\lambda}$, $\mathscr{Z}_{j\lambda}$, 
$\widetilde{\mathscr{Z}}_{j\lambda}^{R}$, $\mathscr{Z}_{j\lambda}^{R}$ respectively. Note 
that the action of $Z_{j}$ and $\overline{Z}_{j}$ on a function of the form $e^{-i\lambda 
t}f(z)$ are given by $$Z_{j}(e^{-i\lambda t}f)=e^{-i\lambda t}L_{j}^{\lambda}
(f),~\overline{Z}_{j}(e^{-i\lambda t}f)=e^{-i\lambda t}\overline{L}_{j}^{\lambda}(f).$$
We also have the commutation relations \bea\label{eqn43}
[\overline{L}^{\lambda}_{j},L^{\lambda}_{k}]=-2\delta_{jk}\lambda 
I,~[L^{\lambda}_{j},L^{\lambda}_{k}]=0,~[\overline{L}^{\lambda}_{j},
\overline{L}^{\lambda}_{k}]=0.\eea \bea\label{eqn43.1}
[\overline{R}^{\lambda}_{j},R^{\lambda}_{k}]=2\delta_{jk}\lambda 
I,~[R^{\lambda}_{j},R^{\lambda}_{k}]=0,~[\overline{R}^{\lambda}_{j},
\overline{R}^{\lambda}_{k}]=0.\eea  The following proposition tells how the operators 
$L^{\lambda}_{j}$, $\overline{L}^{\lambda}_{j}$, $R^{\lambda}_{j}$, $\overline{R}^{\lambda}_{j}$ 
behave under $\mathcal{G}^{\lambda}$. The proof can be found in \cite{G}, page 
624-625.

\begin{prop}\label{eqn p5.3}
If $T\in\mathscr{S}^{\prime}(\mathbb{C}^{n})$ is Weyl transformable, then so are 
$L^{\lambda}_{j}T$, $\overline{L}^{\lambda}_{j}T$, $R^{\lambda}_{j}T$, 
$\overline{R}^{\lambda}_{j}T$. Also $$\mathcal{G}^{\lambda}
(L^{\lambda}_{j}T)=-\mathcal{G}^{\lambda}(T)W^{\lambda}_{j},~\mathcal{G}^{\lambda}
(\overline{L}^{\lambda}_{j}T)=\mathcal{G}^{\lambda}(T)\overline{W}^{\lambda}_{j},$$ 
$$\mathcal{G^{\lambda}}(R^{\lambda}_{j}T)=-W^{\lambda}_{j}\mathcal{G}^{\lambda}
(T),~\mathcal{G}^{\lambda}
(\overline{R}^{\lambda}_{j}T)=\overline{W}^{\lambda}_{j}\mathcal{G}^{\lambda}(T).$$
\end{prop}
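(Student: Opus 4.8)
The plan is to prove the four operator identities first for Schwartz test functions by integration by parts, and then to bootstrap them to all Weyl transformable distributions using the defining series together with the adjoint relation between $W_j^\lambda$ and $\overline{W}_j^\lambda$. I treat the first identity $\mathcal{G}^\lambda(L_j^\lambda f)=-\mathcal{G}^\lambda(f)W_j^\lambda$ in detail; the remaining three are entirely analogous, the only changes being whether one differentiates in $z_j$ or $\bar z_j$ and whether the resulting $W$-operator appears on the right (for the left-invariant $L_j^\lambda,\overline{L}_j^\lambda$) or on the left (for the right-invariant $R_j^\lambda,\overline{R}_j^\lambda$).

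For $f\in\mathscr{S}(\C)$ I would start from $\mathcal{G}^\lambda(L_j^\lambda f)=\int_{\C}\big(\tfrac{\partial f}{\partial\bar z_j}-\lambda z_j f\big)(z)\,V_z^\lambda\,dz\,d\bar z$ and integrate by parts to move the $\bar z_j$-derivative onto $V_z^\lambda$. Using the explicit formula for $V_z^\lambda$ from Section 5 one computes the operator identities $\tfrac{\partial}{\partial\bar z_j}V_z^\lambda=V_z^\lambda W_j^\lambda-\lambda z_j V_z^\lambda$ and $[V_z^\lambda,W_j^\lambda]=2\lambda z_j V_z^\lambda$ for $\lambda>0$ (with their conjugate analogues involving $\overline{W}_j^\lambda$ and $\partial/\partial z_j$, and the roles of $W_j^\lambda,\overline{W}_j^\lambda$ interchanged when $\lambda<0$). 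Substituting the first of these, the two multiplication-by-$z_j$ terms cancel and one is left with $\mathcal{G}^\lambda(L_j^\lambda f)=-\int_{\C}f(z)\,V_z^\lambda W_j^\lambda\,dz\,d\bar z=-\mathcal{G}^\lambda(f)W_j^\lambda$. For the right-invariant operators the same computation, now carrying the commutator $[V_z^\lambda,W_j^\lambda]$ through, converts the right multiplication into a left multiplication, which is precisely why $R_j^\lambda,\overline{R}_j^\lambda$ produce $W_j^\lambda,\overline{W}_j^\lambda$ acting on the left.

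To pass to a Weyl transformable $T$ with $\mathcal{G}^\lambda(T)=R$, I would read off the matrix entries of the candidate transform. By the construction recalled just before Proposition \ref{eqn p5.2}, for fixed $\gamma$ there are $f_{\gamma\beta}\in\mathscr{S}(\C)$ with $\mathcal{G}^\lambda(f_{\gamma\beta})u_\nu^\lambda=\delta_{\nu\gamma}u_\beta^\lambda$, so that testing the defining series against $f_{\gamma\beta}$ collapses it to the single term $\pi^{-n}(2|\lambda|)^n\langle S u_\gamma^\lambda,u_\beta^\lambda\rangle$ and thereby recovers the individual matrix coefficients of an operator $S=\mathcal{G}^\lambda(\cdot)$. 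Applying this with $S=\mathcal{G}^\lambda(L_j^\lambda T)$, I would transfer $L_j^\lambda$ onto the test function via the adjoint dictated by the inner-product pairing in the definition of Weyl transformability, invoke the Schwartz identity from the first step to replace $\mathcal{G}^\lambda$ of the transferred test function by a one-sided multiplication of $\mathcal{G}^\lambda(f_{\gamma\beta})$ by $W_j^\lambda$, and finally move that factor onto $R u_\nu^\lambda$ using the adjoint relation $(W_j^\lambda)^\star=\overline{W}_j^\lambda$. Comparing the resulting matrix entries with those of $-RW_j^\lambda$ shows simultaneously that $L_j^\lambda T$ is Weyl transformable and that its transform equals $-RW_j^\lambda$; the other three identities follow in the same way.

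The main obstacle is not the algebra but the analysis: since $W_j^\lambda$ and $\overline{W}_j^\lambda$ are unbounded on $\mathcal{H}^\lambda$, one must justify that the rearranged defining series still converges absolutely and that carrying a $W$-operator across the inner product is legitimate. Working with matrix coefficients through the functions $f_{\gamma\beta}$ sidesteps this difficulty, because each such test function isolates a single pair of basis vectors, and $W_j^\lambda,\overline{W}_j^\lambda$ send each $u_\nu^\lambda$ to a scalar multiple of a neighbouring basis vector, so every sum that occurs is finite. The remaining care is purely sign and conjugation bookkeeping: tracking which of $L_j^\lambda,\overline{L}_j^\lambda,R_j^\lambda,\overline{R}_j^\lambda$ is the correct adjoint of the operator applied to $T$, and matching it against the overall $\pm$ and the choice of $W_j^\lambda$ versus $\overline{W}_j^\lambda$ on the operator side.
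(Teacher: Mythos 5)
Your Schwartz-level step is sound: for $\lambda>0$ one does have $\tfrac{\partial}{\partial\bar z_j}V_z^{\lambda}=V_z^{\lambda}W_j^{\lambda}-\lambda z_jV_z^{\lambda}$ and $[V_z^{\lambda},W_j^{\lambda}]=2\lambda z_jV_z^{\lambda}$, and integration by parts (interpreted weakly, i.e.\ after pairing against vectors of $\mathcal{P}(\C)$, since the integrands are unbounded-operator valued) gives the four identities for $f\in\mathscr{S}(\C)$, with the two $z_j$-terms cancelling exactly as you say. Note, however, that the paper offers no proof of its own for this proposition --- it only cites Geller, pp.\ 624--625 --- so the benchmark is the standard duality argument: transfer the differential operator to the test function and use the Schwartz-level identity there, for \emph{every} test function.

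The gap is in your passage to distributions, and it is the existence half of the statement. Weyl transformability of $L_j^{\lambda}T$ means: there is an operator $S$ such that for \emph{every} $f\in\mathscr{S}(\C)$ the defining series converges absolutely and sums to $(L_j^{\lambda}T)(f)$. Testing against the special functions $f_{\gamma\beta}$ is precisely Geller's \emph{uniqueness} mechanism (the paper recalls it for exactly that purpose, just before Proposition \ref{eqn p5.2}): it determines what $S$ must be \emph{if} the transform exists, but it cannot certify the ``for all $f$, with absolute convergence'' clause, so your sentence ``shows simultaneously that $L_j^{\lambda}T$ is Weyl transformable and that its transform equals $-RW_j^{\lambda}$'' proves only the identification, not the transformability; the claim that matrix coefficients ``sidestep'' the convergence difficulty is illusory, because that difficulty \emph{is} the existence claim. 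The repair is short and uses your own ladder observation in the right place: for arbitrary $f$ write $(L_j^{\lambda}T)(f)=-T(R_j^{\lambda}f)$ (the distributional transpose of $L_j^{\lambda}$ is $-R_j^{\lambda}$), expand the defining series of $T$ at the Schwartz function $R_j^{\lambda}f$ (absolutely convergent by hypothesis), insert the Schwartz-level identity, and then use $\overline{W}_j^{\lambda}u_{\nu}^{\lambda}=\big(2|\lambda|(\nu_j+1)\big)^{1/2}u_{\nu+e_j}^{\lambda}$, $W_j^{\lambda}u_{\nu}^{\lambda}=\big(2|\lambda|\nu_j\big)^{1/2}u_{\nu-e_j}^{\lambda}$ to see that the resulting series is a term-by-term re-indexing of the series defining $-RW_j^{\lambda}$ tested against $f$; re-indexing preserves absolute convergence and the sum, and this is the whole proof. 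Finally, the conjugation bookkeeping you defer to the end is not cosmetic: carried out with the conjugate-free pairing as literally transcribed in the paper's definition of Weyl transformability, your chain of steps outputs the \emph{left} multiplication $\overline{W}_j^{\lambda}R$ rather than $-RW_j^{\lambda}$; the stated formulas emerge only with the pairing forced by the Plancherel polarization (the test-function slot must carry $\mathcal{G}^{\lambda}(\bar f\,)$), and since which side the $W$-factor lands on is the entire content of the four identities, this bookkeeping is where the proof lives rather than an afterthought.
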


Let $\delta_{0}$ denote the Dirac delta distribution at origin. Since 
$\mathcal{G}^{\lambda}(\delta_{0})$ is the identity operator, from the above 
proposition we get the following corollary. We write $\mathcal{G}^{\lambda}(D)$ for 
$\mathcal{G}^{\lambda}(D\delta_{0})$, if $D\in \mathcal{L}^{\lambda}(\mathbb{C}^{n})$ 
or $\mathcal{R}^{\lambda}(\mathbb{C}^{n})$.

\begin{cor}\label{eqn c5.4}
Let $T$ be a Weyl transformable tempered distribution. Then $\mathcal{G}^{\lambda}
(DT)=\mathcal{G}^{\lambda}(T)\mathcal{G}^{\lambda}(D)$ if $D\in \mathcal{L}^{\lambda}
(\mathbb{C}^{n})$; and $\mathcal{G}^{\lambda}(DT)=\mathcal{G}^{\lambda}
(D)\mathcal{G}^{\lambda}(T)$ if $D\in \mathcal{R}^{\lambda}(\mathbb{C}^{n})$.
\end{cor}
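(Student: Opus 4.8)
The plan is to prove the identity by induction on the order of the differential operator $D$, reducing via the linearity of $\mathcal{G}^{\lambda}$ to the case of a single monomial in the generators and invoking Proposition \textbf{\ref{eqn p5.3}} to peel off one generator at a time. I would treat the left-invariant case $D\in\mathcal{L}^{\lambda}(\mathbb{C}^{n})$ in detail; the case $D\in\mathcal{R}^{\lambda}(\mathbb{C}^{n})$ is entirely parallel, with left multiplication replacing right multiplication throughout. The two ingredients I would lean on are the convention just fixed, $\mathcal{G}^{\lambda}(D)=\mathcal{G}^{\lambda}(D\delta_{0})$, together with $\mathcal{G}^{\lambda}(\delta_{0})=I$.

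For the base case I would take $D$ to be one of the generators $L^{\lambda}_{j}$ or $\overline{L}^{\lambda}_{j}$. Setting $T=\delta_{0}$ in Proposition \textbf{\ref{eqn p5.3}} and using $\mathcal{G}^{\lambda}(\delta_{0})=I$ identifies $\mathcal{G}^{\lambda}(L^{\lambda}_{j})=-W^{\lambda}_{j}$ and $\mathcal{G}^{\lambda}(\overline{L}^{\lambda}_{j})=\overline{W}^{\lambda}_{j}$. Then for an arbitrary Weyl transformable $T$, Proposition \textbf{\ref{eqn p5.3}} already guarantees that $L^{\lambda}_{j}T$ and $\overline{L}^{\lambda}_{j}T$ are again Weyl transformable, and it reads
$$\mathcal{G}^{\lambda}(L^{\lambda}_{j}T)=-\mathcal{G}^{\lambda}(T)W^{\lambda}_{j}=\mathcal{G}^{\lambda}(T)\mathcal{G}^{\lambda}(L^{\lambda}_{j}),\qquad \mathcal{G}^{\lambda}(\overline{L}^{\lambda}_{j}T)=\mathcal{G}^{\lambda}(T)\overline{W}^{\lambda}_{j}=\mathcal{G}^{\lambda}(T)\mathcal{G}^{\lambda}(\overline{L}^{\lambda}_{j}),$$
which is precisely the claimed identity for operators of order one; operators of order zero are trivial by linearity.

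For the inductive step I would write a general $D$ of order $m$ as a linear combination of monomials, so that by linearity it suffices to treat $D=XD'$ with $X$ a generator and $D'$ of order $m-1$. The inductive hypothesis gives that $D'T$ is Weyl transformable with $\mathcal{G}^{\lambda}(D'T)=\mathcal{G}^{\lambda}(T)\mathcal{G}^{\lambda}(D')$; applying the base case to the generator $X$ and the Weyl transformable distribution $D'T$ then yields
$$\mathcal{G}^{\lambda}(DT)=\mathcal{G}^{\lambda}\big(X(D'T)\big)=\mathcal{G}^{\lambda}(D'T)\,\mathcal{G}^{\lambda}(X)=\mathcal{G}^{\lambda}(T)\,\mathcal{G}^{\lambda}(D')\,\mathcal{G}^{\lambda}(X).$$
Running the identical computation with $T$ replaced by $\delta_{0}$ shows $\mathcal{G}^{\lambda}(D)=\mathcal{G}^{\lambda}(D')\mathcal{G}^{\lambda}(X)$, so the last two factors collapse to $\mathcal{G}^{\lambda}(D)$ and the identity $\mathcal{G}^{\lambda}(DT)=\mathcal{G}^{\lambda}(T)\mathcal{G}^{\lambda}(D)$ follows. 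Equivalently, $D\mapsto\mathcal{G}^{\lambda}(D)$ turns out to be an anti-homomorphism on $\mathcal{L}^{\lambda}(\mathbb{C}^{n})$; for $\mathcal{R}^{\lambda}(\mathbb{C}^{n})$ the same argument run with the two left-multiplication formulae of Proposition \textbf{\ref{eqn p5.3}} gives a homomorphism and the identity $\mathcal{G}^{\lambda}(DT)=\mathcal{G}^{\lambda}(D)\mathcal{G}^{\lambda}(T)$.

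The only points needing attention are bookkeeping rather than substance: one must keep the factors in the correct order, since left-invariant generators act by right multiplication under $\mathcal{G}^{\lambda}$ and right-invariant ones by left multiplication, and one must check at each stage that the distribution being differentiated is still Weyl transformable so that Proposition \textbf{\ref{eqn p5.3}} applies. The latter is exactly what that proposition supplies, so the induction never leaves the class of Weyl transformable distributions, and I expect no genuine obstacle beyond this careful tracking of sides and of transformability.
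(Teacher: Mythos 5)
Your proof is correct and follows exactly the route the paper intends: the paper states the corollary as an immediate consequence of Proposition \textbf{\ref{eqn p5.3}} together with $\mathcal{G}^{\lambda}(\delta_{0})=I$ and the convention $\mathcal{G}^{\lambda}(D)=\mathcal{G}^{\lambda}(D\delta_{0})$, leaving implicit precisely the induction on the order of $D$ that you spell out. Your careful tracking of transformability and of the left/right multiplication (anti-homomorphism versus homomorphism) is the right bookkeeping, and there is no gap.
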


Let $\mathcal{P}(\mathbb{C}_{\mathbb{R}}^{n})$ denote the space of all polynomials on 
the underlying real vector space $\mathbb{C}^{n}_{\mathbb{R}}$ of $\mathbb{C}^{n}$. 
Clearly $\mathbb{C}^{n}_{\mathbb{R}}$ can be identified with $\mathbb{R}^{2n}$. In 
other words the elements of $\mathcal{P}(\mathbb{C}^{n}_{\mathbb{R}})$ are polynomials 
in $z$ and $\bar{z}$ with complex coefficients. From now on we use the following 
convention : when we write ``\textbf{polynomial}", we mean a polynomial in $z$ 
and $\bar{z}$, i.e. an element in $\mathcal{P}(\mathbb{C}_{\mathbb{R}}^{n})$, and 
elements of $\mathcal{P}(\mathbb{C}^{n})$ are called ``\textbf{holomorphic 
polynomials}" i.e. polynomials in $z$ only. For a monomial 
$p(\zeta)=\zeta^{\rho}\bar{\zeta}^{\gamma}$ ($\rho,\gamma$ multi-indices), we set 
$$\theta_{1}^{\lambda}(p)=(\overline{R}^{\lambda})^{\gamma}(-R^{\lambda})^{\rho},~ 
\theta_{2}^{\lambda}(p)=(-R^{\lambda})^{\rho}(\overline{R}^{^{\lambda}})^{\gamma}$$ and 
$$\tau_{1}^{\lambda}(p)=(\overline{W}^{\lambda})^{\gamma}(W^{\lambda})^{\rho},~ 
\tau_{2}^{\lambda}(p)=(W^{\lambda})^{\rho}(\overline{W}^{^{\lambda}})^{\gamma}.$$ In the above $(\overline{R}^{\lambda})^{\gamma}=(\overline{R}_1^{\lambda})^{\gamma_1}\cdots(\overline{R}_n^{\lambda})^{\gamma_n}$ (order does not matter because of commutation relations (\ref{eqn43.1})), where $\gamma=(\gamma_1,\cdots,\gamma_n)$. The other expressions are similarly defined. Define 
$$\theta^{\lambda}(p)=\frac{1}{2}(\theta^{\lambda}_{1}(p)+\theta^{\lambda}_{2}(p)),~ \tau^{\lambda}(p)=\frac{1}{2}(\tau^{\lambda}_{1}(p)+\tau^{\lambda}_{2}(p)).$$ 
We extend them to all polynomials by linearity. Note that by Proposition 
\textbf{\ref{eqn p5.3}},
\bea\label{eqn44}\mathcal{G}^{\lambda}(\theta^{\lambda}_{1}(p))=\tau^{\lambda}_{1}
(p),~ \mathcal{G}^{\lambda}(\theta^{\lambda}_{2}(p))=\tau^{\lambda}_{2}
(p),~\mathcal{G}^{\lambda}(\theta^{\lambda}(p))=\tau^{\lambda}(p),\eea for any 
polynomial $p$.

\begin{prop}\label{eqn p5.5}
\textup{(Geller \cite{G}, Proposition 2.1 (a), 2.7)}
\begin{itemize}
\item [\textbf{(a)}] If $p$ is a polynomial, then $\mathcal{F}^{\prime-1}p$ is Weyl 
transformable and hence $\mathcal{W}^{\lambda}(p)$ is well defined.
\item [\textbf{(b)}] If $p$ is a $U(n)$-harmonic polynomial, then $\mathcal{W}^{\lambda}
(p)=\tau^{\lambda}_{1}(p)=\tau^{\lambda}_{2}(p)=\tau^{\lambda}(p).$
\end{itemize}
\end{prop}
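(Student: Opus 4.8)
The plan is to treat the two parts separately, reducing everything to explicit identities among distributions supported at the origin and then invoking Proposition \textbf{\ref{eqn p5.3}} together with the uniqueness of the Weyl transform.

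For part \textbf{(a)} I would first note that $\mathcal{F}^{\prime-1}$ carries a polynomial $p$ to a finite linear combination of derivatives $\partial_{z}^{\alpha}\partial_{\bar{z}}^{\beta}\delta_{0}$ of the Dirac mass; this is the standard fact that, under the Fourier transform, multiplication by a monomial becomes a constant-coefficient derivative, transported through the normalization $(\mathcal{F}^{\prime}f)(\zeta)=(\mathcal{F}f)(-2i\zeta)$. Since $\partial_{z_{j}}=\frac{1}{2}(\overline{R}^{\lambda}_{j}+\overline{L}^{\lambda}_{j})$ and $\partial_{\bar{z}_{j}}=\frac{1}{2}(R^{\lambda}_{j}+L^{\lambda}_{j})$, each first-order Wirtinger derivative is a sum of operators that preserve Weyl transformability by Proposition \textbf{\ref{eqn p5.3}}. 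As $\delta_{0}$ is Weyl transformable with $\mathcal{G}^{\lambda}(\delta_{0})=I$, an induction on the order of differentiation shows that every $\partial_{z}^{\alpha}\partial_{\bar{z}}^{\beta}\delta_{0}$, and hence $\mathcal{F}^{\prime-1}p$, is Weyl transformable. Thus $\mathcal{W}^{\lambda}(p)=\mathcal{G}^{\lambda}(\mathcal{F}^{\prime-1}p)$ is well defined.

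For part \textbf{(b)}, by (\ref{eqn44}) and the uniqueness of the Weyl transform it suffices to prove, for $U(n)$-harmonic $p$, the distributional identities $\theta^{\lambda}_{1}(p)\delta_{0}=\theta^{\lambda}_{2}(p)\delta_{0}=\mathcal{F}^{\prime-1}(p)$; applying $\mathcal{G}^{\lambda}$ then yields $\tau^{\lambda}_{1}(p)=\tau^{\lambda}_{2}(p)=\tau^{\lambda}(p)=\mathcal{W}^{\lambda}(p)$ at once. To evaluate $\theta^{\lambda}_{i}(p)\delta_{0}$ on a monomial $p(\zeta)=\zeta^{\rho}\bar{\zeta}^{\gamma}$ I would use that $z_{k}$ annihilates every pure $\partial_{\bar{z}}$-derivative of $\delta_{0}$, so that $(-R^{\lambda})^{\rho}\delta_{0}=(-1)^{|\rho|}\partial_{\bar{z}}^{\rho}\delta_{0}$ with no $\lambda$-terms, and then expand $(\overline{R}^{\lambda})^{\gamma}=(\partial_{z}-\lambda\bar{z})^{\gamma}$. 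The pure-derivative term reproduces $(-1)^{|\rho|}\partial_{z}^{\gamma}\partial_{\bar{z}}^{\rho}\delta_{0}=\mathcal{F}^{\prime-1}(p)$, while the remaining terms are generated by the contraction rule $\bar{z}_{j}\partial_{\bar{z}_{j}}^{m}\delta_{0}=-m\,\partial_{\bar{z}_{j}}^{m-1}\delta_{0}$.

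The combinatorial heart of the argument, and the step I expect to be the main obstacle, is to show that, summed over all contraction patterns, these corrections reassemble into powers of the $U(n)$-invariant Laplacian $\triangle=\sum_{j}\partial_{z_{j}}\partial_{\bar{z}_{j}}$ applied to the symbol, namely $\theta^{\lambda}_{1}(p)\delta_{0}=\mathcal{F}^{\prime-1}(e^{-\lambda\triangle}p)$ and $\theta^{\lambda}_{2}(p)\delta_{0}=\mathcal{F}^{\prime-1}(e^{\lambda\triangle}p)$, so that $\theta^{\lambda}(p)\delta_{0}=\mathcal{F}^{\prime-1}(\cosh(\lambda\triangle)p)$. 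The delicate point is that a single contraction pairs a $\bar{z}_{j}$ coming from $\overline{R}^{\lambda}_{j}$ with a $\partial_{\bar{z}_{j}}$ coming from $R^{\lambda}_{j}$, and only after summing over $j$ does one obtain the full operator $\triangle$ rather than an individual index-wise second derivative; it is precisely this full $\triangle$ that harmonicity annihilates. Once the bookkeeping is in place, $U(n)$-harmonicity gives $\triangle p=0$, hence $\triangle^{m}p=0$ for all $m\geq 1$, so every correction vanishes and all three distributions collapse to $\mathcal{F}^{\prime-1}(p)$, completing part \textbf{(b)}.
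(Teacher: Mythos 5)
You should first be aware that the paper contains no proof of this proposition at all: it is imported verbatim from Geller \cite{G} (Propositions 2.1(a) and 2.7), and the only trace of it in the text is where its ingredients are reused, e.g. the identity $\mathcal{F}^{\prime-1}p=p(D)\delta_{0}$ in the proof of Proposition \ref{eqn p7.7}. So your proposal is being measured against Geller's argument and against correctness, not against anything in this paper. Your part \textbf{(a)} is complete and correct: $\mathcal{F}^{\prime-1}p$ is a finite linear combination of the distributions $\partial_{z}^{\alpha}\partial_{\bar{z}}^{\beta}\delta_{0}$, the identities $\partial_{z_{j}}=\frac{1}{2}(\overline{R}^{\lambda}_{j}+\overline{L}^{\lambda}_{j})$ and $\partial_{\bar{z}_{j}}=\frac{1}{2}(R^{\lambda}_{j}+L^{\lambda}_{j})$ hold, finite sums of Weyl transformable distributions are Weyl transformable (absolute convergence of the defining series survives finite sums), and induction from $\mathcal{G}^{\lambda}(\delta_{0})=I$ via Proposition \ref{eqn p5.3} finishes.

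In part \textbf{(b)} the reduction to a distributional identity is the right one, and the exponential formulas $\theta^{\lambda}_{1}(p)\delta_{0}=\mathcal{F}^{\prime-1}(e^{-\lambda\triangle}p)$ and $\theta^{\lambda}_{2}(p)\delta_{0}=\mathcal{F}^{\prime-1}(e^{\lambda\triangle}p)$ are in fact true (I verified them on $\zeta_{1}\bar{\zeta}_{1}$ and $\zeta_{1}^{2}\bar{\zeta}_{1}^{2}$ against the commutation relations (\ref{eqn41}) and (\ref{eqn43.1})); the one genuine gap is that you assert this identity via a contraction-counting mechanism without proving it. It closes with a short induction that avoids all bookkeeping over contraction patterns. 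Both sides are linear in $p$, so it suffices to treat $p=\zeta^{\rho}\bar{\zeta}^{\gamma}$, and one inducts on $|\gamma|$: the base case $|\gamma|=0$ is exactly your observation $(-R^{\lambda})^{\rho}\delta_{0}=(-1)^{|\rho|}\partial_{\bar{z}}^{\rho}\delta_{0}$, together with $\triangle\zeta^{\rho}=0$. For the step write $p=\bar{\zeta}_{j}q$; since the $\overline{R}^{\lambda}_{k}$ commute, $\theta^{\lambda}_{1}(p)\delta_{0}=(\partial_{z_{j}}-\lambda\bar{z}_{j})\,\theta^{\lambda}_{1}(q)\delta_{0}$, and the two intertwining relations $\partial_{z_{j}}\mathcal{F}^{\prime-1}(r)=\mathcal{F}^{\prime-1}(\bar{\zeta}_{j}r)$ and $\bar{z}_{j}\mathcal{F}^{\prime-1}(r)=\mathcal{F}^{\prime-1}(\partial_{\zeta_{j}}r)$, read off directly from the kernel of $\mathcal{F}^{\prime}$, turn the inductive hypothesis into $\mathcal{F}^{\prime-1}\big((\bar{\zeta}_{j}-\lambda\partial_{\zeta_{j}})e^{-\lambda\triangle}q\big)$. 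Now the commutator $[\triangle,\bar{\zeta}_{j}]=\partial_{\zeta_{j}}$ gives the operator identity $e^{-\lambda\triangle}\bar{\zeta}_{j}=(\bar{\zeta}_{j}-\lambda\partial_{\zeta_{j}})e^{-\lambda\triangle}$ on polynomials (all series are finite sums there), so the last expression is $\mathcal{F}^{\prime-1}(e^{-\lambda\triangle}p)$ and the induction closes; the $\theta^{\lambda}_{2}$ case is identical with the roles of $\zeta$ and $\bar{\zeta}$ exchanged. With this lemma inserted, harmonicity gives $e^{\mp\lambda\triangle}p=p$, and applying $\mathcal{G}^{\lambda}$ together with (\ref{eqn44}) yields the proposition; your argument then stands as a complete, self-contained proof.

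One byproduct of your formula is worth recording: it shows $\tau^{\lambda}(p)=\mathcal{W}^{\lambda}\big(\cosh(\lambda\triangle)p\big)$ for every polynomial $p$, so Remark \ref{eqn r5.6} fails as stated; for $p=\zeta_{1}^{2}\bar{\zeta}_{1}^{2}$ one gets $\tau^{\lambda}(p)-\mathcal{W}^{\lambda}(p)=2\lambda^{2}I$, which can also be checked directly from (\ref{eqn41}). Thus the restriction to harmonic polynomials in part \textbf{(b)} is essential, not merely convenient.
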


\begin{rem}\label{eqn r5.6}
In fact one can prove that for any polynomial $p$, $\mathcal{W}^{\lambda}
(p)=\tau^{\lambda}(p)$. Since we will be dealing with only harmonic polynomials we 
don't need this general result.
\end{rem}

We conclude this section with a short discussion about Gelfand pairs and $K$-
spherical functions on $\mathbb{H}^{n}$. For details see Benson et al. \cite{BJR}. 
Let $K$ be a compact Lie subgroup of Aut($\mathbb{H}^{n}$), the group automorphisms of 
$\mathbb{H}^{n}$. Each $k\in U(n)$, the group of $n\times n$ unitary 
matrices on $\mathbb{C}^{n}$, gives rise to an automorphism of $\mathbb{H}^{n}$, via 
$k\cdot (z,t)=(k\cdot z,t)$. So we can consider $U(n)$ as a subgroup of 
Aut($\mathbb{H}^{n}$). In fact $U(n)$ is a maximal connected, compact subgroup of 
Aut($\mathbb{H}^{n}$), and thus any connected, compact subgroup of 
Aut($\mathbb{H}^{n}$) is the conjugate of a subgroup $K$ of $U(n)$. The pair $(K,\mathbb{H}^{n})$ is called a Gelfand pair if $L^{1}_{K}
(\mathbb{H}^{n})$, the convolution subalgebra of $K$-invariant $L^{1}$ functions on 
$\mathbb{H}^{n}$, is commutative. Since conjugates 
of $K$ form Gelfand pairs with $\mathbb{H}^{n}$ if and only if $K$ does, and produce 
the same joint eigenfunctions for all $D\in \mathcal{L}_{K}(\mathfrak{h}_{n})$ (the set 
of all differential operators on $\mathbb{H}^{n}$ that are invariant under the action 
of $K$ and the left action of $\mathbb{H}^{n}$), which is our main interest in this 
paper, we will always assume that we are dealing with a connected, compact subgroup 
$K$ of $U(n)$. The $K$-action on $\mathbb{C}^{n}$ gives rise to a natural action on a 
function $f$ on $\mathbb{C}^{n}$ given by $k\cdot f(z)=f(k^{-1}\cdot z)$. Under this 
action we have the decomposition of $\mathcal{P}(\mathbb{C}^{n})$ into $K$-irreducible 
subspaces as  $$\mathcal{P}
(\mathbb{C}^{n})=\bigoplus_{\alpha\in\Lambda}V_{\alpha}~\textup{(algebraic direct 
sum)}.$$ Here $\Lambda$ denotes a countably infinite index set. Since $\mathcal{P}_{m}
(\mathbb{C}^{n})$, the space of    homogeneous holomorphic polynomials of degree $m$, 
is invariant under the $K$-action (as $K\subset U(n)$), we can take each $V_{\alpha}$ 
to be contained in some $\mathcal{P}_{m}(\mathbb{C}^{n})$. Define the unitary 
representation $U^{\lambda}$ of $K$ on the Hilbert space $\mathcal{H}^{\lambda}$ as 
follows: if $u\in\mathcal{H}^{\lambda}$,
\begin{eqnarray*} U^{\lambda}(k)u=\begin{cases}\bar{k}\cdot u ~\textup{if} 
~\lambda>0\\ k\cdot u ~\textup{if}~ \lambda<0\,.\end{cases}
\end{eqnarray*}
Since $(K,\mathbb{H}^{n})$ is a Gelfand pair, $U^{\lambda}$ is multiplicity free (see 
Benson et al. \cite{BJR}, Theorem 1.7). $\mathcal{P}(\mathbb{C}^{n})$ being dense in 
$\mathcal{H}^{\lambda}$, we get the
same decomposition of $\mathcal{H}^{\lambda}$ into $U^{\lambda}$-irreducible subspaces 
: 
$$\mathcal{H}^{\lambda}=\bigoplus^{\bot}_{\alpha\in\Lambda}V_{\alpha}~\textup{(orthogonal 
Hilbert space decomposition)}.$$ Choose a basis $\{e^{\lambda}_{\alpha\nu}:\nu=1,2,
\cdots d(\alpha)\}$ for each $V_{\alpha}$ so that $\{e^{\lambda}_{\alpha\nu}:
\alpha\in\Lambda,~\nu=1,2,\cdots d(\alpha)\}$ is an orthonormal basis for 
$\mathcal{H}^{\lambda}$. We will use this basis in the later sections. The behaviour 
of $K$-action on a function under Weyl transform is given by the following 
proposition.

\begin{prop}\label{eqn p5.7}
\textup{(Geller \cite{G}, Proposition 1.3)}
\begin{itemize}
\item [\textbf{(a)}] $\Pi^{\lambda}(k\cdot z)=\big(U^{\lambda}
(k)\big)^{-1}\Pi^{\lambda}(z)\big(U^{\lambda}(k)\big)$.
\item [\textbf{(b)}] If $f\in L^{2}(\mathbb{C}^{n})$,~~ $\mathcal{G}^{\lambda}(k\cdot 
f)=\left(U^{\lambda}(k)\right)\mathcal{G}^{\lambda}f\left(U^{\lambda}(k)\right)^{-1}.$
\item [\textbf{(c)}] For any polynomial $p$, $\mathcal{W}^{\lambda}(k\cdot 
p)=\left(U^{\lambda}(k)\right)\mathcal{W}^{\lambda}(p)\left(U^{\lambda}
(k)\right)^{-1}.$
\end{itemize}
\end{prop}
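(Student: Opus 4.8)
The plan is to establish (a) by a direct computation on the operators $V_z^\lambda$, then to deduce (b) from (a) by a change of variables in the integral defining $\mathcal{G}^\lambda$, and finally to obtain (c) from (b) by passing through the relation $\mathcal{W}^\lambda=\mathcal{G}^\lambda\circ(\mathcal{F}')^{-1}$. I expect essentially all of the genuine content to sit in part (a); parts (b) and (c) are then formal unwindings.

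For (a), since $\Pi^\lambda(z,t)=e^{i\lambda t}V_z^\lambda$ and the scalar $e^{i\lambda t}$ is untouched by conjugation, it suffices to treat $\Pi^\lambda(z)=V_z^\lambda=\exp(-z\cdot\overline W^\lambda+\bar z\cdot W^\lambda)$. Taking $\lambda>0$ first, I would record how the generators transform under conjugation by the unitary $U^\lambda(k)$. Because $U^\lambda(k)$ acts on $\mathcal{P}(\mathbb{C}^n)$ by the linear substitution $u(w)\mapsto u(\bar k^{-1}w)$, multiplying resp. differentiating through this substitution and using $\bar k^{-1}=k^{t}$ (valid since $k$ is unitary) gives
\[
U^\lambda(k)\,\overline W_j^\lambda\,U^\lambda(k)^{-1}=\sum_l k_{lj}\,\overline W_l^\lambda,\qquad U^\lambda(k)\,W_j^\lambda\,U^\lambda(k)^{-1}=\sum_l \bar k_{lj}\,W_l^\lambda .
\]
Contracting against $z_j$ and $\bar z_j$, the unitarity of $k$ is exactly what collapses the two sums into $-(k\cdot z)\cdot\overline W^\lambda+\overline{(k\cdot z)}\cdot W^\lambda$. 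Since $-z\cdot\overline W^\lambda+\bar z\cdot W^\lambda$ is skew-adjoint (its $i$ times it is self-adjoint) and $U^\lambda(k)$ is unitary, conjugation commutes with the exponential, so the conjugate of $V_z^\lambda$ is the exponential of the conjugated generator, namely $V_{k\cdot z}^\lambda$; this is the covariance relation of (a). The case $\lambda<0$ is identical with the roles of multiplication and differentiation interchanged. As a rigorous alternative that sidesteps exponentiating unbounded operators, I would instead verify (a) by a one-line substitution directly in the explicit formula $(V_z^\lambda u)(w)=u(w+\bar z)\exp[-2\lambda(w\cdot z+|z|^2/2)]$, where $|k\cdot z|=|z|$ and the identity $\bar k^{-1}w\cdot z=w\cdot(k\cdot z)$ make the Gaussian exponents match. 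The main obstacle is precisely this step: tracking the conjugate-transpose of $k$ through the holomorphic substitution and confirming that unitarity reconciles the substitution $w\mapsto\bar k^{-1}w$ with the parameter action $z\mapsto k\cdot z$.

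For (b), I would write $\mathcal{G}^\lambda(k\cdot f)=\int_{\mathbb{C}^n}f(k^{-1}\cdot z)\,\Pi^\lambda(z)\,dz\,d\bar z$ and substitute $z=k\cdot w$. As $k$ is unitary its real Jacobian has modulus one, so Lebesgue measure is preserved and the integral becomes $\int_{\mathbb{C}^n}f(w)\,\Pi^\lambda(k\cdot w)\,dw\,d\bar w$. Inserting the covariance relation of (a) for $\Pi^\lambda(k\cdot w)$ and pulling the unitaries $U^\lambda(k)^{\pm1}$, which do not depend on $w$, outside the integral yields the stated identity, first for $f\in\mathscr{S}(\mathbb{C}^n)$ and then for all $f\in L^2(\mathbb{C}^n)$ by density together with the Plancherel isometry of Theorem \ref{eqn t5.1}.

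For (c), the key auxiliary fact is that $\mathcal{F}'$ intertwines the $K$-action, i.e. $\mathcal{F}'(k\cdot f)=k\cdot(\mathcal{F}'f)$, whence $(\mathcal{F}')^{-1}$ does as well. This follows from the same substitution $z=k\cdot w$ in the definition of $\mathcal{F}'$, using that the form $-z\cdot\bar\zeta+\bar z\cdot\zeta$ is invariant under $(z,\zeta)\mapsto(k\cdot z,k\cdot\zeta)$ because $k$ preserves the Hermitian inner product. Then for a polynomial $p$,
\[
\mathcal{W}^\lambda(k\cdot p)=\mathcal{G}^\lambda\big((\mathcal{F}')^{-1}(k\cdot p)\big)=\mathcal{G}^\lambda\big(k\cdot(\mathcal{F}')^{-1}p\big)=U^\lambda(k)\,\mathcal{G}^\lambda\big((\mathcal{F}')^{-1}p\big)\,U^\lambda(k)^{-1}=U^\lambda(k)\,\mathcal{W}^\lambda(p)\,U^\lambda(k)^{-1},
\]
where the third equality is the extension of (b) to the Weyl-transformable tempered distribution $(\mathcal{F}')^{-1}p$. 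That extension is the one remaining technical point: I would obtain it either by duality, checking that $U^\lambda(k)\,\mathcal{G}^\lambda(T)\,U^\lambda(k)^{-1}$ satisfies the defining relation of $\mathcal{G}^\lambda(k\cdot T)$ via $(k\cdot T)(f)=T(k^{-1}\cdot f)$, the unitarity of $U^\lambda(k)$, and (b) applied to the test function, or by the limiting argument of Proposition \ref{eqn p5.2} after approximating $T$ by Schwartz functions.
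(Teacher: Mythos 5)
Your proposal is correct in substance, and on part (c) --- the only part for which the paper supplies an argument at all --- it follows exactly the paper's route: the paper quotes (a) and (b) from Geller's Proposition 1.3 and disposes of (c) in two lines, namely that (b) extends to Weyl-transformable tempered distributions via the defining duality relation, and that $\mathcal{F}^{\prime}$ commutes with the $K$-action, so $(\mathcal{F}^{\prime})^{-1}$ does too. Your proofs of (a) and (b) are therefore additions rather than parallels to anything in the paper; both the generator-conjugation computation and the change-of-variables argument are the standard ones.

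One thing you should not paper over: the relation your computation in (a) actually yields is $\Pi^{\lambda}(k\cdot z)=U^{\lambda}(k)\Pi^{\lambda}(z)U^{\lambda}(k)^{-1}$, with the unitaries on the opposite sides from the printed statement. Indeed, contracting your relation $U^{\lambda}(k)\overline{W}^{\lambda}_{j}U^{\lambda}(k)^{-1}=\sum_{l}k_{lj}\overline{W}^{\lambda}_{l}$ with $z_{j}$ gives $U^{\lambda}(k)\,(z\cdot\overline{W}^{\lambda})\,U^{\lambda}(k)^{-1}=(k\cdot z)\cdot\overline{W}^{\lambda}$, hence $U^{\lambda}(k)V^{\lambda}_{z}U^{\lambda}(k)^{-1}=V^{\lambda}_{k\cdot z}$; the same comes out of the explicit-formula check, since $\overline{k}^{-1}w\cdot z=w\cdot(k\cdot z)$. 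This sided version is in fact the correct one under the paper's conventions ($k\cdot f=f\circ k^{-1}$ and $U^{\lambda}(k)u=\bar{k}\cdot u$ for $\lambda>0$), and it is the only one compatible with (b): if you insert the printed (a) into your own change-of-variables derivation, you obtain $\mathcal{G}^{\lambda}(k\cdot f)=U^{\lambda}(k)^{-1}\mathcal{G}^{\lambda}(f)U^{\lambda}(k)$, contradicting (b) as stated and as used throughout the paper (for instance in the proofs of Propositions \ref{eqn p6.1} and \ref{eqn p7.2}). So the printed (a) has the conjugation reversed --- equivalently, it holds with $k^{-1}\cdot z$ in place of $k\cdot z$ --- and your proof silently corrects this; asserting that your computed relation ``is the covariance relation of (a)'' hides that correction rather than flagging it. With that caveat, your (b) follows correctly from the corrected (a), and your two suggested routes for extending (b) to Weyl-transformable distributions (duality via the defining relation, or approximation via Proposition \ref{eqn p5.2}) are both sound; the first is what the paper intends.
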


In fact, \textbf{(c)} is not proved in \cite{G}. But using the definition of Weyl 
transform of a tempered distribution, one can show that \textbf{(b)} is true for any 
Weyl transformable tempered distribution. Since Euclidean Fourier transform commutes 
with the action of $K$, \textbf{(c)} follows.

A smooth $K$-invariant function $\phi:\mathbb{H}^{n}\longrightarrow\mathbb{C}$ is 
called $K-spherical$ if $\phi(0,0)=1$ and $\phi$ is a joint eigenfunction for all 
$D\in\mathcal{L}_{K}(\mathfrak{h}_{n})$. In \cite{BJR}, the authors describe all bounded 
$K$-spherical functions, their forms, and the corresponding eigenvalues. We summarise 
these in the following theorem. Assume that $dk$ is the normalized Haar measure on 
$K$.

\begin{thm}\label{eqn t5.8}
There are two distinct classes of bounded $K$-spherical functions.\\
\textup{(a)} The first type is parametrized  by $(\lambda,
\alpha)\in\mathbb{R}^{*}\times\Lambda$ \textup{(}$\mathbb{R}^{*}$ denotes the set of 
all non-zero real numbers\textup{)}, and given by $$\phi^{\lambda}_{\alpha}
(z,t)=\int_{K}\langle\Pi^{\lambda}\left(k\cdot(z,t)\right)v,v\rangle dk,$$ for any 
unit vector $v\in V_{\alpha}$. Each $\phi^{\lambda}_{\alpha}$ has the form 
$$\phi^{\lambda}_{\alpha}(z,t)=e^{i\lambda t}q^{\lambda}_{\alpha}(z)e^{-|\lambda||z|
^{2}},$$ where $q_{\alpha}^{\lambda}(z)$ is a polynomial. The corresponding eigenvalue 
$\widetilde{\mu}^{\lambda}_{\alpha}$s are distinct, and can be obtained from the equation 
\textup{(}for any non-zero $v\in V_{\alpha}\textup{)}$, \bea\label{eqn45}\Pi^{\lambda}
(D)v=\widetilde{\mu}^{\lambda}_{\alpha}(D)v~\forall D\in\mathcal{L}_{K}(\mathfrak{h}_{n}).
\eea
\textup{(b)} The second type is parametrized by  $\mathbb{C}^{n}/K$, the space of $K$-orbits in $\mathbb{C}^{n}$. For $\omega\in\mathbb{C}^{n}$ we write $\eta_{\omega}$ 
for the associated $K$-spherical function. One has 
$\eta_{\omega}=\eta_{\omega^{\prime}}$, if $K\cdot\omega=K\cdot\omega^{\prime}$. 
$\eta_{\omega}(z,t)$ is independent of $t$, and is given by $$\eta_{\omega}
(z,t)=\int_{K}e^{i\textup{Re}\langle\omega,k\cdot z\rangle}dk.$$
\end{thm}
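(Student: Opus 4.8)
The plan is to realize every bounded $K$-spherical function through an irreducible unitary representation of $\mathbb{H}^n$, exploiting that a smooth bounded $K$-invariant $\phi$ with $\phi(0,0)=1$ is $K$-spherical exactly when it solves the spherical functional equation
$$\int_K\phi\big(x\cdot(k\cdot y)\big)\,dk=\phi(x)\phi(y),\qquad x,y\in\mathbb{H}^n,$$
equivalently when $f\mapsto\int_{\mathbb{H}^n}f\,\overline{\phi}$ is a character of the commutative convolution algebra $L^1_K(\mathbb{H}^n)$ (Godement's theory for the Gelfand pair). By the Stone--von Neumann theorem the irreducible unitary representations of $\mathbb{H}^n$ split into those nontrivial on the center, namely the $\Pi^\lambda$ with $\lambda\in\mathbb{R}^*$, and the one-dimensional characters $(z,t)\mapsto e^{i\operatorname{Re}\langle w,z\rangle}$ that are trivial on the center. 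This dichotomy is what produces the two families (a) and (b), and the two are automatically disjoint since type (a) depends on $t$ through $e^{i\lambda t}$ with $\lambda\neq 0$ while type (b) is $t$-independent.

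For type (a) I would begin from the multiplicity-free decomposition $\mathcal{H}^\lambda=\bigoplus^{\bot}_{\alpha\in\Lambda}V_\alpha$ under $U^\lambda$, which holds because $(K,\mathbb{H}^n)$ is a Gelfand pair (Benson et al., Theorem 1.7, recalled above). Fix $\lambda\in\mathbb{R}^*$, $\alpha\in\Lambda$, and a unit vector $v\in V_\alpha$, and set $\phi^\lambda_\alpha(z,t)=\int_K\langle\Pi^\lambda(k\cdot(z,t))v,v\rangle\,dk$. Using $\Pi^\lambda(z,t)=e^{i\lambda t}\Pi^\lambda(z)$ together with Proposition \ref{eqn p5.7}(a), this equals $e^{i\lambda t}\langle A_z v,v\rangle$, where $A_z=\int_K U^\lambda(k)^{-1}\Pi^\lambda(z)U^\lambda(k)\,dk$ commutes with all $U^\lambda(k')$; by multiplicity-freeness and Schur's lemma $A_z$ is a scalar $c_\alpha(z)$ on each $V_\alpha$, so $\phi^\lambda_\alpha(z,t)=e^{i\lambda t}c_\alpha(z)$ independently of the chosen unit vector $v\in V_\alpha$. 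The same commutation argument applied to $\Pi^\lambda(D)$ for $D\in\mathcal{L}_K(\mathfrak{h}_n)$ shows $\Pi^\lambda(D)$ is scalar on $V_\alpha$, which both gives the joint-eigenfunction property and identifies the eigenvalue through (\ref{eqn45}). For the explicit shape $e^{i\lambda t}q^\lambda_\alpha(z)e^{-|\lambda||z|^2}$ I would compute the matrix coefficient directly in the Bargmann--Fock model: the explicit formula for $\Pi^\lambda(z,t)$ recorded in Section 5 shows $\langle\Pi^\lambda(z,t)u^\lambda_\nu,u^\lambda_\mu\rangle$ is $e^{i\lambda t}$ times a polynomial in $(z,\bar z)$ times $e^{-|\lambda||z|^2}$, and averaging over the compact group $K$ preserves this form, yielding the polynomial $q^\lambda_\alpha$.

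For type (b) the one-dimensional representations trivial on the center are $\chi_w(z,t)=e^{i\operatorname{Re}\langle w,z\rangle}$, and their $K$-average $\eta_\omega(z,t)=\int_K e^{i\operatorname{Re}\langle\omega,k\cdot z\rangle}\,dk$ is the associated spherical function. It is manifestly $K$-invariant, independent of $t$ because each $\chi_w$ is, and satisfies the functional equation because the $\chi_w$ are homomorphisms; substituting $\omega\mapsto k_0\cdot\omega$ merely reparametrizes the $K$-integral, so $\eta_\omega$ depends only on the orbit $K\cdot\omega$ and the family is parametrized by $\mathbb{C}^n/K$.

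The main obstacle is completeness, that is, showing (a) and (b) exhaust all bounded $K$-spherical functions, together with the distinctness of the eigenvalues $\widetilde\mu^\lambda_\alpha$ in (a). For completeness I would appeal to the bijection between bounded spherical functions and characters of $L^1_K(\mathbb{H}^n)$ and the fact that each such character factors through an irreducible unitary representation, so that the Stone--von Neumann classification leaves no further possibilities beyond the projections computed above. Distinctness of the $\widetilde\mu^\lambda_\alpha$ follows because inequivalent summands $V_\alpha$ of the multiplicity-free decomposition are separated by the joint spectrum of $\mathcal{L}_K(\mathfrak{h}_n)$: two spherical functions with identical eigenvalue data would define the same algebra character and hence coincide. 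These are exactly the delicate points carried out in Benson et al. \cite{BJR}, which I would cite for the full verification.
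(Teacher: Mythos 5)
The paper offers no proof of this statement: Theorem \ref{eqn t5.8} is introduced with the sentence ``In \cite{BJR}, the authors describe all bounded $K$-spherical functions\dots We summarise these in the following theorem,'' so the paper's ``proof'' is a citation to Benson--Jenkins--Ratcliff. Your proposal therefore cannot be compared to an argument in the paper itself; what it can be compared to is the source it (and you) ultimately lean on, and on that score your sketch is a faithful outline of the standard argument. The constructive half is correct as written: for type (a), averaging $\Pi^{\lambda}$ over $K$ and using Proposition \ref{eqn p5.7}(a) produces an operator commuting with every $U^{\lambda}(k)$, so multiplicity-freeness plus Schur's lemma makes it scalar on each $V_{\alpha}$, independent of the unit vector $v$; the same commutant argument applied to $\Pi^{\lambda}(D)$ gives the eigenvalue equation (\ref{eqn45}); and the Bargmann--Fock computation does show each matrix coefficient $\langle\Pi^{\lambda}(z,t)u^{\lambda}_{\nu},u^{\lambda}_{\mu}\rangle$ is $e^{i\lambda t}$ times a polynomial in $(z,\bar z)$ times $e^{-|\lambda||z|^{2}}$, a form preserved by $K$-averaging. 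Type (b) is likewise routine.

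The one genuine soft spot is exactly where you placed your citation: completeness. Your argument asserts that every character of the commutative algebra $L^{1}_{K}(\mathbb{H}^{n})$ ``factors through an irreducible unitary representation,'' but this is not automatic. The Godement/GNS correspondence attaches unitary representations to \emph{positive-definite} spherical functions, and for a general Gelfand pair a bounded spherical function need not be positive definite; that every bounded $K$-spherical function on $\mathbb{H}^{n}$ is of one of the two listed forms (equivalently, is positive definite for these pairs) is precisely the nontrivial content of \cite{BJR}, not a formal consequence of Stone--von Neumann. Since you explicitly defer this point to \cite{BJR} rather than claim it, your treatment ends up logically equivalent to the paper's own (pure citation), with the added value that the construction and disjointness of the two families, and the scalar/eigenvalue mechanism behind (\ref{eqn45}), are made explicit. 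If you wanted a self-contained proof, the missing ingredient to supply would be: a bounded $K$-spherical $\phi$ restricts on the center to $e^{i\lambda t}$ with $\lambda$ real; for $\lambda\neq 0$ one must show $\phi$ arises from $\Pi^{\lambda}$ as in (a), and for $\lambda=0$ that it descends to a spherical function of the Euclidean pair $(K\ltimes\mathbb{C}^{n},K)$, giving (b) — and both reductions require real work.
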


Let $\mathcal{L}_{K}^{\lambda}(\mathbb{C}^{n})=\{D^{\lambda} : D\in\mathcal{L}_{K}
(\mathfrak{h}_{n})\}$. Clearly $\mathcal{L}_{K}^{\lambda}
(\mathbb{C}^{n})\subset\mathcal{L}^{\lambda}(\mathbb{C}^{n})$.
Define $$\psi_{\alpha}^{\lambda}(z)=\frac{1}
{\kappa_{\alpha}^{\lambda}}\int_{K}\left\langle\Pi^{\lambda}\left(k\cdot 
z)\right)v,v\right\rangle dk,$$ where $v$ is any unit vector in $V_{\alpha},$ and 
$\kappa_{\alpha}^{\lambda}$ is the square of $L^{2}$ norm of 
$\int_{K}\left\langle\Pi^{\lambda}\left(k\cdot z)\right)v,v\right\rangle dk$. The 
functions $\psi_{\alpha}^{\lambda}$ are real valued and 
$\psi_{\alpha}^{\lambda}=\psi_{\alpha}^{-\lambda}$ (see \cite{BJR}, Remark, page-428). 
Therefore we can write $\phi_{\alpha}^{-\lambda}(z,t)=\kappa_{\alpha}^{\lambda}e^{-
i\lambda t}\psi_{\alpha}^{\lambda}$. Then with the property $||
\psi^{\lambda}_{\alpha}||^{2}_{2}=\frac{1}{\kappa_{\alpha}^{\lambda}}$, 
$\psi_{\alpha}^{\lambda}(z)$ is the unique (upto constant multiple) bounded joint eigenfunction for all 
$D^{\lambda}\in\mathcal{L}_{K}^{\lambda}(\mathbb{C}^{n})$ with the eigenvalue 
$\mu^{\lambda}_{\alpha}$, where $\mu_{\alpha}^{\lambda}
(D^{\lambda})=\widetilde{\mu}_{\alpha}^{-\lambda}(D)$ for all $D\in\mathcal{L}_{K}
(\mathfrak{h}_{n}).$

\begin{rem}\label{eqn r5.9}
Equation (\ref{eqn45}) can be restated in terms of Weyl transform as (for any non-zero 
$v\in V_{\alpha}$) $$\mathcal{G}^{\lambda}(D)v=\mu_{\alpha}^{\lambda}(D)v~\forall 
D\in\mathcal{L}^{\lambda}_{K}(\mathbb{C}^{n}).$$
\end{rem}

\begin{rem}\label{eqn r5.10}
Let $K=U(n)$. $\mathcal{L}^{\lambda}_{K}(\C)$ is generated by the special Hermite operator $$\mathcal{L}^{\lambda}:=\sum_{j=1}^{n}L^{\lambda}_j 
\overline{L}^{\lambda}_j+\overline{L}^{\lambda}_jL^{\lambda}_j=\sum^{n}_{j=1}\frac{\partial}
{\partial\bar{z}_j}\frac{\partial}{\partial z_j}-\lambda|z|
^{2}+\sum_{j=1}^{n}\bigg(\bar{z}_j\frac{\partial}{\partial\bar{z}_j}-
z_j\frac{\partial}{\partial z_j}\bigg).$$ The decomposition of $\mathcal{P}(\C)$ into $K$-irreducible subspaces is given by $\mathcal{P}(\C)=\oplus_{k\in\mathbb{Z}^{+}}\mathcal{P}_{k}(\C)$. Recall that $\mathcal{P}_{k}(\C)$ is the space of all homogeneous holomorphic polynomial on $\C$ of degree $k$. The bounded $K$-spherical functions are parametrized by $\mathbb{Z}^{+}$, the set of non negative integers. The corresponding $\psi_{k}^{\lambda}$'s are given by $$\psi^{\lambda}_{k}(z)=\pi^{-n}(2|\lambda|)^nL^{n-1}_k\big(2|\lambda||z|^2\big)e^{-|\lambda||z|^2},$$ where $L^{n-1}_k$ is the Laguerre polynomial of type $n-1$, and the corresponding eigenvalues are given by $\mu^{\lambda}_{k}(\mathcal{L}^{\lambda})=-2|\lambda|(2k+n)$. It is easy to see that (or by Corollary 2.3 in \cite{BJR}), $\psi^{\lambda}_k=\Sigma_{V_{\alpha}\subset\mathcal{P}_{k}(\C)}\psi^{\lambda}_{\alpha}$.  
\end{rem}

\section{Weyl transform of $K$-invariant functions}
Through out this section we assume that $(K,\mathbb{H}^{n})$ is a Gelfand pair. Let 
$\lambda\in\mathbb{R}^{*}$ be fixed.

\begin{prop}\label{eqn p6.1}
Let $T\in\mathscr{S}^{\prime}(\mathbb{C}^{n})$ be $K$-invariant and Weyl 
transformable. Then $\mathcal{G}^{\lambda}T$ is a constant multiple of the identity 
operator on each $V_{\alpha}$.
\end{prop}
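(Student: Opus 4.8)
The plan is to show that $K$-invariance of $T$ forces $\mathcal{G}^{\lambda}T$ to commute with the representation $U^{\lambda}$, and then to invoke the multiplicity-free property of $U^{\lambda}$ together with Schur's lemma applied summand by summand.

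First I would record the intertwining relation. By Proposition \textbf{\ref{eqn p5.7}} \textbf{(b)}, extended to Weyl transformable tempered distributions as noted in the remarks following that proposition, we have
$$\mathcal{G}^{\lambda}(k\cdot T)=U^{\lambda}(k)\,\mathcal{G}^{\lambda}(T)\,U^{\lambda}(k)^{-1},\quad k\in K.$$
Since $T$ is $K$-invariant, $k\cdot T=T$, and therefore $\mathcal{G}^{\lambda}(T)$ commutes with $U^{\lambda}(k)$ for every $k\in K$, as operators on the common domain $\mathcal{P}(\mathbb{C}^{n})$.

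Next I would exploit the Gelfand pair hypothesis. By Benson et al. \cite{BJR}, Theorem 1.7, the representation $U^{\lambda}$ is multiplicity free, so in the decomposition $\mathcal{H}^{\lambda}=\bigoplus^{\bot}_{\alpha\in\Lambda}V_{\alpha}$ the irreducible summands are pairwise inequivalent, and each $V_{\alpha}$ is finite dimensional since it sits inside some $\mathcal{P}_{m}(\mathbb{C}^{n})$. Write $A=\mathcal{G}^{\lambda}(T)$ and let $P_{\alpha}$ denote the orthogonal projection of $\mathcal{H}^{\lambda}$ onto $V_{\alpha}$; because each $V_{\alpha}$ is $U^{\lambda}$-invariant and $U^{\lambda}$ is unitary, $P_{\alpha}$ commutes with every $U^{\lambda}(k)$. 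For $v\in V_{\beta}\subset\mathcal{P}(\mathbb{C}^{n})$ the vector $Av$ lies in $\mathcal{H}^{\lambda}$, and for each $\alpha$ the map $v\mapsto P_{\alpha}Av$ from $V_{\beta}$ into $V_{\alpha}$ intertwines the $K$-actions, since
$$P_{\alpha}A\,U^{\lambda}(k)v=P_{\alpha}U^{\lambda}(k)Av=U^{\lambda}(k)P_{\alpha}Av.$$
By Schur's lemma this intertwiner vanishes when $\alpha\neq\beta$ (inequivalent irreducibles) and equals a scalar $c_{\beta}$ when $\alpha=\beta$. Hence $Av=P_{\beta}Av=c_{\beta}v$ for all $v\in V_{\beta}$, which is precisely the assertion that $\mathcal{G}^{\lambda}T$ acts as a constant multiple of the identity on each $V_{\alpha}$.

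The only delicate point, which I expect to be the main thing to handle with care, is that $\mathcal{G}^{\lambda}(T)$ is a priori an unbounded element of $\mathcal{O}(\mathcal{H}^{\lambda})$. I would therefore apply every identity only on the dense domain $\mathcal{P}(\mathbb{C}^{n})$, which contains all the $V_{\alpha}$; since each $V_{\alpha}$ is finite dimensional and $U^{\lambda}$-invariant, the restriction of $A$ to it is an honest linear endomorphism and the Schur argument runs without domain difficulties. Correspondingly, the substantive input to verify is the distributional version of Proposition \textbf{\ref{eqn p5.7}} \textbf{(b)}, namely that the equivariance of the Weyl transform persists in passing from $L^{2}$ functions to Weyl transformable distributions.
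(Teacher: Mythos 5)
Your proof is correct, but it routes around the distributional subtlety differently than the paper does. You invoke the equivariance $\mathcal{G}^{\lambda}(k\cdot T)=U^{\lambda}(k)\mathcal{G}^{\lambda}(T)U^{\lambda}(k)^{-1}$ directly at the level of Weyl transformable tempered distributions, citing the remark following Proposition \textbf{\ref{eqn p5.7}}, and then run Schur's lemma on the (possibly unbounded) operator $\mathcal{G}^{\lambda}(T)$, with the domain and projection bookkeeping you supply. The paper instead never applies the equivariance to $T$ itself: it regularizes, choosing smooth, compactly supported, $K$-invariant functions $f_{j}\rightarrow T$ in $\mathscr{S}^{\prime}(\mathbb{C}^{n})$, applies Proposition \textbf{\ref{eqn p5.7}} \textbf{(b)} and Schur's lemma to each $\mathcal{G}^{\lambda}f_{j}$ (which, being Hilbert--Schmidt, raises no domain issues), and then passes to the limit using Proposition \textbf{\ref{eqn p5.2}}, which gives convergence of the matrix entries $\langle\mathcal{G}^{\lambda}(f_{j})u,v\rangle\rightarrow\langle\mathcal{G}^{\lambda}(T)u,v\rangle$ on polynomials. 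The trade-off: your argument is shorter and dispenses with the approximation step, but its entire weight rests on the distributional version of Proposition \textbf{\ref{eqn p5.7}} \textbf{(b)}, which the paper asserts in a remark without proof (and which itself requires either a basis-independence/trace argument in Geller's framework or, circularly, an approximation argument of the very kind the paper uses); the paper's route uses only the $L^{2}$ equivariance, which is fully proved in Geller, plus the soft weak-convergence statement, and it exhibits the regularize-then-pass-to-the-limit technique that the paper reuses later (e.g., in Lemma \textbf{\ref{eqn l7.8}} and Theorem \textbf{\ref{eqn t7.9}}). Your explicit treatment of the unbounded operator via the projections $P_{\alpha}$ is a fine point the paper's proof deliberately avoids having to address.
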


\begin{proof}
For simplicity of notation we suppress the superscript $\lambda$ from the notation 
introduced in the previous section. Since $T$ is $K$-invariant, there exists a sequence 
$\{f_{j}\}$ of smooth, compactly supported, $K$-invariant functions on 
$\mathbb{C}^{n}$ such that $f_{j}$ converge  to $T$ in the topology of 
$\mathscr{S}^{\prime}(\C)$. By Proposition \textbf{\ref{eqn p5.7}} \textbf{(b)}, each 
$\mathcal{G}f_{j}$ commutes with all $U(k)$. Since the representation $U$ of $K$ on 
the various $V_{\alpha}$'s are irreducible and inequivalent, $\mathcal{G}f_{j}$ preserves each 
$V_{\alpha}$. Thus, by Schur's Lemma, $\mathcal{G}f_{j}$ is constant on each 
$V_{\alpha}$. Hence by Proposition \textbf{\ref{eqn p5.2}} we are done.                                                                                                                                                                                                                                                                                                                                                                                           
\end{proof}

Since the Euclidean Fourier transform commutes with the action of $K$, an easy consequence 
of the above proposition and Proposition \textbf{4.5 (a)} is the following corollary.

\begin{cor}\label{eqn c6.2}
Weyl correspondence of a $K$-invariant polynomial is constant on each $V_{\alpha}$.
\end{cor}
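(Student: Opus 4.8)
The plan is to reduce the statement directly to Proposition \ref{eqn p6.1} through the definition of the Weyl correspondence. Let $p$ be a $K$-invariant polynomial. By Proposition \ref{eqn p5.5} \textbf{(a)}, the distribution $\mathcal{F}^{\prime-1}p$ is Weyl transformable, so that $\mathcal{W}^{\lambda}(p)=\mathcal{G}^{\lambda}(\mathcal{F}^{\prime-1}p)$ is well defined. If I can show that $\mathcal{F}^{\prime-1}p$ is again $K$-invariant, then it is a $K$-invariant, Weyl transformable tempered distribution, and Proposition \ref{eqn p6.1} applies to it verbatim, giving that $\mathcal{G}^{\lambda}(\mathcal{F}^{\prime-1}p)=\mathcal{W}^{\lambda}(p)$ is a constant multiple of the identity on each $V_{\alpha}$. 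Thus the whole argument hinges on the single claim that $\mathcal{F}^{\prime-1}$ preserves $K$-invariance.

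The key step is therefore to verify that $\mathcal{F}^{\prime}$ (and hence its inverse, extended to $\mathscr{S}^{\prime}(\C)$) commutes with the $K$-action $k\cdot f(z)=f(k^{-1}\cdot z)$. This rests on two observations. First, since $K\subset U(n)$ acts on $\C\cong\mathbb{R}^{2n}$ by orthogonal transformations, the ordinary Euclidean Fourier transform $\mathcal{F}$ commutes with the $K$-action. Second, recall from the definition that $(\mathcal{F}^{\prime}f)(\zeta)=(\mathcal{F}f)(-2i\zeta)$, i.e. $\mathcal{F}^{\prime}$ is $\mathcal{F}$ followed by the scalar dilation $\zeta\mapsto -2i\zeta$; because $K$ acts linearly, this dilation satisfies $k\cdot(-2i\zeta)=-2i(k\cdot\zeta)$ and hence also commutes with the $K$-action. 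Combining the two, $\mathcal{F}^{\prime}(k\cdot f)=k\cdot\mathcal{F}^{\prime}(f)$ for Schwartz functions, and by duality this passes to tempered distributions, so $\mathcal{F}^{\prime-1}$ sends $K$-invariant distributions to $K$-invariant distributions. Applying this to the $K$-invariant polynomial $p$ shows $\mathcal{F}^{\prime-1}p$ is $K$-invariant, completing the reduction.

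I do not expect a serious obstacle here, since the text already signals that the result is ``an easy consequence'' of Proposition \ref{eqn p6.1} together with Proposition \ref{eqn p5.5} \textbf{(a)}. The only point requiring care is the intertwining of $\mathcal{F}^{\prime}$ with the $K$-action; once that is in hand, Weyl transformability of $\mathcal{F}^{\prime-1}p$ comes for free from Proposition \ref{eqn p5.5} \textbf{(a)} and the conclusion is immediate from Proposition \ref{eqn p6.1}. The mild subtlety is to phrase the commutation at the level of tempered distributions rather than just functions, but this is routine once the Schwartz-class identity is established.
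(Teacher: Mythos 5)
Your proof is correct and follows exactly the paper's intended argument: the paper's one-line justification invokes the same three ingredients (Weyl transformability of $\mathcal{F}^{\prime-1}p$ via Proposition \ref{eqn p5.5} \textbf{(a)}, commutation of the Fourier transform with the $K$-action, and Proposition \ref{eqn p6.1}), which you have simply spelled out in detail. The care you take with the dilation $\zeta\mapsto -2i\zeta$ commuting with the $\mathbb{C}$-linear action of $K\subset U(n)$ is a worthwhile precision, but it is the same route.
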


\begin{prop}\label{eqn p6.3}
$\mathcal{G}^{\lambda}(\psi^{\lambda}_{\alpha})=\mathcal{P}_{\alpha}$, where 
$\mathcal{P}_{\alpha}$ denotes the projection operator onto $V_{\alpha}$.
\end{prop}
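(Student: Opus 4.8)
The plan is to show that $A:=\mathcal{G}^{\lambda}(\psi^{\lambda}_{\alpha})$ coincides with $\mathcal{P}_{\alpha}$ by comparing the two operators block by block with respect to the decomposition $\mathcal{H}^{\lambda}=\bigoplus^{\bot}_{\beta\in\Lambda}V_{\beta}$. First I would observe that $\psi^{\lambda}_{\alpha}$ is $K$-invariant and, by its explicit form $q^{\lambda}_{\alpha}(z)e^{-|\lambda||z|^{2}}$ recorded in Theorem \ref{eqn t5.8}(a), lies in $\mathscr{S}(\C)$, hence is Weyl transformable. Proposition \ref{eqn p6.1} then applies and produces scalars $c_{\beta}$ with $A=\sum_{\beta\in\Lambda}c_{\beta}\mathcal{P}_{\beta}$; equivalently $A$ acts on each $V_{\beta}$ as $c_{\beta}\,\mathrm{Id}$ and commutes with every $U^{\lambda}(k)$. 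It remains to prove $c_{\beta}=\delta_{\alpha\beta}$.

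For the off-diagonal blocks I would use that $\psi^{\lambda}_{\alpha}$ is the bounded joint eigenfunction singled out just before Remark \ref{eqn r5.9}. Fix $D\in\mathcal{L}_{K}(\mathfrak{h}_{n})$, so that $D^{\lambda}\in\mathcal{L}^{\lambda}_{K}(\C)\subset\mathcal{L}^{\lambda}(\C)$ and $D^{\lambda}\psi^{\lambda}_{\alpha}=\mu^{\lambda}_{\alpha}(D^{\lambda})\psi^{\lambda}_{\alpha}$. Applying Corollary \ref{eqn c5.4} gives $\mu^{\lambda}_{\alpha}(D^{\lambda})\,A=\mathcal{G}^{\lambda}(D^{\lambda}\psi^{\lambda}_{\alpha})=A\,\mathcal{G}^{\lambda}(D^{\lambda})$. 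By Remark \ref{eqn r5.9} the operator $\mathcal{G}^{\lambda}(D^{\lambda})$ acts on $V_{\beta}$ as the scalar $\mu^{\lambda}_{\beta}(D^{\lambda})$, so reading the previous identity on $V_{\beta}$ yields $c_{\beta}\big(\mu^{\lambda}_{\beta}(D^{\lambda})-\mu^{\lambda}_{\alpha}(D^{\lambda})\big)=0$. Since the characters $\mu^{\lambda}_{\beta}=\widetilde{\mu}^{-\lambda}_{\beta}$ are pairwise distinct as $\beta$ varies (Theorem \ref{eqn t5.8}(a)), for every $\beta\neq\alpha$ I can choose $D$ with $\mu^{\lambda}_{\beta}(D^{\lambda})\neq\mu^{\lambda}_{\alpha}(D^{\lambda})$, which forces $c_{\beta}=0$.

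For the single surviving block I would compute $c_{\alpha}$ by an averaging trick rather than through Plancherel. Picking a unit vector $v\in V_{\alpha}$ we have $c_{\alpha}=\langle Av,v\rangle$, and since $A$ commutes with the unitaries $U^{\lambda}(k)$ we may average to get $c_{\alpha}=\int_{K}\langle A\,U^{\lambda}(k)v,\,U^{\lambda}(k)v\rangle\,dk=\int_{K}\int_{\C}\psi^{\lambda}_{\alpha}(z)\,\langle\Pi^{\lambda}(z)U^{\lambda}(k)v,U^{\lambda}(k)v\rangle\,dz\,d\bar z\,dk$. By Proposition \ref{eqn p5.7}(a) the inner matrix coefficient equals $\langle\Pi^{\lambda}(k\cdot z)v,v\rangle$, and performing the $k$-integration first recovers, by the very definition of $\psi^{\lambda}_{\alpha}$ and $\kappa^{\lambda}_{\alpha}$, the identity $\int_{K}\langle\Pi^{\lambda}(k\cdot z)v,v\rangle\,dk=\kappa^{\lambda}_{\alpha}\,\psi^{\lambda}_{\alpha}(z)$. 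Hence $c_{\alpha}=\kappa^{\lambda}_{\alpha}\int_{\C}\psi^{\lambda}_{\alpha}(z)^{2}\,dz\,d\bar z=\kappa^{\lambda}_{\alpha}\,\|\psi^{\lambda}_{\alpha}\|_{2}^{2}=1$, using that $\psi^{\lambda}_{\alpha}$ is real-valued with $\|\psi^{\lambda}_{\alpha}\|_{2}^{2}=1/\kappa^{\lambda}_{\alpha}$. Together with the previous step this gives $A=\mathcal{P}_{\alpha}$.

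I expect the crux of the argument to be the block-separation in the second step: it is the distinctness of the spherical eigenvalues that pins $A$ down to the single subspace $V_{\alpha}$, and everything hinges on being able to invoke Corollary \ref{eqn c5.4} for the Schwartz function $\psi^{\lambda}_{\alpha}$ together with the eigenfunction relation $D^{\lambda}\psi^{\lambda}_{\alpha}=\mu^{\lambda}_{\alpha}(D^{\lambda})\psi^{\lambda}_{\alpha}$. By comparison the normalization $c_{\alpha}=1$ is essentially bookkeeping, requiring only care with the real-valuedness of $\psi^{\lambda}_{\alpha}$ and an application of Fubini's theorem to the integral over $K\times\C$.
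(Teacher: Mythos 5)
Your proof is correct and takes essentially the same route as the paper's: Proposition \ref{eqn p6.1} gives the block-scalar form $c_{\beta}I$ on each $V_{\beta}$, Corollary \ref{eqn c5.4} together with Remark \ref{eqn r5.9} and the distinctness of the eigenvalues forces $c_{\beta}=0$ for $\beta\neq\alpha$, and the normalization $c_{\alpha}=1$ comes from the defining $K$-average of $\psi^{\lambda}_{\alpha}$, Fubini, real-valuedness, and $||\psi^{\lambda}_{\alpha}||_{2}^{2}=1/\kappa^{\lambda}_{\alpha}$. The only cosmetic differences are that you run the eigenvalue comparison at the operator level rather than on vectors, and you insert the $K$-average via commutation of $\mathcal{G}^{\lambda}(\psi^{\lambda}_{\alpha})$ with $U^{\lambda}(k)$ and Proposition \ref{eqn p5.7} (a), where the paper uses the $K$-invariance of $\psi^{\lambda}_{\alpha}$ directly.
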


\begin{proof}
As usual we suppress the superscript $\lambda$. By the previous proposition, 
$\mathcal{G}(\psi_{\alpha})$ is constant on each $V_{\beta}$, say $c_{\beta}I$. Let 
$v\in V_{\beta}$ be non-zero. Then $$\mathcal{G}(\psi_{\alpha})v=c_{\beta}v.$$ Let 
$D\in\mathcal{L}_{K}(\mathbb{C}^{n})$. By Corollary \textbf{\ref{eqn c5.4}} we have 
$$\mathcal{G}(\psi_{\alpha})\mathcal{G}(D)v=\mathcal{G}(D\psi_{\alpha})v=\mu_{\alpha}
(D)\mathcal{G}(\psi_{\alpha})v
=c_{\beta}\mu_{\alpha}(D)v.$$ Again, by Remark \textbf{\ref{eqn r5.9}}, $$\mathcal{G}
(\psi_{\alpha})\mathcal{G}(D)v=\mu_{\beta}(D)\mathcal{G}
(\psi_{\alpha})v=c_{\beta}\mu_{\beta}(D)v.$$ Therefore we have $$c_{\beta}\mu_{\alpha}
(D)v=c_{\beta}\mu_{\beta}(D)v.$$ This is true for all $D\in\mathcal{L}_{K}
(\mathbb{C}^{n})$.
Since $\mu_{\beta}\neq\mu_{\alpha}$ for $\beta\neq\alpha$, we get $c_{\beta}=0$. 
Therefore $\mathcal{G}(\psi_{\alpha})$ is zero on $V_{\beta}$, if $\beta\neq\alpha$. 
Now take a unit vector $u$ from $V_{\alpha}$. Then \beas c_{\alpha}&=& \langle 
\mathcal{G}(\psi_{\alpha})u,u\rangle \\&=& \int_{\mathbb{C}^{n}}\psi_{\alpha}
(z)\langle\Pi(z)u,u\rangle dz\\&=& \int_{K}\int_{\mathbb{C}^{n}}\psi_{\alpha}
(z)\langle\Pi(k\cdot z)u,u\rangle dz dk.\eeas Since $\Pi$ is unitary, $|
\langle\Pi(k\cdot z)u,u\rangle|\leq 1$. Therefore we can use Fubini's theorem to 
interchange the integrals and the fact that $\psi_{\alpha}$ are real valued to get \beas 
c_{\alpha}=\kappa_{\alpha}\int_{\mathbb{C}^{n}}\psi_{\alpha}(z)\psi_{\alpha}(z)dz= 
\kappa_{\alpha}||\psi_{\alpha}||_{2}^{2}=1.\eeas Hence the proof.
\end{proof}

\begin{prop}\label{eqn p6.4}
Let $f\in L^{2}(\mathbb{C}^{n})$. Then $f = 
\Sigma_{\alpha\in\Lambda}f\times^{\lambda}\psi_{\alpha}^{\lambda}$, where the series converges in $L^{2}(\C)$.
\end{prop}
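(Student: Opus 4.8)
The plan is to transfer the asserted identity to the operator side via the Weyl transform, where it becomes a transparent statement about orthogonal projections. By the Plancherel theorem (Theorem \textbf{\ref{eqn t5.1}}), $\mathcal{G}^{\lambda}$ is a constant multiple of a unitary map of $L^{2}(\C)$ onto the Hilbert--Schmidt operators $\mathcal{S}_{2}(\mathcal{H}^{\lambda})$; being a bounded bijection with bounded inverse, it preserves convergence of series in both directions. Hence it suffices to prove
$$\mathcal{G}^{\lambda}(f)=\sum_{\alpha\in\Lambda}\mathcal{G}^{\lambda}\big(f\times^{\lambda}\psi^{\lambda}_{\alpha}\big)\qquad\textup{in }\|\cdot\|_{\textup{HS}}.$$

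First I would rewrite each summand. Since $f,\psi^{\lambda}_{\alpha}\in L^{2}(\C)$ (the latter because $\|\psi^{\lambda}_{\alpha}\|_{2}^{2}=1/\kappa^{\lambda}_{\alpha}<\infty$), the twisted convolution lies in $L^{2}(\C)$ and $\mathcal{G}^{\lambda}(f\times^{\lambda}\psi^{\lambda}_{\alpha})=\mathcal{G}^{\lambda}(f)\,\mathcal{G}^{\lambda}(\psi^{\lambda}_{\alpha})$ by the multiplicativity of the Weyl transform recorded in Section 5. Proposition \textbf{\ref{eqn p6.3}} identifies $\mathcal{G}^{\lambda}(\psi^{\lambda}_{\alpha})=\mathcal{P}_{\alpha}$, the orthogonal projection of $\mathcal{H}^{\lambda}$ onto $V_{\alpha}$. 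Writing $S=\mathcal{G}^{\lambda}(f)\in\mathcal{S}_{2}(\mathcal{H}^{\lambda})$, the claim reduces to the general fact
$$\sum_{\alpha\in\Lambda}S\mathcal{P}_{\alpha}=S\qquad\textup{in }\|\cdot\|_{\textup{HS}},$$
for any Hilbert--Schmidt operator $S$, relative to the orthogonal decomposition $\mathcal{H}^{\lambda}=\bigoplus^{\bot}_{\alpha}V_{\alpha}$.

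This is where the real content lies. For a finite $F\subset\Lambda$ the partial sum is $\sum_{\alpha\in F}S\mathcal{P}_{\alpha}=SP_{F}$, where $P_{F}=\sum_{\alpha\in F}\mathcal{P}_{\alpha}$ is the projection onto $\bigoplus_{\alpha\in F}V_{\alpha}$. Using the adapted orthonormal basis $\{e^{\lambda}_{\alpha\nu}\}$ of $\mathcal{H}^{\lambda}$, I would estimate the tail directly,
$$\|S-SP_{F}\|_{\textup{HS}}^{2}=\|S(I-P_{F})\|_{\textup{HS}}^{2}=\sum_{\alpha\notin F}\sum_{\nu=1}^{d(\alpha)}\big\|Se^{\lambda}_{\alpha\nu}\big\|^{2},$$
which is precisely the tail of the convergent series $\sum_{\alpha,\nu}\|Se^{\lambda}_{\alpha\nu}\|^{2}=\|S\|_{\textup{HS}}^{2}<\infty$ and therefore tends to $0$ as $F\uparrow\Lambda$. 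Applying the inverse Weyl transform then yields $f=\sum_{\alpha}f\times^{\lambda}\psi^{\lambda}_{\alpha}$ in $L^{2}(\C)$.

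The only genuine obstacle is the Hilbert--Schmidt convergence in the last display; everything else is bookkeeping resting on the multiplicativity of $\mathcal{G}^{\lambda}$ under twisted convolution and on Proposition \textbf{\ref{eqn p6.3}}. I would also note that since $\Lambda$ is countable the sum $\sum_{\alpha}$ is an unconditionally convergent series, which is exactly what the tail estimate above guarantees, as the bound is independent of how $\Lambda$ is enumerated.
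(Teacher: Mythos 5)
Your proof is correct and follows essentially the same route as the paper: both transfer the identity to the operator side via Plancherel, use the multiplicativity of $\mathcal{G}^{\lambda}$ together with Proposition \textbf{\ref{eqn p6.3}} to identify $\mathcal{G}^{\lambda}(f\times^{\lambda}\psi^{\lambda}_{\alpha})=\mathcal{G}^{\lambda}(f)\mathcal{P}_{\alpha}$, and then kill the Hilbert--Schmidt tail $\sum_{\alpha\notin F}\sum_{\nu}\|\mathcal{G}^{\lambda}(f)e^{\lambda}_{\alpha\nu}\|^{2}$ using the adapted basis. The only difference is cosmetic: you phrase the limit over finite subsets $F\subset\Lambda$ (giving unconditional convergence explicitly), whereas the paper enumerates $\Lambda$ as $\mathbb{N}$ and takes partial sums.
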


\begin{proof}
Since the index set $\Lambda$ is countable, we can identify $\Lambda$ with the set of 
natural numbers $\mathbb{N}$. For $j\in \mathbb{N}$, $$\mathcal{G}(f)|
_{V_{j}}=\mathcal{G}(f)\mathcal{P}_{j}=\mathcal{G}(f)\mathcal{G}(\psi_{j})=
\mathcal{G}(f\times\psi_{j}).$$ Therefore, \beas\big|\big|\mathcal{G}
(f)-\mathcal{G}\bigg(\sum_{j=1}^{N}f\times\psi_{j}\bigg)\big|\big|
_{\textup{HS}}^{2}&=& \sum_{j>N}\sum_{\nu=1}^{d(j)}||\mathcal{G}(f)e_{j\nu}||
^{2}_{2}\longrightarrow 0\eeas as $N\longrightarrow\infty$, since $\mathcal{G}(f)$ is 
a Hilbert-Schmidt operator. Hence by the Plancherel theorem (Theorem \textbf{\ref{eqn 
t5.1}}) we are done.
\end{proof}

The above proposition was also proved in \cite{RS}.

\section{generalized spherical functions and Weyl transform of K-type functions}
From now on we assume that $(K,\mathbb{H}^{n})$ is a Gelfand pair, where $K$ is a 
connected, compact subgroup of $U(n)$, whose action on $\mathbb{C}^{n}$ is polar. 
More precisely, if we identify $U(n)$ as a subgroup of $SO(2n)$ and $\mathbb{C}^{n}$ 
with $\mathbb{R}^{2n}$, then the action of $K\subset SO(2n)$ on $\mathbb{R}^{2n}$ is 
polar, so that we can use all the results about polar actions from the first three 
sections. Our main aims are to find all generalized $K$-spherical functions (Theorem 
\textbf{\ref{eqn t7.13}}) and  give a formulae for Weyl transform of a function $F\in 
\mathscr{S}^{\delta}(\mathbb{C}^{n})$ (Theorem \textbf{\ref{eqn t7.4}}). Here $\mathscr{S}^{\delta}
(\C):=\{F\in\mathcal{E}^{\delta}(\C):F_{ij}\in\mathscr{S}(\C)\}.$ Theorem 
\textbf{\ref{eqn t7.4}} can be thought of as a generalization of the Theorem 4.2 in 
\cite{G}, which is a Hecke-Bochner type identity. To prove his theorem, Geller 
introduced certain Hilbert spaces of linear operators which turned out to be analogous 
to $L^{2}(S^{2n-1})$ and showed that the Weyl correspondence of $U(n)$-harmonic 
polynomials are dense in these Hilbert spaces. We show that a similar result holds for 
any $K$ (Proposition \textbf{\ref{eqn p7.7}}), and use this to prove our theorems.

For two positive integers $p$ and $q$, let $\mathcal{R}_{p\times q}^{\lambda}(\C)$ 
denote the set of all $p\times q $ matrices whose entries belong to 
$\mathcal{R}^{\lambda}(\C)$; $\mathcal{O}_{p\times q}(\mathcal{H}^{\lambda})$ denote 
the set of all $p\times q $ matrices whose entries belong to $\mathcal{O}
(\mathcal{H}^{\lambda})$; and $\mathcal{H}^{\lambda}_{p\times q}$ denote the same 
whose entries belong to $\mathcal{H}^{\lambda}$. For 
$\mathcal{T}\in\mathcal{O}_{p\times q}(\mathcal{H}^{\lambda})$, define its action 
as follows: 
if $u\in\mathcal{P}(\C)$, the $(i,j)$th entry of $\mathcal{T}u$ is equal to 
$\mathcal{T}_{ij}u$. Let $\mathscr{S}_{p\times q}^{\prime}(\C)$ denote the set of all 
$p\times q$ matrices with entries from $\mathscr{S}^{\prime}(\C)$. For 
$g\in\mathscr{S}^{\prime}(\C)$ and $F\in\mathscr{S}^{\prime}_{p\times q}(\C)$, we 
define the following whenever they make sense. For a differential operator $D$ on 
$\C$, define $DF:\C\ra\mathcal{M}_{p\times q}$, by $(DF)_{ij}(z)=DF_{ij}(z).$ If 
$\mathcal{D}\in\mathcal{R}^{\lambda}_{p\times q}(\C)$, define $\mathcal{D}g:
\C\ra\mathcal{M}_{p\times q}$, by $(\mathcal{D}g)_{ij}(z)=\mathcal{D}_{ij}g(z).$ 
Define $\mathcal{G}^{\lambda}F, \mathcal{W}^{\lambda}F\in\mathcal{O}_{p\times q}
(\mathcal{H}^{\lambda}); \tau^{\lambda}(P^{\delta})\in\mathcal{O}_{d(\delta)\times 
l(\delta)}(\mathcal{H}^{\lambda});$ and $\theta^{\lambda}
(P^{\delta})\in\mathcal{R}^{\lambda}_{p\times q}(\C)$, 
$\mathcal{F}^{\prime-1}P^{\delta}\in\mathscr{S}^{\prime}_{d(\delta)\times l(\delta)}
(\C)$ 
similarly. For $S\in\mathcal{O}({H}^{\lambda})$,  
$\mathcal{T}\in\mathcal{O}_{p\times q}(\mathcal{H}^{\lambda})$, define 
$\mathcal{T}S\in\mathcal{O}_{p\times q}(\mathcal{H}^{\lambda})$ by 
$(\mathcal{T}S)_{ij}=\mathcal{T}_{ij}\circ S$. Similarly define 
$S\mathcal{T}\in\mathcal{O}_{p\times q}(\mathcal{H}^{\lambda})$. For a $r\times p$ 
constant matrix $C$, define $C\mathcal{T}\in\mathcal{O}_{r\times q}
(\mathcal{H}^{\lambda})$, by 
$(C\mathcal{T})_{ij}=\Sigma_{k=1}^{p}C_{ik}\mathcal{T}_{kj}$.

If $f$ is a joint eigendistribution of all $D\in\mathcal{L}^{\lambda}_{K}(\C)$, then 
$K$ being a subgroup of $U(n)$, it is also a joint eigendistribution of all 
$D\in\mathcal{L}^{\lambda}_{U(n)}(\C)$. But 
$\mathcal{L}_{U(n)}^{\lambda}(\C)$ is generated by the special Hermite operator $\mathcal{L}^{\lambda}$, 
which is elliptic (see Remark \textbf{\ref{eqn r5.10}}). So we can assume that $f$ 
is smooth. Therefore, we will consider only smooth joint eigenfunctions for 
$\mathcal{L}^{\lambda}_{K}(\C)$. 

\begin{defn}\label{eqn d7.1}     
A function $\Psi\in\mathcal{E}^{\delta}(\mathbb{C}^{n})$ is said to be a generalized 
$K$-spherical function of type $\delta$ corresponding to $\mu^{\lambda}_{\alpha}$, if 
it is a joint eigenfunction for all $D\in\mathcal{L}^{\lambda}_{K}(\mathbb{C}^{n})$ 
with eigenvalue $\mu^{\lambda}_{\alpha}$, i.e $D\Psi=\mu^{\lambda}_{\alpha}(D)\Psi$ 
for all $D\in\mathcal{L}^{\lambda}_{K}(\mathbb{C}^{n})$.
\end{defn}

By Proposition \textbf{\ref{eqn p5.5}} \textbf{(b)} it follows that, for any $K$-
harmonic (hence $U(n)$- harmonic) polynomial $p$, $\theta^{\lambda}_{1}
(p)=\theta^{\lambda}_{2}(p)$. Hence $ \theta^{\lambda}
(P^{\delta})=\theta^{\lambda}_{1}(P^{\delta})=\theta^{\lambda}_{2}(P^{\delta}).$ 
Define $\Psi_{\alpha}^{\delta,\lambda}\in\mathcal{E}^{\delta}(\mathbb{C}^{n})$, by 
$$\Psi^{\delta,\lambda}_{\alpha}=\theta^{\lambda}(P^{\delta})\psi^{\lambda}_{\alpha}.
$$

\begin{prop}\label{eqn p7.2}
$\Psi_{\alpha}^{\delta,\lambda}$ is a generalized $K$-spherical function of type 
$\delta$ corresponding to $\mu^{\lambda}_{\alpha}$.
\end{prop}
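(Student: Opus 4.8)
The statement has two parts: that $\Psi_{\alpha}^{\delta,\lambda}$ lies in $\mathcal{E}^{\delta}(\C)$ (i.e. has the correct $\delta$-covariance and is smooth), and that it is a joint eigenfunction for $\mathcal{L}^{\lambda}_{K}(\C)$ with eigenvalue $\mu^{\lambda}_{\alpha}$. Smoothness of each entry $\Psi_{ij}=\theta^{\lambda}(P^{\delta}_{ij})\psi^{\lambda}_{\alpha}$ is immediate, since $\theta^{\lambda}(P^{\delta}_{ij})\in\mathcal{R}^{\lambda}(\C)$ is a differential operator with polynomial coefficients and $\psi^{\lambda}_{\alpha}$ is a Schwartz function (a polynomial times a Gaussian, by Theorem \ref{eqn t5.8}). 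So the real content is the two structural properties, which I would treat separately.

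For the eigenfunction property the key observation is that every operator in $\mathcal{R}^{\lambda}(\C)$ commutes with every operator in $\mathcal{L}^{\lambda}(\C)$. This can be checked directly from the explicit formulas: one finds $[L^{\lambda}_{j},R^{\lambda}_{k}]=0$ and $[L^{\lambda}_{j},\overline{R}^{\lambda}_{k}]=0$ (and the conjugate relations), because the cross terms $-\lambda\delta_{jk}$ and $+\lambda\delta_{jk}$ cancel. Conceptually this reflects the fact that, after applying $\mathcal{G}^{\lambda}$, the operators in $\mathcal{L}^{\lambda}(\C)$ act by right multiplication and those in $\mathcal{R}^{\lambda}(\C)$ by left multiplication (Proposition \ref{eqn p5.3}), and these always commute. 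Granting this, for $D\in\mathcal{L}^{\lambda}_{K}(\C)\subset\mathcal{L}^{\lambda}(\C)$ I would compute entrywise
$$D\Psi_{ij}=D\,\theta^{\lambda}(P^{\delta}_{ij})\psi^{\lambda}_{\alpha}=\theta^{\lambda}(P^{\delta}_{ij})\,D\psi^{\lambda}_{\alpha}=\mu^{\lambda}_{\alpha}(D)\,\theta^{\lambda}(P^{\delta}_{ij})\psi^{\lambda}_{\alpha}=\mu^{\lambda}_{\alpha}(D)\Psi_{ij},$$
using that $\psi^{\lambda}_{\alpha}$ is the joint eigenfunction of $\mathcal{L}^{\lambda}_{K}(\C)$ with eigenvalue $\mu^{\lambda}_{\alpha}$. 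Hence $D\Psi_{\alpha}^{\delta,\lambda}=\mu^{\lambda}_{\alpha}(D)\Psi_{\alpha}^{\delta,\lambda}$.

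For the covariance I would first establish the equivariance of $p\mapsto\theta^{\lambda}(p)$ on $U(n)$-harmonic polynomials, namely $(k\cdot)\circ\theta^{\lambda}(p)\circ(k\cdot)^{-1}=\theta^{\lambda}(k\cdot p)$ for $k\in K$, where $k\cdot f(z)=f(k^{-1}\cdot z)$. To prove it I would apply both sides to an arbitrary $f\in\mathscr{S}(\C)$ and take Weyl transforms: by Corollary \ref{eqn c5.4} the operator $\theta^{\lambda}(p)\in\mathcal{R}^{\lambda}(\C)$ becomes left multiplication by $\mathcal{G}^{\lambda}(\theta^{\lambda}(p))=\tau^{\lambda}(p)=\mathcal{W}^{\lambda}(p)$ (the last equalities by (\ref{eqn44}) and Proposition \ref{eqn p5.5}\textbf{(b)}), and combining this with Proposition \ref{eqn p5.7}\textbf{(b)} reduces the claim to $U^{\lambda}(k)\tau^{\lambda}(p)U^{\lambda}(k)^{-1}=\tau^{\lambda}(k\cdot p)$. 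This is exactly Proposition \ref{eqn p5.7}\textbf{(c)}, noting that $k\cdot p$ is again $U(n)$-harmonic since $K\subset U(n)$. Injectivity of $\mathcal{G}^{\lambda}$ (Theorem \ref{eqn t5.1}) then yields the operator identity.

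Finally I would combine the pieces. Writing $\Psi_{ij}(k\cdot z)=(k^{-1}\cdot\Psi_{ij})(z)$ and passing $k^{-1}\cdot$ through the operator via the equivariance, the $K$-invariance $k^{-1}\cdot\psi^{\lambda}_{\alpha}=\psi^{\lambda}_{\alpha}$ and the covariance $k^{-1}\cdot P^{\delta}_{ij}=\sum_{p}\delta_{ip}(k)P^{\delta}_{pj}$ (just (\ref{eqn31}) for $P^{\delta}$) give, by linearity of $\theta^{\lambda}$,
$$\Psi_{ij}(k\cdot z)=\theta^{\lambda}\Big(\sum_{p}\delta_{ip}(k)P^{\delta}_{pj}\Big)\psi^{\lambda}_{\alpha}(z)=\sum_{p}\delta_{ip}(k)\Psi_{pj}(z),$$
that is $\Psi_{\alpha}^{\delta,\lambda}(k\cdot z)=\delta(k)\Psi_{\alpha}^{\delta,\lambda}(z)$, so $\Psi_{\alpha}^{\delta,\lambda}\in\mathcal{E}^{\delta}(\C)$. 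I expect the equivariance identity for $\theta^{\lambda}$ to be the one step requiring genuine care, since it is where the representation theory (Proposition \ref{eqn p5.7}) and the injectivity of the Weyl transform enter; the eigenvalue computation is then essentially formal once the commutation of $\mathcal{R}^{\lambda}(\C)$ with $\mathcal{L}^{\lambda}(\C)$ is in hand.
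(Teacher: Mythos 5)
Your proof is correct, and it differs from the paper's own proof in one substantive way. The paper argues entirely on the operator side: it first computes $\mathcal{G}^{\lambda}(\Psi^{\delta,\lambda}_{\alpha})=\mathcal{W}^{\lambda}(P^{\delta})\mathcal{G}^{\lambda}(\psi^{\lambda}_{\alpha})$ using Corollary \ref{eqn c5.4}, (\ref{eqn44}) and Proposition \ref{eqn p5.5} \textbf{(b)}; then conjugates this identity by $U^{\lambda}(k)$ via Proposition \ref{eqn p5.7} \textbf{(b)}, \textbf{(c)} to obtain the $\delta$-covariance; and finally applies Corollary \ref{eqn c5.4} once more (with $D\in\mathcal{L}^{\lambda}_{K}(\C)$ acting by right multiplication after transform) together with $D\psi^{\lambda}_{\alpha}=\mu^{\lambda}_{\alpha}(D)\psi^{\lambda}_{\alpha}$ to get the eigenvalue equation, injectivity of $\mathcal{G}^{\lambda}$ converting each operator identity back into a statement about $\Psi^{\delta,\lambda}_{\alpha}$. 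Your covariance step is this same mechanism repackaged: your equivariance lemma $(k\cdot)\circ\theta^{\lambda}(p)\circ(k\cdot)^{-1}=\theta^{\lambda}(k\cdot p)$ is proved by exactly the paper's conjugation computation, after which you finish on the function side using $P^{\delta}(k\cdot z)=\delta(k)P^{\delta}(z)$ and the $K$-invariance of $\psi^{\lambda}_{\alpha}$. Your eigenvalue step, however, is genuinely different and more elementary: rather than passing through the Weyl transform, you check from the explicit formulas that the generators $L^{\lambda}_{j},\overline{L}^{\lambda}_{j}$ of $\mathcal{L}^{\lambda}(\C)$ commute with the generators $R^{\lambda}_{j},\overline{R}^{\lambda}_{j}$ of $\mathcal{R}^{\lambda}(\C)$ (the cross terms $\mp\lambda\delta_{jk}$ do cancel), so $D$ slides past $\theta^{\lambda}(P^{\delta}_{ij})$ and acts on $\psi^{\lambda}_{\alpha}$ directly. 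What this buys you: that half of the argument needs none of the distributional Weyl-transform machinery, and it makes explicit the structural reason (left- and right-invariant operators commute) that the paper's identity $\mathcal{G}^{\lambda}(D\Psi)=\mathcal{G}^{\lambda}(\Psi)\mathcal{G}^{\lambda}(D)$ encodes. What the paper's route buys: all the conclusions, including the formula $\mathcal{G}^{\lambda}(\Psi^{\delta,\lambda}_{\alpha})=\mathcal{W}^{\lambda}(P^{\delta})\mathcal{G}^{\lambda}(\psi^{\lambda}_{\alpha})$, come from one uniform calculus, and that formula is itself needed later (e.g.\ in Lemma \ref{eqn l7.5} and Theorem \ref{eqn t7.4}), so the paper obtains it as a by-product of the same computation.
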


\begin{proof}
We suppress the superscript $\lambda$. Since 
$\theta(P^{\delta})\in\mathcal{R}_{d(\delta)\times l(\delta)}(\C)$, by Corollary 
\textbf{\ref{eqn c5.4}}, $$\mathcal{G}(\Psi_{\alpha}^{\delta})=\mathcal{G}
(\theta(P^{\delta}))\mathcal{G}(\psi_{\alpha})=
\tau(P^{\delta})\mathcal{G}(\psi_{\alpha})=\mathcal{W}(P^{\delta})\mathcal{G}
(\psi_{\alpha}).$$ Therefore, by Proposition \textbf{\ref{eqn p5.7}}, for $k\in K$, \beas &&\mathcal{G}
(k^{-1}\cdot\Psi_{\alpha}^{\delta})=U(k)^{-1}\mathcal{G}(\Psi_{\alpha}^{\delta})U(k)=
U(k)^{-1}\mathcal{W}(P^{\delta})\mathcal{G}(\psi_{\alpha})U(k)\\&=&
U(k)^{-1}\mathcal{W}(P^{\delta})U(k)U(k)^{-1}\mathcal{G}
(\psi_{\alpha})U(k)=\mathcal{W}(k^{-1}\cdot P^{\delta})\mathcal{G}
(k^{-1}\cdot\psi_{\alpha})\\&=&\delta(k)\mathcal{W}(P^{\delta})\mathcal{G}
(\psi_{\alpha})=
\delta(k)\mathcal{G}(\Psi_{\alpha}^{\delta}).\eeas So we get $\Psi_{\alpha}^{\delta}
(k\cdot z)=\delta(k)\Psi_{\alpha}^{\delta}(z)$, and hence 
$\Psi_{\alpha}^{\delta}\in\mathcal{E}^{\delta}(\mathbb{C}^{n}).$
Again, $D\Psi_{\alpha}^{\delta}=\mu_{\alpha}(D)\Psi_{\alpha}^{\delta}$ for all 
$D\in\mathcal{L}_{K}(\mathbb{C}^{n})$, because \beas && \mathcal{G}
(D\Psi_{\alpha}^{\delta})=\mathcal{G}(\Psi_{\alpha}^{\delta})\mathcal{G}
(D)=\mathcal{G}
(\theta(P^{\delta}))\mathcal{G}(\psi_{\alpha})\mathcal{G}(D)\\&=&\mathcal{G}
(\theta(P^{\delta}))\mathcal{G}(D\psi_{\alpha})=\mu_{\alpha}(D)\mathcal{G}
(\theta(P^{\delta}))\mathcal{G}(\psi_{\alpha})=\mu_{\alpha}(D)
\mathcal{G}(\Psi_{\alpha}^{\delta}).\eeas
Therefore $\Psi_{\alpha}^{\delta}$ is a generalized $K$-spherical function of type $\delta$ corresponding to $\mu_{\alpha}$.
\end{proof}

Note that $\Psi_{\alpha}^{\delta,\lambda}(z)$ is equal to $e^{-|\lambda||z|^{2}}$ 
times a polynomial in $\mathcal{E}^{\delta}(\mathbb{C}^{n})$, and hence by Corollary 
\textbf{\ref{eqn c4.8}}, there is a unique $K$-invariant polynomial $L^{\delta,
\lambda}_{\alpha}:\mathbb{C}^{n}\longrightarrow\mathcal{M}_{l(\delta)\times 
l(\delta)}$ such that \bea\label{eqn61}\Psi_{\alpha}^{\delta,\lambda}(z)=P^{\delta}
(z)L^{\delta,\lambda}_{\alpha}(z)e^{-|\lambda||z|^{2}}.\eea
Define the $l(\delta)\times l(\delta)$ constant matrix $A_{\alpha}^{\delta,\lambda}$ 
by \beas A_{\alpha}^{\delta,\lambda}&=&\int_{\mathbb{C}^{n}}[\Psi_{\alpha}^{\delta,
\lambda}(z)]^{\star}\Psi_{\alpha}^{\delta,\lambda}(z)dz\\&=& \int_{\mathbb{C}^{n}}
[L_{\alpha}^{\delta,\lambda}(z)]^{\star}\Upsilon_{\delta}(z)L_{\alpha}^{\delta,
\lambda}(z)e^{-2|\lambda||z|^{2}}dz.
\eeas Clearly $A_{\alpha}^{\delta,\lambda}$ is positive definite. Let $\alpha(\delta)$ 
denote the number of linearly independent columns in $\Psi_{\alpha}^{\delta,\lambda}$. Let $C_i$ denote the $i$th column. Choose $C_{l(1)},C_{l(2)},\cdots,C_{l\big(\alpha(\delta)\big)}$ linearly independent and $l(1)<l(2),\cdots<l\big(\alpha(\delta)\big)$. Let the remaining columns be $C_{m(1)},C_{m(2)},\cdots,C_{m(l(\delta)-\alpha(\delta))}$, with $m(1)<m(2)<\cdots <m(l(\delta)-\alpha(\delta))$ Let $$I_1=\big\{l(1),l(2),\cdots,l\big(\alpha(\delta)\big)\big\},$$ $$I_{2}=\big\{m(1),m(2),\cdots,m\big(l(\delta)-\alpha(\delta)\big)\big\}.$$ Then $I_1$ and $I_2$ are disjoint, and $$I_1\cup I_2=\{1,2,\cdots,l(\delta)\}.$$ Let $\widetilde{\Psi}^{\delta,\lambda}_{\alpha}$ be the $d(\delta)\times\alpha(\delta)$ matrix whose $r$th column is $C_{l(r)}$, where $l(r)\in I_1$; and $\widetilde{L}^{\delta,\lambda}_{\alpha}$ be the $l(\delta)\times\alpha(\delta)$ matrix whose $r$th column is $C_{l(r)}$, where $l(r)\in I_1$. 
Then clearly we have 
$$\widetilde{\Psi}_{\alpha}^{\delta,\lambda}(z)=P^{\delta}(z)\widetilde{L}_{\alpha}^{\delta,
\lambda}(z)e^{-|\lambda||z|^{2}}.$$ Define \beas \widetilde{A}_{\alpha}^{\delta,
\lambda}&=&\int_{\mathbb{C}^{n}}[\widetilde{\Psi}_{\alpha}^{\delta,\lambda}
(z)]^{\star}\widetilde{\Psi}_
{\alpha}^{\delta,\lambda}(z)dz\\&=& \int_{\mathbb{C}^{n}}[\widetilde{L}_{\alpha}^{\delta,
\lambda}(z)]^{\star}\Upsilon_{\delta}(z)\widetilde{L}_{\alpha}^{\delta,\lambda}(z)
e^{-2|\lambda||z|^{2}}dz.
\eeas
Note that $\widetilde{A}_{\alpha}^{\delta,\lambda}$ is precisely the $\alpha(\delta)\times 
\alpha(\delta)$
matrix, obtained by deleting the $m(r)$th rows and columns from $A_{\alpha}^{\delta,\lambda}$, where $m(r)\in I_2$.

\begin{lem}\label{eqn l7.3}
If $\alpha(\delta)>0$, $\widetilde{A}_{\alpha}^{\delta,\lambda}$ is invertible.
\end{lem}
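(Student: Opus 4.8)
The plan is to show that $\widetilde{A}_{\alpha}^{\delta,\lambda}$ is positive definite, since a positive definite matrix is automatically invertible. Recall that $\widetilde{A}_{\alpha}^{\delta,\lambda}$ is defined as the integral $\int_{\mathbb{C}^{n}}[\widetilde{\Psi}_{\alpha}^{\delta,\lambda}(z)]^{\star}\widetilde{\Psi}_{\alpha}^{\delta,\lambda}(z)\,dz$, which is a Gram-type matrix built from the columns $C_{l(1)},\dots,C_{l(\alpha(\delta))}$ of $\widetilde{\Psi}_{\alpha}^{\delta,\lambda}$. The key point I would exploit is precisely that these columns were chosen to be linearly independent as vector-valued functions.

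First I would verify that $\widetilde{A}_{\alpha}^{\delta,\lambda}$ is at least positive semidefinite: for any vector $c\in\mathbb{C}^{\alpha(\delta)}$ we have $c^{\star}\widetilde{A}_{\alpha}^{\delta,\lambda}c=\int_{\mathbb{C}^{n}}\|\widetilde{\Psi}_{\alpha}^{\delta,\lambda}(z)c\|^{2}\,dz\geq 0$, where $\|\cdot\|$ is the Euclidean norm on $\mathbb{C}^{d(\delta)}$ and $\widetilde{\Psi}_{\alpha}^{\delta,\lambda}(z)c$ is the corresponding linear combination of the selected columns. This is the standard Gram-matrix computation, and it shows that $\widetilde{A}_{\alpha}^{\delta,\lambda}$ is the Gram matrix of the functions $C_{l(1)},\dots,C_{l(\alpha(\delta))}$ in the Hilbert space $L^{2}(\mathbb{C}^{n},\mathbb{C}^{d(\delta)})$.

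Next I would establish definiteness: $c^{\star}\widetilde{A}_{\alpha}^{\delta,\lambda}c=0$ forces $\widetilde{\Psi}_{\alpha}^{\delta,\lambda}(z)c=0$ for almost every $z$, hence (by continuity, since $\widetilde{\Psi}_{\alpha}^{\delta,\lambda}$ is smooth) for every $z$. This says exactly that the linear combination $\sum_{r}c_{r}C_{l(r)}$ vanishes identically. But the columns $C_{l(1)},\dots,C_{l(\alpha(\delta))}$ were chosen to be linearly independent, so $c=0$. Therefore $\widetilde{A}_{\alpha}^{\delta,\lambda}$ is positive definite, and in particular invertible. The only delicate issue is making sure the passage from ``almost every $z$'' to ``every $z$'' is legitimate, but this is immediate from smoothness of the entries $\Psi_{\alpha}^{\delta,\lambda}{}_{ij}$, which follows from $\Psi_{\alpha}^{\delta,\lambda}=\theta^{\lambda}(P^{\delta})\psi_{\alpha}^{\lambda}$ being $e^{-|\lambda||z|^{2}}$ times a polynomial.

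I do not anticipate a serious obstacle here; the statement is essentially the assertion that a Gram matrix of a linearly independent family is invertible, and the hypothesis $\alpha(\delta)>0$ merely guarantees that the family is nonempty so that $\widetilde{A}_{\alpha}^{\delta,\lambda}$ is a genuine matrix of positive size. The main thing to be careful about is to phrase the linear independence correctly: it is linear independence of the columns as $\mathbb{C}^{d(\delta)}$-valued functions on $\mathbb{C}^{n}$ (equivalently, $L^{2}$ functions), which is exactly how $\alpha(\delta)$ and the index set $I_{1}$ were defined just before the lemma.
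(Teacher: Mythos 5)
Your proof is correct and is essentially the paper's own argument: the paper likewise uses the Gram identity $\langle\widetilde{A}_{\alpha}^{\delta,\lambda}\underline{e},\underline{e}\rangle=\int_{\mathbb{C}^{n}}\|\widetilde{\Psi}_{\alpha}^{\delta,\lambda}(z)\underline{e}\|^{2}dz$ and the linear independence of the selected columns, only phrased contrapositively (non-invertibility gives a null vector, hence a vanishing linear combination of columns, a contradiction) rather than as positive definiteness. Your extra remark on passing from almost-every to every $z$ via smoothness is a harmless refinement of the same reasoning.
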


\begin{proof}
As usual we suppress the superscript $\lambda$. If $\widetilde{A}_{\alpha}^{\delta}$ is 
not invertible, then there exist a non-zero vector $\underline{e}
\in\mathbb{C}^{\alpha(\delta)}$, such that 
$\langle(\widetilde{A}_{\alpha}^{\delta}\underline{e}),\underline{e}\rangle=0$ (here 
$\langle,\rangle$ denotes the usual hermitian inner product on 
$\mathbb{C}^{\alpha(\delta)}$). Since 
$$\widetilde{A}^{\delta}_{\alpha}=\int_{\mathbb{C}^{n}}
[\widetilde{\Psi}_{\alpha}^{\delta}(z)]^{\star}\widetilde{\Psi}_{\alpha}^{\delta}(z)dz,$$  we 
get $\widetilde{\Psi}_{\alpha}^{\delta}(z)\underline{e}=0$ for all $z$, implying that the 
columns of $\widetilde{\Psi}_{\alpha}^{\delta}$ are linearly dependent, which is a 
contradiction. Hence the proof.
\end{proof}

Before we state one of our main theorems we introduce some more notation. For $F\in\mathcal{E}^{\delta}(\C)$, define $l(\delta)\times l(\delta)$ matrix $C^{\delta,\lambda}_{\alpha}(F)$ as follows : Let $\widetilde{C}^{\delta,\lambda}_{\alpha}(F)$ denotes the $\alpha(\delta)\times l(\delta)$ matrix whose $r$th row is the $l(r)$th row of $C^{\delta,\lambda}_{\alpha}$, where $l(r)\in I_1$; and $\widetilde{\widetilde{C}}^{\delta,\lambda}_{\alpha}(F)$ denotes the $\big(l(\delta)-\alpha(\delta)\big)\times l(\delta)$ matrix whose $r$th row is the $m(r)$th row of $C^{\delta,\lambda}_{\alpha}$, where $m(r)\in I_2$. 
\begin{equation}\label{eqn63}
 \left.\begin{aligned}
        \widetilde{C}^{\delta,\lambda}_{\alpha}(F)&=\big(\widetilde{A}_{\alpha}^{\delta,
\lambda}\big)^{-1}\int_{\C}[\widetilde{\Psi}_{\alpha}^{\delta,\lambda}
(z)]^{\star}F(z)dz\\&=\big(\widetilde{A}_{\alpha}^{\delta,
\lambda}\big)^{-1}\int_{\mathbb{C}^{n}}[\widetilde{L}^{\delta,\lambda}_{\alpha}(z)]
^{\star}
\Upsilon_{\delta}(z)G(z)e^{-|\lambda||z|^{2}}dz,\\
 \widetilde{\widetilde{C}}^{\delta,\lambda}_{\alpha}(F)&=0.
\end{aligned}
\right\}
\qquad 
\end{equation} whenever the integrals exist.


\begin{thm} \label{eqn t7.4}
\textup{(}Hecke-Bochner identity\textup{)} Suppose $F=P^{\delta}G\in \mathscr{S}^{\delta}(\mathbb{C}^{n})$, where $G$ is $K$-invariant. 
Then $\mathcal{G}^{\lambda}(F)=\mathcal{W}^{\lambda}(P^{\delta})S,$ where 
$S\in\mathcal{O}_{l(\delta)\times l(\delta)}(\mathcal{H}^{\lambda})$ whose action on 
each $V_{\alpha}$ is the $l(\delta)\times l(\delta)$ constant matrix 
$C^{\delta,\lambda}_{\alpha}(F)$; equivalently if $F=P^{\delta}G\in\mathscr{S}^{\delta}(\C)$, where $G$ is $K$-invariant, then $F\times^{\lambda}\psi^{\lambda}_{\alpha}= 
\Psi_{\alpha}^{\delta,\lambda}C^{\delta,\lambda}_{\alpha}(F)$, where $C^{\delta,\lambda}_{\alpha}(F)$ is defined by  (\ref{eqn63}).



\end{thm}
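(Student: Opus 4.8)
The plan is to prove the second (twisted-convolution) formulation, namely that for each $\alpha\in\Lambda$
$$F\times^{\lambda}\psi^{\lambda}_{\alpha}=\Psi^{\delta,\lambda}_{\alpha}\,C^{\delta,\lambda}_{\alpha}(F),$$
and to deduce the operator identity from it. The two are equivalent: applying $\mathcal{G}^{\lambda}$ to the displayed equation and using $\mathcal{G}^{\lambda}(\psi^{\lambda}_{\alpha})=\mathcal{P}_{\alpha}$ (Proposition \ref{eqn p6.3}) together with the identity $\mathcal{G}^{\lambda}(\Psi^{\delta,\lambda}_{\alpha})=\mathcal{W}^{\lambda}(P^{\delta})\mathcal{P}_{\alpha}$ from the proof of Proposition \ref{eqn p7.2} gives $\mathcal{G}^{\lambda}(F)\mathcal{P}_{\alpha}=\mathcal{W}^{\lambda}(P^{\delta})\mathcal{P}_{\alpha}C^{\delta,\lambda}_{\alpha}(F)$; summing over $\alpha$ (with $\sum_{\alpha}\mathcal{P}_{\alpha}=I$) and setting $S=\sum_{\alpha}\mathcal{P}_{\alpha}C^{\delta,\lambda}_{\alpha}(F)$, which by construction acts on each $V_{\alpha}$ as the constant matrix $C^{\delta,\lambda}_{\alpha}(F)$, recovers the first form.

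First I would record that $\Phi_{\alpha}:=F\times^{\lambda}\psi^{\lambda}_{\alpha}$ is again a generalized $K$-spherical function of type $\delta$ attached to $\mu^{\lambda}_{\alpha}$. Since $\psi^{\lambda}_{\alpha}$ is $K$-invariant and $K\subset U(n)$ preserves $|z|$, the Haar measure and the phase $\mathrm{Im}(z\cdot\bar w)$, the change of variable $w\mapsto k\cdot w$ in the twisted convolution yields $\Phi_{\alpha}(k\cdot z)=\delta(k)\Phi_{\alpha}(z)$, so $\Phi_{\alpha}\in\mathcal{E}^{\delta}(\C)$ (it is Schwartz, being a twisted convolution of Schwartz functions). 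For $D\in\mathcal{L}^{\lambda}_{K}(\C)$, Corollary \ref{eqn c5.4} gives $\mathcal{G}^{\lambda}((\Phi_{\alpha})_{ij})=\mathcal{G}^{\lambda}(F_{ij})\mathcal{P}_{\alpha}$ and $\mathcal{G}^{\lambda}(D(\Phi_{\alpha})_{ij})=\mathcal{G}^{\lambda}(F_{ij})\mathcal{P}_{\alpha}\mathcal{G}^{\lambda}(D)$; because $\mathcal{G}^{\lambda}(D)$ acts on $V_{\alpha}$ as the scalar $\mu^{\lambda}_{\alpha}(D)$ (Remark \ref{eqn r5.9}) and preserves every $V_{\beta}$, it commutes with $\mathcal{P}_{\alpha}$ and $\mathcal{P}_{\alpha}\mathcal{G}^{\lambda}(D)=\mu^{\lambda}_{\alpha}(D)\mathcal{P}_{\alpha}$. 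Injectivity of the Weyl transform then forces $D\Phi_{\alpha}=\mu^{\lambda}_{\alpha}(D)\Phi_{\alpha}$.

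The heart of the argument, and what I expect to be the main obstacle, is the rigidity step: showing $\Phi_{\alpha}=\Psi^{\delta,\lambda}_{\alpha}B$ for some constant $l(\delta)\times l(\delta)$ matrix $B$, equivalently that $\mathcal{G}^{\lambda}(\Phi_{\alpha})=\mathcal{G}^{\lambda}(F)\mathcal{P}_{\alpha}$ is a right constant-matrix multiple of $\mathcal{W}^{\lambda}(P^{\delta})\mathcal{P}_{\alpha}$. Using Proposition \ref{eqn p5.7}(b),(c) one checks that both $\mathcal{G}^{\lambda}(F)$ and $\mathcal{W}^{\lambda}(P^{\delta})$ satisfy the covariance $\mathcal{T}=\delta(k)\,U^{\lambda}(k)\,\mathcal{T}\,U^{\lambda}(k)^{-1}$ (this uses $F(k\cdot z)=\delta(k)F(z)$, i.e. $k\cdot F=\delta(k)^{-1}F$), so after restriction to the irreducible $K$-module $V_{\alpha}$ both define $\delta$-covariant maps of $V_{\alpha}$ into $\mathcal{H}^{\lambda}$. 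The required input is that the restrictions to $V_{\alpha}$ of the columns of $\mathcal{W}^{\lambda}(P^{\delta})$ span the whole finite-dimensional space of such covariant maps; this is the analog, proved by Schur's lemma and a multiplicity count, of Geller's density theorem for $U(n)$-harmonics in $L^{2}(S^{2n-1})$, and is the spanning result I would establish separately before this theorem. Granting it, $\mathcal{G}^{\lambda}(\Phi_{\alpha})=\mathcal{W}^{\lambda}(P^{\delta})\mathcal{P}_{\alpha}B$, hence $\Phi_{\alpha}=\Psi^{\delta,\lambda}_{\alpha}B$ by injectivity.

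It remains to identify $B$ with $C^{\delta,\lambda}_{\alpha}(F)$. Since every column of $\Psi^{\delta,\lambda}_{\alpha}$ lies in the span of the independent ones, I can write $\Phi_{\alpha}=\widetilde{\Psi}^{\delta,\lambda}_{\alpha}\widetilde{B}$ for a unique $\alpha(\delta)\times l(\delta)$ matrix $\widetilde{B}$; left-multiplying by $[\widetilde{\Psi}^{\delta,\lambda}_{\alpha}]^{\star}$ and integrating gives $\int_{\C}[\widetilde{\Psi}^{\delta,\lambda}_{\alpha}]^{\star}\Phi_{\alpha}=\widetilde{A}^{\delta,\lambda}_{\alpha}\widetilde{B}$, where $\widetilde{A}^{\delta,\lambda}_{\alpha}$ is invertible by Lemma \ref{eqn l7.3}. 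The key simplification is that $\int_{\C}[\widetilde{\Psi}^{\delta,\lambda}_{\alpha}]^{\star}\Phi_{\alpha}=\int_{\C}[\widetilde{\Psi}^{\delta,\lambda}_{\alpha}]^{\star}F$: expanding both sides entrywise by the polarized Plancherel formula (Theorem \ref{eqn t5.1}) turns each into a sum of traces $\mathrm{tr}\big(\mathcal{G}^{\lambda}(F_{pj})\,\mathcal{P}_{\alpha}\,\mathcal{W}^{\lambda}(P^{\delta}_{pi})^{*}\big)$, which agree because $\mathcal{P}_{\alpha}$ is idempotent and self-adjoint and already sits inside $\mathcal{G}^{\lambda}(\Psi^{\delta,\lambda}_{\alpha})=\mathcal{W}^{\lambda}(P^{\delta})\mathcal{P}_{\alpha}$, whether it is supplied by $\Phi_{\alpha}$ or by $\Psi^{\delta,\lambda}_{\alpha}$. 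Hence $\widetilde{B}=(\widetilde{A}^{\delta,\lambda}_{\alpha})^{-1}\int_{\C}[\widetilde{\Psi}^{\delta,\lambda}_{\alpha}]^{\star}F=\widetilde{C}^{\delta,\lambda}_{\alpha}(F)$, and since $C^{\delta,\lambda}_{\alpha}(F)$ has zero rows off $I_{1}$ we get $\Psi^{\delta,\lambda}_{\alpha}C^{\delta,\lambda}_{\alpha}(F)=\widetilde{\Psi}^{\delta,\lambda}_{\alpha}\widetilde{C}^{\delta,\lambda}_{\alpha}(F)=\Phi_{\alpha}$, which completes the proof.
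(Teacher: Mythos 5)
Your skeleton is essentially the paper's: both arguments hinge on the completeness statement that the restrictions $\mathcal{W}^{\lambda}(P^{\delta}_{ij})\big|_{V_{\alpha}}$ exhaust the $\delta$-covariant part of $\mathcal{O}(V_{\alpha})$ (this is the paper's Proposition \ref{eqn p7.7}, proved immediately before Theorem \ref{eqn t7.4} and used in its proof), and both then determine the constant matrix using the invertibility of $\widetilde{A}^{\delta,\lambda}_{\alpha}$ (Lemma \ref{eqn l7.3}). Where you deviate, you are correct, and in one place genuinely cleaner: your verification that $F\times^{\lambda}\psi^{\lambda}_{\alpha}$ is a generalized $K$-spherical function is sound, and your identification of the constants through the Hilbert--Schmidt identity $\langle A\mathcal{P}_{\alpha},B\mathcal{P}_{\alpha}\rangle_{\textup{HS}}=\langle A,B\mathcal{P}_{\alpha}\rangle_{\textup{HS}}$ legitimately bypasses the paper's route, which first establishes the orthogonality relation (\ref{eqn69}) by a delicate adjoint computation with twisted convolutions and then invokes the $L^{2}$-expansion $F=\sum_{\beta}F\times^{\lambda}\psi^{\lambda}_{\beta}$ (Proposition \ref{eqn p6.4}).

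The genuine gap is the rigidity step. You reduce everything to the claim that the columns of $\mathcal{W}^{\lambda}(P^{\delta})\big|_{V_{\alpha}}$ span the finite-dimensional space of $\delta$-covariant maps of $V_{\alpha}$ into $\mathcal{H}^{\lambda}$, and you assert this can be proved ``by Schur's lemma and a multiplicity count.'' It cannot; this claim is precisely the hard input. Schur's lemma only identifies the dimension of the covariant space with a sum of tensor-product multiplicities, essentially $\sum_{\beta\in\Lambda}\dim\textup{Hom}_{K}(V_{\delta},V_{\beta}\otimes V_{\alpha}^{*})$ (up to duality conventions); nothing soft bounds this number by $l(\delta)=\dim V^{M}_{\delta}$ --- an inequality tying tensor-product decompositions of $K$-modules to the polar-action geometry of the stabilizer $M$ --- and nothing soft shows that the specific operators $\mathcal{W}^{\lambda}(P^{\delta}_{1k})\big|_{V_{\alpha}}$ realize all of that dimension. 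Indeed they can vanish outright: for $K=U(n)$ and $\lambda>0$, Remark \ref{eqn r7.31} gives $\Psi^{\check{\delta}_{pq},\lambda}_{k}\equiv 0$, i.e.\ $\mathcal{W}^{\lambda}(P^{\check{\delta}_{pq}})\big|_{\mathcal{P}_{k}(\C)}=0$, whenever $p>k$, so the two counts are degree-dependent and must be shown to agree exactly. The paper's proof of Proposition \ref{eqn p7.7} is correspondingly analytic rather than formal: it uses Geller's density theorem (Lemma \ref{eqn l7.6}), the decomposition $S_{\check{\delta}}=IH_{\check{\delta}}$ from Theorem \ref{eqn t3.1}, the fact that Weyl correspondences of $K$-invariant polynomials are scalar on each $V_{\alpha}$ (Corollary \ref{eqn c6.2}, where the Gelfand-pair hypothesis enters), linear independence coming from Kostant--Rallis (Theorem \ref{eqn t3.2}), and an induction on degree. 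Until you supply an argument of comparable strength for the spanning lemma, your proof has no support at its central step.
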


Before proving this general theorem let us consider the special case $K=U(n)$, and describe $\Psi^{\delta,\lambda}_{\alpha},\widetilde{\Psi}^{\delta,\lambda}_{\alpha},L^{\delta,\lambda}_{\alpha}\cdots$. 
We also show that, in this special case, the above theorem is precisely Theorem 4.2 in \cite{G}. We put these in the following remark.    

\begin{rem}\label{eqn r7.31}
For this remark $K$ always stands for $U(n)$.
Let $M$ be the stabilizer of the $K$-regular point $e_1=(1,0,\cdots,0)\in\C$; then $M$ can be identified with $U(n-1)$. Via the map $kM\rightarrow k\cdot e_1$, we have the identification $K/M=K\cdot e_1=S^{2n-1}$. Note that, in this special case, the space $H$ consists of all polynomials $P$ such that $\triangle_{\C}P=0$, where $\triangle_{\C}=\Sigma_{j=1}^{n}\frac{\partial^2}{\partial z_j\partial\bar{z}_j}$. for each pair of non-negative integers $(p,q)$, let $\mathcal{P}_{pq}$ be the space of all polynomials $P$ in $z$ and $\bar{z}$ of the form $$P(z)=\sum_{|\alpha|=p}\sum_{|\beta|=q}c_{\alpha\beta}z^{\alpha}\bar{z}^{\beta}.$$ Let $\mathcal{H}_{pq}=H\cap\mathcal{P}_{pq}$     , and $\mathcal{S}_{pq}$ denote the space of restrictions of elements of $\mathcal{H}_{pq}$ to $S^{2n-1}$. The elements of $\mathcal{H}_{pq}$ are called bigraded solid harmonics of degree $(p,q)$, and those of $\mathcal{S}_{pq}$ are called bigraded spherical harmonics of degree $(p,q)$. The $K$-action on $S^{2n-1}$ defines an unitary representation on $L^{2}(S^{2n-1})$. Clearly each $\mathcal{S}_{pq}$ is a $K$-invariant subspace. Let $\delta_{pq}$ denotes the restriction of this representation to $\mathcal{S}_{pq}$. It is well known that these describe all inequivalent, irreducible, unitary representations in $\widehat{K}_M$. Note that, according to our general notation (in Section 3), $H_{\delta_{pq}}=\mathcal{H}_{pq},\mathcal{E}_{\delta_{pq}}(K/M)=\mathcal{S}_{pq}$ and $l(\delta_{pq})=$dim$V^{M}_{\delta_{pq}}=1$. 

Using the similar arguments as in Remark \textbf{\ref{eqn r4.01}}, we can prove the 
following : first note that, in this case, $\stackrel{\vee}{\delta}_{pq}$ is 
equivalent to $\delta_{qp}$. Take $P^{i}_{pq}\in\mathcal{H}_{pq}$, and $Y^{i}_{pq}$ to 
be their restrictions to $S^{2n-1}$ so that $\{Y^{i}_{pq}:i=1,2,\cdots,d(p,q)\}$ forms 
an orthonormal basis for $\mathcal{S}_{pq}$. Then it is possible to choose orthonormal 
ordered basis $\textbf{b}=\{v_1,v_2,\cdots,v_{d(p,q)}\}$ for $V_{\delta_{pq}}$ and 
$\textbf{b}^{M}=\{v_1\}$ for $V^{M}_{\delta_{pq}}$, and a basis $\textbf{e}$ for 
$F_{\delta_{pq}}=$Hom$({V_{\delta_{pq}},\mathcal{H}_{pq}})$, so that, with respect to 
these bases, $Y^{\check{\delta}_{pq}}:S^{2n-1}\rightarrow \mathcal{M}_{d(p,q)\times 
1}$ and $P^{\check{\delta}_{pq}}:\C\rightarrow \mathcal{M}_{d(p,q)\times 1}$ are given 
by $$Y^{\stackrel{\vee}{\delta}_{pq}}(\omega)=\sqrt{\frac{|S^{2n-1}|}
{d(p,q)}}\bigg[Y^1_{pq}(\omega),Y^2_{pq}(\omega),\cdots, Y^{d(p,q)}_{pq}
(\omega)\bigg]^t,\omega\in S^{2n-1},$$     $$P^{\stackrel{\vee}{\delta}_{pq}}
(z)=\sqrt{\frac{|S^{2n-1}|}{d(p,q)}}\bigg[P^1_{pq}(z),P^2_{pq}(z),\cdots, 
P^{d(p,q)}_{pq}(z)\bigg]^t,z\in \C.$$ In particular, $$P^{\stackrel{\vee}
{\delta}_{pq}}(z)=|z|^{p+q}Y^{\stackrel{\vee}{\delta}_{pq}}(z/|z|).$$ Also, we have 
$$\Upsilon_{\stackrel{\vee}{\delta}_{pq}}(z)=|z|^{2(p+q)}.$$

\indent {\rm{\textbf{(i)}}}
Recall the $U(n)$-spherical functions $\psi^{\lambda}_k$'s from Remark \textbf{\ref{eqn r5.10}}. The corresponding generalized $K$-spherical functions are given by $\Psi^{\delta_{pq},\lambda}_{k}=\theta^{\lambda}(P^{\delta_{pq}})\psi_{k}^{\lambda}$. 
Note that, $\Psi^{\delta_{pq},\lambda}_{k}:\C\rightarrow\mathcal{M}_{d(p,q)\times 1}$, $L^{\delta_{pq},\lambda}_{k}:\C\rightarrow\mathcal{M}_{1\times 1}$, and $A^{\delta_{pq},\lambda}_k$ is a $1\times 1$ matrix. Let $L^{\gamma}_{k}$ denotes the $k$ th degree Laguerre polynomial of type $\gamma$. We will show the following : For $\lambda>0$,
{\small\begin{align}\label{eqn r7.5.1}
          L^{\stackrel{\vee}{\delta}_{pq},\lambda}_k(z)&=\begin{cases}(-1)^q\pi^{-n}(2|\lambda|)^{n+p+q}L^{n+p+q-1}_{k-p}(2|\lambda||z|^2),~\textup{if}~p\leq k\\0,~\textup{if}~p>k,
\end{cases}\\
\label{eqn r7.5.2}
\Psi^{\stackrel{\vee}{\delta}_{pq},\lambda}_k(z)&=\begin{cases}(-1)^q\pi^{-n}(2|\lambda|)^{n+p+q}P^{\stackrel{\vee}{\delta}_{pq}}(z)L^{n+p+q-1}_{k-p}(2|\lambda||z|^2)e^{-|\lambda||z|^2},~\textup{if}~p\leq k\\ 0_{d(p,q)\times 1},~\textup{if}~p>k,
\end{cases}\\
\label{eqn r7.5.3}
 A^{\stackrel{\vee}{\delta}_{pq},\lambda}_k&=\begin{cases}\pi^{-n}(2|\lambda|)^{n+p+q}\frac{\Gamma(k+n+q)}{\Gamma(n)\Gamma(k-p+1)},~\textup{if}~p\leq k\\0,~\textup{if}~p>k;
\end{cases}
  \end{align}}
consequently $\widetilde{\Psi}^{\stackrel{\vee}{\delta}_{pq},\lambda}_k=\Psi^{\stackrel{\vee}{\delta}_{pq},\lambda}_k$, $\widetilde{L}^{\stackrel{\vee}{\delta}_{pq},\lambda}_k=L^{\stackrel{\vee}{\delta}_{pq},\lambda}_k$ and $\widetilde{A}^{\stackrel{\vee}{\delta}_{pq},\lambda}_k=A^{\stackrel{\vee}{\delta}_{pq},\lambda}_k$ if $p\leq k$. When $\lambda<0$, the role of $p$ and $q$ will be interchanged in the above formulae. We give a proof assuming $\lambda>0$. The proof for $\lambda<0$ will be similar. Since $$\Psi^{\stackrel{\vee}{\delta}_{pq},\lambda}_{k}=\theta^{\lambda}(P^{\stackrel{\vee}{\delta}_{pq}})\psi_{k}^{\lambda}=P^{\stackrel{\vee}{\delta}_{pq}}(z)L^{\stackrel{\vee}{\delta}_{pq},\lambda}_k(z)e^{-|\lambda||z|^2},$$ and $z_1^{p}\bar{z}_2^q\in\mathcal{H}_{pq}$, it follows that $$\theta^{\lambda}(z^p_1\bar{z}^q_2)\psi^{\lambda}_k=z^p_1\bar{z}_2^q L^{\stackrel{\vee}{\delta}_{pq},\lambda}_k(z)e^{-|\lambda||z|^2}.$$ Therefore, to prove (\ref{eqn r7.5.1}) it is enough to show that
\begin{equation} 
\label{eqn r7.5.4}
\theta^{\lambda}(z^p_1\bar{z}^q_2)\psi^{\lambda}_k=\begin{cases}(-1)^q\pi^{-n}(2|\lambda|)^{n+p+q}z^p_1\bar{z}^q_2L^{n+p+q-1}_{k-p}(2|\lambda||z|^2)e^{-|\lambda||z|^2},~\textup{if}~p\leq k\\ 0,~\textup{if}~p>k.
\end{cases}
\end{equation}
Since 
$\theta^{\lambda}(\bar{z}_{2})=\overline{R}^{\lambda}_{2}=\partial/\partial z_{2}-\lambda \bar{z}_{2}$, and 
$$\frac{\partial}{\partial z_{2}}\big[L_{k}^{n+q-1}\big(2|\lambda||z|
^{2}\big)\big]=2|\lambda|\bar{z}_{2}\big[L_{k}^{n+q-1}\big]^{\prime}\big(2|
\lambda||z|^{2}\big),$$ an easy calculation shows that 
$$\theta^{\lambda}(\bar{z}_{2})\psi^{\lambda}_{k}(z)=\pi^{-n}(-1)(2|
\lambda|)^{n+1}\bar{z}_{2}\big[\big(L_{k}^{n-1}\big)^{\prime}-
L_{k}^{n-1}\big](2|\lambda||z|^{2})e^{-|\lambda||z|^2}.$$ Using the 
well-known relations $$(L_{k}^{\alpha})^{\prime}=-
L_{k-1}^{\alpha+1},~L_{k}^{\alpha+1}=L_{k-1}^{\alpha+1}+L_{k}^{\alpha},$$ we get 
$$\theta^{\lambda}(\bar{z}_{2})\psi^{\lambda}_{k}(z)=\pi^{-n}(-1)(2|
\lambda|)^{n+1}\bar{z}_{2}L_{k}^{n+1-1}(2|\lambda||z|^2)e^{-|\lambda||z|^2}.$$ Since 
$\theta^{\lambda}(\bar{z}_{2}^{m+1})=(\overline{R}^{\lambda}_{2})^{m+1}=\overline{R}^{\lambda}_{2}\theta^{\lambda}(\bar{z}_{2}^{m})$, by 
an induction argument we can prove that $$\theta^{\lambda}(\bar{z}_{2}^{q})\psi^{\lambda}_{k}
(z)=\pi^{-n}(-1)^{q}(2|
\lambda|)^{n+q}\bar{z}_{2}^{q}L_{k}^{n+q-1}(2|\lambda||z|^2)e^{-|\lambda||z|^2},$$ for all non negative integer $q.$ Now fix a $q$. Then again by 
induction on $p$ and using a similar argument we can prove that 
$$\theta^{\lambda}(z_{1}^{p}\bar{z}_{2}^{q})\psi^{\lambda}_{k}(z)=\pi^{-n}(-1)^{q}
(2|\lambda|)^{n+p+q}(z_{1}^{p}\bar{z}_{2}^{q})L_{k-p}^{n+p+q-1}(2|\lambda||z|^2)e^{-|\lambda||z|^2},$$ whenever $p\leq k.$ In particular we 
get for any fixed $q$, $\theta^{\lambda}(z_{1}^{k}\bar{z}_{2}^{q})\psi_{k}
(z)$ is equal to a constant times $z_{1}^{k}\bar{z}_{2}^{q_{1}}e^{-|\lambda||
z|^{2}}$. Since $\theta^{\lambda}(z_{1}^{k+1}\bar{z}_{2}^{q})=(-
R^{\lambda}_{1})\theta^{\lambda}(z_{1}^{k}\bar{z}_{2}^{q})$ and 
$$R^{\lambda}_{1}\big(z_{1}^{k}\bar{z}_{2}^{q}e^{-|\lambda||z|
^{2}}\big)=(\partial/\partial \bar{z}_{1}+\lambda 
z_{1})\big(z_{1}^{k}\bar{z}_{2}^{q}e^{-|\lambda||z|^{2}}\big)=0~ 
\textup{(as} \lambda>0\textup{)},$$ we get 
$\theta(z_{1}^{k+1}\bar{z}_{2}^{q})\psi^{\lambda}_{k}(z)=0$, and consequently 
$\theta^{\lambda}(z_{1}^{p}\bar{z}_{2}^{q})\psi^{\lambda}_{k}(z)=0$ for all $p> k$. This finishes the proof of (\ref{eqn r7.5.1}). (\ref{eqn r7.5.2}) follows immediately from (\ref{eqn r7.5.1}). $A^{\stackrel{\vee}{\delta}_{pq},\lambda}_k$ has the formulae $$A^{\stackrel{\vee}{\delta}_{pq},\lambda}_k=\int_{\C}[L^{\stackrel{\vee}{\delta}_{pq},\lambda}_k(z)]^{\star}\Upsilon_{\check{\delta}_{pq}}(z)L^{\stackrel{\vee}{\delta}_{pq},\lambda}_k(z)e^{-2|\lambda||z|^2}dz.$$ Therefore, (\ref{eqn r7.5.3}) follows by (\ref{eqn r7.5.1}) and the fact that $$\int_{0}^{\infty}[L^{\gamma}_k(r)]^2e^{-r}r^{\gamma}dr=\frac{\Gamma(k+\gamma+1)}{\Gamma(k+1)}.$$ 


\indent {\rm{\textbf{(ii)}}} 
From the above discussion it is immediate that, for the special case $K=U(n)$, Theorem \textbf{\ref{eqn t7.4}} can be restated as follows (which is precisely Theorem 4.2 in \cite{G}) : Suppose $Pg\in\mathscr{S}(\C)$, where $P\in\mathcal{H}_{pq}$ and $g$ is a radial function. For $\lambda>0$, $\mathcal{G}^{\lambda}(Pg)=\mathcal{W}^{\lambda}(P)S$, $S\in\mathcal{O}(\mathcal{H}^{\lambda})$, whose action on each $\mathcal{P}_{k}(\C)$ is a constant $c_k$, where $c_k=0$ if $p>k$, and for $p\leq k$, it is given by $$c_k=(-1)^{q}\frac{\Gamma(n)\Gamma(k-p+1)}{\Gamma(k+n+q)}\int_{\C}g(z)L^{n+p+q-1}_{k-p}(2|\lambda||z|^2)|z|^{2(p+q)}e^{-|\lambda||z|^2}.$$ when $\lambda<0$, the role of $p$ and $q$ will be interchanged in the definition of $c_k$. 
\end{rem}


To prove Theorem \textbf{\ref{eqn t7.4}}, we need several steps. For a finite dimensional subspace $V$ of 
$\mathcal{H}^{\lambda}$, let $\mathcal{O}(V)$ stand for the vector space of all 
bounded linear operators $R:V\longrightarrow \mathcal{H}^{\lambda}$. Define an inner 
product $\langle,\rangle^{\lambda}$ on $\mathcal{O}(V)$ by $$\langle 
R,R^{\prime}\rangle^{\lambda}=\sum_{j=1}^{d}\langle Rv_{j},R^{\prime}v_{j}\rangle,$$ 
for an orthonormal basis $\{v_{1}, v_{2}, \cdots v_{d}\}$. Clearly the definition is 
independent of the orthonormal basis. One can see that the norm defined by the above 
inner product is equivalent to the operator norm. Since $\mathcal{O}(V)$ is a Banach 
space with respect to the operator norm, we conclude that $\mathcal{O}(V)$ is a 
Hilbert space with the above inner product. If $V\subset \mathcal{P}(\mathbb{C}^{n})$ 
we can view $\mathcal{O}(\mathcal{H}^{\lambda})$ as a subset of $\mathcal{O}(V)$ by 
restricting the elements in $\mathcal{O}(\mathcal{H}^{\lambda})$ to $V$. If $V=V_{\alpha}$, we denote the inner product by $\langle,\rangle_{\alpha}^{\lambda}$. 
In this case, by Schur's orthogonality relation we have another formula for $\langle,
\rangle_{\alpha}^{\lambda}$, given by $$\langle 
R,R^{\prime}\rangle_{\alpha}^{\lambda}=\int_{K}\langle R(k\cdot v), R^{\prime}(k\cdot 
v)\rangle dk~,$$ for any unit vector $v\in V_{\alpha}$.

\begin{lem}\label{eqn l7.5}
\begin{itemize}
\item [\textbf{(a)}] Suppose $f\in \mathscr{S}(\mathbb{C}^{n})$, and 
$\delta\in\hat{K}_{M}$. Then $$\big\langle\mathcal{G}^{\lambda}(f),
\mathcal{W}^{\lambda}(P^{\delta})\big\rangle_{\alpha}^{\lambda}=\pi^{n}(2|\lambda|)^{-
n}\langle f,\Psi_{\alpha}^{\delta,\lambda}\rangle.$$ Here the equality is entry wise.
\item [\textbf{(b)}] For two $K$-harmonic polynomials $p,q$ 
$$\langle\mathcal{W}^{\lambda}(p),\mathcal{W}^{\lambda}
(q)\rangle_{\alpha}^{\lambda}=\pi^{n}(2|\lambda|)^{-n}\langle\theta^{\lambda}
(p)\psi^{\lambda}_{\alpha},\theta^{\lambda}(q)
\psi^{\lambda}_{\alpha}\rangle.$$ 
\end{itemize}
\end{lem}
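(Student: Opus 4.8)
The plan is to reduce both identities to the polarized Plancherel theorem (Theorem \ref{eqn t5.1}) by means of a single bridging identity: for any $U(n)$-harmonic polynomial $p$,
\[ \mathcal{G}^{\lambda}\big(\theta^{\lambda}(p)\psi_{\alpha}^{\lambda}\big)=\mathcal{W}^{\lambda}(p)\,\mathcal{P}_{\alpha}. \]
This is exactly the computation already made in the proof of Proposition \ref{eqn p7.2}: since $\theta^{\lambda}(p)\in\mathcal{R}^{\lambda}(\mathbb{C}^{n})$, Corollary \ref{eqn c5.4} gives $\mathcal{G}^{\lambda}(\theta^{\lambda}(p)\psi_{\alpha}^{\lambda})=\mathcal{G}^{\lambda}(\theta^{\lambda}(p))\mathcal{G}^{\lambda}(\psi_{\alpha}^{\lambda})$; then (\ref{eqn44}) together with Proposition \ref{eqn p5.5}\textbf{(b)} identifies $\mathcal{G}^{\lambda}(\theta^{\lambda}(p))=\tau^{\lambda}(p)=\mathcal{W}^{\lambda}(p)$, while Proposition \ref{eqn p6.3} gives $\mathcal{G}^{\lambda}(\psi_{\alpha}^{\lambda})=\mathcal{P}_{\alpha}$, the orthogonal projection onto $V_{\alpha}$. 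Applied entrywise to the harmonic entries $P^{\delta}_{ij}$ (each $K$-harmonic, hence $U(n)$-harmonic because $K\subset U(n)$), this reads $\mathcal{G}^{\lambda}\big((\Psi_{\alpha}^{\delta,\lambda})_{ij}\big)=\mathcal{W}^{\lambda}(P^{\delta}_{ij})\mathcal{P}_{\alpha}$.

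Next I would record an elementary trace reduction. For operators $A$ and $B$ defined on $\mathcal{P}(\mathbb{C}^{n})$, using $\mathcal{P}_{\alpha}^{*}=\mathcal{P}_{\alpha}=\mathcal{P}_{\alpha}^{2}$ and cyclicity of the trace (the product is finite rank, hence trace class),
\[ \textup{tr}\big(A\mathcal{P}_{\alpha}B^{*}\big)=\sum_{j=1}^{d(\alpha)}\langle Av_{j},Bv_{j}\rangle=\langle A,B\rangle_{\alpha}^{\lambda}, \]
where $\{v_{1},\dots,v_{d(\alpha)}\}$ is an orthonormal basis of $V_{\alpha}$; only the basis vectors lying in $V_{\alpha}$ survive because of the projection. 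For \textbf{(a)}, each $(\Psi_{\alpha}^{\delta,\lambda})_{ij}$ is a polynomial times $e^{-|\lambda||z|^{2}}$, hence in $L^{2}(\mathbb{C}^{n})$, so Theorem \ref{eqn t5.1} applies entrywise:
\[ \langle f,(\Psi_{\alpha}^{\delta,\lambda})_{ij}\rangle=\pi^{-n}(2|\lambda|)^{n}\,\textup{tr}\Big(\mathcal{G}^{\lambda}(f)\,\mathcal{G}^{\lambda}\big((\Psi_{\alpha}^{\delta,\lambda})_{ij}\big)^{*}\Big). \]
Substituting the bridging identity gives $\mathcal{G}^{\lambda}\big((\Psi_{\alpha}^{\delta,\lambda})_{ij}\big)^{*}=\mathcal{P}_{\alpha}\mathcal{W}^{\lambda}(P^{\delta}_{ij})^{*}$, and the trace reduction with $A=\mathcal{G}^{\lambda}(f)$, $B=\mathcal{W}^{\lambda}(P^{\delta}_{ij})$ yields $\langle f,(\Psi_{\alpha}^{\delta,\lambda})_{ij}\rangle=\pi^{-n}(2|\lambda|)^{n}\langle\mathcal{G}^{\lambda}(f),\mathcal{W}^{\lambda}(P^{\delta}_{ij})\rangle_{\alpha}^{\lambda}$; rearranging is precisely \textbf{(a)}.

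Part \textbf{(b)} follows the same route. Here $\theta^{\lambda}(p)\psi_{\alpha}^{\lambda}$ and $\theta^{\lambda}(q)\psi_{\alpha}^{\lambda}$ again lie in $L^{2}(\mathbb{C}^{n})$, and the bridging identity gives $\mathcal{G}^{\lambda}(\theta^{\lambda}(p)\psi_{\alpha}^{\lambda})=\mathcal{W}^{\lambda}(p)\mathcal{P}_{\alpha}$ and likewise for $q$. Theorem \ref{eqn t5.1} then yields
\[ \langle\theta^{\lambda}(p)\psi_{\alpha}^{\lambda},\theta^{\lambda}(q)\psi_{\alpha}^{\lambda}\rangle=\pi^{-n}(2|\lambda|)^{n}\,\textup{tr}\Big(\mathcal{W}^{\lambda}(p)\mathcal{P}_{\alpha}\big(\mathcal{W}^{\lambda}(q)\mathcal{P}_{\alpha}\big)^{*}\Big), \]
and since $\mathcal{P}_{\alpha}^{2}=\mathcal{P}_{\alpha}$ the argument of the trace is $\mathcal{W}^{\lambda}(p)\mathcal{P}_{\alpha}\mathcal{W}^{\lambda}(q)^{*}$; the trace reduction with $A=\mathcal{W}^{\lambda}(p)$, $B=\mathcal{W}^{\lambda}(q)$ delivers $\langle\mathcal{W}^{\lambda}(p),\mathcal{W}^{\lambda}(q)\rangle_{\alpha}^{\lambda}$, which is \textbf{(b)}. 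The computation is largely mechanical; the only steps demanding care are checking that the relevant functions lie in $L^{2}$ so the Plancherel theorem legitimately applies, and keeping the adjoints and complex conjugations straight while passing between $\langle\cdot,\cdot\rangle$, the Hilbert-Schmidt pairing, and $\langle\cdot,\cdot\rangle_{\alpha}^{\lambda}$.
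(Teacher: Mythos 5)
Your proof is correct and takes essentially the same route as the paper's: both hinge on the identity $\mathcal{G}^{\lambda}(\Psi_{\alpha}^{\delta,\lambda})=\mathcal{G}^{\lambda}\big(\theta^{\lambda}(P^{\delta})\big)\mathcal{G}^{\lambda}(\psi_{\alpha}^{\lambda})=\mathcal{W}^{\lambda}(P^{\delta})\mathcal{P}_{\alpha}$ (via Corollary \ref{eqn c5.4}, Proposition \ref{eqn p5.5} \textbf{(b)} and Proposition \ref{eqn p6.3}), followed by the polarized Plancherel theorem (Theorem \ref{eqn t5.1}). Your trace/cyclicity reduction of $\langle\cdot,\cdot\rangle_{\alpha}^{\lambda}$ to the Hilbert--Schmidt pairing is exactly the paper's step of inserting $\mathcal{P}_{\alpha}$ into the basis sum, merely written in operator form.
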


\begin{proof}
The proof is similar to that of Lemma 2.1 in \cite{T}.
\beas\big\langle\mathcal{G}(f),\mathcal{W}(P^{\delta})\big\rangle_{\alpha}&=
&\sum_{\nu=1}^{d(\alpha)}\big\langle\mathcal{G}(f)e_{\alpha\nu},\mathcal{W}
(P^{\delta})e_{\alpha\nu}\big\rangle
\\&=&\sum_{\beta\in\Lambda}\sum_{\nu=1}^{d(\beta)}\big\langle\mathcal{G}
(f)e_{\beta\nu},\mathcal{W}
(P^{\delta})\mathcal{P}_{\alpha}e_{\beta\nu}\big\rangle\\&=&\sum_{\beta\in\Lambda}
\sum_{\nu=1}^{d(\beta)}\big\langle\mathcal{G}(f)e_{\beta\nu},\mathcal{G}
\big(\theta(P^{\delta})\big)\mathcal{G}(\psi_{\alpha})e_{\beta\nu}\big\rangle.\eeas By 
Corollary \textbf{\ref{eqn c5.4}}, $$\mathcal{G}
\big(\theta(P^{\delta})\big)\mathcal{G}
(\psi_{\alpha})=\mathcal{G}\big(\theta(P^{\delta})\psi_{\alpha}\big)=
\mathcal{G}(\Psi_{\alpha}^{\delta}).$$ Hence by the Plancherel 
Theorem (Theorem \textbf{\ref{eqn t5.1}}), \textbf{(a)} follows. The proof of \textbf{(b)} is similar.
\end{proof}

\begin{lem}\label{eqn l7.6}
$\{\mathcal{W}^{\lambda}(p):p\in\mathcal{P}(\mathbb{C}_{\mathbb{R}}^{n})\}$ is dense 
in $\mathcal{O}(V_{\alpha})$.
\end{lem}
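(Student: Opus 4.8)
The plan is to show that the algebraic span of $\{\mathcal{W}^{\lambda}(p)|_{V_{\alpha}}:p\in\mathcal{P}(\mathbb{C}^{n}_{\mathbb{R}})\}$ already contains a family whose closed linear span is all of $\mathcal{O}(V_{\alpha})$. Recall that $\mathcal{O}(V_{\alpha})=\mathrm{Hom}(V_{\alpha},\mathcal{H}^{\lambda})$ is isometric, via $R\mapsto(Re_{\alpha 1},\dots,Re_{\alpha d(\alpha)})$, to the orthogonal direct sum of $d(\alpha)$ copies of $\mathcal{H}^{\lambda}$. Since $\{e_{\beta\mu}\}$ is an orthonormal basis of $\mathcal{H}^{\lambda}$, the ``matrix units'' $E_{(\beta\mu),(\alpha\nu_{0})}$ (the operator sending $e_{\alpha\nu_{0}}\mapsto e_{\beta\mu}$ and $e_{\alpha\nu}\mapsto 0$ for $\nu\neq\nu_{0}$) have dense linear span in $\mathcal{O}(V_{\alpha})$. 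Thus it will suffice to produce each such matrix unit as $\mathcal{W}^{\lambda}(p)|_{V_{\alpha}}$ for a suitable polynomial $p$.

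The key step is to identify the set $\{\mathcal{W}^{\lambda}(p):p\in\mathcal{P}(\mathbb{C}^{n}_{\mathbb{R}})\}$, viewed as operators on $\mathcal{P}(\mathbb{C}^{n})$, with the full Weyl algebra $\mathcal{A}$ generated by $\{W^{\lambda}_{j},\overline{W}^{\lambda}_{j}:1\le j\le n\}$. By Proposition \ref{eqn p5.5} and Remark \ref{eqn r5.6} one has $\mathcal{W}^{\lambda}(p)=\tau^{\lambda}(p)$ for every polynomial $p$. The normal-ordered map $\tau^{\lambda}_{1}$ sends the monomial $\zeta^{\rho}\bar{\zeta}^{\gamma}$ to $(\overline{W}^{\lambda})^{\gamma}(W^{\lambda})^{\rho}$, and as $(\rho,\gamma)$ runs over all pairs of multi-indices these operators form a Poincar\'e--Birkhoff--Witt basis of $\mathcal{A}$; hence $\tau^{\lambda}_{1}$ is a linear bijection of $\mathcal{P}(\mathbb{C}^{n}_{\mathbb{R}})$ onto $\mathcal{A}$. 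Using the commutation relations (\ref{eqn41}), reordering in $\tau^{\lambda}_{2}(\zeta^{\rho}\bar{\zeta}^{\gamma})=(W^{\lambda})^{\rho}(\overline{W}^{\lambda})^{\gamma}$ produces $(\overline{W}^{\lambda})^{\gamma}(W^{\lambda})^{\rho}$ plus terms of strictly lower total degree, so $\tau^{\lambda}=\tfrac{1}{2}(\tau^{\lambda}_{1}+\tau^{\lambda}_{2})$ differs from $\tau^{\lambda}_{1}$ only by lower-order terms. Being unitriangular for the degree filtration, $\tau^{\lambda}$ is again a linear bijection onto $\mathcal{A}$, so that $\{\mathcal{W}^{\lambda}(p):p\}=\mathcal{A}$ as operators on $\mathcal{P}(\mathbb{C}^{n})$.

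Finally, I would invoke that the representation of $\mathcal{A}$ on $\mathcal{P}(\mathbb{C}^{n})$ is algebraically irreducible: any nonzero submodule contains a nonzero polynomial, repeated application of the annihilation operators $W^{\lambda}_{j}$ yields the constants, and the creation operators $\overline{W}^{\lambda}_{j}$ then generate every monomial, so the submodule is all of $\mathcal{P}(\mathbb{C}^{n})$; by Schur's lemma its commutant is $\mathbb{C}$. The Jacobson density theorem then applies: given the linearly independent vectors $e_{\alpha 1},\dots,e_{\alpha d(\alpha)}\in V_{\alpha}\subset\mathcal{P}(\mathbb{C}^{n})$ and arbitrary target polynomials $u_{1},\dots,u_{d(\alpha)}\in\mathcal{P}(\mathbb{C}^{n})$, there is $A\in\mathcal{A}$ with $Ae_{\alpha\nu}=u_{\nu}$. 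Choosing $u_{\nu_{0}}=e_{\beta\mu}$ and $u_{\nu}=0$ for $\nu\neq\nu_{0}$ realizes the matrix unit $E_{(\beta\mu),(\alpha\nu_{0})}$ as $A|_{V_{\alpha}}=\mathcal{W}^{\lambda}(p)|_{V_{\alpha}}$, which by the first paragraph gives the asserted density. The main obstacle is the second step, namely verifying that the Weyl correspondence of polynomials exhausts the entire Weyl algebra (this is exactly where the identity $\mathcal{W}^{\lambda}(p)=\tau^{\lambda}(p)$ for all, not merely harmonic, $p$ enters); once that is in hand, the irreducibility and the density of matrix units are routine.
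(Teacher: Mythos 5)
Your proof is correct in its overall strategy, but it takes a genuinely different route from the paper's. The paper disposes of the lemma in a few lines: it quotes Geller's result (\cite{G}, Proposition 2.10 (b)) that $\{\mathcal{W}^{\lambda}(p):p\in\mathcal{P}(\mathbb{C}_{\mathbb{R}}^{n})\}$ is dense in $\mathcal{O}(\mathcal{P}_{m}(\C))$, notes that $V_{\alpha}\subset\mathcal{P}_{m}(\C)$ for some $m$, and transfers density to $\mathcal{O}(V_{\alpha})$ by extending a given operator on $V_{\alpha}$ by zero to $\mathcal{P}_{m}(\C)$ and restricting approximants back. You instead give a self-contained algebraic argument: the isometry of $\mathcal{O}(V_{\alpha})$ with $d(\alpha)$ copies of $\mathcal{H}^{\lambda}$ (so matrix units have dense span), the identification of $\{\mathcal{W}^{\lambda}(p)\}$ with the Weyl algebra $\mathcal{A}$ generated by the $W^{\lambda}_{j},\overline{W}^{\lambda}_{j}$ via a PBW/filtration argument, and then algebraic irreducibility of $\mathcal{P}(\C)$ under $\mathcal{A}$ plus the Jacobson density theorem to realize each matrix unit \emph{exactly} as some $\mathcal{W}^{\lambda}(p)\big|_{V_{\alpha}}$. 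Your route avoids importing Geller's density theorem and actually proves more (every finite-rank operator from $V_{\alpha}$ into $\mathcal{P}(\C)$ is exactly of this form, not merely a limit of such), at the cost of being much longer. One small tightening in the last step: Schur's lemma alone only makes the commutant a division algebra; to conclude it is $\mathbb{C}$ either quote the countable-dimension (Dixmier) version of Schur, or, more simply, observe that an operator commuting with all multiplications $\overline{W}^{\lambda}_{j}$ is itself a multiplication operator, and commuting also with the $W^{\lambda}_{j}$ forces the multiplier to be constant.

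The one step I would insist you rewrite is the appeal to Remark \textbf{\ref{eqn r5.6}}. That remark is asserted in the paper without proof, and as literally stated it is suspect: the Weyl correspondence of a monomial is the \emph{fully} symmetrized product of the corresponding $W^{\lambda}_{j},\overline{W}^{\lambda}_{j}$, and full symmetrization does not agree with the two-ordering average $\tau^{\lambda}=\frac{1}{2}(\tau^{\lambda}_{1}+\tau^{\lambda}_{2})$ once the degree is at least four; already for $p(\zeta)=\zeta_{1}^{2}\bar{\zeta}_{1}^{2}$ the two differ by a nonzero constant multiple of the identity (a double-commutator term), so $\mathcal{W}^{\lambda}(p)\neq\tau^{\lambda}(p)$ there. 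Fortunately your argument never needs this exact identity; it only needs the set equality $\{\mathcal{W}^{\lambda}(p):p\in\mathcal{P}(\mathbb{C}^{n}_{\mathbb{R}})\}=\mathcal{A}$, and that follows from proven facts by the very filtration argument you already use: writing $\mathcal{F}^{\prime-1}p=p(D)\delta_{0}$ and $\theta^{\lambda}_{1}(p)=p(D)+\varepsilon(p)$ with $\varepsilon(p)$ of order strictly less than $\deg p$ (exactly as in the paper's proof of Proposition \textbf{\ref{eqn p7.7}}), equation (\ref{eqn44}) gives $\mathcal{W}^{\lambda}(p)=\tau^{\lambda}_{1}(p)-\mathcal{W}^{\lambda}(r)$ for a polynomial $r$ with $\deg r<\deg p$, and induction on the degree yields $\mathrm{span}\{\mathcal{W}^{\lambda}(p)\}=\mathrm{span}\{\tau^{\lambda}_{1}(p)\}=\mathcal{A}$; since both sides are linear subspaces, the sets themselves coincide. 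With that substitution in place of Remark \textbf{\ref{eqn r5.6}}, your proof is complete and independent of \cite{G}, Proposition 2.10 (b).
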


\begin{proof}
It is shown in \cite{G} (See Proposition 2.10 (b)) that $\{\mathcal{W}
(p):p\in\mathcal{P}(\mathbb{C}_{\mathbb{R}}^{n})\}$ is dense in $\mathcal{O}
(\mathcal{P}_{m}(\mathbb{C}^{n})).$ Since $V_{\alpha}$ is contained in some 
$\mathcal{P}_{m}(\mathbb{C}^{n})$, we can extend any operator $T\in\mathcal{O}(V_{\alpha})$ to $T^{\prime}\in\mathcal{O}(\mathcal{P}_m(\C))$ by defining $T^{\prime}$ to be zero on the complement of $V_{\alpha}$ in $\mathcal{P}_m(\C)$. From this, it is easy to see that, $\{\mathcal{W}
(p):p\in\mathcal{P}(\mathbb{C}_{\mathbb{R}}^{n})\}$ is dense in $\mathcal{O}
(V_{\alpha}).$
\end{proof}

Let $H_{\stackrel{\vee}{\delta} }^{i}$ be the subspace of $H_{\stackrel{\vee}{\delta}}$, spanned by the 
entries of $i$th row in $P^{\delta}$. Using Schur's orthogonality it can be shown 
that, if $p\in H_{\stackrel{\vee}{\delta} }^{i}$, $q\in H_{\stackrel{\vee}{\delta^{\prime}} 
}^{i^{\prime}}$ with $(\delta,i)\neq(\delta^{\prime},i^{\prime})$, then 
\bea\label{eqn64}\int_{K}p(k\cdot z)\overline{q(k\cdot z)}dk=0.\eea

\begin{prop}\label{eqn p7.7}
$\mathcal{O}
(V_{\alpha})=\bigoplus_{\delta\in\widehat{K}_{M}}\bigoplus_{i=1}^{d(\delta)}\mathcal{W}
^{\lambda}(H_{\stackrel{\vee}{\delta} }^{i})\big|_{V_{\alpha}}$~(orthogonal~ Hilbert~ space~ 
decomposition).
\end{prop}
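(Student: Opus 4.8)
The plan is to prove the asserted equality as an orthogonal Hilbert space decomposition in two stages: first that the summands $\mathcal{W}^{\lambda}(H_{\check{\delta}}^{i})\big|_{V_{\alpha}}$ are mutually orthogonal in $\mathcal{O}(V_{\alpha})$ as $(\delta,i)$ ranges over $\widehat{K}_{M}\times\{1,\dots,d(\delta)\}$, and second that their closed linear span is all of $\mathcal{O}(V_{\alpha})$.

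For orthogonality, fix $p\in H_{\check{\delta}}^{i}$ and $q\in H_{\check{\delta'}}^{i'}$ with $(\delta,i)\neq(\delta',i')$. Since $p,q$ are $K$-harmonic, Lemma \textbf{\ref{eqn l7.5}} \textbf{(b)} reduces the computation of $\langle\mathcal{W}^{\lambda}(p),\mathcal{W}^{\lambda}(q)\rangle_{\alpha}^{\lambda}$ to the $L^{2}(\C)$ pairing $\langle\theta^{\lambda}(p)\psi^{\lambda}_{\alpha},\theta^{\lambda}(q)\psi^{\lambda}_{\alpha}\rangle$. The structural input is (\ref{eqn61}): writing $p=\sum_{j}a_{j}P^{\delta}_{ij}$ and using $\theta^{\lambda}(P^{\delta}_{ij})\psi^{\lambda}_{\alpha}=(\Psi^{\delta,\lambda}_{\alpha})_{ij}=\sum_{k}P^{\delta}_{ik}(L^{\delta,\lambda}_{\alpha})_{kj}e^{-|\lambda||z|^{2}}$, one sees that $\theta^{\lambda}(p)\psi^{\lambda}_{\alpha}=\sum_{k}P^{\delta}_{ik}\,c_{k}\,e^{-|\lambda||z|^{2}}$ with each $c_{k}$ a $K$-invariant polynomial, so that only the $i$th row of $P^{\delta}$ enters; likewise $\theta^{\lambda}(q)\psi^{\lambda}_{\alpha}=\sum_{k'}P^{\delta'}_{i'k'}\,d_{k'}\,e^{-|\lambda||z|^{2}}$ involves only the $i'$th row of $P^{\delta'}$. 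The pairing is then a finite sum of integrals $\int_{\C}W(z)P^{\delta}_{ik}(z)\overline{P^{\delta'}_{i'k'}(z)}\,dz$ with $K$-invariant weight $W=c_{k}\overline{d_{k'}}e^{-2|\lambda||z|^{2}}$. Since $W$ and Lebesgue measure are $K$-invariant ($K\subset U(n)$ acting orthogonally on $\C=\mathbb{R}^{2n}$), averaging the integrand over $K$ and changing variables gives
\be
\int_{\C}W(z)P^{\delta}_{ik}(z)\overline{P^{\delta'}_{i'k'}(z)}\,dz=\int_{\C}W(z)\Big(\int_{K}P^{\delta}_{ik}(\sigma\cdot z)\overline{P^{\delta'}_{i'k'}(\sigma\cdot z)}\,d\sigma\Big)\,dz=0,
\ee
the inner integral vanishing by (\ref{eqn64}) because $P^{\delta}_{ik}\in H^{i}_{\check{\delta}}$, $P^{\delta'}_{i'k'}\in H^{i'}_{\check{\delta'}}$ and $(\delta,i)\neq(\delta',i')$. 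Hence $\mathcal{W}^{\lambda}(H^{i}_{\check{\delta}})|_{V_{\alpha}}\perp\mathcal{W}^{\lambda}(H^{i'}_{\check{\delta'}})|_{V_{\alpha}}$.

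For completeness, the row subspaces exhaust $H_{\check{\delta}}$ and the $H_{\check{\delta}}$ exhaust $H$, so $\sum_{\delta,i}\mathcal{W}^{\lambda}(H^{i}_{\check{\delta}})|_{V_{\alpha}}=\mathcal{W}^{\lambda}(H)|_{V_{\alpha}}$, and it suffices to show this is dense in $\mathcal{O}(V_{\alpha})$. Lemma \textbf{\ref{eqn l7.6}} already gives density of $\mathcal{W}^{\lambda}(\mathcal{P}(\C^{n}_{\mathbb{R}}))|_{V_{\alpha}}$, so the task is to place each $\mathcal{W}^{\lambda}(p)|_{V_{\alpha}}$ in the closure of $\mathcal{W}^{\lambda}(H)|_{V_{\alpha}}$; by Theorem \textbf{\ref{eqn t3.1}} it is enough to treat a single product $ih$ with $i$ a $K$-invariant and $h$ a $K$-harmonic polynomial. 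To organize this I would pass to the $L^{2}$ side: each $R\in\mathcal{O}(V_{\alpha})$ has finite rank, hence is Hilbert--Schmidt, hence equals $\mathcal{G}^{\lambda}(g)|_{V_{\alpha}}$ for some $g$ in the $\alpha$-component of $L^{2}(\C)$ by Theorem \textbf{\ref{eqn t5.1}}; moreover the proof of Lemma \textbf{\ref{eqn l7.5}} \textbf{(a)} extends to every polynomial $p$ (using Remark \textbf{\ref{eqn r5.6}}, $\mathcal{W}^{\lambda}(p)=\tau^{\lambda}(p)=\mathcal{G}^{\lambda}(\theta^{\lambda}(p))$) to yield $\langle\mathcal{G}^{\lambda}(g),\mathcal{W}^{\lambda}(p)\rangle_{\alpha}^{\lambda}=\pi^{n}(2|\lambda|)^{-n}\langle g,\theta^{\lambda}(p)\psi^{\lambda}_{\alpha}\rangle$. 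Thus $R$ lies in the orthogonal complement of $\mathcal{W}^{\lambda}(H)|_{V_{\alpha}}$ exactly when $g\perp\theta^{\lambda}(h)\psi^{\lambda}_{\alpha}$ for all $K$-harmonic $h$, and density is equivalent to the implication that this forces $g\perp\theta^{\lambda}(ih)\psi^{\lambda}_{\alpha}$ for all invariants $i$.

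This last implication is the main obstacle: because the Weyl correspondence is not multiplicative, $\theta^{\lambda}(ih)\psi^{\lambda}_{\alpha}$ is not obtained by applying $\theta^{\lambda}(i)$ to $\theta^{\lambda}(h)\psi^{\lambda}_{\alpha}$, and the invariant factor cannot be discarded directly. I expect to close it by a degree induction: reducing via Theorem \textbf{\ref{eqn t3.1}} to the generators of the (polynomial) polar invariant ring, one expands $\tau^{\lambda}(ih)$ through the Weyl product and the commutation relations (\ref{eqn43.1}), the correction terms being $\tau^{\lambda}$ of polynomials of strictly smaller bidegree; together with the fact that $\mathcal{W}^{\lambda}(i)$ acts as a scalar on each $V_{\beta}$ (Corollary \textbf{\ref{eqn c6.2}}), this expresses $\mathcal{W}^{\lambda}(ih)|_{V_{\alpha}}$ in the closed span of $\mathcal{W}^{\lambda}$ of lower-degree polynomials, and the induction, combined again with Theorem \textbf{\ref{eqn t3.1}}, returns everything to the harmonic span. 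An alternative would be to transport Geller's corresponding density statement (which underlies Lemma \textbf{\ref{eqn l7.6}}) through the symmetric-space realization of the polar action from Section 2. Granting this reduction, the orthogonal complement of $\mathcal{W}^{\lambda}(H)|_{V_{\alpha}}$ is trivial, so its span is dense, and together with the orthogonality established above this gives the asserted orthogonal Hilbert space decomposition.
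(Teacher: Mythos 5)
Your orthogonality half is correct and is essentially the paper's own argument: reduce via Lemma \textbf{\ref{eqn l7.5}} \textbf{(b)} to the $L^{2}$ pairing, use the form (\ref{eqn61}) of $\Psi^{\delta,\lambda}_{\alpha}$ so that only the $i$th row of $P^{\delta}$ enters, multiplied by $K$-invariant factors, then average over $K$ and invoke (\ref{eqn64}). Your reduction of completeness --- Lemma \textbf{\ref{eqn l7.6}} plus Theorem \textbf{\ref{eqn t3.1}}, so that it suffices to place each $\mathcal{W}^{\lambda}(ih)\big|_{V_{\alpha}}$ in the closed span of $\mathcal{W}^{\lambda}(H)\big|_{V_{\alpha}}$ --- is also the paper's reduction; the paper proves the sharper per-type identity $\mathcal{W}^{\lambda}(IH_{\delta})\big|_{V_{\alpha}}=\mathcal{W}^{\lambda}(H_{\delta})\big|_{V_{\alpha}}$, which has the advantage that each $\mathcal{W}^{\lambda}(H_{\delta})\big|_{V_{\alpha}}$ is finite dimensional, so no closures intervene.

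The genuine gap is at the step you yourself call the main obstacle, and the mechanism you propose does not close it. Expanding $\tau^{\lambda}(ih)$ by the commutation relations bears on $\mathcal{W}^{\lambda}(ih)$ only if one already knows $\mathcal{W}^{\lambda}(ih)=\tau^{\lambda}(ih)$, i.e. Remark \textbf{\ref{eqn r5.6}}; similarly, Corollary \textbf{\ref{eqn c6.2}} concerns $\mathcal{W}^{\lambda}(i)$, not $\tau^{\lambda}(i)$. But Remark \textbf{\ref{eqn r5.6}} is stated in the paper without proof and is deliberately never used there; and it is not a formal identity, since $\tau^{\lambda}$ averages only the two extreme orderings, while for non-harmonic monomials the different orderings of $W^{\lambda}$'s and $\overline{W}^{\lambda}$'s differ by nonzero lower-order terms, so which combination $\mathcal{G}^{\lambda}(\mathcal{F}^{\prime-1}p)$ actually equals must be computed, not assumed. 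The paper's induction is built precisely to avoid this: writing $\mathcal{F}^{\prime-1}(jh)=h(D)j(D)\delta_{0}$ and $\theta^{\lambda}(h)=h(D)+\varepsilon(h)$, one gets
$$\mathcal{W}^{\lambda}(jh)=\mathcal{W}^{\lambda}(h)\mathcal{W}^{\lambda}(j)-\mathcal{W}^{\lambda}\big(\mathcal{F}^{\prime}\big(\varepsilon(h)j(D)\delta_{0}\big)\big),$$
which uses only the harmonic case $\mathcal{W}^{\lambda}(h)=\tau^{\lambda}(h)$ (Proposition \textbf{\ref{eqn p5.5}} \textbf{(b)}) together with Corollary \textbf{\ref{eqn c5.4}}; the correction $\mathcal{F}^{\prime}\big(\varepsilon(h)j(D)\delta_{0}\big)$ is a polynomial lying again in $IH_{\delta}$, of strictly smaller degree, and Corollary \textbf{\ref{eqn c6.2}} then closes the inductive step. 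If you insist on working with $\tau^{\lambda}$, you must supply the triangularity facts that substitute for Remark \textbf{\ref{eqn r5.6}}: namely $\tau^{\lambda}(p)=\mathcal{W}^{\lambda}(p+q)$ with $\deg q<\deg p$ (since $\mathcal{F}^{\prime}\big(\theta^{\lambda}(p)\delta_{0}\big)=p+\mathcal{F}^{\prime}\big(\varepsilon(p)\delta_{0}\big)$), hence $\textup{span}\{\tau^{\lambda}(p)\big|_{V_{\alpha}}:\deg p\leq m\}=\textup{span}\{\mathcal{W}^{\lambda}(p)\big|_{V_{\alpha}}:\deg p\leq m\}$; and that $\tau^{\lambda}(i)=\mathcal{G}^{\lambda}\big(\theta^{\lambda}(i)\delta_{0}\big)$ is scalar on each $V_{\beta}$, by Proposition \textbf{\ref{eqn p6.1}} applied to the $K$-invariant distribution $\theta^{\lambda}(i)\delta_{0}$. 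With those inserted your induction closes; as written, it rests on an unproved identity that the paper's own proof was structured to avoid.
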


\begin{proof}
In Lemma \textbf{\ref{eqn l7.5}} \textbf{(b)}, if we take $p=P_{ij}^{\delta}$ and 
$q=P_{i^{\prime}j^{\prime}}^{\delta^{\prime}}$, then $\theta(p)\psi_{\alpha}=(i,j)$th 
entry of $\theta(P^{\delta})\psi_{\alpha}$ = $(i,j)$th entry of 
$\Psi_{\alpha}^{\delta}$, and similarly 
$\theta(q)\psi_{\alpha}=(i^{\prime},j^{\prime})$th entry of 
$\Psi_{\alpha}^{\delta^{\prime}}$. Note that $\Psi_{\alpha}^{\delta}$ has the form 
(\ref{eqn61}). Therefore if $(\delta,i)\neq(\delta^{\prime},i^{\prime})$, then by 
(\ref{eqn64}) we have, $$\int_{K}[\theta(p)\psi_{\alpha}](k\cdot 
z)\overline{[\theta(q)\psi_{\alpha}](k\cdot z)}dk=0~\forall z.$$ Integrating both 
sides over $\C$, and then making a change of variable, namely $z\rightarrow  
k^{-1}\cdot z$, we get $\langle\theta(p)\psi_{\alpha},\theta(q)\psi_{\alpha}\rangle=0$ 
if $(\delta,i)\neq(\delta^{\prime},i^{\prime}).$ Hence the orthogonality is proved.

By Lemma \textbf{\ref{eqn l7.6}}, $\{\mathcal{W}(p):p\in\mathcal{P}(\mathbb{C}_{\mathbb{R}}^{n})\}$ 
is dense in $\mathcal{O}(V_{\alpha})$. Therefore to complete the proof of the 
proposition it is enough to show that for any $\delta\in\widehat{K}_{M}$, 
\bea\label{eqn65}\mathcal{W}(IH_{\delta})\big|_{V_{\alpha}}=\mathcal{W}
(H_{\delta})\big|_{V_{\alpha}}.\eea For a polynomial $p$, let $p(D)$ denote the 
constant coefficient differential operator obtained by replacing $z_{j}$ by 
$-\partial/\partial\bar{z}_{j}$ and $\bar{z}_{j}$ by $\partial/\partial z_{j}$. Then 
it is an easy consequence of the Euclidean Fourier transform that 
$\mathcal{F}^{\prime-1}p=p(D)\delta_{0}$. Also, we can write 
$$\theta(p)=p(D)+\varepsilon(p),$$ where $\varepsilon(p)$ is a polynomial coefficient 
differential operator of order strictly less than the degree of $p$. Let $P_{m}$ be 
the space of all polynomials in $z$, $\bar{z}$ whose degree is less than or equal to 
$m$. We prove (\ref{eqn65}) by showing that \bea\label{eqn66}\mathcal{W}
(IH_{\delta}\cap P_{m} )\big|_{V_{\alpha}}\subset\mathcal{W}(H_{\delta})\big|
_{V_{\alpha}},\eea for all non negative integers $m$. We do it by induction on $m$. 
Since $\mathcal{W}(1)=\mathcal{G}(\delta_{0})=$ identity operator, (\ref{eqn66}) is 
true for $m=0$. Now suppose (\ref{eqn66}) is true for $m=k$. It is enough to show that 
$\mathcal{W}(p)\big|_{V_{\alpha}}\in \mathcal{W}(H_{\delta})\big|_{V_{\alpha}},$ for 
any polynomial $p$ of the type $p=jh$, where $j\in I$, $h\in H_{\delta}$ and degree 
$p=(k+1)$. \bea\label{eqn67} \mathcal{W}(p)=\nonumber\mathcal{G}
(\mathcal{F}^{\prime-1}p)&=&\mathcal{G}\big(h(D)j(D)\delta_{0}\big)\\&=&\mathcal{G}
\big(\theta(h)j(D)\delta_{0}\big)-\mathcal{G}\big(\varepsilon(h)j(D)\delta_{0}\big)\\
&=&\nonumber
\mathcal{W}(h)\mathcal{W}
(j)-\mathcal{W}\big(\mathcal{F}^{\prime}\big(\varepsilon(h)j(D)\delta_{0}\big)\big).
\eea By Corollary \textbf{\ref{eqn c6.2}}, $\mathcal{W}(j)$ is a scalar on 
$V_{\alpha}$. Hence $$[\mathcal{W}(h)\mathcal{W}(j)]\big|_{V_{\alpha}}\in\mathcal{W}
(H_{\delta})\big|_{V_{\alpha}}.$$ Now note that $\varepsilon(h)j(D)\delta_{0}$ is a 
distribution supported at the origin, whose order is less than or equal to $k$. 
Therefore $\mathcal{F}^{\prime}\big(\varepsilon(h)j(D)\delta_{0}\big)$ is a polynomial 
of degree at most $k$. Again from (\ref{eqn67}) we have $$\mathcal{F}^{\prime-1}p=
\theta(h)j(D)\delta_{0}-\varepsilon(h)j(D)\delta_{0},$$ which implies that 
$$\mathcal{F}^{\prime}\big(\varepsilon(h)j(D)\delta_{0}\big)\in 
S_{\delta}=IH_{\delta}.$$ Therefore, by the induction hypothesis, $$
\mathcal{W}\big(\mathcal{F}^{\prime}\big(\varepsilon(h)j(D)\delta_{0}\big)\big)\big|
_{V_{\alpha}}
\in\mathcal{W}(H_{\delta})\big|_{V_{\alpha}}.$$ So ultimately we get that 
$$\mathcal{W}(p)\big|_{V_{\alpha}}\in \mathcal{W}(H_{\delta})\big|_{V_{\alpha}},$$ as 
desired. Hence the proof of the proposition is complete.
\end{proof}

\begin{proof}
$(Proof~ of~ Theorem$ \textbf{\ref{eqn t7.4}})~ By Lemma \textbf{\ref{eqn l7.5}} 
\textbf{(a)},  $$\big\langle\mathcal{G}(f),\mathcal{W}
(P^{\delta^{\prime}})\big\rangle_{\alpha}=\pi^{n}(2|\lambda|)^{-n}\langle f,
\Psi_{\alpha}^{\delta^{\prime}}\rangle,$$ for $f\in \mathscr{S}(\mathbb{C}^{n})$ and 
$\delta^{\prime}\in\widehat{K}_{M}$.
Again, $\Psi_{\alpha}^{\delta^{\prime}}$ has the form (\ref{eqn61}). Therefore if we 
take $$f(z)=F_{ij}(z)=\sum_{k=1}^{l(\delta)}P^{\delta}_{ik}(z)G_{kj}(z),$$ then by 
(\ref{eqn64}), we get $$\big\langle\mathcal{G}(F_{ij}),\mathcal{W}
(P_{i^{\prime}j^{\prime}}^{\delta^{\prime}})\big\rangle_{\alpha}=0,$$ if 
$(\delta^{\prime},i^{\prime})\neq(\delta,i).$ By Proposition \textbf{\ref{eqn p7.7}}, 
in particular for $i=1$, we get $$\mathcal{G}(F_{1j})\big|_{V_{\alpha}}\in\mathcal{W}
(H_{\stackrel{\vee}{\delta}}^{1})\big|_{V_{\alpha}},$$ for all $j=1,2,\cdots,l(\delta)$; and 
consequently there are constants $c_{kj}$ such that \bea\label{eqn68}\mathcal{G}
(F_{1j})\big|_{V_{\alpha}}=\sum_{k=1}^{l(\delta)}c_{kj}
\mathcal{W}(P^{\delta}_{1k})\big|_{V_{\alpha}},\eea which implies \beas 
F_{1j}\times\psi_{\alpha}&=&\sum_{k=1}^{l(\delta)}c_{kj}\theta(P^{\delta}_{1k})\psi_{\alpha}\\&=& 
(1,j)\textup{th~entry ~of~}\big[\theta(P^{\delta})\psi_{\alpha}\big]C_{\alpha}\\&=& 
(1,j)\textup{th~entry ~of~} \Psi_{\alpha}^{\delta}C_{\alpha},\eeas
where $C_{\alpha}$ is the $l(\delta)\times l(\delta)$ constant matrix whose $(k,j)$th 
entry is $c_{kj}$. Therefore by Corollary \textbf{\ref{eqn c4.3}}, 
$F\times\psi_{\alpha}=\Psi_{\alpha}^{\delta}C_{\alpha}.$ This is true for all 
$\alpha\in\Lambda$. Hence we get $\mathcal{G}(F)=\mathcal{W}(P^{\delta})S$, if we 
define the $l(\delta)\times l(\delta)$ linear operator $S$ by $S\big|
_{V_{\alpha}}=C_{\alpha}$.

Since $\Pi(z)^{\star}=\Pi(-z)$, a direct calculation shows that $\mathcal{G}
(\bar{f})=\mathcal{G}(f^{-})^{\star}$, for all $f\in\mathscr{S}(\mathbb{C}^{n})$, 
where $f^{-}(z)=f(-z)$. Also, note that $(L_{j}f)^{-}=(-L_{j})f^{-}$ and 
$(\overline{L}_{j}f)^{-}=(-\bar{L}_{j})f^{-}$. Therefore if $p,q$ be two $K$-harmonic 
polynomials then 
\beas\mathcal{G}\big(\theta(p)\psi_{\alpha}\times\overline{\theta(q)\psi_{\beta}}\big)&=&
\mathcal{G}\big(\theta(p)\psi_{\alpha}\big)\mathcal{G}\big(\theta(q^{-})
\psi_{\beta}^{-}\big)^{\star}\\&=& \mathcal{W}(p)\mathcal{G}
(\psi_{\alpha})\big[\mathcal{W}(q^{-})\mathcal{G}(\psi_{\beta}^{-})\big]^{\star} 
\\&=&\tau(p)\mathcal{G}(\psi_{\alpha})\mathcal{G}
(\psi_{\beta})\tau(q^{-})^{\star}\\&=& 0,\eeas if $\beta\neq \alpha$. The last two 
equality holds on the domain of $\tau(q^{-})^{\star}$. Since $W_{j}$ and $\bar{W}_{j}$ 
are adjoint to each other we see that $\tau(q^{-})^{\star}=\tau(\bar{q}^{-})$ whose 
domain contains $\mathcal{P}(\mathbb{C}^{n})$. Hence we get 
$\theta(p)\psi_{\alpha}\times\overline{\theta(q)\psi_{\beta}}=0$. In particular 
$\int_{\mathbb{C}^{n}}\overline{\theta(q)\psi_{\beta}(z)}\theta(p)\psi_{\alpha}
(z)dz=0$ if $\beta\neq\alpha$. Applying this, we have for $\beta\neq\alpha$, 
\bea\label{eqn69}\int_{\mathbb{C}^{n}}\big[\Psi_{\beta}^{\delta}
(z)\big]^{\star}\Psi_{\alpha}^{\delta}(z)dz=0.\eea Since (by Proposition 
\textbf{\ref{eqn p6.4}}) 
$$F=\sum_{\beta\in\Lambda}F\times\psi_{\beta}=\sum_{\beta\in\Lambda}\Psi_{\beta}^{\delta}C_{\beta},
$$ we get \bea\label{eqn610} A_{\alpha}^{\delta}C_{\alpha}&=& 
\nonumber\int_{\mathbb{C}^{n}}[\Psi_{\alpha}^{\delta}(z)]^{\star}F(z)dz\\&=& 
\int_{\mathbb{C}^{n}}[L^{\delta}_{\alpha}(z)]^{\star}
\Upsilon_{\delta}(z)G(z)e^{-|\lambda||z|^{2}}dz.\eea
Now we show that it is possible to chose $C_{\alpha}=C_{\alpha}^{\delta,\lambda}(F)$, as defined in (\ref{eqn63}).
Without loss of generality assume that $l(r)=r$, i.e. first $\alpha(\delta)$ 
columns in $\Psi_{\alpha}^{\delta}$ are linearly independent. By Remark 
\textbf{\ref{eqn r4.4}}, $\{\Psi^{\delta}_{\alpha 1j}=\theta 
\big(P^{\delta}_{1j}\big)\psi_{\alpha}:j=1,2,\cdots \alpha(\delta)\}$ form a basis for 
$V^{1}_{\Psi_{\alpha}^{\delta }}=\textup{span}\{\Psi^{\delta}_{\alpha 1j}=\theta 
\big(P^{\delta}_{1j}\big)\psi_{\alpha}:j=1,2,\cdots l(\delta)\}$; which is equivalent 
to saying that $\{\mathcal{W}(P^{\delta}_{1j})\big|_{V_{\alpha}}:j=1,2,\cdots 
\alpha(\delta)\}$ form a basis for $\mathcal{W}(H_{\stackrel{\vee}{\delta}}^{1})\big|
_{V_{\alpha}}$. Therefore in (\ref{eqn68}) we can take $c_{kj}=0$ for $k> 
\alpha(\delta)$. Consequently from (\ref{eqn610}) 
we get $$\widetilde{A}_{\alpha}^{\delta}\widetilde{C}_{\alpha}= \int_{\mathbb{C}^{n}}
[\widetilde{L}^{\delta}_{\alpha}(z)]^{\star}
\Upsilon_{\delta}(z)G(z)e^{-|\lambda||z|^{2}}dz,$$ where $\widetilde{C}_{\alpha}$ denotes the $\alpha(\delta)\times l(\delta)$ matrix whose rows are precisely the first $\alpha(\delta)$ rows of $C_{\alpha}$.  But by Lemma \textbf{\ref{eqn 
l7.3}}, $\widetilde{A}^{\delta,\lambda}_{\alpha}$ is invertible. Therefore we can write $$\widetilde{C}_{\alpha}=\big(\widetilde{A}_{\alpha}^{\delta}\big)^{-1} \int_{\mathbb{C}^{n}}
[\widetilde{L}^{\delta}_{\alpha}(z)]^{\star}
\Upsilon_{\delta}(z)G(z)e^{-|\lambda||z|^{2}}dz.$$ Hence by the definition of $C^{\delta,\lambda}_{\alpha}(F)$, $C_{\alpha}=C^{\delta,\lambda}_{\alpha}(F)$ as desired.
\end{proof}

Now we extend Theorem \textbf{\ref{eqn t7.4}} to a larger class of 
functions. Let $$\mathcal{E}^{\lambda}(\C)=\{f\in\mathcal{E}(\C):e^{-(|
\lambda|-\epsilon)|z|^{2}}|f(z)|\in L^{p}(\C),~\textup{for~some~$\epsilon>0, 1\leq 
p\leq\infty$}\},$$ and for $\delta\in\hat{K}_{M}$, $$\mathcal{E}^{\delta,\lambda}
(\C)=\{F\in\mathcal{E}^{\delta}(\C):\textup{each~}F_{ij}\in\mathcal{E}^{\lambda}
(\C)\}.$$ Since $\psi_{\alpha}^{\lambda}(z)$ is equal to $e^{-|\lambda||z|^{2}}$ times 
a polynomial, clearly(by Holder's inequality) 
$$f\times^{\lambda}\psi^{\lambda}_{\alpha}(z)=\int_{\C}f(z-w)\psi^{\lambda}_{\alpha}
(w)e^{2i\lambda\textup{Im}(z\cdot \bar{w})}dw$$ is well defined, whenever 
$f\in\mathcal{E}^{\lambda}(\C).$ For $\epsilon>0$ and $z\in\C$, define 
$$\tau^{\epsilon}_{z}\psi^{\lambda}_{\alpha}(w)=e^{(|\lambda|-\epsilon)|w|^{2}}
[\psi^{\lambda}_{\alpha}(z-w)e^{-2i\lambda \textup{Im}(z.\bar{w})}],$$ which clearly 
belongs to $\mathscr{S}(\C)$. Note that if $f\in\mathcal{E}^{\lambda}(\C)$, then for 
some $\epsilon>0$, we can think of $e^{-(|\lambda|-\epsilon)|z|^{2}}f(z)$ as a tempered 
distribution and then clearly $$f\times^{\lambda}\psi_{\alpha}^{\lambda}(z)=e^{-(|
\lambda|-\epsilon)|\cdot|^{2}}f\big(\tau^{\epsilon}_{z}\psi^{\lambda}_{\alpha}\big).$$

\begin{lem}\label{eqn l7.8}
Let $f$ be a distribution on $\C$, such that for some $\epsilon>0$, $e^{-(|
\lambda|-\epsilon)|\cdot|^{2}}f$ is a tempered distribution. Let $D$ be a polynomial 
coefficient differential operator on $\C$. Then  
\begin{itemize}
\item [\textbf{(a)}] $e^{-(|\lambda|-\epsilon)|\cdot|^{2}}Df$ is also a tempered 
distribution.
\item [\textbf{(b)}] Let $f_{j}\in\mathscr{S}(\C)$ be such that $e^{-(|
\lambda|-\epsilon)|\cdot|^{2}}f_{j}\rightarrow e^{-(|\lambda|-\epsilon)|\cdot|^{2}}f$ 
in $\mathscr{S}^{\prime}(\C)$. Then $e^{-(|\lambda|-\epsilon)|\cdot|
^{2}}Df_{j}\rightarrow e^{-(|\lambda|-\epsilon)|\cdot|^{2}}Df$ in 
$\mathscr{S}^{\prime}(\C)$. Consequently for each $z\in\C$, 
$Df_{j}\times^{\lambda}\psi_{\alpha}^{\lambda}\rightarrow e^{-(|\lambda|-\epsilon)|
\cdot|^{2}}Df\big(\tau^{\epsilon}_{z}\psi_{\alpha}^{\lambda}\big)$.
\item [\textbf{(c)}] In particular, if $f\in\mathcal{E}^{\lambda}(\C)$, 
$f_{j}\times^{\lambda}\psi^{\lambda}_{\alpha}(z)\rightarrow 
f\times^{\lambda}\psi^{\lambda}_{\alpha}(z)$ for each $z\in\C$.
\end{itemize} 
\end{lem}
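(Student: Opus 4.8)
The plan is to reduce everything to the single observation that conjugating $D$ by the Gaussian weight turns it into another polynomial-coefficient differential operator, which then acts continuously on $\mathscr{S}'(\C)$. Write $E_\epsilon(z)=e^{-(|\lambda|-\epsilon)|z|^2}$, so the hypothesis says $h:=E_\epsilon f\in\mathscr{S}'(\C)$, and set $\widetilde{D}:=E_\epsilon\,D\,E_\epsilon^{-1}$. The first step is to check that $\widetilde{D}$ is again a polynomial-coefficient differential operator. Indeed, writing $D=\sum_\beta p_\beta\,\partial^\beta$ (with $\partial^\beta$ a product of $\partial/\partial z_j$ and $\partial/\partial\bar z_j$) and applying the Leibniz rule,
\[
E_\epsilon\,D\big(E_\epsilon^{-1}u\big)=\sum_\beta p_\beta\sum_{\gamma\le\beta}\binom{\beta}{\gamma}\,\big(E_\epsilon\,\partial^{\beta-\gamma}E_\epsilon^{-1}\big)\,\partial^\gamma u ,
\]
and since each derivative of $E_\epsilon^{-1}=e^{(|\lambda|-\epsilon)|z|^2}$ reproduces $E_\epsilon^{-1}$ times a polynomial, the factors $E_\epsilon\,\partial^{\beta-\gamma}E_\epsilon^{-1}$ are polynomials; hence $\widetilde{D}=\sum_\gamma q_\gamma\,\partial^\gamma$ with polynomial coefficients $q_\gamma$. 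Because $E_\epsilon\,Df=\widetilde{D}(E_\epsilon f)=\widetilde{D}h$ and polynomial-coefficient differential operators map $\mathscr{S}'(\C)$ into itself, part \textbf{(a)} follows immediately.

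For part \textbf{(b)}, the same operator $\widetilde{D}$ is a finite sum of multiplications by polynomials composed with differentiations, each of which is continuous on $\mathscr{S}'(\C)$; so $E_\epsilon f_j\to E_\epsilon f$ in $\mathscr{S}'(\C)$ gives $E_\epsilon\,Df_j=\widetilde{D}(E_\epsilon f_j)\to\widetilde{D}(E_\epsilon f)=E_\epsilon\,Df$ in $\mathscr{S}'(\C)$. For the ``consequently'' assertion, I would first record the pointwise identity, valid for every Schwartz function $\varphi$,
\[
\varphi\times^\lambda\psi^\lambda_\alpha(z)=\big(E_\epsilon\varphi\big)\big(\tau^\epsilon_z\psi^\lambda_\alpha\big),
\]
which is exactly the computation already carried out just before the lemma, obtained from the defining integral for $\times^\lambda$ by the substitution $w\mapsto z-w$ (using that $|z|^2$ is real). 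Applying this with $\varphi=Df_j\in\mathscr{S}(\C)$ (note $D$ preserves $\mathscr{S}(\C)$) rewrites $Df_j\times^\lambda\psi^\lambda_\alpha(z)$ as the pairing of $E_\epsilon\,Df_j$ with the fixed test function $\tau^\epsilon_z\psi^\lambda_\alpha\in\mathscr{S}(\C)$. Since convergence in $\mathscr{S}'(\C)$ means convergence when paired with any fixed Schwartz function, the $\mathscr{S}'$-convergence above yields $Df_j\times^\lambda\psi^\lambda_\alpha(z)\to E_\epsilon\,Df\big(\tau^\epsilon_z\psi^\lambda_\alpha\big)$ for each $z$, as claimed.

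Part \textbf{(c)} is then the special case $D=\mathrm{Id}$: here $E_\epsilon\,Df\big(\tau^\epsilon_z\psi^\lambda_\alpha\big)=E_\epsilon f\big(\tau^\epsilon_z\psi^\lambda_\alpha\big)=f\times^\lambda\psi^\lambda_\alpha(z)$, the last equality being the identity recorded just before the lemma (valid because $f\in\mathcal{E}^\lambda(\C)$ guarantees $E_\epsilon f\in L^p\subset\mathscr{S}'(\C)$), so \textbf{(b)} gives $f_j\times^\lambda\psi^\lambda_\alpha(z)\to f\times^\lambda\psi^\lambda_\alpha(z)$. The only genuinely delicate points, rather than true obstacles, are bookkeeping ones: verifying that conjugation by the Gaussian keeps one inside the class of polynomial-coefficient operators (the Leibniz expansion above), and checking that the substitution $w\mapsto z-w$ really identifies the twisted convolution with the distributional pairing against $\tau^\epsilon_z\psi^\lambda_\alpha$. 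Once these are in place the three parts are immediate, the engine being the continuity of polynomial-coefficient differential operators on $\mathscr{S}'(\C)$.
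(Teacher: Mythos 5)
Your proof is correct and follows essentially the same route as the paper: the paper proves (a) and (b) via the commutation identity $e^{-(|\lambda|-\epsilon)|z|^{2}}\frac{\partial f}{\partial z_{j}}=\frac{\partial}{\partial z_j}\bigl(e^{-(|\lambda|-\epsilon)|z|^{2}}f\bigr)+(|\lambda|-\epsilon)\bar{z}_j e^{-(|\lambda|-\epsilon)|z|^{2}}f$ together with induction, which is exactly your conjugation $\widetilde{D}=E_\epsilon D E_\epsilon^{-1}$ carried out one derivative at a time. Your Leibniz-rule packaging of that identity, and your explicit verification of the ``consequently'' clause and of (c) via the pairing identity $\varphi\times^{\lambda}\psi^{\lambda}_{\alpha}(z)=(E_\epsilon\varphi)\bigl(\tau^{\epsilon}_{z}\psi^{\lambda}_{\alpha}\bigr)$, merely spell out steps the paper leaves implicit.
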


\begin{proof}
When the action of $D$ on $f$ is multiplication by a polynomial, clearly 
$\textbf{(a)}$ and $\textbf{(b)}$ are true. Note that $$e^{-(|\lambda|-\epsilon)|z|
^{2}}\frac{\partial f}{\partial z_{j}}=\frac{\partial}{\partial z_j}\big(e^{-(|
\lambda|-\epsilon)|z|^{2}}f\big)+(|\lambda|-\epsilon)\bar{z_j}f,$$ which immediately 
proves $\textbf{(a)}$, as well as $\textbf{(b)}$ when $D=\partial/\partial z_{j}.$ General case follows by an induction argument. Assertion \textbf{(c)} is immediate from \textbf{(b)}.  
\end{proof}

\begin{thm}\label{eqn t7.9}
Suppose $F=P^{\delta}G\in\mathcal{E}^{\delta,\lambda}(\C)$, $G$ is $K$-invariant. Then 
$F\times^{\lambda}\psi_{\alpha}^{\lambda}=\Psi^{\delta,\lambda}_{\alpha}C^{\delta,\lambda}_{\alpha}(F)$, 
where $C^{\delta,\lambda}_{\alpha}(F)$ is defined by (\ref{eqn63}). 
\end{thm}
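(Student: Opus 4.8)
The plan is to reduce Theorem \ref{eqn t7.9} to the Schwartz-class case already settled in Theorem \ref{eqn t7.4} by a cutoff approximation that preserves the structural form $P^{\delta}G$. Fix $\lambda\in\mathbb{R}^{*}$ and $\alpha\in\Lambda$. Since $F=P^{\delta}G\in\mathcal{E}^{\delta,\lambda}(\mathbb{C}^{n})$ has only finitely many entries, each lying in $\mathcal{E}^{\lambda}(\mathbb{C}^{n})$, I would first choose a single $\epsilon>0$ so that $e^{-(|\lambda|-\epsilon)|z|^{2}}|F_{pq}(z)|\in L^{p_{pq}}(\mathbb{C}^{n})$ for every entry (shrinking each entry's exponent to the smallest one only improves decay, and finitely many entries are involved). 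Let $\{\chi_{j}\}$ be smooth, $K$-invariant cutoffs (functions of $|z|^{2}$) with $\chi_{j}\equiv 1$ on $|z|\leq j$, $\chi_{j}\equiv 0$ on $|z|\geq 2j$, and $0\leq\chi_{j}\leq 1$. Put $G_{j}=\chi_{j}G$ and $F_{j}=\chi_{j}F=P^{\delta}G_{j}$. Then $G_{j}$ is $K$-invariant, and $F_{j}$ is smooth and compactly supported, hence $F_{j}\in\mathscr{S}^{\delta}(\mathbb{C}^{n})$. Thus Theorem \ref{eqn t7.4} applies to each $F_{j}$, giving
$$F_{j}\times^{\lambda}\psi_{\alpha}^{\lambda}=\Psi_{\alpha}^{\delta,\lambda}\,C_{\alpha}^{\delta,\lambda}(F_{j}).$$

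Next I would verify that the right-hand side converges to $\Psi_{\alpha}^{\delta,\lambda}C_{\alpha}^{\delta,\lambda}(F)$. Since $\widetilde{\Psi}_{\alpha}^{\delta,\lambda}$ has entries equal to $e^{-|\lambda||z|^{2}}$ times a polynomial, the integrand in (\ref{eqn63}) is, entrywise, a product of a Schwartz factor $e^{-\epsilon|z|^{2}}\times(\textup{polynomial})$ with $e^{-(|\lambda|-\epsilon)|z|^{2}}F_{pq}(z)\in L^{p_{pq}}$; by Hölder's inequality it lies in $L^{1}(\mathbb{C}^{n})$, so the integral defining $C_{\alpha}^{\delta,\lambda}(F)$ exists (and, using $\widetilde{\Psi}_{\alpha}^{\delta,\lambda}=P^{\delta}\widetilde{L}_{\alpha}^{\delta,\lambda}e^{-|\lambda||z|^{2}}$ together with $\Upsilon_{\delta}=[P^{\delta}]^{\star}P^{\delta}$, the two forms of the integral in (\ref{eqn63}) agree). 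As $\chi_{j}\to 1$ boundedly and pointwise, dominated convergence gives $\int_{\mathbb{C}^{n}}[\widetilde{\Psi}_{\alpha}^{\delta,\lambda}]^{\star}F_{j}\to\int_{\mathbb{C}^{n}}[\widetilde{\Psi}_{\alpha}^{\delta,\lambda}]^{\star}F$. Since $\widetilde{A}_{\alpha}^{\delta,\lambda}$ is a fixed invertible matrix (Lemma \ref{eqn l7.3}; the case $\alpha(\delta)=0$ being trivial, as both sides then vanish) and the complementary block $\widetilde{\widetilde{C}}_{\alpha}^{\delta,\lambda}$ is identically zero, this yields $C_{\alpha}^{\delta,\lambda}(F_{j})\to C_{\alpha}^{\delta,\lambda}(F)$, and hence $\Psi_{\alpha}^{\delta,\lambda}C_{\alpha}^{\delta,\lambda}(F_{j})\to\Psi_{\alpha}^{\delta,\lambda}C_{\alpha}^{\delta,\lambda}(F)$ for each $z$.

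Then I would handle the left-hand side via Lemma \ref{eqn l7.8}. Working entrywise, set $f=F_{pq}$ and $f_{j}=\chi_{j}F_{pq}\in\mathscr{S}(\mathbb{C}^{n})$. Testing against $\phi\in\mathscr{S}(\mathbb{C}^{n})$ and using that $e^{-(|\lambda|-\epsilon)|z|^{2}}F_{pq}\in L^{p_{pq}}$ (so its product with $\phi$ is integrable), dominated convergence shows $e^{-(|\lambda|-\epsilon)|\cdot|^{2}}f_{j}\to e^{-(|\lambda|-\epsilon)|\cdot|^{2}}f$ in $\mathscr{S}^{\prime}(\mathbb{C}^{n})$. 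Lemma \ref{eqn l7.8} \textbf{(c)} then gives $f_{j}\times^{\lambda}\psi_{\alpha}^{\lambda}(z)\to f\times^{\lambda}\psi_{\alpha}^{\lambda}(z)$ for each $z$; assembling the entries, $F_{j}\times^{\lambda}\psi_{\alpha}^{\lambda}(z)\to F\times^{\lambda}\psi_{\alpha}^{\lambda}(z)$ for each $z$. Letting $j\to\infty$ in the displayed identity for $F_{j}$ and combining the two limits yields $F\times^{\lambda}\psi_{\alpha}^{\lambda}=\Psi_{\alpha}^{\delta,\lambda}C_{\alpha}^{\delta,\lambda}(F)$, as required.

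The only genuinely delicate point is the interface with Lemma \ref{eqn l7.8}: I must confirm that the cutoff sequence produces the weighted tempered-distribution convergence demanded by its hypothesis, uniformly across the finitely many matrix entries, which is precisely what forces the choice of a common $\epsilon$. Everything else is routine bookkeeping; the key structural observation is simply that multiplying by a $K$-invariant cutoff keeps the factored form $P^{\delta}(\chi_{j}G)$ with $\chi_{j}G$ still $K$-invariant, so that the Schwartz-class Theorem \ref{eqn t7.4} remains available at every stage.
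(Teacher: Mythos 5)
Your proof is correct and follows essentially the same route as the paper's: approximate $F$ by Schwartz-class functions of the same $\delta$-type, apply Theorem \ref{eqn t7.4} to the approximants, and pass to the limit using Lemma \ref{eqn l7.8} \textbf{(c)} on the convolution side and convergence of the matrices $C^{\delta,\lambda}_{\alpha}(F_{j})$ on the other. The only difference is cosmetic but welcome: the paper merely asserts the existence of an approximating sequence $F_{j}\in\mathscr{S}^{\delta}(\C)$, while your $K$-invariant cutoffs $\chi_{j}$ make explicit that the approximants retain the factored form $P^{\delta}(\chi_{j}G)$ required to invoke Theorem \ref{eqn t7.4}.
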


\begin{proof} 
Each entry of $F$ belongs to $\mathcal{E}^{\lambda}
(\C)$. Take 
$F_{j}\in\mathscr{S}^{\delta}(\C)$ such that $e^{-(|\lambda|-\epsilon)|.|
^{2}}F_{j}\rightarrow e^{-(|\lambda|-\epsilon)|.|^{2}}F$ entry wise in 
$\mathscr{S}^{\prime}(\C)$. For each $F_{j}$ we can apply Theorem \textbf{\ref{eqn 
t7.4}} to get \bea\label{eqn e1}F_{j}\times\psi_{\alpha}(z)=\Psi_{\alpha}^{\delta}
(z)C^{\delta,\lambda}_{\alpha}(F_j),\eea where $C^{\delta,\lambda}_{\alpha}(F_j)$ is defined by equation (\ref{eqn63}). A similar argument used in the proof of the previous lemma shows that $\lim_{j\rightarrow\infty}C^{\delta,\lambda}_{\alpha}
(F_{j})=C^{\delta,\lambda}_{\alpha}(F)$. On the other hand,  by $\textbf{(c)}$ of the previous lemma, for each $z\in\C$, 
$$\lim_{j\rightarrow\infty}[F_{j}\times\psi_{\alpha}(z)]=F\times\psi_{\alpha}(z).$$ 
Hence for each $z\in\C$, taking limit, as 
$j\rightarrow\infty$, in (\ref{eqn e1}), the proof follows.
\end{proof}

Now we proceed to prove the uniqueness (upto right multiplication by a constant 
matrix) of generalized $K$-spherical function when it belongs to $\mathcal{E}^{\delta,
\lambda}(\C).$ 

\begin{lem}\label{eqn l7.10}
Let $f\in\mathcal{E}^{\lambda}(\C)$ be a joint eigenfunction for all 
$D\in\mathcal{L}_{k}^{\lambda}(\C)$ with eigenvalue $\mu_{\alpha}^{\lambda}.$ Then 
$f\times^{\lambda}\psi_{\beta}^{\lambda}=0$ if $\beta\neq\alpha$. 
\end{lem}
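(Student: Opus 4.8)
The plan is to set $g:=f\times^{\lambda}\psi_{\beta}^{\lambda}$ and to exhibit it as a joint eigenfunction of all $D\in\mathcal{L}_{K}^{\lambda}(\C)$ for \emph{two} eigenvalues at once: the value $\mu_{\beta}^{\lambda}$ inherited from $\psi_{\beta}^{\lambda}$ (on which a left-invariant $D$ acts through the right factor of the twisted convolution), and the value $\mu_{\alpha}^{\lambda}$ inherited from the hypothesis on $f$. Since these eigenvalues are distinct for $\alpha\neq\beta$ (Theorem \ref{eqn t5.8}), this will force $g=0$. The device that makes both eigenvalues visible is a transfer identity allowing $D$ to be moved from one convolution factor to the other, which is available precisely because $\psi_{\beta}^{\lambda}$ is $K$-invariant.

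The key step is to prove, for every $D\in\mathcal{L}_{K}^{\lambda}(\C)$,
\be
(Df)\times^{\lambda}\psi_{\beta}^{\lambda}=f\times^{\lambda}(D\psi_{\beta}^{\lambda}).
\ee
I would first establish this for $f\in\mathscr{S}(\C)$ by comparing Weyl transforms. By Corollary \ref{eqn c5.4} (applied on each side, since $D\in\mathcal{L}^{\lambda}(\C)$ acts as right multiplication under $\mathcal{G}^{\lambda}$), one has $\mathcal{G}^{\lambda}\big((Df)\times^{\lambda}\psi_{\beta}^{\lambda}\big)=\mathcal{G}^{\lambda}(f)\,\mathcal{G}^{\lambda}(D)\,\mathcal{G}^{\lambda}(\psi_{\beta}^{\lambda})$ and $\mathcal{G}^{\lambda}\big(f\times^{\lambda}(D\psi_{\beta}^{\lambda})\big)=\mathcal{G}^{\lambda}(f)\,\mathcal{G}^{\lambda}(\psi_{\beta}^{\lambda})\,\mathcal{G}^{\lambda}(D)$. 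Now by Remark \ref{eqn r5.9} the operator $\mathcal{G}^{\lambda}(D)$ is diagonal with respect to $\mathcal{H}^{\lambda}=\bigoplus_{\alpha}V_{\alpha}$, i.e. $\mathcal{G}^{\lambda}(D)=\sum_{\alpha}\mu_{\alpha}^{\lambda}(D)\mathcal{P}_{\alpha}$, while $\mathcal{G}^{\lambda}(\psi_{\beta}^{\lambda})=\mathcal{P}_{\beta}$ by Proposition \ref{eqn p6.3}. These two operators therefore commute, so the two right-hand sides coincide, and injectivity of the Weyl transform (Theorem \ref{eqn t5.1}) yields the identity for Schwartz $f$.

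To pass to $f\in\mathcal{E}^{\lambda}(\C)$ I would note that both sides are well defined, since $\psi_{\beta}^{\lambda}$ and $D\psi_{\beta}^{\lambda}$ lie in $\mathscr{S}(\C)$ (the latter because $D$ has polynomial coefficients and $\psi_{\beta}^{\lambda}$ is $e^{-|\lambda||z|^{2}}$ times a polynomial), so each twisted convolution makes sense for $f\in\mathcal{E}^{\lambda}(\C)$. Choosing $f_{j}\in\mathscr{S}(\C)$ with $e^{-(|\lambda|-\epsilon)|\cdot|^{2}}f_{j}\to e^{-(|\lambda|-\epsilon)|\cdot|^{2}}f$ in $\mathscr{S}^{\prime}(\C)$ and applying Lemma \ref{eqn l7.8}(b),(c) to $(Df_{j})\times^{\lambda}\psi_{\beta}^{\lambda}$ and to $f_{j}\times^{\lambda}(D\psi_{\beta}^{\lambda})$, the transfer identity extends to $f\in\mathcal{E}^{\lambda}(\C)$ by taking pointwise limits. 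Finally I substitute the eigenfunction relations $Df=\mu_{\alpha}^{\lambda}(D)f$ and $D\psi_{\beta}^{\lambda}=\mu_{\beta}^{\lambda}(D)\psi_{\beta}^{\lambda}$ into the identity to obtain $\mu_{\alpha}^{\lambda}(D)\,g=\mu_{\beta}^{\lambda}(D)\,g$ for all $D\in\mathcal{L}_{K}^{\lambda}(\C)$. Because $\beta\neq\alpha$ and the eigenvalues are distinct, there is a $D_{0}$ with $\mu_{\alpha}^{\lambda}(D_{0})\neq\mu_{\beta}^{\lambda}(D_{0})$, and hence $g=f\times^{\lambda}\psi_{\beta}^{\lambda}=0$.

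The main obstacle I anticipate is the rigorous justification of the transfer identity for non-Schwartz $f$: one must verify that moving $D$ across the twisted convolution is legitimate and that the approximation of Lemma \ref{eqn l7.8} applies simultaneously to both convolutions. The conceptual heart, however, is the commutation of $\mathcal{G}^{\lambda}(D)$ with the projection $\mathcal{P}_{\beta}$, which is forced by the $K$-invariance of $\psi_{\beta}^{\lambda}$ together with the diagonality of $\mathcal{G}^{\lambda}(D)$ from Remark \ref{eqn r5.9}; this is exactly what lets a single operator $D$ read off both eigenvalues and thereby collapse $g$ to zero.
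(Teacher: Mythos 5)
Your proof is correct and follows essentially the same route as the paper's: both regularize $f$ by Schwartz functions $f_j$ via Lemma \ref{eqn l7.8}, establish the key Schwartz-level identity $(Df_j)\times^{\lambda}\psi_{\beta}^{\lambda}=\mu_{\beta}^{\lambda}(D)\,f_j\times^{\lambda}\psi_{\beta}^{\lambda}$ by applying Corollary \ref{eqn c5.4} under the Weyl transform together with the fact that $\mathcal{G}^{\lambda}(D)$ acts as the scalar $\mu_{\gamma}^{\lambda}(D)$ on each $V_{\gamma}$ (hence commutes with $\mathcal{P}_{\beta}$), and then pass to the pointwise limit and invoke the distinctness of the eigenvalues $\mu_{\beta}^{\lambda}\neq\mu_{\alpha}^{\lambda}$. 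The only cosmetic difference is that you phrase the middle step as the transfer identity $(Df)\times^{\lambda}\psi_{\beta}^{\lambda}=f\times^{\lambda}(D\psi_{\beta}^{\lambda})$ and quote Remark \ref{eqn r5.9} for the diagonality of $\mathcal{G}^{\lambda}(D)$, whereas the paper obtains $\mathcal{P}_{\gamma}\mathcal{G}^{\lambda}(D)=\mu_{\gamma}^{\lambda}(D)\mathcal{P}_{\gamma}$ from Proposition \ref{eqn p6.3} and Corollary \ref{eqn c5.4}; the mathematical content is identical.
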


\begin{proof}
By definition of $\mathcal{E}^{\delta}(\C)$, $e^{-(|\lambda|-\epsilon)|.|^{2}}f$ is a 
tempered distribution for some $\epsilon>0$. Take $f_{j}\in\mathscr{S}(\C)$ such that 
$e^{-(|\lambda|-\epsilon)|.|^{2}}f_{j}\rightarrow e^{-(|\lambda|-\epsilon)|.|^{2}}f$ 
in $\mathscr{S}^{\prime}(\C)$. Let $D\in\mathcal{L}_{K}^{\lambda}(\C)$. Since 
$Df=\mu_{\alpha}(D)f$, by Lemma \textbf{\ref{eqn l7.8}} \textbf{(b)}, 
\bea\label{eqne3}\lim_{j\rightarrow\infty}Df_{j}\times\psi_{\beta}(z)=e^{-(|
\lambda|-\epsilon)|z|^{2}}\mu_{\alpha}
(D)f(\tau_{z}^{\epsilon}\psi_{\beta})=\mu_{\alpha}(D)f\times\psi_{\beta}.\eea Again by 
Lemma \textbf{\ref{eqn l7.8}} \textbf{(c)}, 
\bea\label{eqne4}\lim_{j\rightarrow\infty}f_{j}\times\psi_{\beta}
(z)=f\times\psi_{\beta}(z),~\forall z\in\C.\eea Now we will show that 
$Df_{j}\times\psi_{\beta}=\mu_{\beta}(D)f_{j}\times\psi_{\beta} $ for all $j$. By 
Proposition \textbf{\ref{eqn p5.3}} it follows that range$[\mathcal{G}
(D)\mathcal{P}_{\beta}]\subset \textbf{P}_{N}$ for some natural number $N$. Here 
$\textbf{P}_{N}$ denotes the space of all holomorphic polynomials of degree less than 
or equal to $N$. Hence $\mathcal{G}(D)\mathcal{P}_{\beta} 
=\bigg(\sum_{V_{\gamma}\subset\textbf{P}_{N}}
\mathcal{P}_{\gamma}\bigg)\mathcal{G}(D)\mathcal{P}_{\beta}$. Enlarging 
$\textbf{P}_{N}$ if necessary we may assume that $V_{\beta}\subset\textbf{P}_{N}$. Therefore we have $$\mathcal{G}
(Df_{j}\times\psi_{\beta})=\mathcal{G}(Df_{j})\mathcal{P}_{\beta}=\mathcal{G}
(f_{j})\mathcal{G}(D)
\mathcal{P}_{\beta}=\mathcal{G}(f_{j})\bigg(\sum_{V_{\gamma}\subset\textbf{P}_{N}}
\mathcal{P}_{\gamma}\bigg)\mathcal{G}(D)\mathcal{P}_{\beta}.$$  But, 
$$\mathcal{P}_{\gamma}\mathcal{G}(D)=\mathcal{G}(\psi_{\gamma})\mathcal{G}
(D)=\mathcal{G}(D\psi_{\gamma})
=\mu_{\gamma}(D)\mathcal{P}_{\gamma}.$$ Hence $$\mathcal{G}
(Df_{j}\times\psi_{\beta})=\mu_{\beta}(D)\mathcal{G}(f_{j})\mathcal{P}_{\beta}=
\mu_{\beta}(D)\mathcal{G}(f_{j}\times\psi_{\beta}).$$ Therefore 
$Df_{j}\times\psi_{\beta}(z)=\mu_{\beta}(D)f_{j}\times\psi_{\beta}(z)$ for all 
$z\in\C$. Now taking limit as $j\rightarrow\infty$ and using (\ref{eqne3}), 
(\ref{eqne4}) we get $\mu_{\alpha}(D)f\times\psi_{\beta}(z)=\mu_{\beta}
(D)f\times\psi_{\beta}(z).$ This is true for all $D\in\mathcal{L}^{\lambda}_{K}(\C)$. 
Since $\mu_{\beta}\neq\mu_{\alpha}$ for $\beta\neq\alpha$, we get $f\times\psi_{\beta}
(z)=0$ if $\beta\neq\alpha$. Hence the proof. 
\end{proof}


\begin{lem}\label{eqn l7.11}
Let $\mathcal{L}^{\lambda}$ be the special Hermite operator and $\psi^{\lambda}_k$'s be the $U(n)$- spherical functions (see Remark \textbf{\ref{eqn r5.10}}).
Let $f\in\mathcal{E}^{\lambda}(\C)$ be an eigenfunction of $\mathcal{L}^{\lambda}$ with 
eigenvalue $-2|\lambda|(2k+n)$. Then $f=f\times^{\lambda}\psi_{k}^{\lambda}.$ 
\end{lem}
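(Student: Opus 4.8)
The plan is to combine Lemma \ref{eqn l7.10}, which isolates a single spectral component of $f$, with a heat-kernel (Abel) summation that reconstructs $f$ from that component. First I would observe that, since $\mathcal{L}^{\lambda}$ generates $\mathcal{L}^{\lambda}_{U(n)}(\C)$ (Remark \ref{eqn r5.10}) and $U(n)\supset K$, the hypothesis $\mathcal{L}^{\lambda}f=-2|\lambda|(2k+n)f$ says precisely that $f$ is a joint eigenfunction of all $D\in\mathcal{L}^{\lambda}_{U(n)}(\C)$ with the eigenvalue character $\mu^{\lambda}_{k}$ of the $U(n)$-spherical function $\psi^{\lambda}_{k}$. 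Applying Lemma \ref{eqn l7.10} with $U(n)$ in place of $K$ then gives $f\times^{\lambda}\psi^{\lambda}_{k'}=0$ for every $k'\neq k$. Thus only the $k$-th twisted projection of $f$ survives, and the whole problem reduces to showing that this single projection reproduces $f$, i.e. $f=f\times^{\lambda}\psi^{\lambda}_{k}$.

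To carry out the reconstruction I would introduce the special Hermite heat kernel $p^{\lambda}_{s}$, $s>0$, determined by $\mathcal{G}^{\lambda}(p^{\lambda}_{s})=\exp\!\big(s\,\mathcal{G}^{\lambda}(\mathcal{L}^{\lambda})\big)$; since $\mathcal{G}^{\lambda}(\psi^{\lambda}_{k'})=\mathcal{P}_{k'}$ (Proposition \ref{eqn p6.3}) this is the same as the spectral series $p^{\lambda}_{s}=\sum_{k'}e^{-2|\lambda|(2k'+n)s}\psi^{\lambda}_{k'}$, and Mehler's formula exhibits $p^{\lambda}_{s}$ as a Gaussian whose decay exponent $|\lambda|\coth(2|\lambda|s)$ is strictly larger than $|\lambda|$ for every $s>0$. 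Consequently $f\times^{\lambda}p^{\lambda}_{s}$ is an absolutely convergent integral for $f\in\mathcal{E}^{\lambda}(\C)$, because the decay $e^{-|\lambda|\coth(2|\lambda|s)|w|^{2}}$ beats the growth $e^{(|\lambda|-\epsilon)|w|^{2}}$ allowed by the definition of $\mathcal{E}^{\lambda}(\C)$. Expanding the kernel term by term and using the first paragraph, all terms but one drop out, so that $f\times^{\lambda}p^{\lambda}_{s}=e^{-2|\lambda|(2k+n)s}\big(f\times^{\lambda}\psi^{\lambda}_{k}\big)$ for every $s>0$.

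Finally I would let $s\to0^{+}$. On the right-hand side the scalar tends to $1$, giving $f\times^{\lambda}\psi^{\lambda}_{k}$; on the left-hand side $p^{\lambda}_{s}$ acts as a twisted approximate identity, so $f\times^{\lambda}p^{\lambda}_{s}(z)\to f(z)$ for each $z$ (both functions being continuous, pointwise convergence is enough to identify them). Equating the two limits yields $f=f\times^{\lambda}\psi^{\lambda}_{k}$. I expect the two genuinely analytic points to be the main obstacles: first, justifying the term-by-term interchange of the spectral series for $p^{\lambda}_{s}$ with the twisted convolution, which I would do by dominated convergence, controlling $\sum_{k'}e^{-2|\lambda|(2k'+n)s}|\psi^{\lambda}_{k'}(w)|$ via the polynomial-in-$k'$ growth of the Laguerre functions against the factor $e^{-4|\lambda|k's}$; and second, the approximate-identity limit $f\times^{\lambda}p^{\lambda}_{s}\to f$ as $s\to0^{+}$, where the concentration of the Gaussian at $w=0$ (at which the twisting factor $e^{2i\lambda\,\mathrm{Im}(z\cdot\bar w)}$ equals $1$) must be reconciled with the growth of $f$ at infinity. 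It is exactly here that the hypothesis $e^{-(|\lambda|-\epsilon)|z|^{2}}|f|\in L^{p}(\C)$ is used, to keep the convolution integrals absolutely convergent uniformly as $s\to0^{+}$.
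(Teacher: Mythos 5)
Your proposal is correct, and it takes a genuinely different route from the paper in the reconstruction step. Both arguments open identically: since $\mathcal{L}^{\lambda}_{U(n)}(\C)$ is generated by $\mathcal{L}^{\lambda}$, the hypothesis makes $f$ a joint eigenfunction with character $\mu^{\lambda}_{k}$, and Lemma \ref{eqn l7.10} applied with $K=U(n)$ kills every projection $f\times^{\lambda}\psi^{\lambda}_{k'}$, $k'\neq k$ (the paper applies this to each $U(n)$-type component $f_{\delta}$ rather than to $f$ itself, which is immaterial since $\psi^{\lambda}_{k'}$ is radial). The divergence is in how $f$ is recovered from the surviving projection. The paper decomposes $f=\sum_{\delta}f_{\delta}$ into $U(n)$-types, cites Proposition 4.5 of \cite{T} to conclude each $f_{\delta}$ is Schwartz (this is where the hard analysis, the confluent hypergeometric/Laguerre asymptotics, is hidden), applies the $L^{2}$ expansion of Proposition \ref{eqn p6.4} to get $f_{\delta}=f_{\delta}\times^{\lambda}\psi^{\lambda}_{k}=(f\times^{\lambda}\psi^{\lambda}_{k})_{\delta}$, and sums the type series, which converges uniformly on compacta for smooth functions. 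You instead Abel-sum the spectral series directly: by Mehler's formula $p^{\lambda}_{s}=\sum_{k'}e^{-2|\lambda|(2k'+n)s}\psi^{\lambda}_{k'}$ is a Gaussian with decay exponent $|\lambda|\coth(2|\lambda|s)>|\lambda|$, so $f\times^{\lambda}p^{\lambda}_{s}$ converges absolutely for $f\in\mathcal{E}^{\lambda}(\C)$; the term-by-term interchange is justified by dominated convergence, since the bound $|L^{n-1}_{k'}(t)|e^{-t/2}\leq\Gamma(k'+n)/\big(k'!\,\Gamma(n)\big)$ provides a summable majorant; the vanishing of the other projections collapses the sum to $e^{-2|\lambda|(2k+n)s}\,f\times^{\lambda}\psi^{\lambda}_{k}$; and letting $s\to0^{+}$, the kernel acts as a twisted approximate identity (its total mass $(\cosh 2|\lambda|s)^{-n}\to 1$ and its concentration at the origin beats the admissible growth $e^{(|\lambda|-\epsilon)|w|^{2}}$ of $f$, exactly as in your estimate), giving $f=f\times^{\lambda}\psi^{\lambda}_{k}$ pointwise. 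The two analytic obstacles you flag are the right ones and are resolved by exactly the tools you name, so the plan is sound. What each approach buys: yours is more self-contained, bypassing both Proposition \ref{eqn p6.4} and the citation of Thangavelu's Proposition 4.5 in favor of standard heat-kernel technology; the paper's is more economical within its own framework, reusing the Laguerre asymptotics of \cite{T} that it needs again anyway in the proof of Proposition \ref{eqn p8.4}.
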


\begin{proof}
For this proof let $K=U(n)$ and $M$ be the subgroup of $U(n)$ that fixes the 
coordinate vector $e_{1}=(1,0,\cdots,0)$ in $\C$. For $\delta\in\widehat{K}_{M}$, let 
$\chi_{\delta}(k)=tr(\delta(k))$. Define $f_{\delta}(z)=\int_{K}f(k^{-1}\cdot 
z)\chi_{\delta}(k)dk$.  Clearly each $f_{\delta}$ is an eigenfunction of $\mathcal{L}$ with 
eigenvalue $-2|\lambda|(2k+n)$. Applying the previous lemma for $K=U(n)$ to each 
$f_{\delta}$ we get $f_{\delta}\times\psi_{m}=0$ if $m\neq k$. Again Proposition 
4.5 \cite{T}, in particular, implies that each $f_{\delta}\in\mathscr{S}(\C)$. Hence by 
Proposition \textbf{\ref{eqn p6.4}}, 
$f_{\delta}=\Sigma_{m\in\mathbb{N}}f_{\delta}\times\psi_{m}.$ Consequently we get 
$f_{\delta}=f_{\delta}\times\psi_{k}$ for all $\delta\in\widehat{K}_{M}$. Since $\psi_{k}$ 
is radial, an easy calculation shows that 
$(f\times\psi_{k})_{\delta}=f_{\delta}\times\psi_{k}.$ Therefore 
$f_{\delta}=(f\times\psi_{k})_{\delta}$ for all $\delta\in\widehat{K}_{M}$. But for any 
smooth function $g$ it is well known that 
$g(z)=\Sigma_{\delta\in\hat{K}_{M}}g_{\delta}(z)$, where the right hand side converges 
uniformly over compact set. Hence we conclude that $f=f\times\psi_{k}.$
\end{proof}


\begin{prop}\label{eqn p7.12}
Let $f\in\mathcal{E}^{\lambda}(\C)$ be a joint eigenfunction for all 
$D\in\mathcal{L}_{K}^{\lambda}(\C)$ with eigenvalue $\mu_{\alpha}^{\lambda}.$ Then 
$f=f\times^{\lambda}\psi_{\alpha}^{\lambda}.$ 
\end{prop}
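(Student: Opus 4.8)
The plan is to reduce the joint‑eigenfunction condition to a single scalar operator, namely the special Hermite operator $\mathcal{L}^{\lambda}$, and then combine Lemma \textbf{\ref{eqn l7.11}} with Lemma \textbf{\ref{eqn l7.10}}. The key observation is that $\mathcal{L}^{\lambda}$ already lies in $\mathcal{L}^{\lambda}_{K}(\C)$: it generates $\mathcal{L}^{\lambda}_{U(n)}(\C)$, and since $K\subset U(n)$ every $U(n)$-invariant operator is $K$-invariant, so $\mathcal{L}^{\lambda}_{U(n)}(\C)\subset\mathcal{L}^{\lambda}_{K}(\C)$. Hence, by hypothesis, $f$ is in particular an eigenfunction of $\mathcal{L}^{\lambda}$ with eigenvalue $\mu^{\lambda}_{\alpha}(\mathcal{L}^{\lambda})$.

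First I would pin down this eigenvalue. Choose $k$ with $V_{\alpha}\subset\mathcal{P}_{k}(\C)$. By Remark \textbf{\ref{eqn r5.10}} the $U(n)$-spherical function $\psi^{\lambda}_{k}$ decomposes as $\psi^{\lambda}_{k}=\sum_{V_{\beta}\subset\mathcal{P}_{k}(\C)}\psi^{\lambda}_{\beta}$ (a finite sum) and satisfies $\mathcal{L}^{\lambda}\psi^{\lambda}_{k}=-2|\lambda|(2k+n)\psi^{\lambda}_{k}$. Applying $\mathcal{G}^{\lambda}$ and using Corollary \textbf{\ref{eqn c5.4}} together with $\mathcal{G}^{\lambda}(\psi^{\lambda}_{\beta})=\mathcal{P}_{\beta}$ (Proposition \textbf{\ref{eqn p6.3}}), one sees that $\mathcal{G}^{\lambda}(\mathcal{L}^{\lambda})$ acts as the scalar $-2|\lambda|(2k+n)$ on $\mathcal{P}_{k}(\C)$, hence on $V_{\alpha}$; comparing with Remark \textbf{\ref{eqn r5.9}} gives $\mu^{\lambda}_{\alpha}(\mathcal{L}^{\lambda})=-2|\lambda|(2k+n)$. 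Thus $f$ is an $\mathcal{E}^{\lambda}(\C)$-eigenfunction of $\mathcal{L}^{\lambda}$ with eigenvalue $-2|\lambda|(2k+n)$, and Lemma \textbf{\ref{eqn l7.11}} yields
$$f=f\times^{\lambda}\psi^{\lambda}_{k}=\sum_{V_{\beta}\subset\mathcal{P}_{k}(\C)}f\times^{\lambda}\psi^{\lambda}_{\beta},$$
the interchange of $\times^{\lambda}$ with the finite sum being harmless by linearity.

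Finally I would invoke Lemma \textbf{\ref{eqn l7.10}}: since $f$ is a joint eigenfunction of all $D\in\mathcal{L}^{\lambda}_{K}(\C)$ with eigenvalue $\mu^{\lambda}_{\alpha}$, every term with $\beta\neq\alpha$ satisfies $f\times^{\lambda}\psi^{\lambda}_{\beta}=0$. As $V_{\alpha}\subset\mathcal{P}_{k}(\C)$, the index $\alpha$ does appear in the sum, so only the $\beta=\alpha$ term survives and we conclude $f=f\times^{\lambda}\psi^{\lambda}_{\alpha}$.

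The only genuinely delicate step is the second one: matching the abstract eigenvalue $\mu^{\lambda}_{\alpha}(\mathcal{L}^{\lambda})$ coming from the Gelfand‑pair spherical‑function data with the explicit special‑Hermite eigenvalue $-2|\lambda|(2k+n)$, so that Lemma \textbf{\ref{eqn l7.11}} (which is stated for $K=U(n)$) can legitimately be applied to our $K$‑eigenfunction $f$. Once that identification is in hand, the rest is bookkeeping with the two cited lemmas.
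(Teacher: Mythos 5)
Your proposal is correct and follows essentially the same route as the paper's own proof: identify $\mu^{\lambda}_{\alpha}(\mathcal{L}^{\lambda})=-2|\lambda|(2k+n)$ for $V_{\alpha}\subset\mathcal{P}_{k}(\C)$, apply Lemma \textbf{\ref{eqn l7.11}} to get $f=f\times^{\lambda}\psi^{\lambda}_{k}$, expand $\psi^{\lambda}_{k}=\sum_{V_{\beta}\subset\mathcal{P}_{k}(\C)}\psi^{\lambda}_{\beta}$, and finish with Lemma \textbf{\ref{eqn l7.10}}. The only difference is that you justify the eigenvalue identification via the Weyl transform and Remark \textbf{\ref{eqn r5.9}}, a step the paper simply asserts, so your write-up is if anything slightly more complete.
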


\begin{proof}
$V_{\alpha}\subset\mathcal{P}_k(\C)$ for some $k\in\mathbb{N}.$ Then $f$ is an eigenfunction of $L$ with eigenvalue $-(2k+n)|\lambda|$. Therefore by Lemma 
\textbf{\ref{eqn l7.11}}, $f=f\times\psi_k$. Since 
$\psi_{k}=\Sigma_{V_{\beta}\subset\mathcal{P}_{k}(\C)}\psi_{\beta},$ we get 
$f=\sum_{V_{\beta}\subset\mathcal{P}_{k}(\C)}f\times \psi_{\beta}$. But then by Lemma 
\textbf{\ref{eqn l7.10}}, $f=f\times\psi_{\alpha}.$ 
\end{proof}

As an immediate consequence of Theorem \textbf{\ref{eqn t7.9}} and Proposition 
\textbf{\ref{eqn p7.12}} we get the following Theorem.

\begin{thm}\label{eqn t7.13}
If $\Psi\in\mathcal{E}^{\delta,\lambda}(\C)$ is is a generalized $K$-spherical 
function of type $\delta$ corresponding to the eigenvalue $\mu_{\alpha}^{\lambda}$, 
then $\Psi=\Psi_{\alpha}^{\delta,\lambda}C$, where $C=C^{\delta,\lambda}_{\alpha}(\Psi)$ as defined by (\ref{eqn63}). 
\end{thm}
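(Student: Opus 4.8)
The plan is to combine the two major results immediately preceding this theorem, namely the Hecke-Bochner identity in the form of Theorem \textbf{\ref{eqn t7.9}} and the projection-type Proposition \textbf{\ref{eqn p7.12}}. The statement asserts that any generalized $K$-spherical function $\Psi$ of type $\delta$ with eigenvalue $\mu_\alpha^\lambda$, lying in $\mathcal{E}^{\delta,\lambda}(\C)$, must equal $\Psi_\alpha^{\delta,\lambda}C$ for the explicitly constructed constant matrix $C=C_\alpha^{\delta,\lambda}(\Psi)$. The key observation is that $\Psi$ has two a priori different descriptions of its twisted convolution with the spherical functions $\psi_\beta^\lambda$: one coming from its eigenfunction property (via Proposition \textbf{\ref{eqn p7.12}}) and one coming from its $\delta$-type structure together with the matrix factorization $F=P^\delta G$ (via Proposition \textbf{\ref{eqn p4.7}} and Theorem \textbf{\ref{eqn t7.9}}).

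First I would note that since $\Psi\in\mathcal{E}^{\delta,\lambda}(\C)$, by Proposition \textbf{\ref{eqn p4.7}} we can write $\Psi=P^\delta G$ for a unique $K$-invariant matrix function $G$ on the set of regular points. Because each entry $\Psi_{ij}$ is a joint eigenfunction for all $D\in\mathcal{L}_K^\lambda(\C)$ with eigenvalue $\mu_\alpha^\lambda$ and lies in $\mathcal{E}^\lambda(\C)$, I would apply Proposition \textbf{\ref{eqn p7.12}} entry-wise to conclude
\begin{equation*}
\Psi_{ij}=\Psi_{ij}\times^{\lambda}\psi_\alpha^{\lambda}
\end{equation*}
for every $i,j$, which in matrix form reads $\Psi=\Psi\times^{\lambda}\psi_\alpha^{\lambda}$. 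The entry-wise application is justified because each $\Psi_{ij}$ is indeed a joint eigenfunction with the stated eigenvalue and satisfies the required growth condition.

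Next I would invoke Theorem \textbf{\ref{eqn t7.9}} directly to the function $\Psi=P^\delta G\in\mathcal{E}^{\delta,\lambda}(\C)$. That theorem gives
\begin{equation*}
\Psi\times^{\lambda}\psi_\alpha^{\lambda}=\Psi_\alpha^{\delta,\lambda}C_\alpha^{\delta,\lambda}(\Psi),
\end{equation*}
where $C_\alpha^{\delta,\lambda}(\Psi)$ is the constant matrix defined in (\ref{eqn63}). Combining this with the identity $\Psi=\Psi\times^{\lambda}\psi_\alpha^{\lambda}$ from the previous step yields $\Psi=\Psi_\alpha^{\delta,\lambda}C_\alpha^{\delta,\lambda}(\Psi)$, which is precisely the assertion with $C=C_\alpha^{\delta,\lambda}(\Psi)$.

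The main obstacle, though it is mostly bookkeeping rather than a genuine difficulty, is verifying that the hypotheses of both Proposition \textbf{\ref{eqn p7.12}} and Theorem \textbf{\ref{eqn t7.9}} apply to $\Psi$ simultaneously and in a compatible way: one must confirm that the growth condition defining $\mathcal{E}^{\delta,\lambda}(\C)$ is precisely what guarantees each entry sits in $\mathcal{E}^\lambda(\C)$ so that the twisted convolutions $\Psi\times^\lambda\psi_\beta^\lambda$ are well defined and the limiting arguments of Lemma \textbf{\ref{eqn l7.8}} go through. Once these membership conditions are checked, the proof is an immediate concatenation of the two results, so I expect the statement to follow in just a couple of lines.
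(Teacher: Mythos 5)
Your proposal is correct and is exactly the paper's own argument: the paper derives Theorem \textbf{\ref{eqn t7.13}} as an immediate consequence of Theorem \textbf{\ref{eqn t7.9}} (applied to $\Psi=P^{\delta}G$ via Proposition \textbf{\ref{eqn p4.7}}) combined with Proposition \textbf{\ref{eqn p7.12}} applied entry-wise to give $\Psi=\Psi\times^{\lambda}\psi_{\alpha}^{\lambda}$. Your bookkeeping of the membership hypotheses (each $\Psi_{ij}\in\mathcal{E}^{\lambda}(\C)$) is precisely what makes the two results compatible, so nothing further is needed.
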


We conclude this section by giving another formulae for $\Psi_{\alpha}^{\delta,
\lambda}$ which will be used in the next section. Define $$\Phi_{\alpha}^{\delta,
\lambda}(z)=\big\langle \Pi^{\lambda}
(z),\mathcal{W}^{\lambda}(P^{\delta})\big\rangle^{\lambda}_{\alpha}=\sum_{\nu=1}^{d(\alpha)}\big\langle 
\Pi^{\lambda}
(z)e_{\alpha\nu}^{\lambda},\mathcal{W}^{\lambda}(P^{\delta})e_{\alpha\nu}^{\lambda} \big\rangle$$

\begin{prop}\label{eqn p7.14}
$\Psi_{\alpha}^{\delta,\lambda}=\pi^{-n}(2|\lambda|)^{n}\Phi_{\alpha}^{\delta,
\lambda}.$   Consequently $\theta^{\lambda}(p)\psi_{\alpha}^{\lambda}=\big\langle\Pi^{\lambda}(z),\mathcal{W}^{\lambda}(p)\big\rangle^{\lambda}_{\alpha}$ whenever $p\in H_{\stackrel{\vee}{\delta}}$. 
\end{prop}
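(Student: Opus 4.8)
The plan is to obtain the identity by reading Lemma \ref{eqn l7.5} \textbf{(a)} ``backwards'': that lemma records, for every $f\in\mathscr{S}(\C)$, the entrywise pairing
$$\big\langle\mathcal{G}^{\lambda}(f),\mathcal{W}^{\lambda}(P^{\delta})\big\rangle_{\alpha}^{\lambda}=\pi^{n}(2|\lambda|)^{-n}\big\langle f,\Psi_{\alpha}^{\delta,\lambda}\big\rangle,$$
so it suffices to evaluate the left-hand side a second way, directly from the definition of the Weyl transform, and to recognize $\Phi_{\alpha}^{\delta,\lambda}$ as the resulting integral kernel paired against $f$. Cancelling the arbitrary $f$ then yields the proportionality.

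First I would use $\mathcal{G}^{\lambda}(f)=\int_{\C}f(z)\Pi^{\lambda}(z)\,dz\,d\bar z$. For each fixed $\nu$ the vector $\mathcal{G}^{\lambda}(f)e_{\alpha\nu}^{\lambda}=\int_{\C}f(z)\Pi^{\lambda}(z)e_{\alpha\nu}^{\lambda}\,dz\,d\bar z$ is a genuine $\mathcal{H}^{\lambda}$-valued (Bochner) integral, since $\Pi^{\lambda}$ is unitary (so $\|\Pi^{\lambda}(z)e_{\alpha\nu}^{\lambda}\|=1$) and $f\in L^{1}(\C)$. The target vector $\mathcal{W}^{\lambda}(P^{\delta})e_{\alpha\nu}^{\lambda}$ is a fixed holomorphic polynomial, because $\mathcal{W}^{\lambda}(P^{\delta})=\tau^{\lambda}(P^{\delta})$ maps $\mathcal{P}(\C)$ into itself. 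Continuity of the inner product then lets me pull it through the integral and sum the finitely many terms over $\nu=1,\dots,d(\alpha)$, giving
$$\big\langle\mathcal{G}^{\lambda}(f),\mathcal{W}^{\lambda}(P^{\delta})\big\rangle_{\alpha}^{\lambda}=\int_{\C}f(z)\Big(\sum_{\nu=1}^{d(\alpha)}\big\langle\Pi^{\lambda}(z)e_{\alpha\nu}^{\lambda},\mathcal{W}^{\lambda}(P^{\delta})e_{\alpha\nu}^{\lambda}\big\rangle\Big)\,dz\,d\bar z=\int_{\C}f(z)\,\Phi_{\alpha}^{\delta,\lambda}(z)\,dz\,d\bar z,$$
the identity holding entrywise in the $d(\delta)\times l(\delta)$ matrices. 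Here $\Phi_{\alpha}^{\delta,\lambda}$ is a smooth matrix-valued function, since $z\mapsto\Pi^{\lambda}(z)e_{\alpha\nu}^{\lambda}$ is smooth and the sum is finite.

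Comparing the two expressions for the left-hand side and using that $f\in\mathscr{S}(\C)$ is arbitrary (so the two continuous integrands must agree) gives $\Phi_{\alpha}^{\delta,\lambda}=\pi^{n}(2|\lambda|)^{-n}\Psi_{\alpha}^{\delta,\lambda}$, i.e.\ the asserted $\Psi_{\alpha}^{\delta,\lambda}=\pi^{-n}(2|\lambda|)^{n}\Phi_{\alpha}^{\delta,\lambda}$. For the final assertion I would read off the $(i,j)$ entry: the $(i,j)$ entry of $\Psi_{\alpha}^{\delta,\lambda}=\theta^{\lambda}(P^{\delta})\psi_{\alpha}^{\lambda}$ is $\theta^{\lambda}(P^{\delta}_{ij})\psi_{\alpha}^{\lambda}$, while that of $\Phi_{\alpha}^{\delta,\lambda}$ is $\langle\Pi^{\lambda}(z),\mathcal{W}^{\lambda}(P^{\delta}_{ij})\rangle_{\alpha}^{\lambda}$. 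Since the entries $\{P^{\delta}_{ij}\}$ form a basis of $H_{\stackrel{\vee}{\delta}}$ by the construction of $P^{\delta}$ in Section 4, and since both $p\mapsto\theta^{\lambda}(p)\psi_{\alpha}^{\lambda}$ and $p\mapsto\langle\Pi^{\lambda}(z),\mathcal{W}^{\lambda}(p)\rangle_{\alpha}^{\lambda}$ depend linearly on $p$ (the latter through linearity of $\mathcal{W}^{\lambda}$), the relation propagates from the basis to every $p\in H_{\stackrel{\vee}{\delta}}$.

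The only delicate points are bookkeeping rather than conceptual. The main one is keeping the complex-conjugation conventions of the $L^{2}$ and $\mathcal{H}^{\lambda}$ inner products consistent throughout, so that the constant comes out exactly as $\pi^{-n}(2|\lambda|)^{n}$ and no spurious conjugate lands on $\Psi_{\alpha}^{\delta,\lambda}$ (this is also what dictates in which slot $p$ enters linearly in the final assertion). The second is the justification of interchanging the inner product with the operator-valued integral defining $\mathcal{G}^{\lambda}(f)$, which follows from the boundedness of $\Pi^{\lambda}(z)$ together with dominated convergence; neither step carries genuine difficulty, as essentially all the analytic content is already packaged in Lemma \ref{eqn l7.5} \textbf{(a)}.
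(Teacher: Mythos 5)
Your proposal is correct and is essentially the paper's own proof: the paper likewise evaluates $\big\langle\mathcal{G}^{\lambda}(f),\mathcal{W}^{\lambda}(P^{\delta})\big\rangle_{\alpha}^{\lambda}$ a second time directly from the definition of $\mathcal{G}^{\lambda}$, obtaining $\langle f,\Phi_{\alpha}^{\delta,\lambda}\rangle$, and then cancels the arbitrary $f\in\mathscr{S}(\C)$ against Lemma \ref{eqn l7.5} \textbf{(a)}, with the second assertion following exactly as you say from linearity in $p$ and the fact that the entries $P^{\delta}_{ij}$ span $H_{\stackrel{\vee}{\delta}}$. On the one delicate point you flag: the bookkeeping does close up, but only because the inner products here are conjugate-linear in the \emph{first} slot (this is forced by linearity of $f\mapsto T(f)$ in the definition of Weyl transformability), so the direct calculation actually produces $\int_{\C}\overline{f(z)}\,\Phi_{\alpha}^{\delta,\lambda}(z)\,dz=\langle f,\Phi_{\alpha}^{\delta,\lambda}\rangle$, literally the same pairing as in Lemma \ref{eqn l7.5} \textbf{(a)}; if instead both pairings were read as linear in $f$, as your displayed formula $\int_{\C}f\,\Phi_{\alpha}^{\delta,\lambda}$ suggests, the comparison would equate $\Phi_{\alpha}^{\delta,\lambda}$ with a constant multiple of $\overline{\Psi_{\alpha}^{\delta,\lambda}}$ rather than of $\Psi_{\alpha}^{\delta,\lambda}$.
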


\begin{proof}
Note that, on the one hand a direct calculation shows $$\big\langle\mathcal{G}^{\lambda}
(f),\mathcal{W}^{\lambda}(P^{\delta})\big\rangle_{\alpha}^{\lambda}=\langle f,
\Phi_{\alpha}^{\delta,\lambda}\rangle,$$ and on the other hand, by Lemma 
\textbf{\ref{eqn l7.5}} \textbf{(a)}, we have $$\big\langle\mathcal{G}^{\lambda}(f),
\mathcal{W}^{\lambda}(P^{\delta})\big\rangle_{\alpha}^{\lambda}=\pi^{n}(2|\lambda|)^{-
n}\langle f,\Psi_{\alpha}^{\delta,\lambda}\rangle,$$ for all $f\in\mathscr{S}
(\mathbb{C}^{n})$. Hence $\Psi_{\alpha}^{\delta,\lambda}=\pi^{-n}(2|
\lambda|)^{n}\Phi_{\alpha}^{\delta,\lambda}.$ This also can be proved directly 
using the inversion formulae for Weyl transform.
\end{proof}

\section{$K$-finite eigenfunctions}

Following the view point of Thangavelu in \cite{T} (see Theorem 3.3 there), we obtain a representation for $K$-finite joint eigenfunctions in $\mathcal{E}^{\lambda}(\C)$.  

\begin{thm}\label{eqn t8.1}
Let $f\in\mathcal{E}^{\lambda}(\C)$ be a $K$-finite joint eigenfunction for all 
$D\in\mathcal{L}_{K}^{\lambda}(\C)$ with eigenvalue $\mu_{\alpha}^{\lambda}$. Then 
$f(z)=\big\langle\Pi^{\lambda}
(z),\mathcal{W}^{\lambda}(P)\big\rangle_{\alpha}^{\lambda}$ for some $K$-harmonic polynomial $P$.  
\end{thm}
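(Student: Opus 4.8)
The plan is to reduce the scalar statement to the matrix-valued uniqueness theorem for generalized $K$-spherical functions (Theorem \ref{eqn t7.13}), by splitting $f$ into $K$-types and reassembling each type into a covariant $\mathrm{Hom}(V_\delta^M,V_\delta)$-valued function. First I would use $K$-finiteness to write $f=\sum_\delta f_\delta$ as a \emph{finite} sum, where $f_\delta(z)=d(\delta)\int_K\overline{\chi_\delta(k)}\,f(k^{-1}\cdot z)\,dk$ is the isotypic projection for the $K$-action $(k\cdot h)(z)=h(k^{-1}\cdot z)$. Since every $D\in\mathcal{L}_K^\lambda(\mathbb{C}^n)$ commutes with this action, each $f_\delta$ is again a joint eigenfunction with the same eigenvalue $\mu_\alpha^\lambda$; and because $K\subset U(n)$ keeps $|z|^2$ invariant, averaging over the compact group $K$ preserves the weight $e^{-(|\lambda|-\epsilon)|z|^2}$, so $f_\delta\in\mathcal{E}^\lambda(\mathbb{C}^n)$. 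Restricting $f_\delta$ to a regular orbit $K/M$ shows only $\delta\in\widehat{K}_M$ can contribute (otherwise $f_\delta$ vanishes on all regular orbits, hence on a dense set, hence identically). Finally, since by Proposition \ref{eqn p7.14} the assignment $p\mapsto\theta^\lambda(p)\psi_\alpha^\lambda=\langle\Pi^\lambda(\cdot),\mathcal{W}^\lambda(p)\rangle_\alpha^\lambda$ is linear in $p\in H=\bigoplus_{\delta\in\widehat{K}_M}H_{\check{\delta}}$, it suffices to show each $f_\delta=\theta^\lambda(P_\delta)\psi_\alpha^\lambda$ for some $P_\delta\in H_{\check{\delta}}$ and then sum.

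Next I would assemble $f_\delta$ into a matrix-valued function. Forming the average of the $K$-translates of $f_\delta$ against the matrix coefficients of $\delta$, one obtains a function $F$ with entries of the shape $F_{ij}(z)=\int_K f_\delta(k^{-1}\cdot z)\,\delta_{ij}(k)\,dk$; a change of variable $k\mapsto k_0 k$ yields the covariance $F(k_0\cdot z)=\delta(k_0)F(z)$, and using the $M$-fixed vectors to select the appropriate $l(\delta)$ columns lands $F$ in $\mathcal{E}^\delta(\mathbb{C}^n)$. Each entry is a $K$-average of translates of $f_\delta$, hence lies in $\mathcal{E}^\lambda(\mathbb{C}^n)$, so $F\in\mathcal{E}^{\delta,\lambda}(\mathbb{C}^n)$. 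Applying any $D\in\mathcal{L}_K^\lambda(\mathbb{C}^n)$ under the integral and using its $K$-invariance together with $Df_\delta=\mu_\alpha^\lambda(D)f_\delta$ gives $DF=\mu_\alpha^\lambda(D)F$ entrywise, so $F$ is a generalized $K$-spherical function of type $\delta$ corresponding to $\mu_\alpha^\lambda$. I would simultaneously record that $f_\delta$ is recovered from $F$: the $K$-span of $f_\delta$ coincides with the linear span of the entries of $F$ (equivalently, $f_\delta$ is a fixed linear combination of the $F_{ij}$ dictated by the Peter--Weyl relations), which is exactly the point where the contragredient bookkeeping $\delta\leftrightarrow\check{\delta}$ must be made consistent with the dual-basis conventions of Section 4.

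Then Theorem \ref{eqn t7.13} applies and gives $F=\Psi_\alpha^{\delta,\lambda}C$ for a constant $l(\delta)\times l(\delta)$ matrix $C$. Since by definition $(\Psi_\alpha^{\delta,\lambda})_{ik}=\theta^\lambda(P^\delta_{ik})\psi_\alpha^\lambda$ with $P^\delta_{ik}\in H_{\check{\delta}}$, every entry of $F$ equals $\theta^\lambda\!\bigl(\sum_k C_{kj}P^\delta_{ik}\bigr)\psi_\alpha^\lambda$ with $\sum_k C_{kj}P^\delta_{ik}\in H_{\check{\delta}}$. As the set $\{\theta^\lambda(p)\psi_\alpha^\lambda:p\in H_{\check{\delta}}\}$ is a linear space and $f_\delta$ lies in the span of the entries of $F$, we conclude $f_\delta=\theta^\lambda(P_\delta)\psi_\alpha^\lambda$ for a single $P_\delta\in H_{\check{\delta}}$. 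Summing over $\delta$ and putting $P=\sum_\delta P_\delta\in H$ (a $K$-harmonic polynomial) gives $f=\theta^\lambda(P)\psi_\alpha^\lambda$, and Proposition \ref{eqn p7.14} rewrites this as $f(z)=\langle\Pi^\lambda(z),\mathcal{W}^\lambda(P)\rangle_\alpha^\lambda$, as required. (Here Proposition \ref{eqn p7.12} may be invoked at the outset to note $f=f\times^\lambda\psi_\alpha^\lambda$, which makes the passage to the Weyl-transform side transparent.)

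The main obstacle is the second step: correctly converting the scalar isotypic piece $f_\delta$ into a genuine element of $\mathcal{E}^{\delta,\lambda}(\mathbb{C}^n)$ and recovering $f_\delta$ from it. The covariance and the eigenfunction property are routine, and the growth condition survives because $|z|^2$ is $K$-invariant; the delicate part is keeping the identification of ``type $\delta$'' scalar functions with $H_{\check{\delta}}$-polynomials straight and ensuring that the $M$-fixed vectors single out the correct columns, so that both the covariance law defining $\mathcal{E}^\delta$ and the recovery of $f_\delta$ hold with matching conventions. Once this bookkeeping is in place, the uniqueness Theorem \ref{eqn t7.13} does the real work and the remaining identifications are purely formal.
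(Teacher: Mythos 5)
Your overall route is the same as the paper's: split $f$ by characters into finitely many isotypic pieces, kill the types outside $\widehat{K}_{M}$ by averaging over $M$ along a regular orbit, convert each surviving piece into a matrix-valued covariant joint eigenfunction, and then invoke Theorem \ref{eqn t7.13} together with Proposition \ref{eqn p7.14} (the paper packages this last step as Lemma \ref{eqn l8.2}). However, there is a genuine gap at exactly the step you flagged as delicate, namely the column selection. The matrix $F$ with entries $F_{ij}(z)=\int_{K}f_{\delta}(k^{-1}\cdot z)\,\delta_{ij}(k)\,dk$ is $d(\delta)\times d(\delta)$ and satisfies only the left covariance $F(k_{0}\cdot z)=\delta(k_{0})F(z)$; it does \emph{not} satisfy the right $M$-invariance $F(z)\delta(m)=F(z)$, since that would require $f_{\delta}(m\cdot w)=f_{\delta}(w)$, which is not among the hypotheses. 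Consequently Proposition \ref{eqn p4.1} does not apply, the columns of $F$ indexed by the non-$M$-fixed basis vectors need not vanish, and nothing guarantees that the $l(\delta)$ columns you keep span the column space of $F$. Since Peter--Weyl recovers $f_{\delta}$ as $d(\delta)\,\textup{tr}(F)=d(\delta)\sum_{i=1}^{d(\delta)}F_{ii}$, which uses diagonal entries from \emph{all} $d(\delta)$ columns, your claim that $f_{\delta}$ lies in the span of the entries of the truncated matrix can fail outright. Concretely, take $\delta\in\widehat{K}_{M}$ with $l(\delta)<d(\delta)$ and $\Psi_{\alpha}^{\delta,\lambda}\neq 0$, choose constant vectors $c^{1}=\cdots=c^{l(\delta)}=0$ and $c^{j_{0}}\neq 0$ for a suitable $j_{0}>l(\delta)$, let $G(z)$ be the $d(\delta)\times d(\delta)$ matrix whose $j$th column is $\Psi_{\alpha}^{\delta,\lambda}(z)c^{j}$, and set $f=d(\delta)\,\textup{tr}(G)$. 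Then $f$ is a nonzero $K$-finite joint eigenfunction in $\mathcal{E}^{\lambda}(\C)$ with eigenvalue $\mu_{\alpha}^{\lambda}$, a Schur-orthogonality computation (modulo the $\delta\leftrightarrow\check{\delta}$ relabeling forced by whichever character convention you fix) shows $F=G$, and yet the $l(\delta)$ columns you selected are identically zero while $f\neq 0$.

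The repair is precisely the paper's Lemma \ref{eqn l8.2}: do not discard columns. For each $j=1,\dots,d(\delta)$, place the $j$th column of $F$ as the first column of a $d(\delta)\times l(\delta)$ matrix $F^{j}$ padded with zeros; each $F^{j}$ still satisfies $F^{j}(k\cdot z)=\delta(k)F^{j}(z)$, lies in $\mathcal{E}^{\delta,\lambda}(\C)$, and is a joint eigenfunction with eigenvalue $\mu_{\alpha}^{\lambda}$, so Theorem \ref{eqn t7.13} applies and gives $F^{j}=\Psi_{\alpha}^{\delta,\lambda}C^{j}$ for a constant matrix $C^{j}$. Hence \emph{every} entry of the full matrix $F$, in particular every diagonal entry, has the form $\theta^{\lambda}(p)\psi_{\alpha}^{\lambda}$ with $p\in H_{\check{\delta}}$, and therefore so does $f_{\delta}=d(\delta)\,\textup{tr}(F)$. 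Summing over the finitely many contributing $\delta$ and rewriting via Proposition \ref{eqn p7.14} then finishes the proof exactly as you describe; with this one correction your argument coincides with the paper's.
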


To prove the theorem we first prove the following lemma, which is an easy consequence 
of Theorem \textbf{\ref{eqn t7.13}} and Proposition \textbf{\ref{eqn p7.14}}.

\begin{lem}\label{eqn l8.2}
Suppose $F:\mathbb{C}^{n}\longrightarrow\mathcal{M}_{d(\delta)\times d(\delta)}$ is a 
smooth, square integrable joint eigenfunction for all $D\in\mathcal{L}^{\lambda}_{K}
(\mathbb{C}^{n})$ with eigenvalue $\mu^{\lambda}_{\alpha}$. Also assume $F(k\cdot 
z)=\delta(k)F(z)$, for some $\delta\in\widehat{K}_{M}$. Then, there exists a 
$l(\delta)\times d(\delta)$ constant matrix $C$ such that $F=\Phi_{\alpha}^{\delta,
\lambda}C$.
\end{lem}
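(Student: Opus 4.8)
The plan is to reduce the $d(\delta)\times d(\delta)$ situation to the uniqueness result for genuine generalized $K$-spherical functions, namely Theorem \textbf{\ref{eqn t7.13}}, which is stated for elements of $\mathcal{E}^{\delta,\lambda}(\C)$ (these are $d(\delta)\times l(\delta)$ matrix-valued). The only genuine obstacle is the mismatch between the $d(\delta)$ columns of $F$ and the $l(\delta)$ columns that Theorem \textbf{\ref{eqn t7.13}} handles; I would resolve it by a column-by-column zero-padding argument.

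First I would record that each column of $F$ already carries all the needed structure. Writing $F^{(1)},\dots,F^{(d(\delta))}$ for the columns of $F$, each $F^{(j)}:\C\to\mathbb{C}^{d(\delta)}$ satisfies $F^{(j)}(k\cdot z)=\delta(k)F^{(j)}(z)$, obtained by reading off the $j$th column of $F(k\cdot z)=\delta(k)F(z)$; and since $DF=\mu^{\lambda}_{\alpha}(D)F$ holds entrywise, each $F^{(j)}$ is again a joint eigenfunction for all $D\in\mathcal{L}^{\lambda}_{K}(\C)$ with eigenvalue $\mu^{\lambda}_{\alpha}$. Moreover, square integrability of the entries of $F$ gives, for any $\epsilon$ with $0<\epsilon<|\lambda|$, that $e^{-(|\lambda|-\epsilon)|z|^{2}}|F_{ij}(z)|\le|F_{ij}(z)|\in L^{2}(\C)$, so every entry of $F$ lies in $\mathcal{E}^{\lambda}(\C)$.

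Next, for each fixed $j$ I would form the $d(\delta)\times l(\delta)$ matrix-valued function $\widehat{F}^{(j)}=[\,F^{(j)},0,\dots,0\,]$, placing $F^{(j)}$ in the first column and zeros elsewhere. Padding with zeros preserves the transformation law, so $\widehat{F}^{(j)}(k\cdot z)=\delta(k)\widehat{F}^{(j)}(z)$ and hence $\widehat{F}^{(j)}\in\mathcal{E}^{\delta}(\C)$; its entries lie in $\mathcal{E}^{\lambda}(\C)$ by the previous paragraph, so in fact $\widehat{F}^{(j)}\in\mathcal{E}^{\delta,\lambda}(\C)$; and $\widehat{F}^{(j)}$ remains a joint eigenfunction with eigenvalue $\mu^{\lambda}_{\alpha}$. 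Thus $\widehat{F}^{(j)}$ is a generalized $K$-spherical function of type $\delta$ corresponding to $\mu^{\lambda}_{\alpha}$, and Theorem \textbf{\ref{eqn t7.13}} produces an $l(\delta)\times l(\delta)$ constant matrix $C_{j}$ with $\widehat{F}^{(j)}=\Psi^{\delta,\lambda}_{\alpha}C_{j}$. Comparing first columns gives $F^{(j)}=\Psi^{\delta,\lambda}_{\alpha}c^{(j)}$, where $c^{(j)}\in\mathbb{C}^{l(\delta)}$ is the first column of $C_{j}$.

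Finally I would reassemble. Collecting $c^{(1)},\dots,c^{(d(\delta))}$ as the columns of an $l(\delta)\times d(\delta)$ constant matrix $C'$, the column identities combine into $F=\Psi^{\delta,\lambda}_{\alpha}C'$. Invoking Proposition \textbf{\ref{eqn p7.14}}, namely $\Psi^{\delta,\lambda}_{\alpha}=\pi^{-n}(2|\lambda|)^{n}\Phi^{\delta,\lambda}_{\alpha}$, I obtain $F=\Phi^{\delta,\lambda}_{\alpha}C$ with $C=\pi^{-n}(2|\lambda|)^{n}C'$, an $l(\delta)\times d(\delta)$ constant matrix, as required; the dimensions are consistent since $\Phi^{\delta,\lambda}_{\alpha}$ is $d(\delta)\times l(\delta)$. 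As indicated above, the one point needing care, and the only thing beyond bookkeeping, is the passage from the $l(\delta)$-column setting of Theorem \textbf{\ref{eqn t7.13}} to the $d(\delta)$-column function $F$, which the zero-padding of individual columns handles uniformly.
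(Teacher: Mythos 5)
Your proof is correct and follows essentially the same route as the paper's own argument: the paper likewise zero-pads each column of $F$ into a $d(\delta)\times l(\delta)$ matrix, observes it is a square integrable generalized $K$-spherical function, and applies Theorem \textbf{\ref{eqn t7.13}} together with Proposition \textbf{\ref{eqn p7.14}} before reassembling the columns into the constant matrix $C$. Your write-up is merely more explicit about the verification that square integrability places each entry in $\mathcal{E}^{\lambda}(\mathbb{C}^{n})$, a point the paper passes over with ``clearly.''
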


\begin{proof}
We suppress the superscript $\lambda$. For each $j\in\{1,2,\cdots d(\delta)\}$, define 
$F^{j}:\mathbb{C}^{n}\longrightarrow\mathcal{M}_{d(\delta)\times l(\delta)}$ to be the matrix whose first column is precisely the $j$th column of $F(z)$ and else 
are zero. Then clearly each $F^{j}$ is square integrable generalized $K$-spherical 
function. Hence, by Theorem \textbf{\ref{eqn t7.13}} and Proposition \textbf{\ref{eqn 
p7.14}}, it follows that there exist $l(\delta)\times l(\delta)$ constant matrix 
$C^{j}$ such that $F^{j}=\Phi_{\alpha}^{\delta}C^{j}.$ Equating the entries in 
first column we get $$F_{ij}=F^{j}_{i1}=\sum_{k=1}^{l(\delta)}(\Phi^{\delta}_{\alpha})_ 
{ik} C^{j}_{k1},~1\leq i,j\leq d(\delta);$$ which in matrix form can be 
written as $F=\Phi_{\alpha}^{\delta}C$, where $C$ is the $l(\delta)\times 
d(\delta)$ constant matrix given by $C_{kj}=C_{k1}^{j}$. Hence the proof.
\end{proof}

Let $\widehat{K}$ denote the set of all inequivalent unitary irreducible representations 
of $K$. For $\delta\in\widehat{K}$, let $\chi_{\delta}(k)=\textup{tr}[\delta(k)]$.

\begin{proof}
$(Proof~ of~ Theorem$ \textbf{\ref{eqn t8.1}})~ Since $f$ is $K$-finite, by Lemma 1.7, 
Chapter IV of \cite{H1}, there is a finite subset $\widehat{K}(f)$ of $\widehat{K}$ 
such that $$f(z)=\sum_{\delta\in\widehat{K}(f)}d(\delta)\chi_{\delta}\ast 
f(z):=\sum_{\delta\in\widehat{K}(f)}d(\delta)\int_{K}\chi_{\delta}(k)f(k^{-1}\cdot 
z)dk=\sum_{\delta\in\widehat{K}(f)}d(\delta)\textup{tr}(f^{\delta}),$$ where $$f^{\delta}
(z)=\int_{K}f(k^{-1}\cdot z)\delta(k)dk.$$ Clearly $f^{\delta}\in\mathcal{E}^{\lambda}
(\C)$. Since any $D\in\mathcal{L}_{K}(\mathbb{C}^{n})$ commutes with the action of 
$K$, clearly each $f^{\delta}$ is also a joint eigenfunction for all 
$D\in\mathcal{L}_{K}(\mathbb{C}^{n})$ with eigenvalue $\mu_{\alpha}$. Also note that 
$f^{\delta}(k\cdot z)=\delta(k)f^{\delta}(z)$. Now, for $z=(r,k M)$ and $m\in M$,
$$f^{\delta}(z)=f^{\delta}
(r,kmM)=f^{\delta}\big(km\cdot(r,M)\big)=\delta(k)\delta(m)f^{\delta}(r,M).$$
Therefore if $\delta\notin\widehat{K}_{M}$, integrating both side of the above equation 
over $M$, we get $f^{\delta}(z)=0$.
So assume that $\delta\in\widehat{K}_{M}$. But then by the previous lemma, each 
$f^{\delta}_{ij}$ can be written as $f^{\delta}_{ij}
(z)=\big\langle\Pi^{\lambda}
(z),\mathcal{W}^{\lambda}{(\widetilde{P}^{\delta}_{ij}})\big\rangle_{\alpha}^{\lambda}$ for some $\widetilde{P}_{ij}^{\delta}\in H_{\delta}$. 
Hence the proof follows.
\end{proof}

Let $f(z,t)$ be a joint eigenfunction for all $D\in\mathcal{L}_{K}(\mathfrak{h}_{n})$ 
with eigenvalue $\widetilde{\mu}^{\lambda}_{\alpha}$. Since $\frac{\partial}{\partial t}
(\phi^{\lambda}_{\alpha})=i\lambda \phi_{\alpha},$  $\widetilde{\mu}^{\lambda}_{\alpha}
(\partial/\partial t)=i\lambda$, $f$ has the form $f(z,t)=e^{i\lambda 
t}g(z).$ Clearly $g$ is a joint eigenfunction for all $D\in\mathcal{L}_{K}^{\lambda}
(\mathbb{C}^{n})$ with eigenvalue $\mu^{-\lambda}_{\alpha}$. Therefore, Theorem 
\textbf{\ref{eqn t8.1}} implies the following theorem on the Heisenberg group.

\begin{thm}\label{eqn t8.3}
Let $f$ be a $K$-finite joint eigenfunction for all $D\in\mathcal{L}_{K}
(\mathfrak{h}_n)$ with eigenvalue $\widetilde{\mu}_{\alpha}^{\lambda}$ such that 
$f(z,0)\in\mathcal{E}^{\lambda}(\C)$. Then $f(z,t)=\big\langle\Pi^{\lambda}(z,t),\mathcal{W}^{\lambda}
(P)\big\rangle_{\alpha}^{\lambda}$ for some $K$-harmonic polynomial 
$P$.
\end{thm}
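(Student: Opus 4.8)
The plan is to reduce Theorem \ref{eqn t8.3} to its planar counterpart, Theorem \ref{eqn t8.1}, by separating off the central variable $t$. First I would note that $\partial/\partial t$ is central and therefore lies in $\mathcal{L}_K(\mathfrak{h}_n)$, so $f$ is in particular one of its eigenfunctions. The corresponding eigenvalue is read off from the shape of the spherical function: since $\phi_\alpha^\lambda(z,t)=e^{i\lambda t}q_\alpha^\lambda(z)e^{-|\lambda||z|^2}$, one has $\widetilde{\mu}_\alpha^\lambda(\partial/\partial t)=i\lambda$, whence $\partial f/\partial t=i\lambda f$ and therefore $f(z,t)=e^{i\lambda t}g(z)$ with $g(z):=f(z,0)\in\mathcal{E}^\lambda(\C)$.

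Next I would descend the eigenfunction equation from $\mathbb{H}^n$ to $\C$. For $D\in\mathcal{L}_K(\mathfrak{h}_n)$, acting on $e^{i\lambda t}g$ amounts to replacing every copy of $\partial/\partial t$ by $i\lambda$, so that $Df=e^{i\lambda t}\big(D^{-\lambda}g\big)$; the eigenfunction relation $Df=\widetilde{\mu}_\alpha^\lambda(D)f$ then forces $g$ to be a joint eigenfunction of every $D^{-\lambda}\in\mathcal{L}_K^{-\lambda}(\C)$. Using the identity $\mu_\alpha^\lambda(D^\lambda)=\widetilde{\mu}_\alpha^{-\lambda}(D)$ (applied with $\lambda\mapsto-\lambda$) I can name this eigenvalue and attach it to an index $\alpha$. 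Moreover $K$-finiteness of $f$ transfers verbatim to $g$, since $K$ acts only in the $z$-variable and commutes with multiplication by $e^{i\lambda t}$; and the growth class $\mathcal{E}^{\pm\lambda}(\C)$ depends only on $|\lambda|$, so $g$ lives in the required space.

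With this in hand, Theorem \ref{eqn t8.1} produces a $K$-harmonic polynomial $P$ with $g(z)=\big\langle\Pi^\lambda(z),\mathcal{W}^\lambda(P)\big\rangle_\alpha^\lambda$, where $\psi_\alpha^\lambda=\psi_\alpha^{-\lambda}$ is used to harmonize the sign of $\lambda$. Finally I would reassemble: since $\Pi^\lambda(z,t)=e^{i\lambda t}\Pi^\lambda(z)$ and the inner product $\langle\cdot,\cdot\rangle_\alpha^\lambda$ is linear in its first argument, the scalar $e^{i\lambda t}$ pulls out of $\big\langle\Pi^\lambda(z,t),\mathcal{W}^\lambda(P)\big\rangle_\alpha^\lambda$, giving $e^{i\lambda t}g(z)=f(z,t)$, which is exactly the claimed formula.

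The genuinely analytic content is already packaged in Theorem \ref{eqn t8.1} (and, through it, in the uniqueness Theorem \ref{eqn t7.13} and the alternate formula of Proposition \ref{eqn p7.14}). Thus the main obstacle here is bookkeeping rather than analysis: one must track carefully how an operator $D$ on $\mathbb{H}^n$ passes to $D^{\mp\lambda}$ on $\C$ under the substitution $\partial/\partial t\mapsto i\lambda$, and correctly align the eigenvalue indices and the sign of $\lambda$ so that the planar theorem applies to $g$ with the same index $\alpha$. Once these conventions are pinned down, the argument is just a separation of variables followed by the elementary observation $\Pi^\lambda(z,t)=e^{i\lambda t}\Pi^\lambda(z)$.
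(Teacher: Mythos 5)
Your proposal is correct and is essentially the paper's own proof: the paper likewise reads off $\widetilde{\mu}_{\alpha}^{\lambda}(\partial/\partial t)=i\lambda$ from the form of $\phi_{\alpha}^{\lambda}$ to write $f(z,t)=e^{i\lambda t}g(z)$, observes that $g$ is then a $K$-finite joint eigenfunction of the descended operators with eigenvalue $\mu_{\alpha}^{-\lambda}$ lying in $\mathcal{E}^{\lambda}(\mathbb{C}^{n})$, and concludes by invoking Theorem \ref{eqn t8.1} together with $\Pi^{\lambda}(z,t)=e^{i\lambda t}\Pi^{\lambda}(z)$. If anything, you are slightly more explicit than the paper about the $\lambda\mapsto-\lambda$ bookkeeping (identifying the descended family as $\mathcal{L}_{K}^{-\lambda}(\mathbb{C}^{n})$, transferring $K$-finiteness and the growth class, and harmonizing the sign via $\psi_{\alpha}^{\lambda}=\psi_{\alpha}^{-\lambda}$), all of which the paper compresses into a single sentence preceding the statement of the theorem.
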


The following proposition says that $\mu^{\lambda}_{\alpha}$'s are the only possible 
eigenvalues for joint eigenfunctions of all $D\in\mathcal{L}_{K}^{\lambda}
(\mathbb{C}^{n})$, which belong to $\mathcal{E}^{\lambda}(\C)$. Hence Theorem 
\textbf{\ref{eqn t8.1}}, actually describes all $K$-finite joint eigenfunctions of all 
$D\in\mathcal{L}_{K}^{\lambda}(\mathbb{C}^{n})$, which belong to 
$\mathcal{E}^{\lambda}(\C)$. Consequently, Theorem \textbf{\ref{eqn t8.3}} actually 
describes all $K$-finite joint eigenfunctions $f(z,t)$ of all $D\in\mathcal{L}
(\mathfrak{h}_{n})$ with eigenvalue $\widetilde{\mu}$, such that $\widetilde{\mu}
(\frac{\partial}{\partial t})$ is a non zero real number and 
$f(z,0)\in\mathcal{E}^{\lambda}(\C)$.

\begin{prop}\label{eqn p8.4}
Let $f\in\mathcal{E}^{\lambda}(\C)$ be a joint eigenfunction of all 
$D\in\mathcal{L}_{K}^{\lambda}(\C)$ with eigenvalue $\mu$. Then 
$\mu=\mu^{\lambda}_{\alpha}$ for some $\alpha\in\Lambda$.
\end{prop}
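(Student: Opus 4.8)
The plan is to reduce everything to the special Hermite operator and then read off the eigenvalue from the twisted convolutions $f\times^\lambda\psi^\lambda_\beta$. Since $K\subset U(n)$ we have $\mathcal{L}^\lambda_{U(n)}(\C)\subset\mathcal{L}^\lambda_K(\C)$, and the former is generated by the elliptic special Hermite operator $\mathcal{L}^\lambda$ (Remark \ref{eqn r5.10}); thus $f$ is in particular an eigenfunction of $\mathcal{L}^\lambda$, with eigenvalue $c$ say. First I would record, for every $\beta\in\Lambda$ and every $D\in\mathcal{L}^\lambda_K(\C)$, the identity
\begin{equation*}
\big(\mu(D)-\mu^\lambda_\beta(D)\big)\,f\times^\lambda\psi^\lambda_\beta=0 .
\end{equation*}
This is precisely the computation in the proof of Lemma \ref{eqn l7.10}: approximating $f$ by $f_j\in\mathscr{S}(\C)$ with $e^{-(|\lambda|-\epsilon)|\cdot|^2}f_j\to e^{-(|\lambda|-\epsilon)|\cdot|^2}f$ in $\mathscr{S}^{\prime}(\C)$, one has $Df_j\times^\lambda\psi_\beta=\mu^\lambda_\beta(D)\,f_j\times^\lambda\psi_\beta$ from the Weyl transform relations, and letting $j\to\infty$ through Lemma \ref{eqn l7.8} produces the displayed relation with the unknown eigenvalue $\mu$ in place of $\mu^\lambda_\alpha$; nothing in that argument uses that the eigenvalue is one of the $\mu^\lambda_\alpha$. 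Hence for each $\beta$ either $f\times^\lambda\psi^\lambda_\beta=0$, or $\mu(D)=\mu^\lambda_\beta(D)$ for all $D$, i.e. $\mu=\mu^\lambda_\beta$. So it is enough to exhibit one $\beta$ with $f\times^\lambda\psi^\lambda_\beta\neq0$.

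To do this I would first pin down $c$ to the discrete set $\{-2|\lambda|(2k+n)\}$. Projecting onto $U(n)$-types, for $\delta\in\widehat{U(n)}_M$ put $f_\delta(z)=\int_{U(n)}f(k^{-1}\cdot z)\chi_\delta(k)\,dk$. Each $f_\delta$ lies in $\mathcal{E}^\lambda(\C)$, is again an eigenfunction of $\mathcal{L}^\lambda$ with eigenvalue $c$, and is of a single $U(n)$-type, so by Proposition 4.5 of \cite{T} it is Schwartz. For such a function Proposition \ref{eqn p6.4} gives $f_\delta=\sum_m f_\delta\times^\lambda\psi^\lambda_m$. Now $f_\delta\times^\lambda\psi^\lambda_m$ is at once an eigenfunction of $\mathcal{L}^\lambda$ with eigenvalue $-2|\lambda|(2m+n)$ (its Weyl transform is supported on $\mathcal{P}_m(\C)$, the corresponding eigenspace) and with eigenvalue $c$ (since $\mathcal{L}^\lambda(f_\delta\times^\lambda\psi_m)=(\mathcal{L}^\lambda f_\delta)\times^\lambda\psi_m=c\,f_\delta\times^\lambda\psi_m$), so $f_\delta\times^\lambda\psi^\lambda_m=0$ unless $c=-2|\lambda|(2m+n)$.

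Finally, because $f\neq0$ and $f=\sum_{\delta\in\widehat{U(n)}_M}f_\delta$ (uniformly on compact sets, $f$ being smooth), some $f_\delta$ is nonzero, and its expansion then forces $c=-2|\lambda|(2k+n)$ for a single $k\in\mathbb{Z}^+$. With the eigenvalue identified, Lemma \ref{eqn l7.11} gives $f=f\times^\lambda\psi^\lambda_k=\sum_{V_\beta\subset\mathcal{P}_k(\C)}f\times^\lambda\psi^\lambda_\beta$, the last step using $\psi^\lambda_k=\sum_{V_\beta\subset\mathcal{P}_k(\C)}\psi^\lambda_\beta$ (Remark \ref{eqn r5.10}). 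Since $f\neq0$, at least one summand $f\times^\lambda\psi^\lambda_\beta$ is nonzero, and the dichotomy of the first step yields $\mu=\mu^\lambda_\beta$; taking $\alpha=\beta$ finishes the proof. I expect the middle step to be the main obstacle: excluding a continuum of eigenvalues for $\mathcal{L}^\lambda$ is what really needs work, and it is handled by pushing the $U(n)$-isotypic pieces $f_\delta$ into $\mathscr{S}(\C)$ (via Proposition 4.5 of \cite{T}), where the spectral decomposition of Proposition \ref{eqn p6.4} can be applied.
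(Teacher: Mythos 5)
Your opening and closing steps are correct and coincide with the paper's own argument. The identity $\big(\mu(D)-\mu^{\lambda}_{\beta}(D)\big)\,f\times^{\lambda}\psi^{\lambda}_{\beta}=0$ does follow from the proof of Lemma \ref{eqn l7.10} with an arbitrary eigenvalue $\mu$ in place of $\mu^{\lambda}_{\alpha}$ (the paper re-derives exactly this dichotomy, by convolving $\sum_{\beta}[\mu(D)-\mu_{\beta}(D)]f\times^{\lambda}\psi^{\lambda}_{\beta}=0$ against $\psi^{\lambda}_{\alpha}$, at the end of its proof of Proposition \ref{eqn p8.4}); and once one knows $\mu(\mathcal{L}^{\lambda})=-2|\lambda|(2k+n)$ for some $k\in\mathbb{Z}^{+}$, Lemma \ref{eqn l7.11} plus the dichotomy finishes the proof just as you describe. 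Your observation that $f_{\delta}\times^{\lambda}\psi^{\lambda}_{m}$ carries two $\mathcal{L}^{\lambda}$-eigenvalues simultaneously, hence vanishes unless $c=-2|\lambda|(2m+n)$, is also sound (granting Schwartzness of $f_{\delta}$, so that the Weyl-transform manipulations apply).

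The gap is the middle step, which is the entire content of the proposition. You dispose of the quantization of the $\mathcal{L}^{\lambda}$-eigenvalue by asserting that each $U(n)$-isotypic piece $f_{\delta}$ — a single-type eigenfunction with \emph{arbitrary complex} eigenvalue $c$, known only to satisfy the growth condition — is Schwartz "by Proposition 4.5 of \cite{T}". But whether a nonzero such $f_{\delta}$ can exist when $c$ is not of the form $-2|\lambda|(2m+n)$ is precisely what must be proved, and the paper's own usage of \cite{T} indicates that the cited statement cannot be invoked at an arbitrary eigenvalue: in Lemma \ref{eqn l7.11}, where the eigenvalue is already assumed to be $-2|\lambda|(2k+n)$, the paper cites the \emph{statement} of Proposition 4.5 of \cite{T} for Schwartzness, whereas in Proposition \ref{eqn p8.4}, where $\mu(\mathcal{L}^{\lambda})=-2|\lambda|(2a+n)$ with $a\in\mathbb{C}$ arbitrary, it cites only the \emph{proofs} of Propositions 4.4 and 4.5 of \cite{T} (for the isotypic reduction and the derivation of the confluent hypergeometric equation) and then supplies the decisive step itself: the asymptotics of the two solutions $u_{1},u_{2}$ from \cite{ABCP}, combined with the growth condition at infinity and boundedness of $f^{j}_{pq}$ at the origin, force $p-a$ to be a non-positive integer, hence $a\in\mathbb{Z}^{+}$. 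This interplay between decay at infinity and regularity at the origin of solutions of a second-order ODE is what quantizes the eigenvalue, and it cannot be recovered by the soft Weyl-transform and spectral arguments you use elsewhere: those require $f_{\delta}\in\mathscr{S}(\C)$ or at least $L^{2}(\C)$, while membership in $\mathcal{E}^{\lambda}(\C)$ only bounds $|f_{\delta}|$ by $Ce^{(|\lambda|-\epsilon)|z|^{2}}$, so Schwartzness is not free — extracting it from the eigenequation \emph{is} the ODE analysis. Thus either Proposition 4.5 of \cite{T} is in fact stated for arbitrary complex eigenvalues, in which case it already contains the quantization and your subsequent appeal to Proposition \ref{eqn p6.4} is redundant, or (as the paper's treatment suggests) it is not, and your proof has assumed its hardest part.
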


\begin{proof}
From Remark \textbf{\ref{eqn r7.31}}, recall $\mathcal{S}_{pq}$, the space of bigraded spherical harmonics of degree $(p,q)$. Take an orthonormal basis $\{Y^{j}_{pq}(\omega):j=1,2,
\cdot,d(p,q)\}$ for $\mathcal{S}_{pq}$, so that $\{Y^{j}_{pq}(\omega):j=1,2,
\cdots,d(p,q);p+q=k;k=0,1,\cdots\infty\}$ form a basis for $L^{2}(S^{2n-1})$. 
Therefore for each $r>0$, $f(r\omega)$ has the bigraded spherical harmonic expansion 
$$f(r\omega)=\sum_{m=0}^{\infty}\sum_{p+q=m}\sum_{j=1}^{d(p,q)}f^{j}_{pq}(r)Y^{j}_{pq}
(\omega),~\omega\in S^{2n-1},$$ where $$f_{pq}^{j}
(r)=\int_{S^{2n-1}}f(r\omega)\overline{Y^{j}_{pq}}(\omega)d\omega,~r>0.$$ Since $f$ is 
smooth, clearly $f^{j}_{pq}(r)$ is bounded at zero. Now let $\mu(\mathcal{L})=-2|\lambda|(2a+n)$, 
$a\in\mathbb{C}$. i.e $f$ is an eigenfunction of $\mathcal{L}$ with eigenvalue $-2|\lambda|(2a+n)
$. Then it can be shown that (see the proof of Proposition 4.5 \cite{T}), each 
$f_{pq}^{j}(r)Y^{j}_{pq}(\omega)$ is also an eigenfunction of $\mathcal{L}$ with eigenvalue 
$-2|\lambda|(2a+n)$ i.e $$\mathcal{L}[f_{pq}^{j}(r)Y^{j}_{pq}(\omega)]=-2|\lambda|(2a+n)
[f_{pq}^{j}(r)Y^{j}_{pq}(\omega)].$$ Writing $\mathcal{L}$ in polar coordinate, using the fact 
that $Y^{j}_{pq}$ is an eigenfunction of the spherical Laplacian on $S^{2n-1}$, and 
then making a change of variable $$f^{j}_{pq}(r)=\\
r^{p+q} u(2|\lambda|r^{2})e^{-|\lambda|r^{2}},$$ we get (for details see the proof of 
Proposition 4.4 \cite{T}) that $u$ satisfies the following confluent hypergeometric 
equation \bea\label{eqn 8.4.1}tu^{\prime\prime}(t)+(d-t)u^{\prime}(t)-(p-a)u(t)=0,\eea 
where $d=n+p+q$. The equation (\ref{eqn 8.4.1}) has two linearly independent solutions 
$u_1$ and $u_2$, with the following asymptotic behaviour (see \cite{ABCP}, page-145):

\indent {\rm{\textbf{(i)}}} If $(p-a)\neq 0,-1,-2,\cdots$, $$u_{1}(t)\sim\frac{(d-1)!}
{\Gamma(p-a)}e^{t}t^{p-a-d},~u_{2}(t)\sim t^{-(p-a)}~\textup{as}~t\rightarrow 
+\infty$$ $$u_{1}(t)\sim 1,~u_{2}(t)\sim\begin{cases}\frac{-\textup{log} t}{\Gamma(p-
a)} ~\textup{if} ~d=1\\ \frac{c}{t^{d-1}} ~\textup{if}~ d\geq 2\,
\end{cases}~\textup{as}~t\rightarrow 0^{+},$$ where $c$ is a non zero constant. 

\indent {\rm{\textbf{(ii)}}} If $(p-a)$ is a non positive integer, $$u_{1}
(t)=L^{d-1}_{a-p}(t),~u_{2}(t)\sim e^{t}(-t)^{a-p-d}~\textup{as}~t\rightarrow +\infty.
$$ Therefore, under the conditions on $f$, the only possibility is $(p-a)$ is a non 
positive integer and consequently $$f^{j}_{pq}(r)=r^{p+q}L^{n+p+q-1}_{a-p}(2|\lambda|
r^{2})e^{-|\lambda|r^{2}}.$$ So there exists non-positive integer $k$ such that $a=k$. 
Hence $f$ is a eigenfunction of $\mathcal{L}$ with eigenvalue $-2|\lambda|(2k+n)$. Therefore by 
Lemma \textbf{\ref{eqn l7.11}}, 
$f=f\times\psi_{k}=\sum_{V_{\beta}\subset\mathcal{P}_{k}(\C)}f\times\psi_{\beta}$. 
Since $f$ is non zero, there exists $\alpha\in\Lambda$ with 
$V_{\alpha}\subset\mathcal{P}_{k}(\C)$ such that $f\times\psi_{\alpha}\neq 0$. Now let 
$D\in\mathcal{L}_{K}(\C)$. Then $$Df=\sum_{V_{\beta}\subset\mathcal{P}_{k}
(\C)}D[f\times\psi_{\beta}]=\sum_{V_{\beta}\subset\mathcal{P}_{k}(\C)}f\times 
D\psi_{\beta}=\sum_{V_{\beta}\subset\mathcal{P}_{k}(\C)}\mu_{\beta}
(D)f\times\psi_{\beta}.$$ Again $$Df=\mu(D)f=\sum_{V_{\beta}\subset\mathcal{P}_{k}
(\C)}\mu(D)f\times\psi_{\beta}.$$ So we get $$\sum_{V_{\beta}\subset\mathcal{P}_{k}
(\C)}[\mu(D)-\mu_{\beta}(D)]f\times\psi_{\beta}=0.$$ Hence 
$$\sum_{V_{\beta}\subset\mathcal{P}_{k}(\C)}[\mu(D)-\mu_{\beta}
(D)]f\times\psi_{\beta}\times\psi_{\alpha}=0,$$ which implies that 
$[\mu(D)-\mu_{\alpha}(D)]f\times\psi_{\alpha}=0$. Since $f\times\psi_{\alpha}\neq 0$, 
we get $\mu(D)=\mu_{\alpha}(D)$. But $D\in\mathcal{L}_{K}(\C)$ is arbitrary. Hence 
$\mu=\mu_{\alpha}$.
\end{proof}

\begin{rem}\label{eqn r8.5}

Theorem \textbf{\ref{eqn t8.1}} holds true even if we assume that $f$ is a distribution such that $e^{-(|\lambda|-\epsilon)|\cdot|^{2}}f$ defines a tempered distribution for some $\epsilon>0$.


\end{rem}

\section{square integrable eigenfunctions}
In this section we prove the following theorem characterizing square integrable joint 
eigenfunctions of all $D\in\mathcal{L}^{\lambda}_{K}(\C)$. This is analogues to Theorem 3.3 in \cite{T}.  


\begin{thm}\label{eqn t10.1}
The square integrable joint eigenfunctions of all the operators 
$D\in\mathcal{L}_{K}^{\lambda}(\C)$ with eigenvalue $\mu^{\lambda}_{\alpha}$ are 
precisely $f(z)=\big\langle\Pi^{\lambda}(z),S\big\rangle^{\lambda}_{\alpha}$, where 
$S\in\mathcal{O}(V_{\alpha})$. Moreover $||f||^{2}_{2}=\pi^{n}(2|
\lambda|)^{-n}||S||_{\alpha}^{2}.$
\end{thm}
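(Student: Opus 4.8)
The plan is to realize the map $S\mapsto g_S$, with $g_S(z):=\langle\Pi^\lambda(z),S\rangle_\alpha^\lambda$, as a constant multiple of an isometry from $\mathcal O(V_\alpha)$ onto the space of square integrable joint eigenfunctions of eigenvalue $\mu_\alpha^\lambda$, and to identify it as the inverse of the Weyl transform restricted to operators supported on $V_\alpha$. Everything rests on the single formula
\[
\mathcal G^\lambda(g_S)=\pi^n(2|\lambda|)^{-n}\,S\mathcal P_\alpha,
\]
where $S\mathcal P_\alpha$ is the operator equal to $S$ on $V_\alpha$ and $0$ on $V_\alpha^{\perp}$; note $S\mathcal P_\alpha$ has rank at most $d(\alpha)$ and is therefore Hilbert--Schmidt.

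First I would prove this formula on a dense subspace. By Lemma \textbf{\ref{eqn l7.6}} and Proposition \textbf{\ref{eqn p7.7}} the operators $\mathcal W^\lambda(p)|_{V_\alpha}$ with $p\in H_{\stackrel{\vee}{\delta}}$ span a dense subspace of $\mathcal O(V_\alpha)$. For $S=\mathcal W^\lambda(p)|_{V_\alpha}$, Proposition \textbf{\ref{eqn p7.14}} gives $g_S=\pi^n(2|\lambda|)^{-n}\theta^\lambda(p)\psi_\alpha^\lambda\in\mathscr S(\C)$, and then Corollary \textbf{\ref{eqn c5.4}} together with $\mathcal G^\lambda(\theta^\lambda(p))=\tau^\lambda(p)=\mathcal W^\lambda(p)$ and $\mathcal G^\lambda(\psi_\alpha^\lambda)=\mathcal P_\alpha$ (Proposition \textbf{\ref{eqn p6.3}}) yields $\mathcal G^\lambda(g_S)=\pi^n(2|\lambda|)^{-n}\mathcal W^\lambda(p)\mathcal P_\alpha=\pi^n(2|\lambda|)^{-n}S\mathcal P_\alpha$, whence $\|g_S\|_2^2=\pi^n(2|\lambda|)^{-n}\|S\|_\alpha^2$ by Plancherel (Theorem \textbf{\ref{eqn t5.1}}). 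To pass to a general $S$ I would use that $\Pi^\lambda(z)$ is unitary, so $\|\Pi^\lambda(z)|_{V_\alpha}\|_\alpha^2=d(\alpha)$ and hence $|g_S(z)-g_{S'}(z)|\le d(\alpha)^{1/2}\|S-S'\|_\alpha$ uniformly in $z$; choosing $S_j\to S$ in the dense subspace, $g_{S_j}\to g_S$ uniformly while $\{g_{S_j}\}$ is Cauchy in $L^2$, so $g_S\in L^2$ and both the displayed formula and the norm identity survive by continuity of $\mathcal G^\lambda$.

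With the formula in hand the two containments are short. For the forward inclusion $g_S$ is a joint eigenfunction: for $D\in\mathcal L_K^\lambda(\C)\subset\mathcal L^\lambda(\C)$, Corollary \textbf{\ref{eqn c5.4}} gives $\mathcal G^\lambda(Dg_S)=\mathcal G^\lambda(g_S)\mathcal G^\lambda(D)=\pi^n(2|\lambda|)^{-n}S\mathcal P_\alpha\mathcal G^\lambda(D)$, and since $\mathcal P_\alpha\mathcal G^\lambda(D)=\mathcal G^\lambda(D\psi_\alpha^\lambda)=\mu_\alpha^\lambda(D)\mathcal P_\alpha$, this equals $\mathcal G^\lambda(\mu_\alpha^\lambda(D)g_S)$; injectivity of the Weyl transform then gives $Dg_S=\mu_\alpha^\lambda(D)g_S$, smoothness of $g_S$ being automatic from the ellipticity of $\mathcal L^\lambda\in\mathcal L_K^\lambda(\C)$. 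For the converse, let $f\in L^2(\C)$ be a joint eigenfunction of eigenvalue $\mu_\alpha^\lambda$; then $f\in\mathcal E^\lambda(\C)$ (with $p=2$) and is smooth, so Proposition \textbf{\ref{eqn p7.12}} gives $f=f\times^\lambda\psi_\alpha^\lambda$ and therefore $\mathcal G^\lambda(f)=\mathcal G^\lambda(f)\mathcal P_\alpha$; thus $T:=\mathcal G^\lambda(f)$ vanishes on $V_\alpha^{\perp}$, and putting $S:=\pi^{-n}(2|\lambda|)^n\,T|_{V_\alpha}\in\mathcal O(V_\alpha)$ we get $\mathcal G^\lambda(g_S)=\pi^n(2|\lambda|)^{-n}S\mathcal P_\alpha=T=\mathcal G^\lambda(f)$, so $f=g_S$ by injectivity, and the norm statement is immediate.

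The step I expect to be most delicate is the extension from the dense subspace to all $S\in\mathcal O(V_\alpha)$: one must make sure the pointwise definition $g_S(z)=\langle\Pi^\lambda(z),S\rangle_\alpha^\lambda$ really coincides with the $L^2$ limit of the approximants, which is exactly what the uniform bound $|g_S-g_{S'}|\le d(\alpha)^{1/2}\|S-S'\|_\alpha$ provides, and to track the normalizing constants consistently through Proposition \textbf{\ref{eqn p7.14}} and the Plancherel theorem so that the final constant comes out as $\pi^n(2|\lambda|)^{-n}$.
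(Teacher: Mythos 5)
Your proof is correct, and it takes a genuinely different route from the paper's, so let me compare them. Writing $g_S(z)=\big\langle\Pi^{\lambda}(z),S\big\rangle^{\lambda}_{\alpha}$, the paper proves the inclusion ``every square integrable joint eigenfunction is some $g_S$'' by decomposing $f$ into its $K$-types, $f=\sum_{\delta\in\widehat{K}_{M}}d(\delta)\textup{tr}(f^{\delta})$, applying Lemma \textbf{\ref{eqn l8.2}} (which rests on the uniqueness theorem, Theorem \textbf{\ref{eqn t7.13}}) to each $f^{\delta}$, and assembling $S$ from the resulting constant matrices with the help of Lemma \textbf{\ref{eqn l7.5}} \textbf{(b)}; the reverse inclusion is proved by expanding $S$ in the orthonormal basis of $\mathcal{O}(V_{\alpha})$ furnished by Proposition \textbf{\ref{eqn p7.7}}, showing that the resulting series for $g_S$ converges in $L^{2}(\C)$, and differentiating it term by term in the distribution sense. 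You instead organize both inclusions around the single identity $\mathcal{G}^{\lambda}(g_S)=\pi^{n}(2|\lambda|)^{-n}S\mathcal{P}_{\alpha}$, established on the dense span of the operators $\mathcal{W}^{\lambda}(p)\vert_{V_{\alpha}}$ ($p$ a $K$-harmonic polynomial) and extended by your uniform bound $|g_S(z)-g_{S'}(z)|\leq d(\alpha)^{1/2}||S-S'||_{\alpha}$ together with the $L^{2}$-continuity of $\mathcal{G}^{\lambda}$. The eigenfunction property of $g_S$ then follows from the operator identity $\mathcal{P}_{\alpha}\mathcal{G}^{\lambda}(D)=\mu^{\lambda}_{\alpha}(D)\mathcal{P}_{\alpha}$ and the injectivity of the Weyl transform, which replaces the paper's termwise differentiation of an $L^{2}$-convergent series; and for the other inclusion you invoke Proposition \textbf{\ref{eqn p7.12}} directly---$f=f\times^{\lambda}\psi^{\lambda}_{\alpha}$ forces $\mathcal{G}^{\lambda}(f)$ to vanish on $V_{\alpha}^{\perp}$---which bypasses the $K$-type decomposition and Lemma \textbf{\ref{eqn l8.2}} entirely. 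Your route is shorter, makes the norm identity an immediate consequence of the Plancherel theorem (Theorem \textbf{\ref{eqn t5.1}}), and exhibits the eigenspace as exactly the inverse Weyl transform of the finite-rank operators $T$ satisfying $T=T\mathcal{P}_{\alpha}$; the paper's route costs more machinery but produces the explicit expansion of $f$ in the functions $\theta^{\lambda}(p_{j}^{\delta})\psi^{\lambda}_{\alpha}$, which is precisely what is reused in the final section (compare Theorem \textbf{\ref{eqn t9.11}}). One bookkeeping point you share with the paper: $S\mapsto g_S$ is conjugate-linear while $S\mapsto S\mathcal{P}_{\alpha}$ is linear, so your key formula, like the relation $\theta^{\lambda}(p)\psi^{\lambda}_{\alpha}=\big\langle\Pi^{\lambda}(z),\mathcal{W}^{\lambda}(p)\big\rangle^{\lambda}_{\alpha}$ of Proposition \textbf{\ref{eqn p7.14}} on which both proofs rely, is literally consistent only once the conjugations implicit in that proposition are tracked; this affects constants and conjugates but not the structure of either argument.
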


\begin{proof}
Let $f\in L^{2}(\C)$ be a joint eigenfunction of all $D\in\mathcal{L}_{K}(\C)$ with 
eigenvalue $\mu_{\alpha}$. We have 
$$f=\sum_{\delta\in\widehat{K}_{M}}d(\delta)\chi_{\delta}*f=
\sum_{\delta\in\widehat{K}_{M}}d(\delta)\textup{tr}(f^{\delta}),$$ where the series converges in $L^2(\C)$. Clearly each 
$f^{\delta}:\C\rightarrow \mathcal{M}_{d(\delta)\times d(\delta)}$ is a joint 
eigenfunction of all $D\in\mathcal{L}_{K}(\C)$ with eigenvalue $\mu_{\alpha}$. also 
$f^{\delta}(k\cdot z)=\delta(k)f^{\delta}(z)$. Therefore by Lemma \textbf{\ref{eqn 
l8.2}}, there is a $\big(l(\delta)\times d(\delta)\big)$ constant matrix $C_{\delta}$ 
such that $$d(\delta)f^{\delta}=\Psi^{\delta}_{\alpha}C_{\delta}=\pi^{-n}(2|
\lambda|)^{n}\big\langle\Pi(z),\mathcal{W}(P^{\delta})\big\rangle_{\alpha} C_{\delta}.
$$ Hence \bea\label{eqn 10.1}f(z)=\pi^{-n}(2|
\lambda|)^{n}\sum_{\delta\in\widehat{K}_{M}}\textup{tr}\big[\big\langle\Pi(z),\mathcal{W}
(P^{\delta})\big\rangle_{\alpha} C_{\delta}\big],\eea and \beas||f||
_{2}^{2}=\sum_{\delta\in\widehat{K}_{M}}\big|\big|
\textup{tr}\big[\Psi^{\delta}_{\alpha}C_{\delta}\big]\big|\big|
_{2}^{2}&=&\sum_{\delta\in\widehat{K}_{M}}\big|\big|
\textup{tr}\big[\big(\theta(P^{\delta})\psi_{\alpha}\big)C_{\delta}\big]\big|\big|
_{2}^{2}\\&=&\pi^{-n}(2|\lambda|)^{n}\sum_{\delta\in\widehat{K}_{M}}\big|\big|
\textup{tr}\big[\mathcal{W}(P^{\delta})C_{\delta}\big]\big|\big|_{\alpha}^{2},\eeas 
where the last equality follows from Lemma \textbf{\ref{eqn l7.5}} \textbf{(b)}. 
Therefore $$S:=\pi^{-n}(2|
\lambda|)^{n}\sum_{\delta\in\widehat{K}_{M}}\textup{tr}\big[\mathcal{W}
(P^{\delta})\vert_{V_{\alpha}}C_{\delta}\big]$$ defines an element in $\mathcal{O}
(V_{\alpha}),$ and consequently from (\ref{eqn 10.1}) we get 
$f(z)=\big\langle\Pi(z),S\big\rangle_{\alpha}$. Conversely let 
$f(z)=\big\langle\Pi(z),S\big\rangle_{\alpha}$ for some $S\in\mathcal{O}(V_{\alpha})$. 
Let $\widehat{K}(\alpha)=\big\{\delta\in\widehat{K}_{M}:\mathcal{W}
(H_{\delta})\vert_{V_{\alpha}}\neq\{0\}\big\}$. For each $\delta\in\widehat{K}(\alpha)$, 
choose $p_{j}^{\delta}\in H_{\delta}$, $j=1,2,\cdots n_{\alpha}(\delta)$ so that 
$\{\mathcal{W}(p_{j}^{\delta})\vert_{V_{\alpha}}:j=1,2,\cdots,n_{\alpha}(\delta)\}$ 
forms an orthonormal basis for $\mathcal{W}(H_{\delta})\vert_{V_{\alpha}}$. Hence by 
Proposition \textbf{\ref{eqn p7.7}}, $\{\mathcal{W}
(p_{j}^{\delta})\vert_{V_{\alpha}}:j=1,2,\cdots,n_{\alpha}(\delta);\delta\in\widehat{K}
(\alpha)\}$ is an orthonormal basis for $\mathcal{O}(V_{\alpha}).$ Therefore we can 
write for each $z\in\C$, \bea\label{eqn 10.2} 
f(z)=\big\langle\Pi(z),S\big\rangle_{\alpha}&=&\nonumber\sum_{\delta\in\widehat{K}
(\alpha)}\sum_{j=1}^{n_{\alpha}(\delta)}\big\langle\Pi(z),\mathcal{W}
(p^{\delta}_{j})\big\rangle_{\alpha}\big\langle\mathcal{W}
(p^{\delta}_{j}),S\big\rangle_{\alpha}\\&=&\pi^{n}(2|\lambda|)^{-
n}\sum_{\delta\in\widehat{K}(\alpha)}\sum_{j=1}^{n_{\alpha}
(\delta)}\big[\theta(p^{\delta}_{j})\psi_{\alpha}\big](z)\big\langle\mathcal{W}
(p^{\delta}_{j}),S\big\rangle_{\alpha}\eea by Proposition \textbf{\ref{eqn p7.14}}. 
But $$\sum_{\delta\in\widehat{K}(\alpha)}\sum_{j=1}^{n_{\alpha}(\delta)}\big|
\big\langle\mathcal{W}(p^{\delta}_{j}),S\big\rangle_{\alpha}\big|^{2}=||S||
_{\alpha}^{2}\leq\infty,$$ and by Lemma \textbf{\ref{eqn l7.5}} \textbf{(b)}, 
\begin{eqnarray*} \big\langle\theta(p^{\delta}_{j})\psi_{\alpha},
\theta(p^{\delta^{\prime}}_{j^{\prime}})\psi_{\alpha}\big\rangle=\begin{cases}0 
~\textup{if} ~(\delta,j)\neq(\delta^{\prime},j^{\prime})\\ \pi^{-n}(2|\lambda|)^{n} 
~\textup{if}~ (\delta,j)=(\delta^{\prime},j^{\prime})\,.\end{cases}
 \end{eqnarray*} Therefore it follows that the series for $f$ defined by equation 
(\ref{eqn 10.2}) converges in $L^{2}(\C)$. In particular $f\in L^{2}(\C)$. Since any 
$D\in\mathcal{L}_{K}(\C)$ is a polynomial coefficient differential operator we have 
$$Df(z)=\pi^{n}(2|\lambda|)^{-n}\sum_{\delta\in\widehat{K}(\alpha)}\sum_{j=1}^{n_{\alpha}
(\delta)}D\big[\theta(p^{\delta}_{j})\psi_{\alpha}\big](z)\big\langle\mathcal{W}
(p^{\delta}_{j}),S\big\rangle_{\alpha}$$ in the distribution sense. But 
$D\big[\theta(p^{\delta}_{j})\psi_{\alpha}\big]=\mu_{\alpha}(D)
[\theta(p^{\delta}_{j})\psi_{\alpha}\big]$. Therefore we can conclude that 
$Df=\mu_{\alpha}(D)f$. Hence $f$ is a joint eigenfunction of all $D\in\mathcal{L}_{K}
(\C)$ with eigenvalue $\mu_{\alpha}$. Also note that $||f||^{2}_{2}=\pi^{n}(2|
\lambda|)^{-n}||S||_{\alpha}^{2}$. Thus the proof is complete. 
\end{proof}

\section{Integral representations of eigenfunctions when dim$V_{\delta}^{M}=1$}
As usual let $(K,\mathbb{H}^{n})$ $(K\subset U(n))$ be a Gelfand pair such that the 
$K$-action on $\C$ is polar. In this section we consider the special case when 
dim$V_{\delta}^{M}=1$ for all $\delta\in\widehat{K}_{M}$, and (under the usual growth 
condition) characterize any joint eigenfunction of all 
$D\in\mathcal{L}_{K}^{\lambda}(\C)$. We show that for this special case it is enough 
to consider subgroups of the type $K=U(n_{1})\times U(n_{2})\times\cdots\times U(n_{m})$, 
$n_{1}+n_{2}+\cdots+n_{m}=n.$ Then the generalized $K$-spherical functions are given 
in terms of certain Laguerre polynomials. We use the well-known asymptotic 
behaviour of Laguerre polynomials to characterize joint eigenfunctions. We give 
two such characterizations. The first one is a direct generalization of Theorem 
\textbf{\ref{eqn t8.1}} and  Theorem \textbf{\ref{eqn t10.1}}. We will see that this 
is actually analogues to Theorem 4.1 in \cite{T}, which gives an 
integral representation of eigenfunctions. Though the ideas behind the proof are 
similar to that in \cite{T}, we give the details here, since we will be dealing with $K=U(n_1)\times U(n_2)$ instead of $K=U(n)$. The second one gives 
a different integral representation of eigenfunctions with an explicit kernel. 

\begin{lem}\label{eqn l9.1}
Suppose \textup{dim}$V^{M}_{\delta}=1$ for all $\delta\in\widehat{K}_{M}$. Also assume 
that the decomposition of $\mathcal{P}_{1}(\C)$ into $K$-irreducible subspaces is as 
follows : $\mathcal{P}_{1}(\C)=\bigoplus_{j=1}^{m}V_{j}$, where 
$V_{1}=\textup{span}\{z_1,z_2,\cdots,z_{n_{1}}\}, 
V_{2}=\textup{span}\{z_{n_1+1},z_{n_1+2},\cdots,z_{n_1+n_2}\},\cdots$; $n_1+n_2+\cdots 
n_m=n$. Then $\mathcal{P}(\C_{\mathbb{R}})^{K}=\mathcal{P}(\C_{\mathbb{R}})^{K_{0}},$ 
where $K_{0}=U(n_1)\times U(n_2)\cdots\times U(n_m).$ 
\end{lem}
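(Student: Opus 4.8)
The plan is to split the asserted equality into its two inclusions and to observe that the hypothesis on $\mathcal{P}_1(\mathbb{C}^n)$ makes one of them immediate. First I would upgrade the $K$-invariance of each summand $V_j=\operatorname{span}\{z_i:i\in B_j\}$ of $\mathcal{P}_1(\mathbb{C}^n)=(\mathbb{C}^n)^{*}$ (with $B_j$ the $j$th block of indices) to a group inclusion $K\subseteq K_0$: the coordinate block $U_j=\operatorname{span}\{e_i:i\in B_j\}\subseteq\mathbb{C}^n$ is recovered as $U_j=\bigcap_{j'\neq j}\bigcap_{\ell\in V_{j'}}\ker\ell$, so it is $K$-invariant, and since $K$ is unitary this gives $K|_{U_j}\subseteq U(n_j)$, whence $K\hookrightarrow\prod_{j=1}^m U(n_j)=K_0$. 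In particular every $K_0$-invariant polynomial is $K$-invariant, i.e. $\mathcal{P}(\mathbb{C}^n_{\mathbb{R}})^{K_0}\subseteq\mathcal{P}(\mathbb{C}^n_{\mathbb{R}})^{K}$; recalling the classical fact $\mathcal{P}(\mathbb{C}^n_{\mathbb{R}})^{K_0}=\mathbb{C}[\rho_1,\dots,\rho_m]$ with $\rho_j=\sum_{i\in B_j}z_i\bar z_i$, the whole content of the lemma is the reverse inclusion $\mathcal{P}(\mathbb{C}^n_{\mathbb{R}})^{K}\subseteq\mathbb{C}[\rho_1,\dots,\rho_m]$.

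To attack the reverse inclusion I would pass to a bigraded dimension count using that $\mathcal{P}(\mathbb{C}^n)$ is multiplicity-free under $K$ (the Gelfand pair hypothesis). Writing $\mathcal{P}(\mathbb{C}^n_{\mathbb{R}})_{(p,q)}=\mathcal{P}_p(\mathbb{C}^n)\otimes\overline{\mathcal{P}_q(\mathbb{C}^n)}$ and using $\overline{V}\cong V^{*}$ for a unitary representation, Schur's lemma gives $\dim\mathcal{P}(\mathbb{C}^n_{\mathbb{R}})^{K}_{(p,q)}=\dim\operatorname{Hom}_K\big(\mathcal{P}_q(\mathbb{C}^n),\mathcal{P}_p(\mathbb{C}^n)\big)$; by multiplicity-freeness this vanishes for $p\neq q$ and equals, for $p=q$, the number $c_p$ of $K$-irreducible constituents of $\mathcal{P}_p(\mathbb{C}^n)$. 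Since the analogous count for $K_0$ produces exactly the constituents $\bigotimes_j\operatorname{Sym}^{p_j}(\mathbb{C}^{n_j})$ with $\sum_j p_j=p$, and $K\subseteq K_0$ forces the $K_0$-count to be $\le c_p$, the desired equality is equivalent to $c_p$ being the $K_0$-count for every $p$, that is, to the statement that each $K_0$-irreducible constituent of $\mathcal{P}_p(\mathbb{C}^n)$ stays irreducible when restricted to $K$. Equivalently, since for compact groups invariant polynomials separate orbits, the claim is that the $K$- and $K_0$-orbits on $\mathbb{C}^n$ coincide, i.e. that the regular $K$-orbit already exhausts the product of spheres $K_0\cdot z$.

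The main obstacle is precisely this irreducibility-on-restriction, and it is the point at which the hypothesis $\dim V_\delta^M=1$ for all $\delta\in\widehat{K}_M$ must be used; everything before it holds for any Gelfand pair $K\subseteq U(n)$ with the stated $\mathcal{P}_1$-decomposition. I would use that $\dim V_\delta^M=1$ for all class one $\delta$ is exactly the assertion that $L^2(K/M)$ is multiplicity-free, i.e. that $(K,M)$ is itself a Gelfand pair and the regular orbit $K/M=N_x$ is a commutative space, while for $K_0$ the regular orbit is the product of spheres $\prod_j U(n_j)/U(n_j-1)$. The plan is to show that a splitting of some $K_0$-constituent under $K$ would force either an extra multiplicity in $L^2(K/M)$ or a class one representation with $\dim V_\delta^M\ge 2$: concretely I would compare, in each degree $p$, the graded multiplicity $\sum_{V_\alpha\subseteq\mathcal{P}_p(\mathbb{C}^n)}\dim V_\alpha^{M}$ with the corresponding count for $K_0$, using the restriction theorem of Section 3 together with the Kostant--Rallis description $H_\delta=\bigoplus_{i=1}^{l(\delta)}H_{\delta,i}$, $l(\delta)=\dim V_\delta^M$. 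Once $c_p$ matches the $K_0$-count for all $p$, the bigraded count of the previous paragraph closes the argument. I expect the delicate part to be the bookkeeping that links the holomorphic constituents $V_\alpha\subseteq\mathcal{P}(\mathbb{C}^n)$ to the real class one representations $\delta\in\widehat{K}_M$, since it is the latter on which the hypothesis is phrased.
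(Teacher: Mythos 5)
Your two preliminary reductions are correct and cleanly done: the observation that each coordinate block $U_j$ is the common kernel of the $K$-stable spaces of linear forms $V_{j'}$, $j'\neq j$, so that $K\subseteq K_0$ and hence $\mathcal{P}(\mathbb{C}^n_{\mathbb{R}})^{K_0}\subseteq\mathcal{P}(\mathbb{C}^n_{\mathbb{R}})^{K}$; and the bigraded Schur-lemma count showing that the remaining inclusion is equivalent to the assertion that every $K_0$-constituent $\mathcal{P}_{p_1}(\mathbb{C}^{n_1})\otimes\cdots\otimes\mathcal{P}_{p_m}(\mathbb{C}^{n_m})$ of $\mathcal{P}_p(\mathbb{C}^n)$ stays irreducible under $K$, i.e. that $c_p=c_p^{0}$ for all $p$, where $c_p$ (resp. $c_p^0$) is the number of $K$- (resp. $K_0$-) irreducible constituents of $\mathcal{P}_p(\mathbb{C}^n)$. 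But your argument stops exactly where the lemma begins: the implication from $\dim V_{\delta}^{M}=1$ to this irreducibility-under-restriction is only announced as a plan, never carried out. Since, as you yourself note, everything before that point holds for an arbitrary Gelfand pair with the stated $\mathcal{P}_1$-decomposition, this missing step is the entire content of the lemma, and so this is a genuine gap.

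Moreover, the plan as sketched appears circular. The quantity you propose to compare, $\sum_{V_\alpha\subseteq\mathcal{P}_p(\mathbb{C}^n)}\dim V_\alpha^M=\dim\big(\mathcal{P}_p(\mathbb{C}^n)\big)^M$, is equal to $c_p$ itself under the hypothesis: every constituent of $\mathcal{P}(\mathbb{C}^n_{\mathbb{R}})$ is class one (by $S=IH$ and the injectivity of restriction of $H$ to a regular orbit), so each $\dim V_\alpha^M=1$. To deduce $c_p\leq c_p^{0}$ from this you would need $\big(\mathcal{P}_p(\mathbb{C}^n)\big)^{M}=\big(\mathcal{P}_p(\mathbb{C}^n)\big)^{M_0}$; but $M\subseteq M_0$ only gives the inclusion in the wrong direction, and with no a priori lower bound on $M$ the required equality is essentially the statement that the $K$- and $K_0$-orbits coincide -- that is, the conclusion in disguise. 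The paper's proof proceeds quite differently, and the hypothesis enters at a concretely usable point: since $l(\delta_j)=1$, the $\delta_j$-isotypic part of the $K$-harmonics is exactly $V_j$ (only the linear forms of the $j$-th block), so by the type decomposition $S_{\delta_j}=IH_{\delta_j}$ every polynomial transforming by $\delta_j$ lies in $I\cdot V_j$. Applying this to $\partial p_\alpha/\partial\bar z_i$, where $p_\alpha=\sum_k v_k\bar v_k$ runs over the basis of $\mathcal{P}(\mathbb{C}^n_{\mathbb{R}})^{K}$ furnished by \cite{BJR}, and then comparing coefficients and inducting on degree, one shows each $p_\alpha$ is a polynomial in $\gamma_1,\dots,\gamma_m$ (the block norm-squares), which is the desired inclusion. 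To complete your approach you would need a genuinely independent representation-theoretic bound on $c_p$; failing that, you should fall back on a differential-algebraic argument of the paper's type.
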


\begin{proof}
For simplicity of the proof we take $m=2$. Let $\{v_1,v_2,\cdots,v_{d(\alpha)}\}$ be 
an orthonormal (in $\mathcal{H}^{\lambda}$ for $\lambda=\frac{1}{2}$) basis for $V_{\alpha}$ and 
$p_{\alpha}=\Sigma_{i=1}^{d(\alpha)}v_i\bar{v_i}$. Then $\{p_{\alpha}\}_{\alpha\in\Lambda}$ is a vector 
space basis for $\mathcal{P}(\C_{\mathbb{R}})^{K}$ (see Proposition 3.9 in 
\cite{BJR}). In particular, any $K$-invariant second degree homogeneous polynomial has to be a linear combination of $\gamma_1$ and $\gamma_2$, where $\gamma_{1}(z)=|z_1|^2+|z_2|^2+\cdots |z_{n_{1}}|
^2$ and $\gamma_{2}(z)=|z_{n_1+1}|^2+|z_{n_1+2}|^2+\cdots |z_{n}|^2$. Since $U(n_1)\times U(n_2)$ invariant elements in $\mathcal{P}
(\C_{\mathbb{R}})$ are generated by $\gamma_{1}$ and $\gamma_{2}(z)$, to prove 
the theorem it is enough to show that any $K$-invariant homogeneous polynomial can be written as a polynomial 
in $\gamma_1(z)$ and $\gamma_2(z)$. We prove this by induction on the degree of $K$-invariant homogeneous polynomials. Note that degree of a $K$-invariant homogeneous polynomial is always even. Denote the representation of $K$ on $V_{j}$ by 
$\delta_{j}$. Since $l(\delta_{j})$=dim$V^{M}_{j}=1$, $H_{\delta_{j}}=V_{j}$ (see 
(\ref{eqn21})). Now if $V_{\alpha}\subset\mathcal{P}_{2}(\C)$ then degree of 
$p_{\alpha}$ is $4$. Since $\mathcal{F}^{\prime}\big(\frac{\partial p_{\alpha}}{\partial{\bar{z}_{1}}}\big)$ is equal to $z_1$ times a $K$-invariant distribution, which transform according to representation $\delta_1$ and $K$-action commutes with $\mathcal{F}^{\prime}$, the same is true for $\frac{\partial p_{\alpha}}{\partial\bar{z}_1}$. Hence $\frac{\partial p_{\alpha}}{\partial{\bar{z}_{1}}}\in IH_{\delta_{1}}=IV_{1}$, where 
$I=\mathcal{P}(\C_{\mathbb{R}})^{K}$. But $\frac{\partial p_{\alpha}}
{\partial{\bar{z}_{1}}}$ being a third degree homogeneous polynomial and any $K$-invariant second degree homogeneous polynomial being a linear combination of $\gamma_1$ and $\gamma_2$, $\frac{\partial p_{\alpha}}
{\partial{\bar{z}_{1}}}$ has to be of the following form : \bea\label{eqn 
9.1}\frac{\partial p_{\alpha}}{\partial{\bar{z}_{1}}}(z)=\sum_{j=1}^{n_{1}}z_{j}
[a_{j}\gamma_{1}(z)+b_{j}\gamma_{2}(z)]. \eea   Similarly, as $\frac{\partial p_{\alpha}}
{\partial{\bar{z}_{n_{1}+1}}}$ is a third degree homogeneous polynomial which 
belongs to $IH_{\delta_{2}}=IV_{2}$, it has the following form : \bea\label{eqn 
9.2}\frac{\partial p_{\alpha}}{\partial{\bar{z}_{n_1+1}}}
(z)=\sum_{j=1}^{n_{2}}z_{n_1+j}[a^{\prime}_{j}\gamma_{1}(z)+b^{\prime}_{j}\gamma_{2}
(z)]. \eea From the above two equations we get $$\frac{\partial^{2} p_{\alpha}}
{\partial{\bar{z}_{n_{1}+1}}\partial{\bar{z}_{1}}}=\sum_{j=1}^{n_{1}}b_{j}z_{j}z_{n_1+1}
=\sum_{j=1}^{n_2}a^{\prime}_{j}z_{n_1+j}z_{1},$$ which implies that $b_{j}=0$ if 
$j\neq 1.$ So (\ref{eqn 9.1}) becomes \bea\label{eqn 9.3}\frac{\partial p_{\alpha}}
{\partial{\bar{z}_{1}}}(z)=z_1[a_{1}\gamma_{1}(z)+b_{1}\gamma_{2}
(z)]+\sum_{j=2}^{n_{1}}z_ja_{j}\gamma_{1}(z).\eea Now let $$v_i(z)=z_{1}
(c_{i1}z_1+c_{i2}z_2+\cdots c_{in}z_n)+q_{1}(z),~ i=1,2,\cdots d(\alpha),$$ where 
$q_{1}(z)$ is a second degree homogeneous holomorphic polynomial in $z_2,z_3,\cdots 
z_n.$ Then \bea\label{eqn 9.4}\frac{\partial p_{\alpha}}{\partial{\bar{z}_{1}}}
(z)=\sum_{i=1}^{d(\alpha)}[z_{1}(c_{i1}z_1+c_{i2}z_2+\cdots c_{in}z_n)+q_{1}(z)]
[2\bar{c}_{i1}\bar{z}_1+\bar{c}_{i2}\bar{z}_2+\cdots \bar{c}_{in}\bar{z}_n].\eea 
Equating the coefficient of $z_2z_1\bar{z}_{1}$ from the right hand sides of (\ref{eqn 
9.3}) and (\ref{eqn 9.4}) we get $a_2=2\Sigma c_{i2}\bar{c}_{i1}.$ Again equating the 
coefficients of $z_{1}^{2}\bar{z}_{2}$ from the right hand sides of (\ref{eqn 9.3}) 
and (\ref{eqn 9.4}) we get $\Sigma c_{i1}\bar{c}_{i2}=0$. Hence $a_2=0$. similarly 
$a_{3}=a_{4}=\cdots =a_{n_{1}}=0.$) So from (\ref{eqn 9.3}) we get $$\frac{\partial 
p_{\alpha}}{\partial{\bar{z}_{1}}}(z)=z_1[a_{1}\gamma_{1}(z)+b_{1}\gamma_{2}(z)].$$ 
Similarly we get $$\frac{\partial p_{\alpha}}{\partial{\bar{z}_{j}}}
(z)=z_j[a_{j1}\gamma_{1}(z)+b_{j1}\gamma_{2}(z)],j=1,2,\cdots n.$$ Hence we can write 
$p_{\alpha}$ as \begin{eqnarray*} p_{\alpha}(z)=\begin{cases}a_{j}\gamma_{1}
(z)^{2}+b_{j}\gamma_{1}(z)\gamma_{2}(z)+r_{j}(z) ~\textup{if} ~j=1,2,\cdots,n_1\\ 
c_{j}\gamma_{1}(z)\gamma_{2}(z)+d_{j}\gamma_{2}(z)^{2}+r_{j}(z) ~\textup{if}~ 
j=n_1+1,n_1+2,\cdots,n_1+n_2=n\,.\end{cases}
 \end{eqnarray*}
Here $r_{j}(z)$ is a fourth degree homogeneous polynomial in $z,\bar{z}$, which is 
independent of $\bar{z}_{j}$. For $j=1,2$, equating the coefficients of $|z_{1}|^{2}|
z_{2}^{2}|$, we get $a_1=a_2.$ Similarly we can show that all $a_{j}$'s are same; and 
all $d_{j}'s$ are same. Again for $j=i$ and $j=n_1+k~ (i=1,2,\cdots,n_1;k=1,2,
\cdots,n_2)$, equating the coefficients of $|z_{i}|^{2}|z_{n_{1}+k}|^{2}$, we get 
$b_{i}=c_{n_{1}+k}$. Hence we can write $$p_{\alpha}(z)=a_{1}\gamma_{1}
(z)^{2}+b_{1}\gamma_{1}(z)\gamma_{2}(z)+d_{1}\gamma_{2}(z)^{2}+\widetilde{r}_{j}(z),
\forall j=1,2,\cdots,n,$$ where $\widetilde{r}_{j}(z)$ is a fourth degree homogeneous 
polynomial in $z,\bar{z}$, which is independent of $\bar{z}_{j}$. Therefore 
$$p_{\alpha}(z)=a_{1}\gamma_{1}(z)^{2}+b_{1}\gamma_{1}(z)\gamma_{2}(z)+d_{1}\gamma_{2}
(z)^{2}+r(z),$$ where $r(z)(=\widetilde{r}_{1}(z)=\widetilde{r}_{2}(z)=\cdots=\widetilde{r}_{n}
(z))$ is a fourth degree homogeneous polynomial in $z$ only. But since $p_{\alpha}$ 
has the form $p_{\alpha}=\Sigma_{i=1}^{d(\alpha)}v_i\bar{v_i}$, it follows that 
$r(z)\equiv 0.$ Hence $$p_{\alpha}(z)=a_{1}\gamma_{1}(z)^{2}+b_{1}\gamma_{1}
(z)\gamma_{2}(z)+d_{1}\gamma_{2}(z)^{2}.$$ So we have proved that if 
$V_{\alpha}\subset\mathcal{P}_{2}(\C)$, then $p_{\alpha}$ can be written as a 
polynomial in $\gamma_{1}$ and $\gamma_{2}$. Hence, it follows that any $K$-invariant, $4$th degree, homogeneous polynomial can be written as a polynomial in $\gamma_1$ and $\gamma_2$. Now, let any $K$-invariant homogeneous polynomial of degree $2N$ can be written as a polynomial in $\gamma_1$ and $\gamma_2$. We have to show that any $K$-invariant homogeneous polynomial of degree $2(N+1)$ can be written as a polynomial in $\gamma_1$ and $\gamma_2$. But for this, it is enough to show the following : If $V_{\alpha}\subset\mathcal{P}_{N+1}(\C)$, then $p_{\alpha}$ can be written as a polynomial in $\gamma_1$ and $\gamma_2$. So fix a $\alpha\in\Lambda$, such that $V_{\alpha}\subset\mathcal{P}_{N+1}(\C)$. Then $\frac{\partial p_{\alpha}}{\partial\bar{z}_1}$ is a $(2N+1)$th degree homogeneous polynomial which belongs to $IV_1$. Therefore by the induction hypothesis, $\frac{\partial p_{\alpha}}{\partial\bar{z}_1}(z)$ has the following form $$\frac{\partial p_{\alpha}}{\partial\bar{z}_1}(z)=\sum_{j=1}^{n_1}
z_j\bigg[\sum_{l=0}^{N}a^l_j\big(\gamma_1(z)\big)^{N-l}\big(\gamma_2(z)\big)^{l}\bigg].$$ Similarly, $\frac{\partial p_{\alpha}}
{\partial{\bar{z}_{n_{1}+1}}}$ has the following form $$\frac{\partial p_{\alpha}}
{\partial{\bar{z}_{n_{1}+1}}}(z)=\sum_{j=1}^{n_2}
z_{n_1+j}\bigg[\sum_{l=0}^{N}b^l_j\big(\gamma_1(z)\big)^{N-l}\big(\gamma_2(z)\big)^{l}\bigg].$$ Now using similar arguments as before, it is possible to show that $p_{\alpha}$ is generated by $\gamma_1$ and $\gamma_2$. Hence the proof.
\end{proof}

\begin{prop}\label{eqn p9.2}
Suppose \textup{dim}$V^{M}_{\delta}=1$ for all $\delta\in\widehat{K}_{M}$. Then there 
exist $
J\in U(n)$, and positive integers $n_1,n_2,\cdots n_m$ with $n_1+n_2+\cdots 
n_m=n$, such that $\mathcal{P}(\C_{\mathbb{R}})^{K}=\mathcal{P}
(\C_{\mathbb{R}})^{K_{0}},$ where $K_{0}=J\big[U(n_1)\times U(n_2)\cdots\times 
U(n_m)\big]J^{-1}.$ 
\end{prop}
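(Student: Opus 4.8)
The plan is to reduce to the situation already settled in Lemma \textbf{\ref{eqn l9.1}} by conjugating $K$ into ``standard position'' with a suitable $J\in U(n)$, and then to transport the resulting equality of invariant rings back by the corresponding change of variables. Since $K\subset U(n)$ acts unitarily on $\C$, the natural representation decomposes orthogonally into $K$-irreducible subspaces $\C=\bigoplus_{j=1}^{m}W_{j}$; set $n_{j}=\dim W_{j}$, so that $n_{1}+\cdots+n_{m}=n$. First I would choose an orthonormal basis $u_{1},\dots,u_{n}$ of $\C$ adapted to this decomposition (the first $n_{1}$ vectors spanning $W_{1}$, the next $n_{2}$ spanning $W_{2}$, and so on), and let $J\in U(n)$ be the unitary matrix with $Je_{i}=u_{i}$. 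Then $\widetilde{K}:=J^{-1}KJ$ leaves each consecutive coordinate subspace $\textup{span}\{e_{n_{1}+\cdots+n_{j-1}+1},\dots,e_{n_{1}+\cdots+n_{j}}\}$ invariant and acts irreducibly there, and a short computation with the action $(\tilde{k}\cdot f)(z)=f(\tilde{k}^{-1}z)$ shows that the induced decomposition of $\mathcal{P}_{1}(\C)$ under $\widetilde{K}$ is exactly $\bigoplus_{j=1}^{m}V_{j}$ with $V_{1}=\textup{span}\{z_{1},\dots,z_{n_{1}}\}$, $V_{2}=\textup{span}\{z_{n_{1}+1},\dots,z_{n_{1}+n_{2}}\}$, and so forth. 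This is precisely the normal form hypothesized in Lemma \textbf{\ref{eqn l9.1}}.

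The next step is to check that $\widetilde{K}$ inherits the three standing hypotheses, so that Lemma \textbf{\ref{eqn l9.1}} applies to it. The Gelfand pair property is preserved because $\widetilde{K}$ is a $U(n)$-conjugate of $K$, as recorded in Section 5. The action of $\widetilde{K}$ is polar, since $\widetilde{K}$-orbits are the images under $J^{-1}$ of $K$-orbits, so $J^{-1}T$ is a $\widetilde{K}$-transversal domain whenever $T$ is a $K$-transversal domain. Finally $\dim V_{\delta}^{M}=1$ for all class one representations of $\widetilde{K}$, because $k\mapsto J^{-1}kJ$ is a group isomorphism carrying the $K$-action on $\C$ to the equivalent $\widetilde{K}$-action, hence inducing a bijection of class one representations that preserves the dimensions of the $M$-fixed spaces. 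Lemma \textbf{\ref{eqn l9.1}} then yields $\mathcal{P}(\C_{\mathbb{R}})^{\widetilde{K}}=\mathcal{P}(\C_{\mathbb{R}})^{\widetilde{K}_{0}}$, where $\widetilde{K}_{0}=U(n_{1})\times\cdots\times U(n_{m})$.

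To conclude I would transport this identity back to $K$ via the change of variables $\Phi_{J}f(z)=f(Jz)$, a linear bijection of $\mathcal{P}(\C_{\mathbb{R}})$ with inverse $\Phi_{J^{-1}}$. For any subgroup $G\subset U(n)$ and any $G$-invariant $f$, the computation $\Phi_{J}f\big((J^{-1}gJ)z\big)=f(gJz)=f(Jz)=\Phi_{J}f(z)$ shows that $\Phi_{J}$ carries $\mathcal{P}(\C_{\mathbb{R}})^{G}$ bijectively onto $\mathcal{P}(\C_{\mathbb{R}})^{J^{-1}GJ}$. Taking $G=K$ gives $\Phi_{J}\big(\mathcal{P}(\C_{\mathbb{R}})^{K}\big)=\mathcal{P}(\C_{\mathbb{R}})^{\widetilde{K}}$, while taking $G=K_{0}:=J\big[U(n_{1})\times\cdots\times U(n_{m})\big]J^{-1}$ gives $\Phi_{J}\big(\mathcal{P}(\C_{\mathbb{R}})^{K_{0}}\big)=\mathcal{P}(\C_{\mathbb{R}})^{\widetilde{K}_{0}}$. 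Applying the bijection $\Phi_{J}^{-1}$ to the equality from Lemma \textbf{\ref{eqn l9.1}} then produces $\mathcal{P}(\C_{\mathbb{R}})^{K}=\mathcal{P}(\C_{\mathbb{R}})^{K_{0}}$, which is exactly the assertion.

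The argument is organizational rather than computational: the one place needing genuine care is the second step, namely verifying that all three hypotheses of Lemma \textbf{\ref{eqn l9.1}} are conjugation-invariant, and in particular that the equivalence of the $K$- and $\widetilde{K}$-actions through $J$ preserves both the polarity of the action and the dimensions $\dim V_{\delta}^{M}$. Once these invariances are established, the statement follows formally from the normal form of the first step together with the change-of-variables bijection of the last step.
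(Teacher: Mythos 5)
Your proposal is correct and follows essentially the same route as the paper's own proof: choose a unitary $J$ from an orthonormal basis adapted to the irreducible decomposition, check that the Gelfand pair, polarity, and $\dim V_{\delta}^{M}=1$ hypotheses are preserved under conjugation (the paper does this via the explicit bijection $\delta\mapsto\delta'$, $\delta'(J^{-1}kJ)=\delta(k)$, between $\widehat{K}_{M}$ and $\widehat{K'}_{M'}$), apply Lemma \textbf{\ref{eqn l9.1}} to $J^{-1}KJ$, and transport the equality of invariant rings back by the change of variables. The only cosmetic difference is that you decompose $\C$ itself while the paper decomposes $\mathcal{P}_{1}(\C)$; since these are contragredient representations the block structures coincide, and your more explicit final transport step is exactly what the paper leaves implicit in ``Hence the proof.''
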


\begin{proof}
Let $\mathcal{P}_{1}(\C)=\bigoplus_{l=1}^{m}V_{l}$ be the decomposition of 
$\mathcal{P}_{1}(\C)$ into $K$-irreducible subspaces. $V_{l}$'s are pairwise 
orthogonal in $\mathcal{H}^{\frac{1}{2}}$. Let dim$V_{l}=n_{l}$ so that 
$n_{1}+n_{2}+\cdots n_{m}=n$. Let $u_{i}(z)=z_{i}$. Choose an orthonormal (in 
$\mathcal{H}^{\frac{1}{2}}$) basis $\{v_{i}(z)=\Sigma_{j=1}^{n}c_{ij}u_{j}:i=1,2,
\cdots,n\}$ for $\mathcal{P}_{1}(\C)$ such that $\{v_{1},v_{2},\cdots,v_{n_{1}}\}$, 
$\{v_{n_{1}+1},v_{n_{1}+2},\cdots,v_{n_{1}+n_{2}}\}$, $\cdots$ form a basis for 
$V_{1},V_{2},\cdots$ respectively. Since $\{u_{1}(z),u_{2}(z),\cdots,u_{n}(z)\}$ is an 
orthonormal set in $\mathcal{H}^{\frac{1}{2}}$, we get $ \Sigma_{j=1}^{n}|c_{ij}|
^{2}=1;~\textup{and}~ \Sigma_{j=1}^{n}c_{ij}\overline{c_{i^{\prime}j}}=
0~\textup{if}~i\neq i^{\prime}.$ Hence the matrix  
$J:=\big(\bar{c}_{ij}\big)_{n\times n}$ is unitary. Therefore $J^{-1}=J^{*}=(c_{ji})_{n\times n}$, 
which implies that $(J\cdot u_{i})(z)=u_{i}(J^{-1}\cdot z)=v_{i}(z)$ or $J\cdot 
u_{i}=v_{i}.$ Consequently  the decomposition of $\mathcal{P}_{1}(\C)$ into 
$J^{-1}KJ$-irreducible subspaces is given by $\mathcal{P}_{1}
(\C)=\bigoplus_{l=1}^{m}V^{\prime}_{l}$, where  
$V_{1}^{\prime}=\textup{span}\{u_{1},u_{2},\cdots,u_{n_{1}}\},$
 $V_{2}^{\prime}=\textup{span}\{u_{n_{1}+1},u_{n_{1}+2},
\cdots,u_{n_{1}+n_{2}}\}\cdots$. Next, $M=K_{z_0}$ for some $K$-regular point $z_0$. 
Then clearly $J^{-1}\cdot z_0$ is a $J^{-1}KJ$-regular point, and 
$J^{-1}MJ=[J^{-1}KJ]_{J^{-1}\cdot z_0}$. Let $K^{\prime}=J^{-1}KJ$ and 
$M^{\prime}=J^{-1}MJ$. For each $\delta\in\widehat{K}_{M}$ define the irreducible unitary 
representation $\delta^{\prime}$ of $K^{\prime}$ on $V_{\delta^{\prime}}=V^{\delta}$ 
by $\delta^{\prime}(J^{-1}kJ)=\delta(k)$ for all $k\in K$. Then it is easy to see that 
the map $\delta\rightarrow\delta^{\prime}$ is a bijection from $\widehat{K}_{M}$ onto 
$\widehat{K^{\prime}}_{M^{\prime}}$, and dim$V_{\delta^{\prime}}^{M^{\prime}}=1$ for all 
$\delta^{\prime}\in\widehat{K^{\prime}}_{M^{\prime}}$. Therefore by the previous lemma 
$\mathcal{P}(\C_{\mathbb{R}})^{K^{\prime}}=\mathcal{P}
(\C_{\mathbb{R}})^{K^{\prime}_{0}},$ where $K^{\prime}_{0}=U(n_1)\times 
U(n_2)\cdots\times U(n_m).$ Hence the proof. 
\end{proof}

If dim$V_{\delta}^{M}=1$ for all $\delta\in\widehat{K}_{M}$, the above proposition says 
that with respect to a suitable coordinate system on $\C$, the $K$-invariant 
polynomials are same as that of $U(n_{1})\times U(n_{2})\times\cdots\times U(n_{m})  
$, $n_{1}+n_{2}+\cdots+n_{m}=n.$  Since $\mathcal{P}(\C_{\mathbb{R}})^{K}$ determines 
$\mathcal{L}_{K}(\textbf{h}_{n})$ (see \cite{BJR}, section-3), and hence 
$\mathcal{L}_{K}^{\lambda}(\C)$, to find joint eigenfunctions of all 
$D\in\mathcal{L}^{\lambda}_{K}(\C)$ for this special case, it is enough to consider 
the groups $U(n_{1})\times U(n_{2})\times\cdots\times U(n_{m})$, 
$n_{1}+n_{2}+\cdots+n_{m}=n.$ For simplicity of notation, here we only deal with the 
particular case : $m=2$. 

So, from now on $K$ always stands for $U(n_{1})\times 
U(n_{2})$, and $M$ the stabilizer of the $K$-regular point $e=(1,0,\cdots,1,0,
\cdots,0)\in\C$, where the second $1$ is at the $(n_{1}+1)$th position. Via the map 
$kM\rightarrow k\cdot e$, we have the identification $$ K/M=K\cdot e =\{z\in\C:
\sum_{j=1}^{n_{1}}|z_{j}|^{2}=1,\sum_{j=n_{1}+1}^{n}|z_{j}|^{2}=1\}.$$ If we identify 
$\C$ with $\mathbb{C}^{n_{1}}\times\mathbb{C}^{n_{2}}$ by the map 
$z\rightarrow\big((z_{1},z_{2},\cdots,z_{n_{1}}),(z_{n_{1}+1},z_{n_{1}+2},
\cdots,z_{n})\big),$ then $K/M=S^{2n_{1}-1}\times S^{2n_{2}-1}$, where $S^{2n_{1}-1}$ 
is the unit sphere in $\mathbb{C}^{n_{1}}$ and $S^{2n_{2}-1}$ is the unit sphere in 
$\mathbb{C}^{n_{2}}$. Now we explicitly describe the spaces $H_{\delta}$ and 
$\mathcal{E}_{\delta}(K/M)=\mathcal{E}_{\delta}(S^{2n_{1}-1}\times S^{2n_{2}-1})$. 
Since $I_{+}$, the set of polynomials in $I=\mathcal{P}(\C_{\mathbb{R}})^{K}$ without 
constant term, is generated by $\Sigma_{j=1}^{n_{1}}|z_{j}|^{2},
\Sigma_{j=n_{1}+1}^{n}|z_{j}|^{2},$ the set of $K$-harmonic polynomials is given by 
$$H=\{P\in\mathcal{P}(\C_{\mathbb{R}}):\triangle_{1}P=0,\triangle_{2}P=0\},$$ where 
$$\triangle_{1}=\sum_{j=1}^{n_{1}}\frac{\partial^{2}}{\partial 
z_{j}\partial\bar{z}_{j}},~\triangle_{2}=\sum_{j=n_{1}+1}^{n}\frac{\partial^{2}}
{\partial z_{j}\partial\bar{z}_{j}}.$$ For $z\in\C$, let $z^{1}=(z_{1},z_{2},\cdots 
z_{n_{1}})\in\mathbb{C}^{n_{i}}$, $z^{2}=(z_{n_{1}+1},z_{n_{1}+2},\cdots, z_{n})$. Let 
$i=1$ or $2$. For each pair of positive integer $(p,q)$, we define 
$\mathcal{P}^{i}_{pq}$ to be the subspace of $\mathcal{P}
(\mathbb{C}^{n_{i}}_{\mathbb{R}})$ consisting of all polynomials of the form 
$$P(z^{i})=\sum_{|\alpha_{i}|=p}\sum_{|\beta_{i}|=q}(z^{i})^{\alpha_{i}}
(\bar{z^{i}})^{\beta_{i}}.$$ Here $\alpha_{i}$ and $\beta_{i}$ are multi-indices of non 
negative integers of length $n_{i}$. We let $$H^{i}=\{P\in\mathcal{P}
(\mathbb{C}^{n_{i}}_{\mathbb{R}}):
\triangle_{i}P=0\};~H^{i}_{pq}=\{P\in\mathcal{P}^{i}_{pq}:\triangle_{i}P=0\}.$$ We 
have the identification $\mathcal{P}(\C_{\mathbb{R}})=\mathcal{P}
(\mathbb{C}_{\mathbb{R}}^{n_{1}})\otimes\mathcal{P}(\mathbb{C}_{\mathbb{R}}^{n_{2}})$, 
$H=H^{1}\otimes H^{2},$ and consequently $H$ has the algebraic direct sum 
decomposition : $$H=\bigoplus_{p_1,q_1,p_2,q_2\in\mathbb{Z}^{+}} H^{1}_{p_1q_1}\otimes 
H^{2}_{p_2q_2}.$$ Here $\mathbb{Z}^{+}$ denotes the set of non negative integers. Also 
note that each $P\in\mathcal{P}^{1}_{p_{1}q_{1}}\otimes\mathcal{P}^{2}_{p_{2}q_{2}}$ 
satisfy the homogeneity condition $$P(\lambda_{1}z^{1},
\lambda_{2}z^{2})=\lambda_{1}^{p_{1}}\bar{\lambda}_{1}^{q_{1}}
\lambda_{2}^{p_{2}}\bar{\lambda}_{2}^{q_{2}}P(z)$$ for all $\lambda_{1},
\lambda_{2}\in\mathbb{C}.$ Let $\mathcal{E}^{i}_{pq}$ stand for the restrictions of 
members of $H^{i}_{pq}$ to $S^{2n_{i}-1}$. The relation between $P\in H^{i}_{pq}$ and 
its restriction $Y^{i}_{pq}$ is given by $P^{i}_{pq}(z^{i})=|z^{i}|^{p+q}Y^{i}_{pq}
(\omega^{i})$, if $z^{i}=r^{i}\omega^{i}$, $r^{i}>0,~\omega^{i}\in S^{2n_{i}-1}$. The 
natural action of $U(n_{i})$ defines a unitary representation, $\delta^{i}_{pq}$ on each of these spaces 
$\mathcal{E}^{i}_{pq}$, considered as a Hilbert subspace of $L^{2}(S^{2n_{i}-1})$. 
Clearly the restriction of 
$H^{1}_{p_1q_1}\otimes H^{2}_{p_2q_2}$ to $S^{2n_1-1}\times S^{2n_2-1}$ is given by 
$\mathcal{E}^{1}_{p_1q_1}\otimes\mathcal{E}^{2}_{p_2q_2}$. If we consider this as a 
Hilbert subspace of $L^{2}(S^{2n_1-1}\times S^{2n_2-1})$, then the natural action of 
$K$ on each of these spaces 
$\mathcal{E}^{1}_{p_1q_1}\otimes\mathcal{E}^{2}_{p_2q_2}$ defines a unitary 
representation which is same as $\delta^{1}_{p_1q_1}\otimes\delta^{2}_{p_2q_2}$. Now 
for each fixed $i\in\{1,2\}$, we have the following well known facts about the class 
one representations of $U(n_{i})$ (see \cite{T2}, page: 64-69) : The representations 
$\delta^{i}_{pq}$ of $U(n_{i})$ on $\mathcal{E}^{i}_{pq}$ are irreducible. 
$\delta^{i}_{pq}$ and $\delta^{i}_{p^{\prime}q^{\prime}}$ are unitarily equivalent if 
and only if $(p,q)=(p^{\prime},q^{\prime})$. Let $M_{i}\subset U(n_{i})$ be the 
stabilizer of $(1,0,\cdots,0)\in\mathbb{C}^{n_{i}}$, so that $M=M_1\times M_2$. Any 
$\delta\in\widehat{U(n_{i})}_{M_{i}}$ is equivalent to some $\delta^{i}_{pq}$. $L^{2}
(S^{2n_{i}-1})$ has the orthogonal Hilbert space decomposition : $L^{2}
(S^{2n_{i}-1})=\bigoplus^{\bot}_{p,q\in\mathbb{Z}^{+}}\mathcal{E}^{i}_{pq}$. From these facts we can prove the following proposition.  

\begin{prop}\label{eqn p9.3}

\indent{\rm{\textbf{(a)}}} The representations 
$\delta^{1}_{p_1q_1}\otimes\delta^{2}_{p_2q_2}$ of $K$ on 
$\mathcal{E}^{1}_{p_1q_1}\otimes\mathcal{E}^{2}_{p_2q_2}$ are irreducible. 
$\delta^{1}_{p_1q_1}\otimes\delta^{2}_{p_2q_2}$ and 
$\delta^{1}_{p^{\prime}_1q^{\prime}_1}\otimes\delta^{2}_{p^{\prime}_2q^{\prime}_2}$ 
are unitarily equivalent if and only if $(p_1,q_1,p_2,q_2)
=(p_1^{\prime},q_1^{\prime},p_2^{\prime},q_2^{\prime}).$ Moreover any 
$\delta\in\hat{K}_{M}$ is equivalent to some 
$\delta^{1}_{p_1q_1}\otimes\delta^{2}_{p_2q_2}.$

\indent{\rm{\textbf{(b)}}} We have the orthogonal Hilbert space decomposition of 
$L^{2}(S^{2n_1-1}\times S^{2n_2-1})$ : $$L^{2}(S^{2n_1-1}\times 
S^{2n_2-1})=\bigoplus^{\bot}_{p_1,q_1,p_2,q_2
\in\mathbb{Z}^{+}}\mathcal{E}^{1}_{p_1q_1}\otimes\mathcal{E}^{2}_{p_2q_2}.$$  

\end{prop}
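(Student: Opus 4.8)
The plan is to treat the whole statement as a formal consequence of the external tensor product structure of the direct product group $K=U(n_1)\times U(n_2)$, feeding in the facts about class one representations of the individual factors $U(n_i)$ recalled just before the proposition. First I would record the two standard facts about external tensor products of representations of a product of compact groups. Namely: (1) if $\sigma_1$ is an irreducible unitary representation of $U(n_1)$ on $W_1$ and $\sigma_2$ is an irreducible unitary representation of $U(n_2)$ on $W_2$, then the external tensor product $\sigma_1\otimes\sigma_2$ of $K$ on $W_1\otimes W_2$ is irreducible, and conversely every irreducible unitary representation of $K$ is equivalent to such an external tensor product; and (2) two external tensor products $\sigma_1\otimes\sigma_2$ and $\sigma_1'\otimes\sigma_2'$ are unitarily equivalent if and only if $\sigma_1\cong\sigma_1'$ and $\sigma_2\cong\sigma_2'$. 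Fact (1) is the Schur computation $\operatorname{End}_{K}(W_1\otimes W_2)=\operatorname{End}_{U(n_1)}(W_1)\otimes\operatorname{End}_{U(n_2)}(W_2)=\mathbb{C}\otimes\mathbb{C}=\mathbb{C}$, combined with a Peter--Weyl dimension count to see these exhaust $\widehat{K}$; fact (2) follows by comparing characters, using $\chi_{\sigma_1\otimes\sigma_2}(k_1,k_2)=\chi_{\sigma_1}(k_1)\chi_{\sigma_2}(k_2)$.

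For part \textbf{(a)} I would then simply specialize. Since each $\delta^i_{p_iq_i}$ is irreducible (by the recalled facts about $U(n_i)$), fact (1) gives the irreducibility of $\delta^1_{p_1q_1}\otimes\delta^2_{p_2q_2}$. Combining fact (2) with the recalled statement that $\delta^i_{pq}\cong\delta^i_{p'q'}$ iff $(p,q)=(p',q')$ yields at once that $\delta^1_{p_1q_1}\otimes\delta^2_{p_2q_2}\cong\delta^1_{p'_1q'_1}\otimes\delta^2_{p'_2q'_2}$ iff $(p_1,q_1,p_2,q_2)=(p'_1,q'_1,p'_2,q'_2)$. For the ``moreover'' clause I would invoke fact (1) to write an arbitrary $\delta\in\widehat{K}_{M}$ as $\delta\cong\delta_1\otimes\delta_2$ with $\delta_i\in\widehat{U(n_i)}$ acting on $W_i$; because $M=M_1\times M_2$, the space of $M$-fixed vectors in $W_1\otimes W_2$ equals $(W_1)^{M_1}\otimes(W_2)^{M_2}$, so the existence of a nonzero $M$-fixed vector forces each $(W_i)^{M_i}\neq\{0\}$, i.e. $\delta_i\in\widehat{U(n_i)}_{M_i}$. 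By the recalled classification for $U(n_i)$ each $\delta_i\cong\delta^i_{p_iq_i}$, so $\delta\cong\delta^1_{p_1q_1}\otimes\delta^2_{p_2q_2}$, as claimed.

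For part \textbf{(b)} I would identify $L^{2}(S^{2n_1-1}\times S^{2n_2-1})$ with the Hilbert space tensor product $L^{2}(S^{2n_1-1})\,\widehat{\otimes}\,L^{2}(S^{2n_2-1})$, which is standard for a product measure space. Inserting the recalled orthogonal decompositions $L^{2}(S^{2n_i-1})=\bigoplus^{\bot}_{p,q\in\mathbb{Z}^{+}}\mathcal{E}^{i}_{pq}$ into the two factors and using that the Hilbert tensor product distributes over orthogonal direct sums, $\big(\bigoplus^{\bot}_{a}A_a\big)\widehat{\otimes}\big(\bigoplus^{\bot}_{b}B_b\big)=\bigoplus^{\bot}_{a,b}A_a\otimes B_b$, gives exactly the asserted decomposition, once one notes that $\mathcal{E}^{1}_{p_1q_1}\otimes\mathcal{E}^{2}_{p_2q_2}$ is precisely the restriction of $H^{1}_{p_1q_1}\otimes H^{2}_{p_2q_2}$ to $S^{2n_1-1}\times S^{2n_2-1}$.

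Since every ingredient above is either a cited property of $U(n_i)$ or a standard fact about compact-group tensor products, there is no deep obstacle here; the proposition is essentially the formal passage from the two factors to the product group. The only point requiring genuine care is the bookkeeping in the ``moreover'' clause---verifying that the $M$-fixed-vector space factors as $(W_1)^{M_1}\otimes(W_2)^{M_2}$ under $M=M_1\times M_2$---so I would state that factorization explicitly and then let the classification for each $U(n_i)$ do the rest.
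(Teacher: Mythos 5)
Your proof is correct and is essentially the argument the paper intends: the paper states this proposition without a written proof, asserting only that it follows ``from these facts'' about the class one representations of the factors $U(n_i)$, and your external-tensor-product bookkeeping---irreducibility and equivalence of $\sigma_1\otimes\sigma_2$ via Schur/characters, the factorization $(W_1\otimes W_2)^{M}=W_1^{M_1}\otimes W_2^{M_2}$ for $M=M_1\times M_2$, and distributivity of the Hilbert tensor product over orthogonal sums for part (b)---is exactly the standard way to pass from the two factors to the product group. Nothing to correct.
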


By the above proposition, the decomposition of $\mathcal{P}(\C)$ into $K$-irreducible 
subspaces is given by $$\mathcal{P}
(\C)=\bigoplus_{m_{1},m_{2}\in\mathbb{Z}^{+}}V_{m_{1}m_{2}},$$ where 
$$V_{m_{1}m_{2}}=\textup{span}\big\{(z^{1})^{\alpha_{1}}(z^{2})^{\alpha_{2}}:|
\alpha_{1}|=m_{1},|\alpha_{2}|=m_{2}\big\}.$$ Denote the corresponding bounded $K$-
spherical functions by $\phi^{\lambda}_{m_{1}m_{2}}(z,t)=e^{i\lambda 
t}\psi^{\lambda}_{m_{1}m_{2}}(z)$. Then $\psi^{\lambda}_{m_{1}m_{2}}(z)$ is a joint 
eigenfunction of all $D\in\mathcal{L}_{K}^{\lambda}(\C)$ with eigenvalue, say 
$\mu^{\lambda}_{m_{1}m_{2}}$. Note that here $\mathcal{L}^{\lambda}_{K}(\C)$ is 
generated by $$\mathcal{L}^{\lambda}_{1}:=\sum_{j=1}^{n_{1}}L^{\lambda}_{j}
\overline{L}_{j}^{\lambda}+\overline{L}^{\lambda}_{j}L^{\lambda}_{j},
~\textup{and}~\mathcal{L}^{\lambda}_{2}
:=\sum_{j=n_{1}+1}^{n_{2}}L^{\lambda}_{j}
\overline{L}_{j}^{\lambda}+\overline{L}^{\lambda}_{j}L^{\lambda}_{j}.$$ Let $L_{k}^{\alpha}$ be 
the $k$th  degree Laguerre polynomial of type $\alpha$. For any $\nu\in\mathbb{N}$, 
and any $\zeta\in\mathbb{C}^{\nu}$, define $$\varphi_{k,\lambda}^{\alpha}
(\zeta)=L_{k}^{\alpha}\big(2|\lambda||\zeta|^{2}\big)e^{-|\lambda||\zeta|^{2}}.$$

\begin{prop}\label{eqn p9.4}
$\mu^{\lambda}_{m_{1}m_{2}}(\mathcal{L}_{i}^{\lambda})=-2|\lambda|(2m_{i}+n_{i})$, $i=1,2$. 
$\psi^{\lambda}_{m_{1}m_{2}}$ has the following formulae in terms of Laguerre 
polynomials : $$\psi_{m_{1}m_{2}}^{\lambda}(z)=\pi^{-n}(2|
\lambda|)^{n}\prod_{i=1}^{2}\varphi_{m_{i},\lambda}^{n_{i}-1}(z^{i}).$$
\end{prop}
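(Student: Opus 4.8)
The plan is to exploit the product structure $K=U(n_{1})\times U(n_{2})$ together with $\mathbb{C}^{n}=\mathbb{C}^{n_{1}}\times\mathbb{C}^{n_{2}}$, reduce every object to the single-block Gelfand pairs $(U(n_{i}),\mathbb{H}^{n_{i}})$ handled in Remark \textbf{\ref{eqn r5.10}}, and then identify $\psi^{\lambda}_{m_{1}m_{2}}$ by computing its Weyl transform. First I would record the tensor factorizations. Writing $w=(w^{1},w^{2})$, $z=(z^{1},z^{2})$ with $w^{i},z^{i}\in\mathbb{C}^{n_{i}}$, the basis elements factor as $u^{\lambda}_{\nu}(w)=u^{\lambda}_{\nu^{1}}(w^{1})\,u^{\lambda}_{\nu^{2}}(w^{2})$, giving $\mathcal{H}^{\lambda}=\mathcal{H}^{\lambda}_{(1)}\otimes\mathcal{H}^{\lambda}_{(2)}$, where $\mathcal{H}^{\lambda}_{(i)}$ is the Fock space over $\mathbb{C}^{n_{i}}$. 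Since $w\cdot z=w^{1}\cdot z^{1}+w^{2}\cdot z^{2}$ and $|z|^{2}=|z^{1}|^{2}+|z^{2}|^{2}$, the explicit formula for $V^{\lambda}_{z}$ yields $\Pi^{\lambda}(z)=V^{\lambda}_{z}=\Pi^{\lambda}_{(1)}(z^{1})\otimes\Pi^{\lambda}_{(2)}(z^{2})$, where $\Pi^{\lambda}_{(i)}$ is the representation of $\mathbb{H}^{n_{i}}$. As $dz\,d\bar z=dz^{1}d\bar z^{1}\,dz^{2}d\bar z^{2}$, for a product function $f(z)=f_{1}(z^{1})f_{2}(z^{2})$ one obtains $\mathcal{G}^{\lambda}(f)=\mathcal{G}^{\lambda}_{(1)}(f_{1})\otimes\mathcal{G}^{\lambda}_{(2)}(f_{2})$. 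Finally, from the description of $V_{m_{1}m_{2}}$ one has $V_{m_{1}m_{2}}=\mathcal{P}_{m_{1}}(\mathbb{C}^{n_{1}})\otimes\mathcal{P}_{m_{2}}(\mathbb{C}^{n_{2}})$, so the projection onto it (the operator $\mathcal{P}_{m_{1}m_{2}}$ in the notation of Proposition \textbf{\ref{eqn p6.3}}) factors as $\mathcal{P}_{m_{1}m_{2}}=\mathcal{P}^{(1)}_{m_{1}}\otimes\mathcal{P}^{(2)}_{m_{2}}$, where $\mathcal{P}^{(i)}_{m_{i}}$ is the projection in $\mathcal{H}^{\lambda}_{(i)}$ onto $\mathcal{P}_{m_{i}}(\mathbb{C}^{n_{i}})$.

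Next I would feed in the single-block data. Applying Remark \textbf{\ref{eqn r5.10}} and Proposition \textbf{\ref{eqn p6.3}} to $(U(n_{i}),\mathbb{H}^{n_{i}})$, the $U(n_{i})$-spherical function is $\psi^{\lambda}_{(i),m_{i}}(\zeta)=\pi^{-n_{i}}(2|\lambda|)^{n_{i}}\varphi^{n_{i}-1}_{m_{i},\lambda}(\zeta)$, it satisfies $\mathcal{G}^{\lambda}_{(i)}(\psi^{\lambda}_{(i),m_{i}})=\mathcal{P}^{(i)}_{m_{i}}$, and it is an eigenfunction of the block operator $\mathcal{L}^{\lambda}_{i}$ with eigenvalue $-2|\lambda|(2m_{i}+n_{i})$. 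Set $g(z)=\pi^{-n}(2|\lambda|)^{n}\varphi^{n_{1}-1}_{m_{1},\lambda}(z^{1})\varphi^{n_{2}-1}_{m_{2},\lambda}(z^{2})$. Because $\pi^{-n}(2|\lambda|)^{n}=\prod_{i}\pi^{-n_{i}}(2|\lambda|)^{n_{i}}$, we have $g(z)=\psi^{\lambda}_{(1),m_{1}}(z^{1})\,\psi^{\lambda}_{(2),m_{2}}(z^{2})$, and the factorization of $\mathcal{G}^{\lambda}$ above gives $\mathcal{G}^{\lambda}(g)=\mathcal{P}^{(1)}_{m_{1}}\otimes\mathcal{P}^{(2)}_{m_{2}}=\mathcal{P}_{m_{1}m_{2}}$. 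On the other hand Proposition \textbf{\ref{eqn p6.3}} gives $\mathcal{G}^{\lambda}(\psi^{\lambda}_{m_{1}m_{2}})=\mathcal{P}_{m_{1}m_{2}}$. Both $g$ and $\psi^{\lambda}_{m_{1}m_{2}}$ lie in $L^{2}(\mathbb{C}^{n})$, being polynomials times a Gaussian, so the injectivity of the Weyl transform on $L^{2}$ (Theorem \textbf{\ref{eqn t5.1}}) forces $\psi^{\lambda}_{m_{1}m_{2}}=g$, which is the asserted product formula.

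For the eigenvalues I would apply $\mathcal{L}^{\lambda}_{1}$ to the formula just obtained. Since $\mathcal{L}^{\lambda}_{1}$ involves only $\partial/\partial z_{j},\partial/\partial\bar z_{j}$ and multiplication by $z_{j},\bar z_{j}$ for the indices $j$ of the first block, it acts on the $z^{1}$-factor alone, leaving the $z^{2}$-factor fixed; hence $\mathcal{L}^{\lambda}_{1}\psi^{\lambda}_{m_{1}m_{2}}=\big(\mathcal{L}^{\lambda}_{1}\psi^{\lambda}_{(1),m_{1}}\big)(z^{1})\,\psi^{\lambda}_{(2),m_{2}}(z^{2})=-2|\lambda|(2m_{1}+n_{1})\,\psi^{\lambda}_{m_{1}m_{2}}$ by the single-block eigenvalue. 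As $\mathcal{L}^{\lambda}_{1}\in\mathcal{L}^{\lambda}_{K}(\mathbb{C}^{n})$ and $\psi^{\lambda}_{m_{1}m_{2}}$ is a joint eigenfunction, this reads $\mu^{\lambda}_{m_{1}m_{2}}(\mathcal{L}^{\lambda}_{1})=-2|\lambda|(2m_{1}+n_{1})$, and symmetrically for $i=2$. I expect no genuine analytic obstacle here: the only points needing care are the verification of $\Pi^{\lambda}(z)=\Pi^{\lambda}_{(1)}(z^{1})\otimes\Pi^{\lambda}_{(2)}(z^{2})$ from the explicit kernel in the two cases $\lambda>0$ and $\lambda<0$, and keeping the normalizing constants $\pi^{-n_{i}}(2|\lambda|)^{n_{i}}$ consistent across the factors, since injectivity of the Weyl transform supplies the uniqueness (and hence the exact constant) for free.
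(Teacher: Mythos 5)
Your proof is correct, but it follows a genuinely different route from the paper's. The paper first computes the eigenvalues directly: it takes the monomial $z_{1}^{m_{1}}z_{n_{1}+1}^{m_{2}}\in V_{m_{1}m_{2}}$, uses Remark \textbf{\ref{eqn r5.9}} together with Proposition \textbf{\ref{eqn p5.3}} to turn the eigenvalue equation into a computation with the Weyl operators $W_{j},\overline{W}_{j}$, and reads off $\mu^{\lambda}_{m_{1}m_{2}}(\mathcal{L}^{\lambda}_{i})=-2|\lambda|(2m_{i}+n_{i})$. It then observes that $\prod_{i}\varphi^{n_{i}-1}_{m_{i},\lambda}(z^{i})$ is a bounded joint eigenfunction with this eigenvalue (using that $\mathcal{L}^{\lambda}_{1},\mathcal{L}^{\lambda}_{2}$ generate $\mathcal{L}^{\lambda}_{K}(\C)$), invokes the uniqueness up to constant multiple of bounded joint eigenfunctions to get $\psi^{\lambda}_{m_{1}m_{2}}=c\prod_{i}\varphi^{n_{i}-1}_{m_{i},\lambda}$, and pins down $c$ from the idempotence $\psi_{m_{1}m_{2}}=\psi_{m_{1}m_{2}}\times\psi_{m_{1}m_{2}}$ (Proposition \textbf{\ref{eqn p6.4}}) evaluated at $z=0$, via the Laguerre identities for $L^{\alpha}_{k}(0)$ and $\int_{0}^{\infty}[L^{\alpha}_{k}(r)]^{2}e^{-r}r^{\alpha}dr$. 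You instead derive the formula first, by factoring the whole Fock-space apparatus over $\mathbb{C}^{n}=\mathbb{C}^{n_{1}}\times\mathbb{C}^{n_{2}}$: the identities $\Pi^{\lambda}(z)=\Pi^{\lambda}_{(1)}(z^{1})\otimes\Pi^{\lambda}_{(2)}(z^{2})$, $\mathcal{G}^{\lambda}(f_{1}f_{2})=\mathcal{G}^{\lambda}_{(1)}(f_{1})\otimes\mathcal{G}^{\lambda}_{(2)}(f_{2})$ and $\mathcal{P}_{m_{1}m_{2}}=\mathcal{P}^{(1)}_{m_{1}}\otimes\mathcal{P}^{(2)}_{m_{2}}$, combined with Proposition \textbf{\ref{eqn p6.3}} applied both to the product pair and to each block pair $(U(n_{i}),\mathbb{H}^{n_{i}})$, show that $\psi^{\lambda}_{m_{1}m_{2}}$ and the product of block spherical functions have the same Weyl transform, and injectivity of $\mathcal{G}^{\lambda}$ on $L^{2}$ (Theorem \textbf{\ref{eqn t5.1}}) forces equality; the eigenvalues then fall out as a corollary since $\mathcal{L}^{\lambda}_{i}$ acts only on the $z^{i}$-factor. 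What your approach buys is that the normalization constant comes for free from Plancherel (no Laguerre integral computation, no appeal to uniqueness of bounded spherical functions), and the eigenvalue computation is replaced by the single-block fact from Remark \textbf{\ref{eqn r5.10}}; what it costs is the obligation to verify the tensor factorizations, which you correctly flag as the only points needing care (they are routine: the basis $u^{\lambda}_{\nu}$, the explicit kernel of $V^{\lambda}_{z}$ in both signs of $\lambda$, and the product measure all factor). Both arguments ultimately lean on the same $U(n)$-case input, so neither is more elementary in substance, but yours is more structural and the paper's more computational.
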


\begin{proof}
As usual we drop the superscript $\lambda$. Take $z_{1}^{m_{1}}z_{n_{1}+1}^{m_{2}}\in 
V_{m_{1}m_{2}}$. By Remark \textbf{\ref{eqn r5.9}}, $$\mathcal{G}(\mathcal{L}_{1})
[z_{1}^{m_{1}}z_{n_{1}+1}^{m_{2}}]=\mu_{m_{1}m_{2}}(\mathcal{L}_{1})
[z_{1}^{m_{1}}z_{n_{1}+1}^{m_{2}}],$$ which, by Proposition \textbf{\ref{eqn p5.3}}, 
reduces to $$\bigg(-\sum_{j=1}^{n_{1}}\overline{W}_{j}W_{j}+
W_{j}\overline{W}_{j}\bigg)
[z_{1}^{m_{1}}z_{n_{1}+1}^{m_{2}}]=\mu_{m_{1}m_{2}}(\mathcal{L}_{1})
[z_{1}^{m_{1}}z_{n_{1}+1}^{m_{2}}].$$ Using the definition of $W_{j}$ and 
$\overline{W}_{j}$, an easy calculation shows that $\mu_{m_{1}m_{2}}(\mathcal{L}_{1})=-2|
\lambda|(2m_{1}+n_{1})$. Similarly $\mu_{m_{1}m_{2}}(\mathcal{L}_{2})=-2|\lambda|
(2m_{2}+n_{2})$. Since $\varphi_{m_{i},\lambda}^{n_{i}-1}(z^{i})$ is an eigenfunction 
of $\mathcal{L}_{i}$ with eigenvalue $-2|\lambda|(2m_{i}+n_{i})$, $\Pi_{i=1}^{2}\varphi_{m_{i},
\lambda}^{n_{i}-1}(z^{i})$ is a joint eigenfunction of $\mathcal{L}_{K}(\C)$ with 
eigenvalue $\mu_{m_{1}m_{2}}$ . Hence $\psi_{m_{1}m_{2}}
(z)=c\Pi_{i=1}^{2}\varphi_{m_{i},\lambda}^{n_{i}-1}(z^{i}),$ for some constant $c$. To 
calculate the constant $c$, first note that by Proposition \textbf{\ref{eqn p6.4}}, 
$\psi_{m_{1}m_{2}}(z)=\psi_{m_{1}m_{2}}\times\psi_{m_{1}m_{2}}(z)$. In particular, 
putting $z=0$, we get $$cL_{m_{1}}^{n_{1}-1}(0)L_{m_{2}}^{n_{2}-1}
(0)=c^{2}\prod_{i=1}^{2}\int_{\mathbb{C}^{n_{i}}}\big[L_{m_{i}}^{n_{i}-1}\big(2|
\lambda||z^{i}|^{2}\big)\big]^{2}e^{-2|\lambda||z^{i}|^{2}}dz^{i}.$$ Using the well-
known facts  $$L^{\alpha}_{k}(0)=\frac{\Gamma(k+\alpha+1)}
{\Gamma(k+1)\Gamma(\alpha+1)},~\textup{and}~\int_{0}^{\infty}[L_{k}^{\alpha}
(r)]^{2}e^{-r}r^{\alpha}dr=\frac{\Gamma(k+\alpha+1)}{\Gamma(k+1)},$$ we can deduce 
that $c=\pi^{-n}(2|\lambda|)^{n}$. Hence the proof.
\end{proof}

\textbf{From now on we always assume that $\lambda>0$ and state our results only for 
$\lambda>0$. The corresponding results for $\lambda<0$ can be obtained by 
interchanging the role of $p_i$ and $q_i$.}

\begin{prop}\label{eqn p9.5}
Let $P\in H^{1}_{p_1q_1}\otimes H^{2}_{p_2q_2}$. Then $$\theta^{\lambda}
(P)\psi^{\lambda}_{m_{1}m_{2}}(z)=\pi^{-n}(2|\lambda|)^{n}P(z)\prod_{i=1}^{2}
(-1)^{q_{i}}(2|\lambda|)^{p_{i}+q_{i}}\varphi_{m_{i}-p_i,\lambda}^{n_{i}+p_i+q_i-1}
(z^{i})$$ if $p_{i}\leq m_{i}$ for all $i=1,2$; otherwise $\theta^{\lambda}
(P)\psi^{\lambda}_{m_{1}m_{2}}(z)=0$.
\end{prop}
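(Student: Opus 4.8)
The plan is to exploit the product structure of everything involved. Since $\theta^{\lambda}$ is linear and $H^{1}_{p_1q_1}\otimes H^{2}_{p_2q_2}$ is spanned by products $P=P^{1}(z^{1})P^{2}(z^{2})$ with $P^{1}\in H^{1}_{p_1q_1}$ and $P^{2}\in H^{2}_{p_2q_2}$, I would first reduce to such a single product. Writing $\theta^{(i)}_1,\theta^{(i)}_2,\theta^{(i)}$ for the operators formed exactly as $\theta^{\lambda}_1,\theta^{\lambda}_2,\theta^{\lambda}$ but out of the block-$i$ variables $z^{i}$ alone (so $\theta^{(1)}$ uses only $R^{\lambda}_j,\overline{R}^{\lambda}_j$ with $1\leq j\leq n_1$, and $\theta^{(2)}$ only those with $n_1<j\leq n$), the goal is the factorization $\theta^{\lambda}(P)=\theta^{(1)}(P^{1})\,\theta^{(2)}(P^{2})$; once this is in hand the computation splits into two independent one-block problems.

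To obtain the factorization I would argue on monomials. For $m=(z^{1})^{\rho_1}(\bar{z}^{1})^{\gamma_1}(z^{2})^{\rho_2}(\bar{z}^{2})^{\gamma_2}$, the commutation relations (\ref{eqn43.1}) make every block-$1$ operator commute with every block-$2$ operator, so each of $\theta^{\lambda}_1,\theta^{\lambda}_2$ separates as $\theta^{\lambda}_s(m)=\theta^{(1)}_s(m^{1})\theta^{(2)}_s(m^{2})$ for $s=1,2$, where $m^{i}=(z^{i})^{\rho_i}(\bar{z}^{i})^{\gamma_i}$; summing over monomials gives $\theta^{\lambda}_s(P^1P^2)=\theta^{(1)}_s(P^1)\theta^{(2)}_s(P^2)$. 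The difficulty is that the symmetrization $\theta^{\lambda}=\frac{1}{2}(\theta^{\lambda}_1+\theta^{\lambda}_2)$ does \emph{not} factor through this relation, and this is the step I expect to be the main obstacle. The way around it is harmonicity: each $P^{i}$ is $U(n_i)$-harmonic by the definition of $H^{i}_{p_iq_i}$, and $P=P^1P^2$ is $U(n)$-harmonic because $\triangle_{\C}=\triangle_1+\triangle_2$ annihilates it. Applying Proposition \textbf{\ref{eqn p5.5}} \textbf{(b)} over $U(n)$ and over each $U(n_i)$ collapses $\theta_1=\theta_2=\theta$ both globally and on each factor, so that $\theta^{\lambda}(P)=\theta^{\lambda}_1(P^1P^2)=\theta^{(1)}_1(P^1)\theta^{(2)}_1(P^2)=\theta^{(1)}(P^1)\theta^{(2)}(P^2)$, as wanted.

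Finally I would evaluate each block. By Proposition \textbf{\ref{eqn p9.4}}, $\psi^{\lambda}_{m_1m_2}(z)=\pi^{-n}(2|\lambda|)^{n}\varphi^{n_1-1}_{m_1,\lambda}(z^1)\varphi^{n_2-1}_{m_2,\lambda}(z^2)$, and since $\theta^{(i)}(P^i)$ differentiates only in $z^i$,
$$\theta^{\lambda}(P)\psi^{\lambda}_{m_1m_2}=\pi^{-n}(2|\lambda|)^{n}\big[\theta^{(1)}(P^1)\varphi^{n_1-1}_{m_1,\lambda}(z^1)\big]\big[\theta^{(2)}(P^2)\varphi^{n_2-1}_{m_2,\lambda}(z^2)\big].$$
For a single block, $\varphi^{n_i-1}_{m_i,\lambda}$ is a constant multiple of the $U(n_i)$-spherical function $\psi^{\lambda}_{m_i}$ of Remark \textbf{\ref{eqn r5.10}}, so the single-group computation of Remark \textbf{\ref{eqn r7.31}} \textbf{(i)} applies verbatim with $n,k,(p,q)$ replaced by $n_i,m_i,(p_i,q_i)$. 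Since the entries of the matrix $P^{\check{\delta}_{p_iq_i}}$ span all $U(n_i)$-harmonic polynomials of bidegree $(p_i,q_i)$, equation (\ref{eqn r7.5.2}) extends by linearity of $\theta^{(i)}$ to the arbitrary harmonic $P^i$, giving
$$\theta^{(i)}(P^i)\varphi^{n_i-1}_{m_i,\lambda}(z^i)=(-1)^{q_i}(2|\lambda|)^{p_i+q_i}P^i(z^i)\,\varphi^{n_i+p_i+q_i-1}_{m_i-p_i,\lambda}(z^i)$$
when $p_i\leq m_i$ and $0$ when $p_i>m_i$. Substituting and using $P(z)=P^1(z^1)P^2(z^2)$ yields the stated formula, and shows the product vanishes as soon as $p_i>m_i$ for one of the indices.
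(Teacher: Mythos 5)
Your proof is correct, and it shares the paper's computational endgame --- the product formula $\psi^{\lambda}_{m_1m_2}(z)=\pi^{-n}(2|\lambda|)^{n}\varphi^{n_1-1}_{m_1,\lambda}(z^{1})\varphi^{n_2-1}_{m_2,\lambda}(z^{2})$ of Proposition \textbf{\ref{eqn p9.4}} followed by the one-block Laguerre identity (\ref{eqn r7.5.4}) of Remark \textbf{\ref{eqn r7.31}} --- but your reduction step is genuinely different from the paper's. The paper never considers general products $P^{1}P^{2}$: because $\delta^{1}_{p_1q_1}\otimes\delta^{2}_{p_2q_2}$ has a one-dimensional space of $M$-fixed vectors, Corollary \textbf{\ref{eqn c4.3}} (an element of $\mathcal{E}^{\delta}(\C)$ is determined by a single entry of its matrix) reduces the whole proposition to the single monomial $P(z)=z_{1}^{p_1}\bar{z}_{2}^{q_1}z_{n_1+1}^{p_2}\bar{z}_{n_1+2}^{q_2}$, for which $\theta^{\lambda}(P)=\big[\overline{R}_{2}^{q_1}(-R_{1})^{p_1}\big]\big[\overline{R}_{n_1+2}^{q_2}(-R_{n_1+1})^{p_2}\big]$ splits into commuting block factors by inspection, so the symmetrization issue you identify never arises there. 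You instead reduce by linearity to arbitrary products $P=P^{1}P^{2}$ and prove the operator factorization $\theta^{\lambda}(P^{1}P^{2})=\theta^{(1)}(P^{1})\theta^{(2)}(P^{2})$; your treatment of the obstruction --- that $\theta^{\lambda}=\tfrac{1}{2}(\theta^{\lambda}_{1}+\theta^{\lambda}_{2})$ does not factor termwise, resolved by collapsing $\theta_{1}=\theta_{2}=\theta$ both globally and blockwise via harmonicity (the observation following Definition \textbf{\ref{eqn d7.1}}, resting on Proposition \textbf{\ref{eqn p5.5}} \textbf{(b)}), together with the check that $\triangle_{\C}=\triangle_{1}+\triangle_{2}$ makes $P^{1}P^{2}$ a $U(n)$-harmonic polynomial --- is exactly what is needed and is sound. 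The trade-off: the paper's equivariance reduction is shorter and completely sidesteps the symmetrization subtlety, but leans on the Section 4 machinery and on $l(\delta)=1$; your route is more self-contained, and the factorization lemma for $\theta^{\lambda}$ on harmonic products is reusable (for instance for $K=U(n_1)\times\cdots\times U(n_m)$ with $m>2$) while making explicit where harmonicity enters. Two small points to record if you write this up: extending (\ref{eqn r7.5.2}) by linearity uses that the entries of $P^{\stackrel{\vee}{\delta}_{pq}}$ form a basis of $\mathcal{H}_{pq}$ (this is how Remark \textbf{\ref{eqn r7.31}} constructs them, applied here with $n_i$ in place of $n$), and the whole argument takes place under the standing assumption $\lambda>0$ in force in this section, the case $\lambda<0$ following with the roles of $p_i$ and $q_i$ interchanged.
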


\begin{proof}
Since $\delta^{1}_{p_1q_1}\otimes\delta^{2}_{p_2q_2}$ has a unique (upto a constant 
multiple) $M$-fixed vector, by Corollary \textbf{\ref{eqn c4.3}}, it is enough to 
prove the proposition for 
$P(z)=z_{1}^{p_1}\bar{z}_{2}^{q_{1}}z_{n_{1}+1}^{p_2}\bar{z}_{n_{1}+2}^{q_{2}}$, which 
clearly belongs to $H^{1}_{p_1q_{1}}\otimes H^{2}_{p_{2}q_{2}}.$ Since 
$\theta(P)=\overline{R}_{2}^{q_{1}}\overline{R}_{n_{1}+2}^{q_{2}}(-R_{1})^{p_{1}}
(R_{n_{1}+1})^{q_{2}}$, it follows that  
$$\theta(P)\psi_{m_{1}m_{2}}=\big[\theta(z_1^{p_{1}}
\bar{z}_{2}^{q_{1}})\psi_{m_{1}}
(z^{1})\big]\big[\theta(z_{n_1+1}^{p_{2}}\bar{z}_{n_2+2}^{q_{2}})
\psi_{m_{2}}(z^{2})\big],$$ where $$\psi_{m_{i}}(z^{i})=\pi^{-n_{i}}(2|
\lambda|)^{n_{i}}\varphi^{n_{i}-1}_{m_{i},\lambda}(z^{i}),~i=1,2.$$ Hence the proof follows by (\ref{eqn r7.5.4}). 

\end{proof}

\begin{cor}\label{eqn c9.6}

\indent{\rm{\textbf{(a)}}} Let $Pg\in\mathcal{E}^{\lambda}(\C)$ be a joint 
eigenfunction of all $D\in\mathcal{L}_{K}^{\lambda}(\C)$ with eigenvalue 
$\mu_{m_{1}m_{2}}^{\lambda}$, where $g$ is $K$-invariant and $P\in 
H^{1}_{p_{1}q_{1}}\otimes H^{2}_{p_2q_2}$. Then there is a constant $c_{p_1q_1p_2q_2}$ 
such that $$P(z)g(z)=c_{p_1q_1p_2q_2}P(z)\prod_{i=1}^{2}(-1)^{q_{i}}(2|
\lambda|)^{p_{i}+q_{i}}\varphi_{m_{i}-p_i,\lambda}^{n_{i}+p_i+q_i-1}(z^{i}),$$ when 
$p_{i}\leq m_{i}$ for all $i=1,2$; otherwise $Pg=0$. 
 
\indent{\rm{\textbf{(b)}}} Let $P\in H^{1}_{p_1q_1}\otimes H^{2}_{p_2q_2}$. Then 
$$\big\langle\Pi^{\lambda}(z),\mathcal{W}^{\lambda}(P)\big\rangle^{\lambda}_{m_1 
m_2}=P(z)\prod_{i=1}^{2}(-1)^{q_{i}}(2|\lambda|)^{p_{i}+q_{i}}\varphi_{m_{i}-p_i,
\lambda}^{n_{i}+p_i+q_i-1}(z^{i}),$$ when $p_{i}\leq m_{i}$ for all $i=1,2$; otherwise 
$\big\langle\Pi^{\lambda}(z),\mathcal{W}^{\lambda}
(P)\big\rangle^{\lambda}_{m_1m_2}=0$. 

\end{cor}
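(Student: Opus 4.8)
The plan is to read off both parts from two results already established: the explicit Laguerre computation of $\theta^{\lambda}(P)\psi^{\lambda}_{m_1m_2}$ in Proposition \ref{eqn p9.5}, together with the identification machinery for generalized $K$-spherical functions in Proposition \ref{eqn p7.14} and Theorem \ref{eqn t7.13}. Throughout write $\sigma=\delta^{1}_{p_1q_1}\otimes\delta^{2}_{p_2q_2}$ for the class one representation of $K=U(n_1)\times U(n_2)$ carried by $H^{1}_{p_1q_1}\otimes H^{2}_{p_2q_2}$ (Proposition \ref{eqn p9.3}), and set $\delta=\stackrel{\vee}{\sigma}$, so that $H_{\stackrel{\vee}{\delta}}=H^{1}_{p_1q_1}\otimes H^{2}_{p_2q_2}$, the entries of the column $P^{\delta}$ lie in $H^{1}_{p_1q_1}\otimes H^{2}_{p_2q_2}$, and $l(\delta)=\dim V^{M}_{\delta}=1$ in this section.

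For part \textbf{(b)} I would argue directly. Since $P\in H_{\stackrel{\vee}{\delta}}$, Proposition \ref{eqn p7.14} yields $\theta^{\lambda}(P)\psi^{\lambda}_{m_1m_2}=\pi^{-n}(2|\lambda|)^{n}\big\langle\Pi^{\lambda}(z),\mathcal{W}^{\lambda}(P)\big\rangle^{\lambda}_{m_1m_2}$, which is the entrywise content of the relation $\Psi^{\delta,\lambda}_{\alpha}=\pi^{-n}(2|\lambda|)^{n}\Phi^{\delta,\lambda}_{\alpha}$ applied to the scalar $P$. Solving for the inner product and inserting the value of $\theta^{\lambda}(P)\psi^{\lambda}_{m_1m_2}$ from Proposition \ref{eqn p9.5}, the factors $\pi^{\mp n}(2|\lambda|)^{\pm n}$ cancel exactly and leave $\big\langle\Pi^{\lambda}(z),\mathcal{W}^{\lambda}(P)\big\rangle^{\lambda}_{m_1m_2}=P(z)\prod_{i=1}^{2}(-1)^{q_i}(2|\lambda|)^{p_i+q_i}\varphi^{n_i+p_i+q_i-1}_{m_i-p_i,\lambda}(z^{i})$ when $p_i\le m_i$ for both $i$, and $0$ otherwise.

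For part \textbf{(a)} I would first upgrade the eigenfunction hypothesis from the single function $Pg$ to the whole isotypic space. Each $D\in\mathcal{L}^{\lambda}_{K}(\C)$ commutes with the $K$-action and $g$ is $K$-invariant, so for $k\in K$ one has $D\big((k\cdot P)g\big)=k\cdot D(Pg)=\mu^{\lambda}_{m_1m_2}(D)\,(k\cdot P)g$; as $\sigma$ is irreducible the translates $k\cdot P$ span $H^{1}_{p_1q_1}\otimes H^{2}_{p_2q_2}$, so $P'g$ is a joint eigenfunction with eigenvalue $\mu^{\lambda}_{m_1m_2}$ for every $P'$ in that space. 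Hence the column $F:=P^{\delta}g$ has every entry a joint eigenfunction, satisfies $F(k\cdot z)=\delta(k)F(z)$, and lies in $\mathcal{E}^{\delta,\lambda}(\C)$ (each entry is a finite linear combination of $K$-translates of $Pg\in\mathcal{E}^{\lambda}(\C)$, and translation preserves the growth condition); thus $F$ is a generalized $K$-spherical function of type $\delta$ corresponding to $\mu^{\lambda}_{m_1m_2}$ in the sense of Definition \ref{eqn d7.1}. By Theorem \ref{eqn t7.13}, $F=\Psi^{\delta,\lambda}_{m_1m_2}C$ for a scalar constant $C$ (scalar since $l(\delta)=1$). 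Now $\Psi^{\delta,\lambda}_{m_1m_2}=\theta^{\lambda}(P^{\delta})\psi^{\lambda}_{m_1m_2}$ is evaluated entrywise by Proposition \ref{eqn p9.5}, equalling $\pi^{-n}(2|\lambda|)^{n}P^{\delta}(z)\prod_{i=1}^{2}(-1)^{q_i}(2|\lambda|)^{p_i+q_i}\varphi^{n_i+p_i+q_i-1}_{m_i-p_i,\lambda}(z^{i})$ when $p_i\le m_i$ for both $i$ and $0$ otherwise. Writing $P=\sum_i a_i P^{\delta}_{i1}$ (possible since these entries span $H^{1}_{p_1q_1}\otimes H^{2}_{p_2q_2}$) and taking the same linear combination of the entries of $F=\Psi^{\delta,\lambda}_{m_1m_2}C$, the common $K$-invariant factor factors out and gives $Pg=c_{p_1q_1p_2q_2}P(z)\prod_{i=1}^{2}(-1)^{q_i}(2|\lambda|)^{p_i+q_i}\varphi^{n_i+p_i+q_i-1}_{m_i-p_i,\lambda}(z^{i})$ with $c_{p_1q_1p_2q_2}=C\pi^{-n}(2|\lambda|)^{n}$; and if $p_i>m_i$ for some $i$ then $\Psi^{\delta,\lambda}_{m_1m_2}=0$ forces $F=0$, hence $Pg=0$.

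The genuinely new content, namely the Laguerre evaluation, is already supplied by Proposition \ref{eqn p9.5}, so the corollary is short. The only points needing care are bookkeeping: matching the contragredient index $\delta=\stackrel{\vee}{\sigma}$ so that Propositions \ref{eqn p7.14} and \ref{eqn p9.5} apply to the \emph{same} $P$, checking that the constants $\pi^{\mp n}(2|\lambda|)^{\pm n}$ cancel in \textbf{(b)}, and justifying in \textbf{(a)} that the eigenfunction hypothesis on the single scalar $Pg$ lifts, via $K$-equivariance and irreducibility, to the full matrix $F=P^{\delta}g$ so that Theorem \ref{eqn t7.13} becomes applicable. I expect this last lifting step to be the main conceptual obstacle, the rest being constant tracking.
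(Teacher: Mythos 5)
Your proof is correct and takes essentially the same route as the paper's: the paper's own proof is a one-line citation of Theorem \ref{eqn t7.13} together with Proposition \ref{eqn p9.5} for part \textbf{(a)}, and of Proposition \ref{eqn p7.14} together with Proposition \ref{eqn p9.5} for part \textbf{(b)}, using that $\delta^{1}_{p_1q_1}\otimes\delta^{2}_{p_2q_2}$ has a one-dimensional space of $M$-fixed vectors. The lifting of the eigenfunction hypothesis from the scalar $Pg$ to the column $P^{\delta}g$ via $K$-equivariance and irreducibility, and the cancellation of the factors $\pi^{\mp n}(2|\lambda|)^{\pm n}$ between Propositions \ref{eqn p7.14} and \ref{eqn p9.5}, are exactly the details the paper leaves implicit.
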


\begin{proof}
Since $\delta^{1}_{p_1q_1}\otimes\delta^{2}_{p_2q_2}$ has unique (upto a constant 
multiple) $M$-fixed vector, \textbf{(a)} follows from Theorem \textbf{\ref{eqn t7.13}} 
and Proposition \textbf{\ref{eqn p9.5}}; \textbf{(b)} follows from Proposition 
\textbf{\ref{eqn p7.14}} and Proposition \textbf{\ref{eqn p9.5}}. 
\end{proof}

\begin{lem}\label{eqn l9.7}
Let $\{Y^{j}_{p_1q_1p_2q_2}:j=1,2,\cdots d(p_1,q_1,p_2,q_2)\}$ be an orthonormal basis 
for $\mathcal{E}^{1}_{p_1q_1}\otimes\mathcal{E}^2_{p_2q_2}$ so that 
$\{Y^{j}_{p_1q_1p_2q_2}:j_i=1,2,\cdots d(p_1,q_1,p_2,q_2); 
p_1,q_1,p_2,q_2\in\mathbb{Z}^{+}\}$ forms an orthonormal basis for $L^{2}
(S^{2n_1-1}\times S^{2n_2-1})$. Let $Y^{j}_{p_1q_1p_2q_2}$ be the restriction of 
$\widetilde{P}^{j}_{p_1q_1p_2q_2}\in H^{1}_{p_1q_1}\otimes H^{2}_{p_2q_2}$ i.e 
$$\widetilde{P}^{j}_{p_1q_1p_2q_2}(z)=\widetilde{P}^{j}_{p_1q_1p_2q_2}
(z^{1},z^{2})=r_{1}^{p_1+q_1}r_{2}^{p_2+q_2}Y^{j}_{p_1q_1p_2q_2}(\omega^{1},
\omega^{2}),$$ where $z^{i}=r_i\omega^i,~ \omega^i\in S^{2n_i-1}.$  Define 
\bea\label{eqn  9.30}P^{j}_{p_1q_1p_2q_2}(z)=\sqrt{\prod_{i=1}^{2}\Gamma(n_i)(2|
\lambda|)^{-(p_i+q_i)}\frac{\Gamma(m_i-p_i+1)}{\Gamma(m_i+n_i+q_i)}} 
~\widetilde{P}^{j}_{p_1q_1p_2q_2}(z).\eea Then $$\{P^{j}_{p_1q_1p_2q_2}
(z):j=1,2,\cdots,d(p_1,q_1,p_2,q_2);p_i\leq m_i,q_i\in\mathbb{Z}^{+};i=1,2\}$$ forms 
an orthonormal basis for $\mathcal{O}^{\lambda}(V_{m_1m_2}).$ 
\end{lem}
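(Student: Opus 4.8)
The statement to be proved is that, under the standing identification of each polynomial $P^{j}_{p_1q_1p_2q_2}$ with the operator $\mathcal{W}^{\lambda}(P^{j}_{p_1q_1p_2q_2})\big|_{V_{m_1m_2}}$, the indicated family is an orthonormal basis of the Hilbert space $\mathcal{O}^{\lambda}(V_{m_1m_2})=\mathcal{O}(V_{m_1m_2})$ equipped with $\langle\,,\rangle^{\lambda}_{m_1m_2}$. The plan is to split the claim into its three ingredients---pairwise orthogonality, unit norm, and completeness---and to reduce each to tools already in hand: Lemma \textbf{\ref{eqn l7.5}} \textbf{(b)} to turn operator inner products into $L^{2}(\C)$ inner products, Proposition \textbf{\ref{eqn p9.5}} to evaluate $\theta^{\lambda}(\widetilde P^{j}_{p_1q_1p_2q_2})\psi^{\lambda}_{m_1m_2}$ explicitly, Proposition \textbf{\ref{eqn p9.3}} \textbf{(b)} for the angular orthogonality, and Proposition \textbf{\ref{eqn p7.7}} for spanning.

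For orthogonality and norm I would begin from Lemma \textbf{\ref{eqn l7.5}} \textbf{(b)}, which gives $\langle\mathcal{W}^{\lambda}(\widetilde P^{j}_{p_1q_1p_2q_2}),\mathcal{W}^{\lambda}(\widetilde P^{j'}_{p'_1q'_1p'_2q'_2})\rangle^{\lambda}_{m_1m_2}=\pi^{n}(2|\lambda|)^{-n}\langle\theta^{\lambda}(\widetilde P^{j}_{p_1q_1p_2q_2})\psi^{\lambda}_{m_1m_2},\theta^{\lambda}(\widetilde P^{j'}_{p'_1q'_1p'_2q'_2})\psi^{\lambda}_{m_1m_2}\rangle$. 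Substituting the formula of Proposition \textbf{\ref{eqn p9.5}} and writing $z^{i}=r_i\omega^{i}$, each function $\theta^{\lambda}(\widetilde P^{j})\psi^{\lambda}_{m_1m_2}$ becomes the angular factor $Y^{j}_{p_1q_1p_2q_2}(\omega^{1},\omega^{2})$ times the radial weights $r_i^{p_i+q_i}L^{n_i+p_i+q_i-1}_{m_i-p_i}(2|\lambda|r_i^{2})e^{-|\lambda|r_i^{2}}$. Passing to polar coordinates splits the $L^{2}(\C)$ integral into an angular integral over $S^{2n_1-1}\times S^{2n_2-1}$ and two radial integrals; by Proposition \textbf{\ref{eqn p9.3}} \textbf{(b)} the angular integral is $\delta_{jj'}$ times the Kronecker deltas in $(p_1,q_1,p_2,q_2)$, so the expression vanishes off the diagonal and orthogonality follows at once. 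On the diagonal the radial integrals are computed by the substitution $s=2|\lambda|r_i^{2}$ and the relation $\int_{0}^{\infty}[L^{\gamma}_{k}(s)]^{2}e^{-s}s^{\gamma}\,ds=\Gamma(k+\gamma+1)/\Gamma(k+1)$ with $\gamma=n_i+p_i+q_i-1$ and $k=m_i-p_i$, yielding $\Gamma(m_i+n_i+q_i)/\Gamma(m_i-p_i+1)$; combined with the surface areas $|S^{2n_i-1}|=2\pi^{n_i}/\Gamma(n_i)$, the scalar in front of $\widetilde P^{j}$ turns out to be exactly the reciprocal of the square of the normalizing factor defining $P^{j}_{p_1q_1p_2q_2}$, so $\|\mathcal{W}^{\lambda}(P^{j}_{p_1q_1p_2q_2})\|^{2}_{m_1m_2}=1$.

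For completeness I would invoke Proposition \textbf{\ref{eqn p7.7}}, which decomposes $\mathcal{O}(V_{m_1m_2})$ as the orthogonal sum of the spaces $\mathcal{W}^{\lambda}(H^{i}_{\check\delta})\big|_{V_{m_1m_2}}$ over $\delta\in\widehat{K}_{M}$ and $1\le i\le d(\delta)$. Since $\dim V^{M}_{\delta}=l(\delta)=1$, summing over $i$ collapses $\bigoplus_{i}\mathcal{W}^{\lambda}(H^{i}_{\check\delta})\big|_{V_{m_1m_2}}$ to $\mathcal{W}^{\lambda}(H_{\check\delta})\big|_{V_{m_1m_2}}$, because the single entries of the column $P^{\delta}$ span $H_{\check\delta}$. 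Writing $\delta\cong\delta^{1}_{q_1p_1}\otimes\delta^{2}_{q_2p_2}$ and using that the contragredient of $\delta_{pq}$ is $\delta_{qp}$ (Remark \textbf{\ref{eqn r7.31}}), one gets $H_{\check\delta}=H^{1}_{p_1q_1}\otimes H^{2}_{p_2q_2}$, which is spanned by $\{\widetilde P^{j}_{p_1q_1p_2q_2}\}_{j}$. By Lemma \textbf{\ref{eqn l7.5}} \textbf{(b)} together with Proposition \textbf{\ref{eqn p9.5}} (equivalently Corollary \textbf{\ref{eqn c9.6}} \textbf{(b)} via Proposition \textbf{\ref{eqn p7.14}}), $\mathcal{W}^{\lambda}(\widetilde P^{j})\big|_{V_{m_1m_2}}=0$ precisely when some $p_i>m_i$; hence only the summands with $p_1\le m_1$ and $p_2\le m_2$ survive, and on each such summand the $d(p_1,q_1,p_2,q_2)$ operators $\mathcal{W}^{\lambda}(\widetilde P^{j})\big|_{V_{m_1m_2}}$ are mutually orthogonal, hence a basis. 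Taking the union over all surviving $\delta$ and normalizing produces the asserted orthonormal basis.

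The double integral is routine, but the genuine obstacle is the completeness step: one must match the abstract label $\delta\in\widehat{K}_{M}$ of Proposition \textbf{\ref{eqn p7.7}} with the concrete quadruple $(p_1,q_1,p_2,q_2)$, track the interchange $p\leftrightarrow q$ induced by passing to $\check\delta$, and check both that the surviving summands are exactly those with $p_i\le m_i$ and that the dimension count $d(\delta)=d(p_1,q_1,p_2,q_2)$ is consistent, so that the family exhausts $\mathcal{O}(V_{m_1m_2})$ without redundancy. A secondary care point is the measure normalization: the constant comes out correctly only after reconciling the convention for $L^{2}(S^{2n_1-1}\times S^{2n_2-1})$ and the surface factors $|S^{2n_i-1}|=2\pi^{n_i}/\Gamma(n_i)$ with the Gaussian measure underlying the Weyl calculus.
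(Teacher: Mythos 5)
Your proof is correct and follows essentially the same route as the paper: Lemma \textbf{\ref{eqn l7.5}} \textbf{(b)} combined with Proposition \textbf{\ref{eqn p9.5}}, polar coordinates and the Laguerre integral identity give the vanishing for $p_i>m_i$ and the unit norms, while Proposition \textbf{\ref{eqn p7.7}} together with the fact that each $\delta^{1}_{p_1q_1}\otimes\delta^{2}_{p_2q_2}$ has a one-dimensional space of $M$-fixed vectors gives completeness. The only cosmetic difference is that you check pairwise orthogonality directly through the angular integrals (Proposition \textbf{\ref{eqn p9.3}} \textbf{(b)}), whereas the paper reads it off from the orthogonality of the decomposition in Proposition \textbf{\ref{eqn p7.7}}, each summand there being one-dimensional since $l(\delta)=1$.
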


\begin{proof}
Taking $p=q=P^{j}_{p_1q_1p_2q_2}$ in Lemma \textbf{\ref{eqn l7.5}} \textbf{(b)}, we 
get $$\big|\big|\mathcal{W}(P^{j}_{p_1q_1p_2q_2})\big|\big|_{m_1m_2}=\sqrt{\pi^{n}(2|
\lambda|)^{-n}}\big|\big|\theta(P)\psi_{m_1m_2}\big|\big|_2,$$ Therefore by 
Proposition \textbf{\ref{eqn p9.5}}, $\mathcal{W}(P^{j}_{p_1q_1p_2q_2})=0$, unless  
$p_1\leq m_1$, $p_2\leq m_2$; and if $p_1\leq m_1$, $p_2\leq m_2$, writing the right 
hand side of the above equation in polar coordinates and using the formulae 
$$\int_{0}^{\infty}[L_{k}^{\alpha}(r)]^{2}e^{-r}r^{\alpha}dr=\frac{\Gamma(k+\alpha+1)}
{\Gamma(k+1)},$$ we can deduce that $\big|\big|\mathcal{W}(P^{j}_{p_1q_1p_2q_2})\big|
\big|_{m_1m_2}=1$. But then, since each 
$\delta^{1}_{p_1q_1}\otimes\delta^{2}_{p_2q_2}$ has a unique (upto a constant multiple) 
$M$-fixed vector, the proof follows from Proposition \textbf{\ref{eqn p7.7}}. 
\end{proof}

\begin{lem}\label{eqn l9.8}
Let $\widetilde{P}^{j}_{p_1q_1p_2q_2}$ and $P^{j}_{p_1q_1p_2q_2}$  are as in the 
previous lemma. If $f$ is a joint eigenfunction of all $D\in\mathcal{L}_{K}^{\lambda}
(\C)$ with eigenvalue $\mu^{\lambda}_{m_1m_2}$ satisfying 
$\chi_{\delta}*f\in\mathcal{E}^{\lambda}(\C)$ for each $\delta\in\hat{K}_{M}$, then 
there exist constants $a^{j}_{p_1q_1p_2q_2}$ such that \bea\label{eqn 
9.31}f(z)=\mathop{\sum_{p_1,q_1,p_2,q_2\in\mathbb{Z}^{+}}}_{p_1\leq m_1,p_2\leq  
m_2}\sum_{j=1}^{d(p_1,q_1,p_2,q_2)}a^{j}_{p_1q_1p_2q_2}
\big\langle\Pi^{\lambda}(z),\mathcal{W}^{\lambda}
(P^{j}_{p_1q_1p_2q_2})\big\rangle^{\lambda}_{m_1 m_2},\eea where the series converges uniformly over compact subsets of $\C$. $a_{p_1q_1p_2q_2}$'s 
satisfy the following : \bea\label{eqn 
9.32}\mathop{\sum_{p_1,q_1,p_2,q_2\in\mathbb{Z}^{+}}}_{p_1\leq m_1,p_2\leq  
m_2}\sum_{j=1}^{d(p_1q_1p_2q_2)}|a^j_{p_1q_1p_2q_2}|
^{2}\prod_{i=1}^{2}\frac{k_i^{q_i}}{\Gamma(n_i+p_i+q_i)}<\infty,~\forall 
k_1,k_2\in\mathbb{N}.\eea
\end{lem}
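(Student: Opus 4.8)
The plan is to prove the expansion (\ref{eqn 9.31}) first, by decomposing $f$ into its $K$-isotypic components and identifying each one explicitly, and then to extract the summability (\ref{eqn 9.32}) from the finiteness of the spherical $L^{2}$-norm of $f$ at each radius; the main work will be a two-sided asymptotic estimate for Laguerre polynomials.

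First I would decompose $f$ into its $K$-isotypic pieces. Since $f$ is smooth, the series $f=\sum_{\delta\in\widehat{K}_{M}}d(\delta)\,\chi_{\delta}*f$ converges uniformly on compact subsets of $\C$ (Lemma 1.7, Chapter IV of \cite{H1}); only the types $\delta\in\widehat{K}_{M}$ occur, the remaining ones being annihilated by averaging over $M$ exactly as in the proof of Theorem \textbf{\ref{eqn t8.1}}. Each $\chi_{\delta}*f$ is again a joint eigenfunction for all $D\in\mathcal{L}^{\lambda}_{K}(\C)$ with eigenvalue $\mu^{\lambda}_{m_{1}m_{2}}$, transforms under the single type $\delta=\delta^{1}_{p_{1}q_{1}}\otimes\delta^{2}_{p_{2}q_{2}}$, and by hypothesis lies in $\mathcal{E}^{\lambda}(\C)$. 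Since $\dim V^{M}_{\delta}=1$, Corollary \textbf{\ref{eqn c9.6}} \textbf{(a)} pins down its radial profile: on the product of spheres $z^{i}=r_{i}\omega^{i}$ the function $\chi_{\delta}*f$ is a fixed Laguerre-times-Gaussian radial factor times an element of $\mathcal{E}^{1}_{p_{1}q_{1}}\otimes\mathcal{E}^{2}_{p_{2}q_{2}}$, vanishing unless $p_{i}\leq m_{i}$. Expanding that angular factor in the orthonormal basis $\{Y^{j}_{p_{1}q_{1}p_{2}q_{2}}\}_{j}$ and invoking Corollary \textbf{\ref{eqn c9.6}} \textbf{(b)}, which identifies $P^{j}_{p_{1}q_{1}p_{2}q_{2}}(z)$ times the same Laguerre-times-Gaussian factor with $\langle\Pi^{\lambda}(z),\mathcal{W}^{\lambda}(P^{j}_{p_{1}q_{1}p_{2}q_{2}})\rangle^{\lambda}_{m_{1}m_{2}}$, I obtain $\chi_{\delta}*f=\sum_{j}a^{j}_{p_{1}q_{1}p_{2}q_{2}}\langle\Pi^{\lambda}(z),\mathcal{W}^{\lambda}(P^{j}_{p_{1}q_{1}p_{2}q_{2}})\rangle^{\lambda}_{m_{1}m_{2}}$ for suitable constants $a^{j}_{p_{1}q_{1}p_{2}q_{2}}$. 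Summing over $\delta$ gives (\ref{eqn 9.31}), with uniform convergence inherited from the isotypic decomposition.

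To obtain (\ref{eqn 9.32}) I would freeze the radii and apply Parseval on $S^{2n_{1}-1}\times S^{2n_{2}-1}$. Because distinct $\delta$ contribute mutually orthogonal spherical harmonics (Proposition \textbf{\ref{eqn p9.3}}), for every $(r_{1},r_{2})$ the quantity
\[
F(r_{1},r_{2}):=\int_{S^{2n_{1}-1}\times S^{2n_{2}-1}}\big|f(r_{1}\omega^{1},r_{2}\omega^{2})\big|^{2}\,d\omega
\]
is a sum over $\delta$ of the squared radial profiles weighted by $c_{\delta}^{2}:=\sum_{j}|a^{j}_{p_{1}q_{1}p_{2}q_{2}}|^{2}$, and it is finite since $f$ is continuous. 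Inserting the normalization (\ref{eqn 9.30}) and the explicit Laguerre form of Proposition \textbf{\ref{eqn p9.5}}, and substituting $s_{i}=2|\lambda|r_{i}^{2}$, the $\delta$-summand of $F$ is a fixed constant multiple of $c_{\delta}^{2}\prod_{i=1}^{2}\frac{\Gamma(m_{i}-p_{i}+1)}{\Gamma(m_{i}+n_{i}+q_{i})}s_{i}^{p_{i}+q_{i}}\big[L^{n_{i}+p_{i}+q_{i}-1}_{m_{i}-p_{i}}(s_{i})\big]^{2}e^{-s_{i}}$. Thus the finiteness of $F$ furnishes, for every $s_{1},s_{2}\in[0,\infty)$, a convergent series in the $c_{\delta}^{2}$.

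The decisive step, which I expect to be the main obstacle, is to convert this into (\ref{eqn 9.32}), whose weight is instead $\prod_{i}k_{i}^{q_{i}}/\Gamma(n_{i}+p_{i}+q_{i})$ for arbitrary $k_{1},k_{2}$. Given $k_{1},k_{2}$, I would fix $s_{i}=2|\lambda|r_{i}^{2}>k_{i}$ and compare the two $\delta$-summands term by term; their ratio factors over $i$ as
\[
\frac{e^{s_{i}}}{\Gamma(m_{i}-p_{i}+1)\,s_{i}^{p_{i}}}\left(\frac{k_{i}}{s_{i}}\right)^{q_{i}}\frac{\Gamma(m_{i}+n_{i}+q_{i})}{\Gamma(n_{i}+p_{i}+q_{i})\big[L^{n_{i}+p_{i}+q_{i}-1}_{m_{i}-p_{i}}(s_{i})\big]^{2}}.
\]
Here I would use the large-parameter behaviour of a fixed-degree Laguerre polynomial, $L^{\alpha}_{k}(s)\sim\alpha^{k}/k!>0$ as $\alpha\to\infty$, together with the fact that the zeros of $L^{n_{i}+p_{i}+q_{i}-1}_{m_{i}-p_{i}}$ move off to infinity, so $L^{n_{i}+p_{i}+q_{i}-1}_{m_{i}-p_{i}}(s_{i})\neq 0$ for all large $q_{i}$; combined with $\Gamma(m_{i}+n_{i}+q_{i})/\Gamma(n_{i}+p_{i}+q_{i})\sim q_{i}^{m_{i}-p_{i}}$ this makes the Gamma-over-Laguerre factor $\sim((m_{i}-p_{i})!)^{2}q_{i}^{-(m_{i}-p_{i})}$, which is bounded, while $(k_{i}/s_{i})^{q_{i}}\to 0$. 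Hence the ratio stays bounded in $q_{i}$ (the finitely many $q_{i}$ for which $L$ might vanish at $s_{i}$ being irrelevant to convergence), so each (\ref{eqn 9.32})-summand is dominated by a fixed multiple of the corresponding $F$-summand. Summing over $j$, over $q_{1},q_{2}$, and over the finitely many admissible $p_{1}\leq m_{1}$, $p_{2}\leq m_{2}$ then bounds the left-hand side of (\ref{eqn 9.32}) by a constant multiple of $F(r_{1},r_{2})<\infty$; since $k_{1},k_{2}$ were arbitrary, (\ref{eqn 9.32}) follows.
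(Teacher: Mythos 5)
Your overall route is the paper's own: decompose $f$ into $\delta$-isotypic/spherical-harmonic pieces, identify each piece through Corollary \textbf{\ref{eqn c9.6}}, and then extract (\ref{eqn 9.32}) from finiteness of the spherical $L^{2}$-norm at a fixed radius together with lower bounds for fixed-degree Laguerre polynomials of large order. But the first half has a genuine gap as written. Corollary \textbf{\ref{eqn c9.6}} \textbf{(a)} applies to a function already given in the form $Pg$ with a \emph{single} $P\in H^{1}_{p_1q_1}\otimes H^{2}_{p_2q_2}$ and a $K$-invariant $g$; the function $\chi_{\delta}*f$ is a priori only a sum $\sum_{j}P^{j}_{p_1q_1p_2q_2}\,g_{j}$ with possibly unrelated $K$-invariant functions $g_{j}$ (equivalently, on each orbit it is $\sum_{j}f^{j}_{p_1q_1p_2q_2}(r_1,r_2)Y^{j}_{p_1q_1p_2q_2}(\omega)$ with independent coefficients). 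So ``Corollary \textbf{\ref{eqn c9.6}} \textbf{(a)} pins down its radial profile'' is not licensed: the statement that all the $g_{j}$ are constant multiples of one Laguerre--Gaussian profile \emph{is} (\ref{eqn 9.31}), not a premise. The missing ingredient, which is where the paper does its real work, is the representation-theoretic identity $f^{j}_{p_1q_1p_2q_2}(r_1,r_2)Y^{j}_{p_1q_1p_2q_2}(\omega^{1},\omega^{2})=d(\delta)\int_{K}f(k\cdot z)\big(\delta(k^{-1})Y^{j}_{p_1q_1p_2q_2},Y^{j}_{p_1q_1p_2q_2}\big)dk$ (quoted from the proof of Proposition 4.5 in \cite{T}), which shows that each \emph{individual} component is itself a joint eigenfunction lying in $\mathcal{E}^{\lambda}(\C)$; only then can Corollary \textbf{\ref{eqn c9.6}} \textbf{(a)} be applied term by term to produce the constants $a^{j}_{p_1q_1p_2q_2}$.

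In the second half your comparison of summands is the same mechanism as the paper's (you use the asymptotic $L^{\alpha}_{k}(s)\sim\alpha^{k}/k!$ where the paper uses the inequality $|L^{\alpha}_{k}(t)|\geq\frac{1}{2}\Gamma(k+\alpha+1)/(\Gamma(k+1)\Gamma(\alpha+1))$ for $\alpha+1>2kt$ --- the same underlying fact), but your parenthetical dismissal of the $q_{i}$ with $L^{n_i+p_i+q_i-1}_{m_i-p_i}(s_i)=0$ as ``finitely many, irrelevant'' is wrong as stated: for each such bad $q_{1}$ the domination fails on the entire infinite strip $\{q_1\}\times\mathbb{Z}^{+}$ of pairs $(q_1,q_2)$, not on finitely many terms. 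The repair is easy: the zeros of all the polynomials $L^{n_i+p_i+q_i-1}_{m_i-p_i}$, $p_i\leq m_i$, $q_i\in\mathbb{Z}^{+}$, form a countable subset of $(0,\infty)$, so you may choose $s_i>k_i$ outside it, after which your ratio is bounded over \emph{all} $(q_1,q_2)$ and the domination closes. In fairness, the paper's proof establishes (\ref{eqn 9.34}) only when both $q_1,q_2$ are large and is silent about these strips, so it needs the same touch-up; this part of your argument is at the paper's level of rigor, whereas the gap in the first paragraph is a step the paper supplies and you do not.
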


\begin{proof}
By Proposition \textbf{\ref{eqn p9.3}} \textbf{(b)}, we have the expansion, for fixed 
$r_1,r_2>0$,  \bea\label{eqn 
9.33}f(z)=f(r_1\omega^1,r_2\omega^2)=\mathop{\sum_{p_1,q_1,p_2,q_2\in\mathbb{Z}^{+}}}
_{p_1\leq m_1,p_2\leq  m_2}\sum_{j=1}^{d(p_1,q_1,p_2,q_2)}f^j_{p_1q_1p_2q_2}
(r_1,r_2)Y^j_{p_1q_1p_2q_2}(\omega^{1},\omega^{2}),\eea where the right hand side 
converges in $L^{2}(S^{2n_1-1}\times S^{2n_2-1}).$ Here $$f^j_{p_1q_1p_2q_2}
(r_1,r_2)=\int_{S^{2n_1-1}}\int_{S^{2n_2-1}}f(r_1 
\omega^1,r_2\omega^2)\overline{Y^j_{p_1q_1p_2q_2}(\omega^1,\omega^2)}d\omega^{1}d\omega^2.$$ By a 
representation theoretic argument it can be shown that (see the proof of Proposition 
4.5 in \cite{T}) if $\delta=\delta^{1}_{p_1q_1}\otimes\delta^{2}_{p_2q_2}$, 
$z=(r_1\omega^1,r_2\omega^2)$, $$f^j_{p_1q_1p_2q_2}(r_1,r_2)Y^j_{p_1q_1p_2q_2}
(\omega^{1},\omega^{2})=d(\delta)\int_{K}f(k\cdot 
z)\big(\delta(k^{-1})Y^j_{p_1q_1p_2q_2},Y^j_{p_1q_1p_2q_2}\big)dk .$$ Hence we can 
conclude that each $f^j_{p_1q_1p_2q_2}(r_1,r_2)Y^j_{p_1q_1p_2q_2}(\omega^1,
\omega^2)\in\mathcal{E}^{\lambda}(\C)$ is a joint eigenfunction of all 
$D\in\mathcal{L}_{K}^{\lambda}(\C)$ with eigenvalue $\mu^{\lambda}_{m_1m_2}.$ But  
then by Corollary \textbf{\ref{eqn c9.6}} \textbf{(a)}, it follows that 
$\big(\textup{for}~z=(z^1,z^2)=(r_1\omega^1,r_2\omega^2)\big)$ $$f^j_{p_1q_1p_2q_2}
(r_1,r_2)Y^j_{p_1q_1p_2q_2}(\omega^1,\omega^2)=a^j_{p_1q_1p_2q_2}P^j_{p_1q_1p_2q_2}
(z)\prod_{i=1}^{2}(-1)^{q_{i}}(2|\lambda|)^{p_{i}+q_{i}}\varphi_{m_{i}-p_i,
\lambda}^{n_{i}+p_i+q_i-1}(z^{i}),$$ 
for some constant $a^j_{p_1q_1p_2q_2}$. Hence (\ref{eqn 9.31}) follows from Corollary 
\textbf{\ref{eqn c9.6}} \textbf{(b)}. Since $Y^{j}_{p_1q_1p_2q_2}$ and 
$P^j_{p_1q_1p_2q_2}$ are related by (\ref{eqn  9.30}), from the above equation we get 
$$f^j_{p_1q_1p_2q_2}
(r_1,r_2)=a^j_{p_1q_1p_2q_2}b^j_{p_1q_1p_2q_2}\prod_{i=1}^{2}r_i^{p_i+q_i}
L_{m_{i}-p_i}^{n_{i}+p_i+q_i-1}(2|\lambda|r_i^2)e^{-|\lambda|r_i^2},$$ where 
$$b^j_{p_1q_1p_2q_2}=\prod_{i=1}^{2}(-1)^{q_{i}}(2|
\lambda|)^{p_{i}+q_{i}}\sqrt{\Gamma(n_i)(2|\lambda|)^{-(p_i+q_i)}\frac{\Gamma(m_i-
p_i+1)}{\Gamma(m_i+n_i+q_i)}}.$$ Now fix $r_1,r_2>0$. Since 
$$\mathop{\sum_{p_1,q_1,p_2,q_2\in\mathbb{Z}^{+}}}_{p_1\leq m_1,p_2\leq  m_2}|
f^j_{p_1q_1p_2q_2}(r_1,r_2)|^2=\big|\big|f(r_1\omega^1,r_2\omega^2)\big|\big|
^{2}_{L^2(S^{2n_1-1}\times S^{2n_2-1})}<\infty,$$ 
$$\mathop{\sum_{p_1,q_1,p_2,q_2\in\mathbb{Z}^{+}}}_{p_1\leq m_1,p_2\leq  m_2}\bigg[|
a^j_{p_1q_1p_2q_2}|^2\prod_{i=1}^2\frac{(2|\lambda|r_i^2)^{q_i}}
{\Gamma(n_i+p_i+q_i)}\bigg]\prod_{i=1}^2\frac{\Gamma(n_i+p_i+q_i)}
{\Gamma(m_i+n_i+q_i)}\big(L_{m_i-p_i}^{n_i+p_i+q_i-1}(2|\lambda|r_i^2)\big)^2<\infty.
$$ Therefore to prove (\ref{eqn 9.32}), it is enough to show that for large $q_1,q_2$, 
\bea\label{eqn 9.34}\prod_{i=1}^2\frac{\Gamma(n_i+p_i+q_i)}
{\Gamma(m_i+n_i+q_i)}\big(L_{m_i-p_i}^{n_i+p_i+q_i-1}(2|\lambda|r_i^2)\big)^2>c\eea 
for all $p_i\leq m_i.$ Now if $\alpha+1>2kt$, then $$|L_{k}^{\alpha}(t)|\geq\frac{1}
{2}\frac{\Gamma(k+\alpha+1)}{\Gamma(k+1)\Gamma(\alpha+1)}.$$ So when $n_i+q_i>2m_i(2|
\lambda|r^2_i)$, then $n_i+p_i+q_i>2(m_i-p_i)(2|\lambda|r^2_i)$ for all $p_i\leq m_i$ 
and hence $$L_{m_i-p_i}^{n_i+p_i+q_i-1}(2|\lambda|r_i^2)\geq\frac{1}
{2}\frac{\Gamma(m_i+q_i+n_i)}{\Gamma(m_i-p_i+1)\Gamma(n_i+p_i+q_i)}\geq \frac{1}
{2}\frac{\Gamma(m_i+q_i+n_i)}{\Gamma(m_i+1)\Gamma(n_i+p_i+q_i)}.$$ Therefore for all 
$q_i>2m_i(2|\lambda|r^2_i)-n_i$ and $p_i\leq m_i$, we have $$\big(L_{m_i-
p_i}^{n_i+p_i+q_i-1}(2|\lambda|r_i^2)\big)^2\geq \frac{1}{4}\frac{\Gamma(m_i+q_i+n_i)}
{\Gamma(m_i+1)^2\Gamma(n_i+p_i+q_i)},$$ which implies (\ref{eqn 9.34}). Hence the 
proof is complete. 
\end{proof}

Following \cite{T}, for each positive integer $k$, we define $\mathcal{B}_{k}$ to be 
the subspace of operators $S\in\mathcal{O}^{\lambda}(V_{m_1m_2})$ for which 
$$\mathop{\sum_{p_1,q_1,p_2,q_2\in\mathbb{Z}^{+}}}_{p_1\leq m_1,p_2\leq  m_2}||
\mathcal{P}_{\mathcal{W}^{\lambda}(H^1_{p_1q_1}\otimes H^2_{p_2q_2})}S||
_{m_1m_2}^2\prod_{i=1}^2\frac{\Gamma(n_i+p_i+q_i)}{k^{q_i}}<\infty,$$ where 
$\mathcal{P}_{\mathcal{W}^{\lambda}(H^1_{p_1q_1}\otimes H^2_{p_2q_2})}$ is the 
projection on $\mathcal{W}^{\lambda}(H^1_{p_1q_1}\otimes H^2_{p_2q_2})$, that is 
$$\mathcal{P}_{\mathcal{W}^{\lambda}(H^1_{p_1q_1}\otimes 
H^2_{p_2q_2})}S=\sum_{j=1}^{d(p_1,q_1,p_2,q_2)}\big\langle S,\mathcal{W}^{\lambda}
(P^j_{p_1q_1p_2q_2})\big\rangle^{\lambda}_{m_1m_2}\mathcal{W}^{\lambda}
(P^j_{p_1q_1p_2q_2}).$$ Then $\mathcal{B}_{k}$ becomes a Hilbert space if we define 
the inner product as $$\langle 
S_1,S_2\rangle_{\mathcal{B}_{k}}=\mathop{\sum_{p_1,q_1,p_2,q_2\in\mathbb{Z}^{+}}}_{p_1\leq 
m_1,p_2\leq  m_2}\big\langle\mathcal{P}_{\mathcal{W}^{\lambda}(H^1_{p_1q_1}\otimes 
H^2_{p_2q_2})}S_1,\mathcal{P}_{\mathcal{W}^{\lambda}(H^1_{p_1q_1}\otimes 
H^2_{p_2q_2})}S_2\big\rangle^{\lambda}_{m_1m_2}\prod_{i=1}^2\frac{\Gamma(n_i+p_i+q_i)}
{k^{q_i}}.$$ Note that for each $k\in\mathbb{N}$, 
$\mathcal{B}_{k}\subset\mathcal{B}_{k+1}$ and the inclusion 
$\mathcal{B}_{k}\hookrightarrow\mathcal{B}_{k+1}$ is continuous. We define 
$\mathcal{B}=\cup_{k\in\mathbb{N}}\mathcal{B}_{k}$ and equip it with the inductive 
limit topology.

\begin{lem}\label{eqn l9.9}
For each fixed $z\in\C$, $\Pi^{\lambda}(z)\in\mathcal{B}.$
\end{lem}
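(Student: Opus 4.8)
The plan is to verify that the restriction of $\Pi^{\lambda}(z)$ to $V_{m_1m_2}$ lands in some $\mathcal{B}_k$, where $k$ is allowed to depend on $z$; this suffices since $\mathcal{B}=\cup_{k}\mathcal{B}_k$ carries the inductive limit topology. First I would note that $V_{m_1m_2}\subset\mathcal{P}(\C)\subset\mathcal{H}^{\lambda}$ is finite dimensional and $\Pi^{\lambda}(z)$ is unitary on $\mathcal{H}^{\lambda}$, so its restriction to $V_{m_1m_2}$ is a bounded operator, i.e. an element of $\mathcal{O}^{\lambda}(V_{m_1m_2})$. Since $\{\mathcal{W}^{\lambda}(P^j_{p_1q_1p_2q_2})\}$ is an orthonormal basis of $\mathcal{O}^{\lambda}(V_{m_1m_2})$ by Lemma \textbf{\ref{eqn l9.7}}, and $\mathcal{P}_{\mathcal{W}^{\lambda}(H^1_{p_1q_1}\otimes H^2_{p_2q_2})}$ is the orthogonal projection onto the span of the finitely many $\mathcal{W}^{\lambda}(P^j_{p_1q_1p_2q_2})$, $j=1,\dots,d(p_1,q_1,p_2,q_2)$, the relevant norm is
$$\big\|\mathcal{P}_{\mathcal{W}^{\lambda}(H^1_{p_1q_1}\otimes H^2_{p_2q_2})}\Pi^{\lambda}(z)\big\|^2_{m_1m_2}=\sum_{j=1}^{d(p_1,q_1,p_2,q_2)}\big|\big\langle\Pi^{\lambda}(z),\mathcal{W}^{\lambda}(P^j_{p_1q_1p_2q_2})\big\rangle^{\lambda}_{m_1m_2}\big|^2.$$
Thus the whole problem reduces to estimating these Fourier coefficients and summing them against the weight $\prod_{i=1}^2\Gamma(n_i+p_i+q_i)/k^{q_i}$ appearing in the definition of $\mathcal{B}_k$.

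The second step is to compute the coefficients explicitly. By Corollary \textbf{\ref{eqn c9.6}} \textbf{(b)}, for $P^j_{p_1q_1p_2q_2}\in H^1_{p_1q_1}\otimes H^2_{p_2q_2}$ the coefficient $\langle\Pi^{\lambda}(z),\mathcal{W}^{\lambda}(P^j_{p_1q_1p_2q_2})\rangle^{\lambda}_{m_1m_2}$ equals $P^j_{p_1q_1p_2q_2}(z)\prod_{i=1}^2(-1)^{q_i}(2|\lambda|)^{p_i+q_i}\varphi^{n_i+p_i+q_i-1}_{m_i-p_i,\lambda}(z^i)$ when $p_i\le m_i$ for both $i$, and vanishes otherwise; in particular only terms with $p_1\le m_1,p_2\le m_2$ survive, matching the index range of $\mathcal{B}_k$. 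The factor $\prod_i(\cdots)\varphi(\cdots)$ is independent of $j$, so after squaring and summing over $j$ the only $j$-dependent piece is $\sum_j|P^j_{p_1q_1p_2q_2}(z)|^2$. Using the normalization (\ref{eqn  9.30}), writing $z^i=r_i\omega^i$, and the addition formula $\sum_j|Y^j_{p_1q_1p_2q_2}(\omega^1,\omega^2)|^2=d(p_1,q_1,p_2,q_2)/(|S^{2n_1-1}||S^{2n_2-1}|)$ — which holds because this sum is $K$-invariant and $K$ acts transitively on $S^{2n_1-1}\times S^{2n_2-1}$ — I would obtain a closed expression in which the $j$-sum contributes a factor $d(p_1,q_1,p_2,q_2)$ and each direction $i$ contributes $\Gamma(n_i)\frac{\Gamma(m_i-p_i+1)}{\Gamma(m_i+n_i+q_i)}(2|\lambda|r_i^2)^{p_i+q_i}\big[L^{n_i+p_i+q_i-1}_{m_i-p_i}(2|\lambda|r_i^2)\big]^2e^{-2|\lambda|r_i^2}$.

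The final step is the growth estimate, where the real content lies. After multiplying by the weight $\prod_i\Gamma(n_i+p_i+q_i)/k^{q_i}$, the Gamma functions combine into $\prod_i\Gamma(n_i+p_i+q_i)/\Gamma(m_i+n_i+q_i)$, which — since $p_i\le m_i$ — is bounded (indeed $O(q_i^{p_i-m_i})$ by Stirling) rather than factorially large; this cancellation is crucial, since a naive reading of the weight $\Gamma(n_i+p_i+q_i)$ might suggest factorial growth. The purely geometric part is $\prod_i(2|\lambda|r_i^2/k)^{q_i}$, while the dimension $d(p_1,q_1,p_2,q_2)$ and the squared Laguerre values $\big[L^{n_i+p_i+q_i-1}_{m_i-p_i}(2|\lambda|r_i^2)\big]^2$ grow only polynomially in $q_1,q_2$ (the former because $\dim\mathcal{E}^i_{p_iq_i}$ is polynomial in $q_i$, the latter because $L^{\alpha}_{m_i-p_i}(t)$ is a polynomial of degree $m_i-p_i$ in $\alpha$ for fixed $t$). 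Hence choosing any integer $k>2|\lambda|\max(|z^1|^2,|z^2|^2)$ makes each geometric ratio strictly less than one, so the double series over $q_1,q_2$ (with $p_1\le m_1,p_2\le m_2$ ranging over a finite set) converges. This gives $\Pi^{\lambda}(z)\in\mathcal{B}_k\subset\mathcal{B}$. The main obstacle is organizing the constants so that the Gamma-function cancellation is visible; once it is in hand, the factor $k^{-q_i}$ dominates everything and convergence is routine.
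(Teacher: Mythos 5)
Your proof is correct and follows essentially the same route as the paper's: both reduce the statement to estimating $\big|\big|\mathcal{P}_{\mathcal{W}^{\lambda}(H^1_{p_1q_1}\otimes H^2_{p_2q_2})}\Pi^{\lambda}(z)\big|\big|^2_{m_1m_2}$ via Corollary \textbf{\ref{eqn c9.6}} \textbf{(b)}, exploit the cancellation $\Gamma(n_i+p_i+q_i)/\Gamma(m_i+n_i+q_i)\leq 1$ against the weight in the definition of $\mathcal{B}_k$, and then choose $k$ larger than $2|\lambda||z^i|^2$ so that a polynomial-times-geometric series converges. The only (harmless) differences are technical: you evaluate the $j$-sum exactly by the addition theorem for spherical harmonics and bound the Laguerre factor by its polynomial dependence on the order, whereas the paper uses a sup-norm bound on each $Y^j_{p_1q_1p_2q_2}$ together with the uniform estimate $\big|\varphi^{n_i+p_i+q_i-1}_{m_i-p_i,\lambda}(z^i)\big|\leq c\,\Gamma(m_i+n_i+q_i)/\big(\Gamma(m_i-p_i+1)\Gamma(n_i+p_i+q_i)\big)$.
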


\begin{proof}
Fix $z\in\C$. \beas||\mathcal{P}_{\mathcal{W}(H^1_{p_1q_1}\otimes 
H^2_{p_2q_2})}\Pi(z)||^2_{m_1m_2}&=& \sum_{j=1}^{d(p_1,q_1,p_2,q_2)}\big|
\big\langle\Pi(z),\mathcal{W}(P^j_{p_1q_1p_2q_2})\big\rangle_{m_1m_2}\big|
^2\\&=&\sum_{j=1}^{d(p_1,q_1,p_2,q_2)}\bigg|P^j_{p_1q_1p_2q_2}(z)\prod_{i=1}^{2}
(-1)^{q_{i}}(2|\lambda|)^{p_{i}+q_{i}}\varphi_{m_{i}-p_i,\lambda}^{n_{i}+p_i+q_i-1}
(z^{i})\bigg|^2.\eeas Since $|Y^j_{p_1q_1p_2q_2}(\omega^1,\omega^2)|\leq 
c_1\prod_{i=1}^2(p_i+q_i)^{n_i-1}$ and $$\big|\varphi_{m_{i}-p_i,
\lambda}^{n_{i}+p_i+q_i-1}(z^{i})\big|\leq c_2\frac{\Gamma(m_i+n_i+q_i)}{\Gamma(m_i-
p_i+1)\Gamma(n_i+p_i+q_i)},$$ we get \beas||\mathcal{P}_{\mathcal{W}
(H^1_{p_1q_1}\otimes H^2_{p_2q_2})}\Pi(z)||^2_{m_1m_2}&\leq & c_3 
\prod_{i=1}^2(p_i+q_i)^{2n_i-2}\frac{\Gamma(m_i+n_i+q_i)(2|\lambda|r_i^2)^{p_i+q_i}}
{\Gamma(m_i-p_i+1)\big(\Gamma(n_i+p_i+q_i)\big)^2}\\&\leq & c_3 
\prod_{i=1}^2(p_i+q_i)^{2n_i}\frac{(m_i+n_i+q_i)^{m_i-p_i}(2|\lambda|r_i^2)^{p_i+q_i}}
{\Gamma(m_i-p_i+1)\Gamma(n_i+p_i+q_i)}\\&\leq & c^{\prime}_3 
\prod_{i=1}^2\frac{q_i^{2n_i+m_i}(2|\lambda|r_i^2)^{q_i}}
{\Gamma(n_i+p_i+q_i)},~\textup{for all}~q_i\in\mathbb{Z}^{+},~p_i\leq m_i.\eeas Now 
choose $k$ such that $2|\lambda|r_i^2\leq \frac{k}{2}.$ Then $$||
\mathcal{P}_{\mathcal{W}(H^1_{p_1q_1}\otimes H^2_{p_2q_2})}\Pi(z)||^2_{m_1m_2}\leq 
c^{\prime}_3 \prod_{i=1}^2\frac{q_i^{2n_i+m_i}}{2^{q_i}}\frac{k^{q_i}}
{\Gamma(n_i+p_i+q_i)},$$ which implies $$\big|\big|\Pi(z)\big|\big|
^2_{\mathcal{B}_{k}}=\leq c^{\prime}_3 
\mathop{\sum_{p_1,q_1,p_2,q_2\in\mathbb{Z}^{+}}}_{p_1\leq m_1,p_2\leq  
m_2}\prod_{i=1}^2\frac{q_i^{2n_i+m_i}}{2^{q_i}}<\infty.$$ Therefore 
$\Pi(z)\in\mathcal{B}_{k}$. Hence the proof. 
\end{proof}

\begin{lem}
Let $\mathcal{B}^{*}$ be the dual of $\mathcal{B}$. If $\upsilon\in\mathcal{B}^{*}$ 
then {\small\bea\label{eqn 9.35}||\upsilon||
_{k}^2:=\mathop{\sum_{p_1,q_1,p_2,q_2\in\mathbb{Z}^{+}}}_{p_1\leq m_1,p_2\leq  
m_2}\sum_{j=1}^{d(p_1,q_1,p_2,q_2)}\big|\upsilon\big(\mathcal{W}^{\lambda}
(P^{j}_{p_1q_1p_2q_2})\big)\big|^{2}\prod_{i=1}^{2}\frac{k^{q_i}}
{\Gamma(n_i+p_i+q_i)}<\infty,\eea} for all $k\in\mathbb{N}.$ Conversely if the 
constants $a^j_{p_1q_1p_2q_2}$'s satisfy \bea\label{eqn 
9.36}\mathop{\sum_{p_1,q_1,p_2,q_2\in\mathbb{Z}^{+}}}_{p_1\leq m_1,p_2\leq  
m_2}\sum_{j=1}^{d(p_1,q_1,p_2,q_2)}|a^j_{p_1q_1p_2q_2}|
^{2}\prod_{i=1}^{2}\frac{k^{q_i}}{\Gamma(n_i+p_i+q_i)}<\infty\eea for all 
$k\in\mathbb{N}$, then there is a unique $\upsilon\in\mathcal{B}^{*}$ such that 
$\upsilon\big(\mathcal{W}^{\lambda}(P^{j}_{p_1q_1p_2q_2})\big)=a^j_{p_1q_1p_2q_2}.$
\end{lem}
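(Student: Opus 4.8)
The plan is to recognize $\mathcal{B}=\bigcup_{k}\mathcal{B}_{k}$ as a countable locally convex inductive limit of Hilbert spaces and to read both assertions off from the duality of the individual steps $\mathcal{B}_{k}$. First I would fix notation: write the index set as $A=\{(p_1,q_1,p_2,q_2,j):p_1\le m_1,\,p_2\le m_2,\,q_1,q_2\in\mathbb{Z}^{+},\,1\le j\le d(p_1,q_1,p_2,q_2)\}$, abbreviate $e_{a}=\mathcal{W}^{\lambda}(P^{j}_{p_1q_1p_2q_2})$ for $a\in A$, and set $w_{a}^{(k)}=\prod_{i=1}^{2}\Gamma(n_i+p_i+q_i)k^{-q_i}$. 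By Lemma \textbf{\ref{eqn l9.7}} the family $\{e_{a}\}_{a\in A}$ is an orthonormal basis of $\mathcal{O}^{\lambda}(V_{m_1m_2})$, so with $c_{a}(S)=\langle S,e_{a}\rangle^{\lambda}_{m_1m_2}$ one has $\|\mathcal{P}_{\mathcal{W}^{\lambda}(H^1_{p_1q_1}\otimes H^2_{p_2q_2})}S\|^{2}_{m_1m_2}=\sum_{j}|c_{a}(S)|^{2}$, whence $\|S\|^{2}_{\mathcal{B}_{k}}=\sum_{a\in A}|c_{a}(S)|^{2}w_{a}^{(k)}$ and $\langle e_{a},e_{b}\rangle_{\mathcal{B}_{k}}=\delta_{ab}w_{a}^{(k)}$. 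Thus each $\mathcal{B}_{k}$ is the weighted $\ell^{2}$-space for the weights $w^{(k)}$, and $\{(w_{a}^{(k)})^{-1/2}e_{a}\}_{a\in A}$ is an orthonormal basis of it. The one structural input I would invoke is the universal property of the inductive limit topology: a linear functional $\upsilon$ on $\mathcal{B}$ is continuous, i.e. $\upsilon\in\mathcal{B}^{*}$, if and only if its restriction $\upsilon|_{\mathcal{B}_{k}}$ is continuous for every $k$.

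For the forward implication I would fix $k$ and compute the norm of $\upsilon|_{\mathcal{B}_{k}}$ in $(\mathcal{B}_{k})^{*}$ against the orthonormal basis $\{(w_{a}^{(k)})^{-1/2}e_{a}\}$ of $\mathcal{B}_{k}$: by the Riesz representation theorem $\|\upsilon\|^{2}_{(\mathcal{B}_{k})^{*}}=\sum_{a\in A}\big|\upsilon\big((w_{a}^{(k)})^{-1/2}e_{a}\big)\big|^{2}=\sum_{a\in A}\frac{|\upsilon(e_{a})|^{2}}{w_{a}^{(k)}}$, which is exactly the left-hand side of (\ref{eqn 9.35}). Since $\upsilon\in\mathcal{B}^{*}$ forces $\upsilon|_{\mathcal{B}_{k}}$ to be bounded for each $k$, the finiteness (\ref{eqn 9.35}) follows for every $k$.

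For the converse, suppose the constants $a^{j}_{p_1q_1p_2q_2}$, abbreviated $a_{a}$ ($a\in A$), satisfy (\ref{eqn 9.36}), i.e. $\sum_{a\in A}|a_{a}|^{2}/w_{a}^{(k)}<\infty$ for every $k$. I would define $\upsilon$ on $\mathcal{B}$ by $\upsilon(S)=\sum_{a\in A}c_{a}(S)\,a_{a}$; since the coefficients $c_{a}(S)$ are intrinsic to $S$, this is unambiguous on $\mathcal{B}=\bigcup_{k}\mathcal{B}_{k}$. For $S\in\mathcal{B}_{k}$, Cauchy--Schwarz gives $\sum_{a}|c_{a}(S)a_{a}|\le\big(\sum_{a}|c_{a}(S)|^{2}w_{a}^{(k)}\big)^{1/2}\big(\sum_{a}|a_{a}|^{2}/w_{a}^{(k)}\big)^{1/2}$, where the first factor is $\|S\|_{\mathcal{B}_{k}}$ and the second is finite by (\ref{eqn 9.36}); hence the series converges absolutely and $|\upsilon(S)|\le C_{k}\|S\|_{\mathcal{B}_{k}}$ with $C_{k}=\big(\sum_{a}|a_{a}|^{2}/w_{a}^{(k)}\big)^{1/2}$. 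So $\upsilon|_{\mathcal{B}_{k}}$ is continuous for every $k$, giving $\upsilon\in\mathcal{B}^{*}$, and by construction $\upsilon(e_{a})=a_{a}$. Uniqueness is immediate: the finite linear combinations of the $e_{a}$ are dense in each $\mathcal{B}_{k}$, hence in $\mathcal{B}$, and a continuous functional is determined by its values on a dense subspace. The only step that is not a routine Cauchy--Schwarz or Riesz computation is the reduction of continuity on $\mathcal{B}$ to continuity on each $\mathcal{B}_{k}$, so I expect the standard theory of locally convex inductive limits to carry that point and the rest to be self-contained.
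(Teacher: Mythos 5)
Your proof is correct and takes essentially the same route as the paper: both reduce membership in $\mathcal{B}^{*}$ to boundedness on each Hilbert space $\mathcal{B}_{k}$ via the inductive limit topology, compute the dual norm there by Riesz representation/Parseval in the weighted orthonormal basis $\big\{\big(\prod_{i=1}^{2}k^{q_i}/\Gamma(n_i+p_i+q_i)\big)^{1/2}\mathcal{W}^{\lambda}(P^{j}_{p_1q_1p_2q_2})\big\}$, and in the converse direction define $\upsilon(S)=\sum a^{j}_{p_1q_1p_2q_2}\big\langle S,\mathcal{W}^{\lambda}(P^{j}_{p_1q_1p_2q_2})\big\rangle^{\lambda}_{m_1m_2}$, which is exactly the functional the paper constructs through its Riesz representative $S_{k}$. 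Your uniqueness argument via density of finite linear combinations of the basis vectors is also the paper's argument, merely phrased in terms of density rather than the orthonormal basis property.
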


\begin{proof}
Since the topology on $\mathcal{B}$ is the inductive limit topology, 
$\upsilon\in\mathcal{B}^{*}$ if and only if $\upsilon\in \mathcal{B}_{k}^{*}$ for all 
$k$. Fix a $k$. 
Then as $\mathcal{B}_{k}$ is a Hilbert 
space, there exists $S_k\in\mathcal{B}_{k}$ such that $\upsilon(S)=\big\langle 
S,S_k\big\rangle_{\mathcal{B}_{k}}$ for all $S\in\mathcal{B}_{k}.$ Taking 
$S=\mathcal{W}(P^j_{p_1q_1p_2q_2})$, we get $$\upsilon\big(\mathcal{W}
(P^j_{p_1q_1p_2q_2})\big)=\big\langle \mathcal{W}
(P^j_{p_1q_1p_2q_2}),S_k\big\rangle_{\mathcal{B}_{k}}=\big\langle \mathcal{W}
(P^j_{p_1q_1p_2q_2}),S_k\big\rangle_{m_1m_2}\prod_{i=1}^{2}\frac{\Gamma(n_i+p_i+q_i)}
{k^{q_i}}.$$ Since $S_k\in\mathcal{B}_{k}$, 
$$\mathop{\sum_{p_1,q_1,p_2,q_2\in\mathbb{Z}^{+}}}_{p_1\leq m_1,p_2\leq  
m_2}\sum_{j=1}^{d(p_1,q_1,p_2,q_2)}\big|\big\langle S_k,\mathcal{W}
(P^j_{p_1q_1p_2q_2})\big\rangle_{m_1m_2}\big|^2\prod_{i=1}^2\frac{\Gamma(n_i+p_i+q_i)}
{k^{q_i}}<\infty .$$ Hence (\ref{eqn 9.35}) follows. Conversely, let the constants 
$a^{j}_{p_1q_1p_2q_2}$'s satisfy (\ref{eqn 9.36}). Then we can define an operator 
$S_{k}\in\mathcal{B}_{k}$ by $$\big\langle \mathcal{W}
(P^j_{p_1q_1p_2q_2}),S_k\big\rangle_{m_1m_2}=a^j_{p_1q_1p_2q_2}\prod_{i=1}^2\frac{k^{q_i}}
{\Gamma(n_i+p_i+q_i)}.$$ For each $k\in\mathbb{N}$, define $\upsilon_{k}\in\mathcal{B}_{k}^{*}$, by $\upsilon_{k}
(S)=\big\langle S,S_k\big\rangle_{\mathcal{B}_{k}}$ for all $S\in\mathcal{B}_{k}$.
Note that $$\upsilon_{k}(S)=\mathop{\sum_{p_1,q_1,p_2,q_2\in\mathbb{Z}^{+}}}_{p_1\leq 
m_1,p_2\leq  m_2}\sum_{j=1}^{d(p_1,q_1,p_2,q_2)}a^j_{p_1q_1p_2q_2}\big\langle S,
\mathcal{W}(P^j_{p_1q_1p_2q_2})\big\rangle_{m_1m_2},~s\in\mathcal{B}_k.$$ Therefore 
for any $S\in\mathcal{B}$, if we define $\upsilon(S)$ to be equal to the right hand 
side of the above equation then 
$\upsilon\mid_{B_k}=\upsilon_{k}\in\mathcal{B}_{k}^{*}$. Hence 
$\upsilon\in\mathcal{B}^{*}$. Also note that $\upsilon(\mathcal{W}
(P^j_{p_1q_1p_2q_2}))=a^j_{p_1q_1p_2q_2}$. Uniqueness of $\upsilon$ follows from the 
fact that $$\bigg\{\sqrt{\prod_{i=1}^2\frac{k^{q_i}}{\Gamma(n_i+p_i+q_i)}}~\mathcal{W}
(P^j_{p_1q_1p_2q_2}):j=1,2,\cdots,d(p_1,q_1,p_2,q_2);p_i\leq 
m_i,q_i\in\mathbb{Z}^{+}\bigg\}$$ forms an orthonormal basis for $\mathcal{B}_k$. 
Hence the proof is complete.   
\end{proof}

\begin{thm}\label{eqn t9.11}
Let $f$ be a joint eigenfunction of all $D\in\mathcal{L}_{K}^{\lambda}(\C)$ with eigenvalue $\mu^{\lambda}_{m_1m_2}$ such that $\chi_{\delta}* f\in\mathcal{E}^{\lambda}(\C)$ for all $\delta\in\widehat{K}_{M}$. Then $f(z)=\upsilon\big(\Pi^{\lambda}(z)\big)$ for a unique $\nu\in\mathcal{B}^{*}$
Conversely, if $f(z)=\upsilon\big(\Pi^{\lambda}(z)\big)$ for some  $\nu\in\mathcal{B}^{*}$, then $f$ is a joint eigenfunction of all $D\in\mathcal{L}_{K}^{\lambda}(\C)$ with eigenvalue $\mu^{\lambda}_{m_1m_2}$ and $\chi_{\delta}* f\in\mathcal{E}^{\lambda}(\C)$ for all $\delta\in\widehat{K}_{M}$.

\end{thm}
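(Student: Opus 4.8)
The plan is to prove the two implications separately, reading off the coefficients of $f$ from Lemma \textbf{\ref{eqn l9.8}} and transferring them through the duality description of $\mathcal{B}^{*}$ in the lemma immediately preceding this theorem. First I would treat the forward direction. Assuming $f$ is a joint eigenfunction for all $D\in\mathcal{L}_{K}^{\lambda}(\C)$ with eigenvalue $\mu^{\lambda}_{m_1m_2}$ and $\chi_{\delta}*f\in\mathcal{E}^{\lambda}(\C)$ for every $\delta\in\widehat{K}_{M}$, Lemma \textbf{\ref{eqn l9.8}} yields the expansion (\ref{eqn 9.31}) with coefficients $a^{j}_{p_1q_1p_2q_2}$ obeying (\ref{eqn 9.32}); since (\ref{eqn 9.32}) is equivalent to (\ref{eqn 9.36}) (take $k_1=k_2=k$ in one direction and $k=\max\{k_1,k_2\}$ in the other), the preceding lemma furnishes a unique $\upsilon\in\mathcal{B}^{*}$ with $\upsilon(\mathcal{W}^{\lambda}(P^{j}_{p_1q_1p_2q_2}))=a^{j}_{p_1q_1p_2q_2}$. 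By Lemma \textbf{\ref{eqn l9.9}}, $\Pi^{\lambda}(z)\in\mathcal{B}$ for each $z$, and the explicit formula for $\upsilon$ obtained in the proof of the preceding lemma gives
$$\upsilon(\Pi^{\lambda}(z))=\mathop{\sum_{p_1,q_1,p_2,q_2\in\mathbb{Z}^{+}}}_{p_1\leq m_1,p_2\leq m_2}\sum_{j=1}^{d(p_1,q_1,p_2,q_2)}a^{j}_{p_1q_1p_2q_2}\big\langle\Pi^{\lambda}(z),\mathcal{W}^{\lambda}(P^{j}_{p_1q_1p_2q_2})\big\rangle^{\lambda}_{m_1m_2},$$
which is precisely the right-hand side of (\ref{eqn 9.31}); hence $f(z)=\upsilon(\Pi^{\lambda}(z))$.

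For the uniqueness of $\upsilon$, I would show that $\omega\in\mathcal{B}^{*}$ with $\omega(\Pi^{\lambda}(z))\equiv 0$ must vanish. Setting $b^{j}=\omega(\mathcal{W}^{\lambda}(P^{j}_{p_1q_1p_2q_2}))$ and evaluating each pairing by Corollary \textbf{\ref{eqn c9.6}} \textbf{(b)}, the hypothesis reads $\sum b^{j}P^{j}_{p_1q_1p_2q_2}(z)\prod_{i}(-1)^{q_i}(2|\lambda|)^{p_i+q_i}\varphi^{n_i+p_i+q_i-1}_{m_i-p_i,\lambda}(z^{i})\equiv 0$. In polar coordinates $z^{i}=r_i\omega^{i}$ the angular parts are the $Y^{j}_{p_1q_1p_2q_2}$, which form an orthonormal basis of $L^{2}(S^{2n_1-1}\times S^{2n_2-1})$ by Proposition \textbf{\ref{eqn p9.3}} \textbf{(b)}, so for each fixed $(r_1,r_2)$ every coefficient vanishes; as the radial factor carries the product of Laguerre polynomials $L^{n_i+p_i+q_i-1}_{m_i-p_i}$, which is not identically zero, I get $b^{j}=0$ for all indices. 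Since the $\{\mathcal{W}^{\lambda}(P^{j}_{p_1q_1p_2q_2})\}$ span a dense subspace of every $\mathcal{B}_k$ (they are the orthonormal basis of $\mathcal{B}_k$ up to the normalizing constants exhibited in the proof of the preceding lemma), continuity forces $\omega=0$.

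For the converse, given $\upsilon\in\mathcal{B}^{*}$ I set $a^{j}_{p_1q_1p_2q_2}=\upsilon(\mathcal{W}^{\lambda}(P^{j}_{p_1q_1p_2q_2}))$, which satisfy (\ref{eqn 9.36}) by the preceding lemma, and as above obtain $f(z)=\upsilon(\Pi^{\lambda}(z))=\sum a^{j}\langle\Pi^{\lambda}(z),\mathcal{W}^{\lambda}(P^{j})\rangle^{\lambda}_{m_1m_2}$. A Cauchy--Schwarz estimate pairing (\ref{eqn 9.36}) against the bound for $\|\Pi^{\lambda}(z)\|_{\mathcal{B}_k}$ from the proof of Lemma \textbf{\ref{eqn l9.9}} shows this series converges uniformly on compact subsets of $\C$, so $f$ is continuous and the partial sums $f_N$ converge to $f$ in the distribution sense. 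By Proposition \textbf{\ref{eqn p7.14}} each term equals $\theta^{\lambda}(P^{j})\psi^{\lambda}_{m_1m_2}$, a generalized $K$-spherical function of eigenvalue $\mu^{\lambda}_{m_1m_2}$ by Proposition \textbf{\ref{eqn p7.2}}, so $Df_N=\mu^{\lambda}_{m_1m_2}(D)f_N$ for all $D\in\mathcal{L}_{K}^{\lambda}(\C)$; letting $N\to\infty$ and using continuity of differentiation on distributions gives $Df=\mu^{\lambda}_{m_1m_2}(D)f$, whence ellipticity of the generator $\mathcal{L}^{\lambda}$ makes $f$ smooth and a genuine joint eigenfunction. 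Finally, for $\delta=\delta^{1}_{p_1q_1}\otimes\delta^{2}_{p_2q_2}$ the projection $\chi_{\delta}*f$ collapses to the finite inner sum over $j$, which by Corollary \textbf{\ref{eqn c9.6}} \textbf{(b)} is a polynomial times $e^{-|\lambda||z|^{2}}$ and hence lies in $\mathcal{E}^{\lambda}(\C)$.

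I expect the main obstacle to be the converse verification that $f$ is a smooth joint eigenfunction, i.e. the interchange of each $D\in\mathcal{L}_{K}^{\lambda}(\C)$ with the infinite series. The cleanest way is to avoid differentiating a merely uniformly-convergent series directly and instead pass the eigenvalue identity $Df_N=\mu^{\lambda}_{m_1m_2}(D)f_N$ to the limit in the distribution sense, then invoke elliptic regularity of $\mathcal{L}^{\lambda}$ to recover smoothness and the classical eigenfunction equation. The accompanying estimate linking (\ref{eqn 9.36}) to the $\mathcal{B}_k$-norm bound for $\Pi^{\lambda}(z)$ is routine but must be arranged so that the convergence is uniform over compact sets in $z$.
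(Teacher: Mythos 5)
Your proposal is correct and follows essentially the same route as the paper's proof: the forward direction via Lemma \textbf{\ref{eqn l9.8}} and the duality lemma for $\mathcal{B}^{*}$, uniqueness by pairing $\upsilon\big(\Pi^{\lambda}(r_1\cdot,r_2\cdot)\big)$ against the $Y^{j}_{p_1q_1p_2q_2}$ and using non-vanishing of the Laguerre radial factors, and the converse via the expansion (\ref{eqn 9.37}) with term-by-term differentiation. Your two refinements (making the equivalence of (\ref{eqn 9.32}) and (\ref{eqn 9.36}) explicit, and invoking elliptic regularity of $\mathcal{L}^{\lambda}$ for smoothness rather than asserting it from uniform convergence) only tighten details the paper leaves implicit.
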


\begin{proof}
Let $\upsilon\in\mathcal{B}^{*}$ and $f(z)=\upsilon\big(\Pi(z)\big)$. We claim that 
$$f(z)=\mathop{\sum_{p_1,q_1,p_2,q_2\in\mathbb{Z}^{+}}}_{p_1\leq m_1,p_2\leq  
m_2}\sum_{j=1}^{d(p_1,q_1,p_2,q_2)}\upsilon\big(\mathcal{W}
(P^{j}_{p_1q_1p_2q_2})\big)\big\langle\Pi(z),\mathcal{W}
(P^{j}_{p_1q_1p_2q_2})\big\rangle_{m_1m_2},$$ where the right hand side converges 
absolutely and uniformly over every compact subset of $\C$. To prove the claim fix $r_i>0$. 
Then the proof of Lemma \textbf{\ref{eqn l9.9}} shows that there exist 
$k\in\mathbb{N}$ (depending on $r_i$) such that $\Pi(z)\in\mathcal{B}_{k}$ and $||
\Pi(z)||_{\mathcal{B}_{k}}<c$ for all $z\in\C$ with $|z^i|\leq r_i$. Since 
$\Pi(z)\in\mathcal{B}_{k}$, it follows that 
$$\mathop{\sum_{p_1,q_1,p_2,q_2\in\mathbb{Z}^{+}}}_{p_1\leq m_1,p_2\leq  
m_2}\sum_{j=1}^{d(p_1,q_1,p_2,q_2)}\big\langle\Pi(z),\mathcal{W}
(P^{j}_{p_1q_1p_2q_2})\big\rangle_{m_1m_2}\mathcal{W}(P^{j}_{p_1q_1p_2q_2})$$ 
converges to $\Pi(z)$ in the Hilbert space $\mathcal{B}_{k}$. Since 
$\upsilon\in\mathcal{B}^{*}_{k}$, we get \bea\label{eqn 
9.37}\upsilon\big(\Pi(z)\big)=\mathop{\sum_{p_1,q_1,p_2,q_2\in\mathbb{Z}^{+}}}_{p_1\leq 
m_1,p_2\leq  m_2}\sum_{j=1}^{d(p_1,q_1,p_2,q_2)}\upsilon\big(\mathcal{W}
(P^{j}_{p_1q_1p_2q_2})\big)\big\langle\Pi(z),\mathcal{W}
(P^{j}_{p_1q_1p_2q_2})\big\rangle_{m_1m_2}.\eea 
Multiply 
$\upsilon\big(\mathcal{W}_{m_1m_2}(P^{j}_{p_1q_1p_2q_2})\big)$ by 
$\Pi_{i=1}^2k^{q_i}\big(\Gamma(n_i+p_i+q_i)\big)^{-1}$, $\big\langle\Pi(z),\mathcal{W}
(P^{j}_{p_1q_1p_2q_2})\big\rangle_{m_1m_2}$ by $\Pi_{i=1}^2k^{-
q_i}\Gamma(n_i+p_i+q_i)$ and then use the Cauchy-Schwarz inequality to get \beas 
&&\mathop{\sum_{p_1,q_1,p_2,q_2\in\mathbb{Z}^{+}}}_{p_1\leq m_1,p_2\leq  
m_2}\sum_{j=1}^{d(p_1,q_1,p_2,q_2)}\bigg|\upsilon\big(\mathcal{W}
(P^{j}_{p_1q_1p_2q_2})\big)\big\langle\Pi(z),\mathcal{W}
(P^{j}_{p_1q_1p_2q_2})\big\rangle_{m_1m_2}\bigg|\\&=&\leq ||\upsilon||_{k}||\Pi(z)||
_{\mathcal{B}_{k}}\leq c ||\upsilon||_{k}\eeas for all $z\in\C$ such that 
$|z^i|\leq r_i$. Since $r_i>0$ was arbitrary, the claim follows. In particular $f$ is a 
smooth function. Since any $D\in\mathcal{L}_{K}(\C)$ is a polynomial coefficient 
differential operator we have 
$$Df(z)=\mathop{\sum_{p_1,q_1,p_2,q_2\in\mathbb{Z}^{+}}}_{p_1\leq m_1,p_2\leq  
m_2}\sum_{j=1}^{d(p_1,q_1,p_2,q_2)}\upsilon\big(\mathcal{W}
(P^{j}_{p_1q_1p_2q_2})\big)D\bigg[\big\langle\Pi(z),\mathcal{W}
(P^{j}_{p_1q_1p_2q_2})\big\rangle_{m_1m_2}\bigg]$$ in the distribution sense. But 
$$D\bigg[\big\langle\Pi(z),\mathcal{W}
(P^{j}_{p_1q_1p_2q_2})\big\rangle_{m_1m_2}\bigg]=\mu_{m_1m_2}(D)\big\langle\Pi(z),
\mathcal{W}(P^{j}_{p_1q_1p_2q_2})\big\rangle_{m_1m_2}.$$ Therefore we can conclude 
that $Df=\mu_{m_1m_2}(D)f$. Hence $f$ is a joint eigenfunction of all 
$D\in\mathcal{L}_{K}(\C)$ with eigenvalue $\mu_{m_1m_2}$. Now, if $\stackrel{\vee}{\delta}=\delta^1_{p_1q_1}\otimes\delta^2_{p_2q_2}$, equation (\ref{eqn 9.37}) implies that, $\chi_{\delta}*f=0$ if $p_1>m_1$ or $p_2>m_2$; and when $p_i\leq m_i$ for $i=1,2$, $$\chi_{\delta}*f=\frac{1}{d(p_1,q_1,p_2,q_2)}\sum_{j=1}^{d(p_1,q_1,p_2,q_2)}\upsilon\big(\mathcal{W}
(P^{j}_{p_1q_1p_2q_2})\big)\big\langle\Pi(z),\mathcal{W}
(P^{j}_{p_1q_1p_2q_2})\big\rangle_{m_1m_2}.$$ But, by Proposition \textbf{\ref{eqn p7.14}}, $\big\langle\Pi(z),\mathcal{W}
(P^{j}_{p_1q_1p_2q_2})\big\rangle_{m_1m_2}=\theta(P^j_{p_1q_1p_2q_2})\psi_{m_1m_2}$ which clearly equals to $e^{-|\lambda||z|^2}$ times a polynomial. Hence it follows that $\chi_{\delta}* f\in\mathcal{E}^{\lambda}(\C)$. 

Conversely let $f$ be a joint eigenfunction of all $D\in\mathcal{L}_{K}(\C)$ with 
eigenvalue $\mu_{m_1m_2}$ such that $\chi_{\delta}*f\in\mathcal{E}^{\lambda}(\C)$ for 
each $\delta\in\widehat{K}_{M}$. By Lemma \textbf{\ref{eqn l9.8}}, there exist constants 
$a^{j}_{p_1q_1p_2q_2}$ such that 
$$f(z)=\mathop{\sum_{p_1,q_1,p_2,q_2\in\mathbb{Z}^{+}}}_{p_1\leq m_1,p_2\leq  
m_2}\sum_{j=1}^{d(p_1,q_1,p_2,q_2)}a^{j}_{p_1q_1p_2q_2}
\big\langle\Pi(z),\mathcal{W}(P^{j}_{p_1q_1p_2q_2})\big\rangle_{m_1 m_2},$$ and 
$a^j_{p_1q_1p_2q_2}$'s satisfy the following : 
$$\mathop{\sum_{p_1,q_1,p_2,q_2\in\mathbb{Z}^{+}}}_{p_1\leq m_1,p_2\leq  
m_2}\sum_{j=1}^{d(p_1q_1p_2q_2)}|a^j_{p_1q_1p_2q_2}|^{2}\prod_{i=1}^{2}\frac{k^{q_i}}
{\Gamma(n_i+p_i+q_i)}<\infty,~\forall k\in\mathbb{N}.$$ Then by the previous lemma 
there exists $\upsilon\in\mathcal{B}^{*}$ such that $\upsilon\big(\mathcal{W}
(P^j_{p_1q_1p_2q_2})\big)=a^j_{p_1q_1p_2q_2},$ and consequently by (\ref{eqn 9.37}), 
$f(z)=\upsilon\big(\Pi(z)\big)$. 

Now we prove the uniqueness of $\upsilon$ which will complete the proof of the 
theorem. So let $\upsilon\in\mathcal{B}^{*}$ and $\upsilon\big(\Pi(z)\big)=0$ for all 
$z\in\C$. We must prove that $\upsilon=0$. It is enough to show that 
$\upsilon\big(\mathcal{W}(P^j_{p_1q_1p_2q_2})\big)=0$ for all $j=1,2,
\cdots,d(p_1,q_1,p_2,q_2); p_i\leq m_i,q_i\in\mathbb{Z}^{+}$. But this follows, since 
(\ref{eqn 9.37}) and Corollary \textbf{\ref{eqn c9.6}} \textbf{(b)} imply that for 
each fixed $r_1,r_2>0$, \beas  
&&\big\langle\upsilon\big(\Pi(r_1\cdot,r_2\cdot)\big),Y^j_{p_1q_1p_2q_2}\big\rangle_
{L^2(S^{2n_1-1}\times S^{2n_2-1})}
\\&=& b^j_{p_1q_1p_2q_2}\upsilon\big(\mathcal{W}
(P^{j}_{p_1q_1p_2q_2})\big)\prod_{i=1}^2r_i^{p_i+q_i}L^{n_i+p_i+q_i-1}_{m_i-p_i,
\lambda}\big(2|\lambda|r_i^2\big)e^{-|\lambda|r_i^2}\eeas for some non zero constants 
$b^j_{p_1q_1p_2q_2}.$           
\end{proof}

We have already mentioned that the above characterization is analogues to the view 
point of Thangavelu \cite{T} (see Theorem 4.1 there). Now we make this analogy clear 
by showing that the above theorem can be reformulated (Theorem \textbf{\ref{eqn 
t9.12}} below), which is similar to Theorem 4.1 in \cite{T}. Consider 
$$L^{2}_{m_1,m_2}(S^{2n_1-1}\times 
S^{2n_2-1}):=\overline{\textup{span}}\big\{Y^j_{p_1q_1p_2q_2}:j=1,2,
\cdots,d(p_1,q_1,p_2,q_2);p_i\leq m_i,q_i\in\mathbb{Z}^{+}\big\}$$ as Hilbert subspace 
of $L^{2}(S^{2n_1-1}\times S^{2n_2-1})$. Then the map $$\mathcal{I}:
\mathcal{O}^{\lambda}(V_{m_1m_2})\rightarrow L^{2}_{m_1,m_2}(S^{2n_1-1}\times 
S^{2n_2-1})$$ defined by $$\mathcal{I}\big(\mathcal{W}^{\lambda}
(P^{j}_{p_1q_1p_2q_2})\big)=Y^j_{p_1q_1p_2q_2}$$ is an Hilbert space isomorphism. Note 
that $\mathcal{I}(\mathcal{B}_k)$ is the subspace of all functions $\phi$ in 
$L^{2}_{m_1,m_2}(S^{2n_1-1}\times S^{2n_2-1})$ such that $$ 
\mathop{\sum_{p_1,q_1,p_2,q_2\in\mathbb{Z}^{+}}}_{p_1\leq m_1,p_2\leq  m_2}\big|\big|
\phi_{\delta_{p_1q_2}\otimes \delta_{p_2q_2}}\big|\big|
^{2}\prod_{i=1}^2\frac{\Gamma(n_i+p_i+q_i)}{k^{q_i}}<\infty,$$ where, for $\omega\in 
S^{2n_1-1}\times S^{2n_2-1}$, \beas\phi_{\delta_{p_1q_2}\otimes \delta_{p_2q_2}}
(\omega)&:=&d(p_1,q_1,p_2,q_2)[\chi_{\delta_{p_1q_1}\otimes \delta_{p_2q_2}}*\phi]
(\omega)\\&=&d(p_1,q_1,p_2,q_2)\int_{K}\chi_{\delta_{p_1q_1}\otimes \delta_{p_2q_2}}
(k)\phi(k^{-1}\cdot\omega)dk\\&=&\sum_{j=1}^{d(p_1,q_1,p_2,q_2)}\langle\phi,
Y^j_{p_1q_1p_2q_2}\rangle Y^j_{p_1q_1p_2q_2}(\omega).\eeas Each $\mathcal{I}
(\mathcal{B}_{k})$ becomes a Hilbert space with the inner product $$\langle\phi_1,
\phi_2\rangle_{\mathcal{I}(\mathcal{B}_{k})}=\big\langle\mathcal{I}^{-1}\phi_1,
\mathcal{I}^{-1}\phi_2\big\rangle_{\mathcal{B}_k}.$$ Explicitly  $$\langle\phi_1,
\phi_2\rangle_{\mathcal{I}
(\mathcal{B}_{k})}=\mathop{\sum_{p_1,q_1,p_2,q_2\in\mathbb{Z}^{+}}}_{p_1\leq 
m_1,p_2\leq  m_2}\big\langle(\phi_1)_{\delta_{p_1q_2}\otimes \delta_{p_2q_2}},
(\phi_2)_{\delta_{p_1q_2}\otimes 
\delta_{p_2q_2}}\big\rangle\prod_{i=1}^2\frac{\Gamma(n_i+p_i+q_i)}{k^{q_i}}.$$ 
Consider $\mathcal{I}(\mathcal{B})=\cup_{k\in\mathbb{N}}\mathcal{I}(\mathcal{B}_k)$ 
and equip this space with the inductive limit topology. Let {\small\bea\label{eqn 
9.38}\mathcal{P}^{\lambda}_{m_1m_2}(z,
\omega)=\mathop{\sum_{p_1,q_1,p_2,q_2\in\mathbb{Z}^{+}}}_{p_1\leq m_1,p_2\leq  
m_2}\sum_{j=1}^{d(p_1,q_1,p_2,q_2)}\big\langle\Pi^{\lambda}(z),\mathcal{W}^{\lambda}
(P^j_{p_1q_1p_2q_2})\big\rangle^{\lambda}_{m_1m_2}Y^j_{p_1q_1p_2q_2}(\omega),\eea} 
$\omega\in S^{2n_1-1}\times S^{2n_2-1}.$ It is easy to see that $\mathcal{I}
(\Pi^{\lambda}(z))=\mathcal{P}^{\lambda}_{m_1m_2}(z,\cdot).$ Then one can show that 
Theorem \textbf{\ref{eqn t9.11}} is equivalent to the following theorem :

\begin{thm}\label{eqn t9.12}
Let $f$ be a joint eigenfunction of all $D\in\mathcal{L}_{K}^{\lambda}(\C)$ with eigenvalue $\mu^{\lambda}_{m_1m_2}$ such that $\chi_{\delta}* f\in\mathcal{E}^{\lambda}(\C)$ for all $\delta\in\widehat{K}_{M}$. Then $$f(z)=\int_{S^{2n_1-1}\times 
S^{2n_2-1}}\mathcal{P}^{\lambda}_{m_1m_2}(z,\omega)d\nu(\omega),$$ for a unique $\nu\in\mathcal{B}^{*}$
Conversely, if $$f(z)=\int_{S^{2n_1-1}\times 
S^{2n_2-1}}\mathcal{P}^{\lambda}_{m_1m_2}(z,\omega)d\nu(\omega),$$ for some  $\nu\in\mathcal{B}^{*}$, then $f$ is a joint eigenfunction of all $D\in\mathcal{L}_{K}^{\lambda}(\C)$ with eigenvalue $\mu^{\lambda}_{m_1m_2}$ and $\chi_{\delta}* f\in\mathcal{E}^{\lambda}(\C)$ for all $\delta\in\widehat{K}_{M}$.

\end{thm}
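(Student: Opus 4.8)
The plan is to deduce Theorem \ref{eqn t9.12} directly from Theorem \ref{eqn t9.11} by transporting everything through the Hilbert space isomorphism $\mathcal{I}:\mathcal{O}^{\lambda}(V_{m_1m_2})\rightarrow L^{2}_{m_1,m_2}(S^{2n_1-1}\times S^{2n_2-1})$ determined by $\mathcal{I}\big(\mathcal{W}^{\lambda}(P^j_{p_1q_1p_2q_2})\big)=Y^j_{p_1q_1p_2q_2}$. First I would observe that, by the very definition of the inner product $\langle\cdot,\cdot\rangle_{\mathcal{I}(\mathcal{B}_k)}$, the map $\mathcal{I}$ restricts to an isometric isomorphism of $\mathcal{B}_k$ onto $\mathcal{I}(\mathcal{B}_k)$ for every $k\in\mathbb{N}$; indeed it carries the orthonormal basis $\{\sqrt{\prod_{i=1}^{2}k^{q_i}/\Gamma(n_i+p_i+q_i)}\,\mathcal{W}^{\lambda}(P^j_{p_1q_1p_2q_2})\}$ of $\mathcal{B}_k$ onto the corresponding orthonormal system in $\mathcal{I}(\mathcal{B}_k)$. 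Since the inclusions $\mathcal{B}_k\hookrightarrow\mathcal{B}_{k+1}$ are continuous and $\mathcal{I}$ is consistently defined across the stages, $\mathcal{I}$ is a topological isomorphism of the inductive limit $\mathcal{B}=\cup_k\mathcal{B}_k$ onto $\mathcal{I}(\mathcal{B})=\cup_k\mathcal{I}(\mathcal{B}_k)$.

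Next I would dualize. The dual map induces a bijection between $\mathcal{B}^{*}$ and $\mathcal{I}(\mathcal{B})^{*}$: to $\upsilon\in\mathcal{B}^{*}$ associate the functional $\nu:=\upsilon\circ\mathcal{I}^{-1}$ on $\mathcal{I}(\mathcal{B})$, and conversely $\upsilon=\nu\circ\mathcal{I}$. Under this identification (which is how the symbol $\nu\in\mathcal{B}^{*}$ is to be read in the statement), the formal integral is interpreted as the pairing
\begin{equation*}
\int_{S^{2n_1-1}\times S^{2n_2-1}}\mathcal{P}^{\lambda}_{m_1m_2}(z,\omega)\,d\nu(\omega):=\nu\big(\mathcal{P}^{\lambda}_{m_1m_2}(z,\cdot)\big).
\end{equation*}
The crucial identity, already noted after (\ref{eqn 9.38}), is $\mathcal{I}\big(\Pi^{\lambda}(z)\big)=\mathcal{P}^{\lambda}_{m_1m_2}(z,\cdot)$; this is obtained termwise by applying the continuous map $\mathcal{I}$ to the expansion of $\Pi^{\lambda}(z)$ in $\mathcal{B}_k$ supplied in the proof of Theorem \ref{eqn t9.11}, Lemma \ref{eqn l9.9} guaranteeing $\Pi^{\lambda}(z)\in\mathcal{B}_k$ for a suitable $k$. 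Consequently, for any $\upsilon\in\mathcal{B}^{*}$ with associated $\nu$,
\begin{equation*}
\int_{S^{2n_1-1}\times S^{2n_2-1}}\mathcal{P}^{\lambda}_{m_1m_2}(z,\omega)\,d\nu(\omega)=\nu\big(\mathcal{I}(\Pi^{\lambda}(z))\big)=\upsilon\big(\mathcal{I}^{-1}\mathcal{I}(\Pi^{\lambda}(z))\big)=\upsilon\big(\Pi^{\lambda}(z)\big).
\end{equation*}

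With this identity the theorem falls out of Theorem \ref{eqn t9.11}. In the forward direction, a joint eigenfunction $f$ with $\chi_{\delta}* f\in\mathcal{E}^{\lambda}(\mathbb{C}^n)$ for all $\delta\in\widehat{K}_{M}$ is, by Theorem \ref{eqn t9.11}, of the form $f(z)=\upsilon\big(\Pi^{\lambda}(z)\big)$ for a unique $\upsilon\in\mathcal{B}^{*}$; setting $\nu=\upsilon\circ\mathcal{I}^{-1}$ and invoking the displayed identity yields the desired integral representation, while uniqueness of $\nu$ is inherited from that of $\upsilon$ together with the bijectivity of $\upsilon\mapsto\nu$. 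The converse is the same computation run backwards: given $\nu$, put $\upsilon=\nu\circ\mathcal{I}$ so that $\int\mathcal{P}^{\lambda}_{m_1m_2}(z,\omega)\,d\nu(\omega)=\upsilon\big(\Pi^{\lambda}(z)\big)$, and apply the converse half of Theorem \ref{eqn t9.11}. The only point requiring real care — and the step I expect to be the main obstacle — is the rigorous identification of the two dual spaces at the level of the inductive limit topology, namely verifying that $\mathcal{I}$ and $\mathcal{I}^{-1}$ are continuous on each Hilbert stage and that the measure notation $d\nu(\omega)$ is consistently interpreted as the action of $\mathcal{I}(\mathcal{B})^{*}$ on $\mathcal{P}^{\lambda}_{m_1m_2}(z,\cdot)$; once this bookkeeping is in place, the equivalence is essentially formal.
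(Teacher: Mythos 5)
Your proposal is correct and is essentially the paper's own argument: the paper merely sets up the isomorphism $\mathcal{I}$, the stages $\mathcal{I}(\mathcal{B}_k)$ with their transported inner products, and the identity $\mathcal{I}\big(\Pi^{\lambda}(z)\big)=\mathcal{P}^{\lambda}_{m_1m_2}(z,\cdot)$, and then asserts that Theorem \ref{eqn t9.11} is equivalent to Theorem \ref{eqn t9.12}; your write-up supplies exactly that equivalence by dualizing $\mathcal{I}$ stage by stage and reading $d\nu(\omega)$ as the pairing of $\mathcal{I}(\mathcal{B})^{*}$ against $\mathcal{P}^{\lambda}_{m_1m_2}(z,\cdot)$. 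Your handling of the notational point that $\nu\in\mathcal{B}^{*}$ in the statement must be interpreted through the identification $\upsilon\mapsto\nu=\upsilon\circ\mathcal{I}^{-1}$ matches the paper's intent, so no gap remains.
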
  

The above theorem gives an integral representation of joint eigenfunctions, where the 
kernel $\mathcal{P}^{\lambda}_{m_1m_2}(z,\omega)$ is given by the series in (\ref{eqn 
9.38}). Now we shall give another integral representation, where the kernel can be 
given explicitly. Fix $a_1,a_2>0$ so that $$L^{n_i+p_i+q_i-1}_{m_i-p_i}\big(2|\lambda|
a_i^2\big)\neq 0$$ for all $p_i\leq m_i,q\in\mathbb{Z}^{+};i=1,2$. Define 
\beas\mathcal{Q}^{\lambda}_{m_1m_2}(z,\omega)&=& 
e^{-2i\lambda~\textup{Im}\big(z\cdot\overline{(a_1\omega^1,a_2\omega^2)}\big)}\psi^{\lambda}_{m_1m_2}\big(z-(a_1\omega^1,a_2\omega^2)
\big)\\&=&\pi^{-n}(2|\lambda|)^{n}\prod_{i=1}^2 e^{2i\lambda a_i~\textup{Im}
(z^i\cdot\bar{\omega^i})}\varphi_{m_i,\lambda}^{n_i-1}\big(z^i-a_i\omega^i\big),\eeas 
where $z=(z^1,z^2)\in\mathbb{C}^{n_1}\times \mathbb{C}^{n_2}$ and $\omega=(\omega^1,
\omega^2)\in S^{2n_1-1}\times S^{2n_2-1}.$ For each positive integer $k$, define 
$\mathcal{A}_{k}$ to be the subspace of functions $\phi$ in $L^{2}_{m_1m_2}
(S^{2n_1-1}\times S^{2n_2-1})$ for which 
$$\mathop{\sum_{p_1,q_1,p_2,q_2\in\mathbb{Z}^{+}}}_{p_1\leq m_1,p_2\leq  m_2}\big|
\big|\phi_{\delta_{p_1q_2}\otimes \delta_{p_2q_2}}\big|\big|
^{2}\prod_{i=1}^2\bigg[\frac{\Gamma(n_i+p_i+q_i)}{k^{q_i}}\bigg]^2<\infty.$$ Each 
$\mathcal{A}_{k}$ becomes a Hilbert space with the following inner product : 
$$\langle\phi_1,
\phi_2\rangle_{\mathcal{A}_{k}}=\mathop{\sum_{p_1,q_1,p_2,q_2\in\mathbb{Z}^{+}}}_{p_1\leq 
m_1,p_2\leq  m_2}\big\langle(\phi_1)_{\delta_{p_1q_2}\otimes \delta_{p_2q_2}},
(\phi_2)_{\delta_{p_1q_2}\otimes 
\delta_{p_2q_2}}\big\rangle\prod_{i=1}^2\bigg[\frac{\Gamma(n_i+p_i+q_i)}
{k^{q_i}}\bigg]^2.$$ We take $\mathcal{A}=\cup_{k\in\mathbb{N}}\mathcal{A}_k$ and 
equip it with the inductive limit topology. Let $\mathcal{A}^{*}$ be the dual of 
$\mathcal{A}$ with respect to this topology. Then we have the following integral 
representation of joint eigenfunctions of all $D\in\mathcal{L}^{\lambda}_{K}(\C).$ 

\begin{thm}
Let $f$ be a joint eigenfunction of all $D\in\mathcal{L}_{K}^{\lambda}(\C)$ with eigenvalue $\mu^{\lambda}_{m_1m_2}$ such that $\chi_{\delta}* f\in\mathcal{E}^{\lambda}(\C)$ for all $\delta\in\widehat{K}_{M}$. Then $$f(z)=\int_{S^{2n_1-1}\times 
S^{2n_2-1}}\mathcal{Q}^{\lambda}_{m_1m_2}(z,\omega)d\nu(\omega),$$ for a unique $\nu\in\mathcal{A}^{*}$
Conversely, if $$f(z)=\int_{S^{2n_1-1}\times 
S^{2n_2-1}}\mathcal{Q}^{\lambda}_{m_1m_2}(z,\omega)d\nu(\omega),$$ for some  $\nu\in\mathcal{A}^{*}$, then $f$ is a joint eigenfunction of all $D\in\mathcal{L}_{K}^{\lambda}(\C)$ with eigenvalue $\mu^{\lambda}_{m_1m_2}$ and $\chi_{\delta}* f\in\mathcal{E}^{\lambda}(\C)$ for all $\delta\in\widehat{K}_{M}$.

\end{thm}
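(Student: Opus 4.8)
The plan is to deduce this theorem from the integral representation already obtained in Theorem \ref{eqn t9.12}, by exhibiting the explicit kernel $\mathcal{Q}^{\lambda}_{m_1m_2}$ as a diagonal, nonvanishing ``multiplier'' of the series kernel $\mathcal{P}^{\lambda}_{m_1m_2}$. First I would recognise $\mathcal{Q}^{\lambda}_{m_1m_2}$ as a twisted translate of the spherical function. Writing $w_{\omega}=(a_1\omega^1,a_2\omega^2)$ and letting $\delta_{w_{\omega}}$ be the unit point mass at $w_{\omega}$, the identity $\delta_{w_{\omega}}\times^{\lambda}g(z)=g(z-w_{\omega})e^{-2i\lambda\,\mathrm{Im}(z\cdot\overline{w_{\omega}})}$ shows at once that $\mathcal{Q}^{\lambda}_{m_1m_2}(z,\omega)=\delta_{w_{\omega}}\times^{\lambda}\psi^{\lambda}_{m_1m_2}(z)$. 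For each fixed $\omega$ this is a square-integrable function whose Weyl transform is $\mathcal{G}^{\lambda}(\delta_{w_{\omega}})\mathcal{G}^{\lambda}(\psi^{\lambda}_{m_1m_2})=\Pi^{\lambda}(w_{\omega})\mathcal{P}_{m_1m_2}$, where $\mathcal{P}_{m_1m_2}$ is the projection onto $V_{m_1m_2}$ (Proposition \ref{eqn p6.3}); it is moreover a joint eigenfunction of all $D\in\mathcal{L}^{\lambda}_{K}(\C)$ with eigenvalue $\mu^{\lambda}_{m_1m_2}$, since $\mathcal{P}_{m_1m_2}\mathcal{G}^{\lambda}(D)=\mu^{\lambda}_{m_1m_2}(D)\mathcal{P}_{m_1m_2}$ (Remark \ref{eqn r5.9}) and $\mathcal{G}^{\lambda}$ turns the action of $D$ into right composition (Corollary \ref{eqn c5.4}). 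Hence, by Theorem \ref{eqn t10.1}, $\mathcal{Q}^{\lambda}_{m_1m_2}(z,\omega)=\big\langle\Pi^{\lambda}(z),S_{\omega}\big\rangle^{\lambda}_{m_1m_2}$ for a unique $S_{\omega}\in\mathcal{O}^{\lambda}(V_{m_1m_2})$.

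Next I would expand $S_{\omega}$ in the orthonormal basis $\{\mathcal{W}^{\lambda}(P^{j}_{p_1q_1p_2q_2})\}$ of $\mathcal{O}^{\lambda}(V_{m_1m_2})$ furnished by Lemma \ref{eqn l9.7}. A short computation with the inversion isometry of Theorem \ref{eqn t10.1} together with Proposition \ref{eqn p7.14} identifies each coefficient $\langle S_{\omega},\mathcal{W}^{\lambda}(P^{j}_{p_1q_1p_2q_2})\rangle^{\lambda}_{m_1m_2}$ with a nonzero constant multiple of $\overline{\theta^{\lambda}(P^{j}_{p_1q_1p_2q_2})\psi^{\lambda}_{m_1m_2}(w_{\omega})}$; since $|w_{\omega}^i|=a_i$, Corollary \ref{eqn c9.6}\,\textbf{(b)} evaluates this in closed form, the $\omega$-dependence collapsing to $\overline{Y^{j}_{p_1q_1p_2q_2}(\omega)}$. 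This yields the expansion
\[
\mathcal{Q}^{\lambda}_{m_1m_2}(z,\omega)=\sum_{p_i\le m_i,\,q_i\ge 0}\sum_{j}\beta^{j}_{p_1q_1p_2q_2}\,\big\langle\Pi^{\lambda}(z),\mathcal{W}^{\lambda}(P^{j}_{p_1q_1p_2q_2})\big\rangle^{\lambda}_{m_1m_2}\,Y^{j}_{p_1q_1p_2q_2}(\omega),
\]
in which each $\beta^{j}_{p_1q_1p_2q_2}$ is a pure constant, namely a nonzero multiple of $\prod_{i}L^{n_i+p_i+q_i-1}_{m_i-p_i}(2|\lambda|a_i^2)$, the nonvanishing being exactly the reason for the choice of $a_1,a_2$. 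Comparing with the defining series (\ref{eqn 9.38}) for $\mathcal{P}^{\lambda}_{m_1m_2}$, the two kernels are built from the very same functions $\langle\Pi^{\lambda}(z),\mathcal{W}^{\lambda}(P^{j}_{p_1q_1p_2q_2})\rangle^{\lambda}_{m_1m_2}\,Y^{j}_{p_1q_1p_2q_2}(\omega)$, differing only through the diagonal nonzero factors $\beta^{j}_{p_1q_1p_2q_2}$.

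I would then transfer the representation across this diagonal factor. Establishing the dual description of $\mathcal{A}$ exactly as the Lemma preceding Theorem \ref{eqn t9.11} does for $\mathcal{B}$ (now with the squared weight $\prod_i[\Gamma(n_i+p_i+q_i)/k^{q_i}]^2$), together with the fact that $\mathcal{Q}^{\lambda}_{m_1m_2}(z,\cdot)\in\mathcal{A}$ with control uniform on compact sets of $z$ (the analogue of Lemma \ref{eqn l9.9}), one checks that the assignment $\nu\mapsto\mu$ determined by $\mu(Y^{j}_{p_1q_1p_2q_2})=\beta^{j}_{p_1q_1p_2q_2}\,\nu(Y^{j}_{p_1q_1p_2q_2})$ is a topological isomorphism of $\mathcal{A}^{*}$ onto $\mathcal{B}^{*}$ satisfying $\int\mathcal{Q}^{\lambda}_{m_1m_2}(z,\omega)\,d\nu(\omega)=\int\mathcal{P}^{\lambda}_{m_1m_2}(z,\omega)\,d\mu(\omega)$ for every $z$ (the term-by-term application of $\nu$ being justified, as in the proof of Theorem \ref{eqn t9.11}, by the convergence of the expansion in $\mathcal{A}_k$). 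Granting this, both implications and the uniqueness of $\nu$ follow immediately from Theorem \ref{eqn t9.12}: $f$ admits a $\mathcal{Q}$-representation against some $\nu\in\mathcal{A}^{*}$ precisely when it admits a $\mathcal{P}$-representation against $\mu\in\mathcal{B}^{*}$, i.e.\ precisely when it is a joint eigenfunction with eigenvalue $\mu^{\lambda}_{m_1m_2}$ such that $\chi_{\delta}*f\in\mathcal{E}^{\lambda}(\C)$ for all $\delta\in\widehat{K}_M$.

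The hard part will be verifying that $\nu\mapsto\mu$ genuinely carries $\mathcal{A}^{*}$ onto $\mathcal{B}^{*}$; this is where the squaring of the weight in $\mathcal{A}_k$ is indispensable. From the asymptotics $L^{\alpha}_{k}(t)\sim\binom{k+\alpha}{k}$ one finds $|\beta^{j}_{p_1q_1p_2q_2}|^{2}\sim\prod_i(2|\lambda|a_i^2)^{q_i}\,\Gamma(m_i+n_i+q_i)/\Gamma(n_i+p_i+q_i)^{2}$ as $q_i\to\infty$. Multiplying the $\mathcal{B}^{*}$-weight $\prod_i k^{q_i}/\Gamma(n_i+p_i+q_i)$ by $|\beta^{j}|^{2}$ therefore reproduces the $\mathcal{A}^{*}$-weight $\prod_i(k')^{2q_i}/\Gamma(n_i+p_i+q_i)^{2}$ up to an adjustable exponential base and the factor $\Gamma(m_i+n_i+q_i)/\Gamma(n_i+p_i+q_i)\sim q_i^{m_i-p_i}$, which grows only polynomially and is thus dominated when one passes to a larger index in the inductive limit; a routine but careful comparison of the two families of Gamma-ratio weighted sums, valid for all $k$, then yields the equivalence of the finiteness conditions defining the two dual spaces. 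Managing these estimates, and the closed-form evaluation of $\beta^{j}$ producing exactly the $\Gamma$-ratio above, is the technical heart of the proof.
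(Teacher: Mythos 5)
Your proposal is correct in substance, but it reaches the theorem by a route that differs from the paper's in both of its halves, so a comparison is worthwhile. The paper never invokes Theorem \ref{eqn t10.1}: it reduces the theorem to the claim that $\int_{S^{2n_1-1}\times S^{2n_2-1}}\mathcal{Q}^{\lambda}_{m_1m_2}(z,\omega)Y^{j}_{p_1q_1p_2q_2}(\omega)\,d\omega$ is an explicit constant (nonzero exactly when $p_i\leq m_i$, by the choice of $a_1,a_2$) times $(P^{j}_{p_1q_1p_2q_2})^{\prime}(z)\prod_{i}\varphi^{n_i+p_i+q_i-1}_{m_i-p_i,\lambda}(z^i)$, and it proves this claim by rewriting the integral as a constant multiple of the twisted convolution $P^{j}_{p_1q_1p_2q_2}\,d\mu_{a_1,a_2}\times^{\lambda}\psi^{\lambda}_{m_1m_2}$ and evaluating that convolution with the Hecke--Bochner identity (Theorem \ref{eqn t7.9}), via the explicit computation of $\widetilde{A}$, $\widetilde{L}$ and $\Upsilon_{\stackrel{\vee}{\delta}}$ carried out in the concluding lemma; it then finishes by repeating the argument of Theorem \ref{eqn t9.11} with $\mathcal{A}$ in place of $\mathcal{B}$ (the analogues of Lemma \ref{eqn l9.9} and of the unlabeled duality lemma). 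You obtain the same expansion of $\mathcal{Q}^{\lambda}_{m_1m_2}(z,\cdot)$, with the same coefficients $\beta^{j}\propto\prod_iL^{n_i+p_i+q_i-1}_{m_i-p_i}(2|\lambda|a_i^2)$, but by recognizing $\mathcal{Q}^{\lambda}_{m_1m_2}(\cdot,\omega)=\delta_{w_\omega}\times^{\lambda}\psi^{\lambda}_{m_1m_2}$, $w_\omega=(a_1\omega^1,a_2\omega^2)$, as a square integrable joint eigenfunction and then using Theorem \ref{eqn t10.1}, Lemma \ref{eqn l9.7}, Proposition \ref{eqn p7.14} and Corollary \ref{eqn c9.6}\,\textbf{(b)}. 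This is an attractive alternative: it explains conceptually why the constants are the values of the basis eigenfunctions at the points $w_\omega$, and it spares you the matrix computations of the paper's final lemma (you should still record the one-line verification, using (\ref{eqn42}) and a change of variables, that $\mathcal{G}^{\lambda}\big(\delta_{w_\omega}\times^{\lambda}\psi^{\lambda}_{m_1m_2}\big)=\Pi^{\lambda}(w_\omega)\,\mathcal{G}^{\lambda}(\psi^{\lambda}_{m_1m_2})$, which is legitimate since $\mathcal{Q}^{\lambda}_{m_1m_2}(\cdot,\omega)$ is Schwartz). Your second half is likewise different: instead of rerunning the inductive-limit argument for $\mathcal{A}$, you transfer Theorem \ref{eqn t9.12} across the diagonal map $\mu(Y^j)=\beta^{j}\nu(Y^j)$; here only bijectivity of this map and the pairing identity are actually needed (not a topological isomorphism), and the Laguerre asymptotics you use to verify that it carries $\mathcal{A}^{*}$ onto $\mathcal{B}^{*}$ are exactly the estimates the paper's route requires anyway, so the cost is the same, while your formulation makes transparent why the weights defining $\mathcal{A}_k$ must be the squares of those defining $\mathcal{B}_k$.

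One point to handle with care in a full write-up: the pairing of $\nu$ against $\mathcal{Q}^{\lambda}_{m_1m_2}(z,\cdot)$ has to be taken bilinearly (as in the paper's claim, which integrates against $Y^j$ rather than $\overline{Y^j}$), and you must track where complex conjugates land in the coefficient computation, i.e., $\langle S_\omega,\mathcal{W}^{\lambda}(P^j)\rangle^{\lambda}_{m_1m_2}$ versus its conjugate, since conjugation interchanges $\mathcal{E}^{1}_{p_1q_1}\otimes\mathcal{E}^{2}_{p_2q_2}$ and $\mathcal{E}^{1}_{q_1p_1}\otimes\mathcal{E}^{2}_{q_2p_2}$. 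The paper glosses over the same bookkeeping (already in Proposition \ref{eqn p7.14} and in the meaning of $\int\mathcal{Q}^{\lambda}_{m_1m_2}(z,\omega)\,d\nu(\omega)$), so this is not a gap of your argument relative to the paper's; but your phrase that the two kernels are ``built from the very same functions'' conceals it, and getting it wrong would scramble the vanishing conditions $p_i\leq m_i$ versus $q_i\leq m_i$ in the transfer.
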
 

\begin{proof}
The theorem can be proved using arguments similar to the proof of Theorem 
\textbf{\ref{eqn t9.11}}, once we have the following claim : $$\int_{S^{2n_1-1}\times 
S^{2n_2-1}}\mathcal{Q}^{\lambda}_{m_1m_2}(z,\omega)Y^j_{p_1q_1p_2q_2}
(\omega)d\omega=c_{p_1q_1p_2q_2}\big(P^j_{p_1q_1p_2q_2}\big)^{\prime}
(z)\prod_{i=1}^2\varphi^{n_i+p_i+q_i-1}_{m_i-p_i,\lambda}(z^i),$$ where 
$c_{p_1q_1p_2q_2}=0$ if either $p_1>m_1$ or $p_2>m_2$ , and for $p_i\leq m_i$, it is 
given by \beas c_{p_1q_1p_2q_2}&=& \pi^{-n}(2|\lambda|)^{n}\prod_{i=1}^{2}(2|
\lambda|)^{p_i+q_i}\frac{\Gamma(n_i)\Gamma(m_i-p_i+1)}
{\Gamma(m_i+n_i+q_i)}\frac{a_i^{(p_i+q_i)}}{a_i^{2n_i-1}}L^{n_i+p_i+q_i-1}_{m_i-
p_i}\big(2|\lambda|a_i^2\big)e^{-|\lambda|a_i^2}.\eeas To prove the claim, first note 
that we can write \beas &&\int_{S^{2n_1-1}\times 
S^{2n_2-1}}\mathcal{Q}^{\lambda}_{m_1m_2}(z,\omega)Y^j_{p_1q_1p_2q_2}
(\omega)d\omega\\&=&\bigg[\prod_{i=1}^2\frac{1}
{a_i^{2n_i+p_i+q_i-1}}\bigg]\big(P^j_{p_1q_1p_2q_2}\big)^{\prime}d\mu_{a_1,a_2}
\times^{\lambda}\psi^{\lambda}_{m_1m_2}(z)\\&=& \left[\sqrt{\prod_{i=1}^{2}(2|
\lambda|)^{-(p_i+q_i)}\frac{\Gamma(n_i)\Gamma(m_i-p_i+1)}
{\Gamma(m_i+n_i+q_i)}}\bigg(a_i^{2n_i+p_i+q_i-1}\bigg)\right]^{-1}\\&\times& 
\left[P^j_{p_1q_1p_2q_2}
d\mu_{a_1,a_2}
\times^{\lambda}\psi^{\lambda}_{m_1m_2}(z)\right],\eeas where $d\mu_{a_1,a_2}$ is the 
surface measure on $a_1 S^{2n_1-1}\times a_2S^{2n_2-1}.$ But then the claim follows, 
if we can prove the following lemma.
\end{proof}

\begin{lem}
Let $a_1,a_2>0$ and $d\mu_{a_1,a_2}$ be the surface measure on $a_1S^{2n_1-1}\times 
a_2 S^{2n_2-1}$. Let $P\in H^{1}_{p_1q_1}\otimes H^{2}_{p_2q_2}$. Then 
$$Pd\mu_{a_1,a_2}\times^{\lambda}\psi^{\lambda}_{m_1m_2}=b_{p_1q_1p_2q_2}\pi^{-n}(2|
\lambda|)^{n}P(z)\prod_{i=1}^{2}(-1)^{q_{i}}(2|\lambda|)^{p_{i}+q_{i}}\varphi_{m_{i}-
p_i,\lambda}^{n_{i}+p_i+q_i-1}(z^{i}),$$ $$b_{p_1q_1p_2q_2}=\prod_{i=1}^{2}
(-1)^{q_i}\frac{\Gamma(n_i)\Gamma(m_i-p_i+1)}{\Gamma(m_i+n_i+q_i)} 
a_i^{2(p_i+q_i)}L^{n_i+p_i+q_i-1}_{m_i-p_i}\big(2|\lambda|a_i^2\big)e^{-|\lambda|
a_i^{2}},$$ if $p_{i}\leq m_{i}$ for all $i=1,2$; otherwise 
$Pd\mu_{a_1,a_2}\times^{\lambda}\psi^{\lambda}_{m_1m_2}=0$.  
\end{lem}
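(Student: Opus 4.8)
The plan is to reduce to a single monomial harmonic and then exploit the tensor factorization of the problem over the two unitary blocks. By Proposition \ref{eqn p9.5}, for every $P\in H^1_{p_1q_1}\otimes H^2_{p_2q_2}$ one has $\theta^{\lambda}(P)\psi^{\lambda}_{m_1m_2}=\pi^{-n}(2|\lambda|)^{n}P(z)\prod_{i=1}^{2}(-1)^{q_i}(2|\lambda|)^{p_i+q_i}\varphi^{n_i+p_i+q_i-1}_{m_i-p_i,\lambda}(z^i)$, so the asserted right-hand side is exactly $b_{p_1q_1p_2q_2}\,\theta^{\lambda}(P)\psi^{\lambda}_{m_1m_2}$; thus the lemma is equivalent to $P\,d\mu_{a_1,a_2}\times^{\lambda}\psi^{\lambda}_{m_1m_2}=b_{p_1q_1p_2q_2}\,\theta^{\lambda}(P)\psi^{\lambda}_{m_1m_2}$. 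Both sides are linear in $P$, and since $d\mu_{a_1,a_2}$ is $K$-invariant (it is supported on the single $K$-orbit $a_1S^{2n_1-1}\times a_2S^{2n_2-1}$) and $\psi^{\lambda}_{m_1m_2}$ is $K$-invariant, the maps $P\mapsto P\,d\mu_{a_1,a_2}\times^{\lambda}\psi^{\lambda}_{m_1m_2}$ and $P\mapsto\theta^{\lambda}(P)\psi^{\lambda}_{m_1m_2}$ both intertwine the $K$-action (cf. Proposition \ref{eqn p7.2}). As $H^1_{p_1q_1}\otimes H^2_{p_2q_2}$ is $K$-irreducible (Proposition \ref{eqn p9.3}), the difference of the two maps is a $K$-intertwiner out of an irreducible module and therefore vanishes identically once it kills one nonzero vector. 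Following the reduction used in the proof of Proposition \ref{eqn p9.5} via Corollary \ref{eqn c4.3}, it then suffices to treat $P_0(z)=z_1^{p_1}\bar z_2^{q_1}z_{n_1+1}^{p_2}\bar z_{n_1+2}^{q_2}$, which lies in $H^1_{p_1q_1}\otimes H^2_{p_2q_2}$.

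For $P_0$ the whole computation factors over the two blocks. Writing $z=(z^1,z^2),\,w=(w^1,w^2)\in\mathbb{C}^{n_1}\times\mathbb{C}^{n_2}$, one has $d\mu_{a_1,a_2}=d\mu_{a_1}\otimes d\mu_{a_2}$, $P_0(w)=R_1(w^1)R_2(w^2)$ with $R_1(w^1)=w_1^{p_1}\bar w_2^{q_1}$ and $R_2(w^2)=w_{n_1+1}^{p_2}\bar w_{n_1+2}^{q_2}$, and, by Proposition \ref{eqn p9.4}, $\psi^{\lambda}_{m_1m_2}=\psi^{\lambda}_{m_1}\otimes\psi^{\lambda}_{m_2}$ where $\psi^{\lambda}_{m_i}(z^i)=\pi^{-n_i}(2|\lambda|)^{n_i}\varphi^{n_i-1}_{m_i,\lambda}(z^i)$ is the $U(n_i)$-spherical function on $\mathbb{C}^{n_i}$. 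Since the twisted-convolution phase also splits, $\textup{Im}(z\cdot\bar w)=\textup{Im}(z^1\cdot\bar w^1)+\textup{Im}(z^2\cdot\bar w^2)$, the twisted convolution $P_0\,d\mu_{a_1,a_2}\times^{\lambda}\psi^{\lambda}_{m_1m_2}$ becomes the product of two single-block twisted convolutions, one for $U(n_1)$ on $\mathbb{C}^{n_1}$ and one for $U(n_2)$ on $\mathbb{C}^{n_2}$. This reduces everything to the one-block statement: for the solid harmonic $R=w_1^{p}\bar w_2^{q}\in\mathcal{H}_{pq}$ on $\mathbb{C}^{N}$ and the surface measure $d\mu_a$ on $aS^{2N-1}$, one has $R\,d\mu_a\times^{\lambda}\psi^{\lambda}_m=c\,\theta^{\lambda}(R)\psi^{\lambda}_m$, the full constant $b_{p_1q_1p_2q_2}$ being the product of the two resulting one-block constants.

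The one-block identity I would obtain by approximation from the $U(N)$ Hecke--Bochner identity. Pick radial functions $g_{\varepsilon}\in\mathscr{S}(\mathbb{C}^N)$ with $g_{\varepsilon}\to d\mu_a$ weakly; then each $Rg_{\varepsilon}$ is a Schwartz function of the type handled by Theorem \ref{eqn t7.4}, and in the form of Remark \ref{eqn r7.31}\,(ii) it gives $Rg_{\varepsilon}\times^{\lambda}\psi^{\lambda}_m=c(g_{\varepsilon})\,\theta^{\lambda}(R)\psi^{\lambda}_m$, where $c(g_{\varepsilon})$ is the explicit radial Laguerre integral displayed there. Because $\psi^{\lambda}_m$ is a Gaussian times a polynomial, the substitution $u=z-w$ rewrites $Rg\times^{\lambda}\psi^{\lambda}_m(z)$, for each fixed $z$, as the pairing of $g$ with the Schwartz function $u\mapsto R(u)\psi^{\lambda}_m(z-u)e^{-2i\lambda\,\textup{Im}(z\cdot\bar u)}$; hence $f\mapsto f\times^{\lambda}\psi^{\lambda}_m(z)$ is continuous along $g_{\varepsilon}\to d\mu_a$, and letting $\varepsilon\to0$ yields $R\,d\mu_a\times^{\lambda}\psi^{\lambda}_m=c(d\mu_a)\,\theta^{\lambda}(R)\psi^{\lambda}_m$, where $c(d\mu_a)$ is the same integral taken against $d\mu_a$. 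As the Laguerre weight is radial, this integral reduces to the value of $L^{N+p+q-1}_{m-p}(2|\lambda|a^2)\,a^{2(p+q)}e^{-|\lambda|a^2}$ at radius $a$ times the mass of $d\mu_a$, which is the one-block factor of the constant. When $p>m$ one has $\theta^{\lambda}(R)\psi^{\lambda}_m=0$ by Proposition \ref{eqn p9.5}, and this accounts for the vanishing in the case $p_i>m_i$.

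Recombining, I multiply the two one-block identities for $R_1$ and $R_2$; the product $[\theta^{\lambda}(R_1)\psi^{\lambda}_{m_1}][\theta^{\lambda}(R_2)\psi^{\lambda}_{m_2}]$ reassembles into $\theta^{\lambda}(P_0)\psi^{\lambda}_{m_1m_2}$ (this is just the factored form of Proposition \ref{eqn p9.5}), and the product of the two one-block constants is collected into $b_{p_1q_1p_2q_2}$; together with the equivalence from the first paragraph this proves the lemma. The only genuine obstacle is pinning down the constant precisely: one must combine the value of the weight $L^{n_i+p_i+q_i-1}_{m_i-p_i}(2|\lambda|a_i^2)\,a_i^{2(p_i+q_i)}e^{-|\lambda|a_i^2}$ at radius $a_i$ (the power $a_i^{2(p_i+q_i)}$ coming from $|w|^{2(p_i+q_i)}$), the Gamma-factors $\Gamma(n_i)\Gamma(m_i-p_i+1)/\Gamma(m_i+n_i+q_i)$ furnished by the one-block Hecke--Bochner constant of Remark \ref{eqn r7.31}\,(ii), and the normalization (mass) of $d\mu_{a_i}$, and verify that these assemble into $b_{p_1q_1p_2q_2}$ with the stated powers of $a_i$; this step is routine but the most error-prone, and it is where one must be careful about the convention chosen for the surface measure.
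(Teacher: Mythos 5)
Your overall strategy is sound and genuinely different from the paper's. The paper proves the lemma in one stroke for the full group $K=U(n_1)\times U(n_2)$: it takes the basis $P^{j}_{p_1q_1p_2q_2}$ of Lemma \textbf{\ref{eqn l9.7}}, forms the matrix $P^{\stackrel{\vee}{\delta}}$, computes the three ingredients of the general machinery, namely $\widetilde{A}^{\stackrel{\vee}{\delta}}_{m_1m_2}=\pi^{-n}(2|\lambda|)^{n}d(p_1,q_1,p_2,q_2)$ (via Lemma \textbf{\ref{eqn l7.5}} \textbf{(b)}), $\widetilde{L}^{\stackrel{\vee}{\delta}}_{m_1m_2}$ and $\Upsilon_{\stackrel{\vee}{\delta}}$, and then reads the constant off the generalized Hecke--Bochner identity, Theorem \textbf{\ref{eqn t7.9}} and formula (\ref{eqn63}), applied with the $K$-invariant factor $G$ replaced by the measure $d\mu_{a_1,a_2}$. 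You instead (i) reduce to a single monomial by a Schur-type intertwining argument (legitimate: both maps are $K$-equivariant, the domain is irreducible by Proposition \textbf{\ref{eqn p9.3}} \textbf{(a)}, so the difference of the two intertwiners vanishes once it kills one nonzero vector), (ii) factor the twisted convolution over the two blocks, and (iii) invoke Geller's one-block identity of Remark \textbf{\ref{eqn r7.31}} \textbf{(ii)} through radial Schwartz approximations of $d\mu_{a_i}$. Each of these steps is correct, and your approximation device is in fact needed by the paper as well, since Theorem \textbf{\ref{eqn t7.9}} is stated for smooth $F$, not for measures; your weak-limit argument makes explicit what the paper leaves implicit.

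The gap is that you never assemble the constant, and the explicit constant \emph{is} the lemma. Worse, the step you defer as ``routine'' conceals a genuine discrepancy. Carried through your route, the one-block constant obtained from Remark \textbf{\ref{eqn r7.31}} \textbf{(ii)} with $g=d\mu_{a_i}$ is
\begin{equation*}
c_i=(-1)^{q_i}\frac{\Gamma(n_i)\Gamma(m_i-p_i+1)}{\Gamma(m_i+n_i+q_i)}\,a_i^{2(p_i+q_i)}L^{n_i+p_i+q_i-1}_{m_i-p_i}\big(2|\lambda|a_i^2\big)e^{-|\lambda|a_i^{2}}\cdot\mu_{a_i}\big(a_iS^{2n_i-1}\big),
\end{equation*}
i.e.\ the stated $i$-th factor of $b_{p_1q_1p_2q_2}$ multiplied by the total mass $a_i^{2n_i-1}|S^{2n_i-1}|$ of the sphere. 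This is not an artifact of your method: testing the lemma with $P\equiv 1$ at $z=0$ gives, for the unnormalized surface measure, $d\mu_{a_1,a_2}\times^{\lambda}\psi^{\lambda}_{m_1m_2}(0)=\pi^{-n}(2|\lambda|)^{n}\prod_{i=1}^{2}L^{n_i-1}_{m_i}(2|\lambda|a_i^2)e^{-|\lambda|a_i^2}a_i^{2n_i-1}|S^{2n_i-1}|$, whereas the right-hand side of the lemma at $z=0$ equals the same expression without the mass factors. So the stated formula is only correct if $d\mu_{a_1,a_2}$ is normalized; the discrepancy traces to the paper's own computation of $\Upsilon_{\stackrel{\vee}{\delta}}$, which uses $\sum_{j}|Y^{j}_{p_1q_1p_2q_2}(\omega)|^2=d/|a_1S^{2n_1-1}\times a_2S^{2n_2-1}|$ even though Lemma \textbf{\ref{eqn l9.7}} normalizes the $Y^{j}$ on the unit spheres. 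To complete your proof you must fix one convention for the measure, carry it consistently through the constant, and state which version of $b_{p_1q_1p_2q_2}$ results; until then the proposal does not establish the formula it claims.
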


\begin{proof}
Let $\stackrel{\vee}{\delta}=\delta^1_{p_1q_1}\otimes \delta^2_{p_2q_2}$. Take 
$P^j_{p_1q_1p_2q_2}$, $j=1,2,\cdots,d(p_1,q_1,p_2,q_2)$, as Lemma \textbf{\ref{eqn 
l9.7}}, so that $\{P^j_{p_1q_1p_2q_2}:j=1,2,\cdots,d(p_1,q_1,p_2,q_2)\}$ forms a basis 
for $H_{\stackrel{\vee}{\delta}}$. Also we have $\big|\big|
\mathcal{W}\big(P^j_{p_1q_1p_2q_2}\big)\big|\big|_{m_1m_2}^{2}=1$ if $p_i\leq 
m_i,i=1,2.$ We can choose suitable bases $\textbf{b}$ for $V_{\stackrel{\vee}{\delta}}$ and 
$\textbf{e}$ for $F_{\delta}=\textup{Hom}_{K}(V_{\delta},H_{\delta})$ so that with 
respect to these bases $P^{\stackrel{\vee}{\delta}}:\C\rightarrow \mathcal{M}_{d(\delta)\times 
1}$ can be given as follows : $P^{\stackrel{\vee}{\delta}}_{j1}=P^j_{p_1q_1p_2q_2}$. Since 
$\Psi^{\stackrel{\vee}{\delta}}_{m_1m_2}=\theta(P^{\stackrel{\vee}{\delta}})\psi_{m_1m_2}$, by 
Proposition \textbf{\ref{eqn p9.5}}, we can say that, 
$\Psi^{\stackrel{\vee}{\delta}}_{m_1m_2}=\widetilde{\Psi}^{\stackrel{\vee}{\delta}}_{m_1m_2}$ if $p_i\leq 
m_i$ for all $i=1,2$; otherwise $\Psi^{\stackrel{\vee}{\delta}}_{m_1m_2}=0.$ Now let $p_i\leq 
m_i$ for $i=1,2$. Then \beas 
\widetilde{A}^{\stackrel{\vee}{\delta}}_{m_1m_2}=A^{\stackrel{\vee}{\delta}}_{m_1m_2}&=&\int_{\C}
[\Psi^{\stackrel{\vee}{\delta}}_{m_1m_2}(z)]^{\star}[\Psi^{\stackrel{\vee}{\delta}}_{m_1m_2}
(z)]dz\\&=&\sum_{j=1}^{d(p_1,q_1,p_2,q_2)}\big|\big|
\theta(P^j_{p_1q_1p_2q_2})\psi_{m_1m_2}\big|\big|^2_2\\&=& \pi^{-n}(2|
\lambda|)^{n}d(p_1,q_1,p_2,q_2),~\textup{by Lemma \textbf{\ref{eqn l7.5}} 
\textbf{(b)}},\eeas and $$\widetilde{L}^{\stackrel{\vee}{\delta}}_{m_1m_2}
(z)=\widetilde{L}^{\stackrel{\vee}{\delta}}_{m_1m_2}(z)=\pi^{-n}(2|\lambda|)^{n}\prod_{i=1}^{2}
(-1)^{q_{i}}(2|\lambda|)^{p_{i}+q_{i}}L_{m_{i}-p_i}^{n_{i}+p_i+q_i-1}(2|\lambda||z^i|
^2).$$ Also we have \beas\Upsilon_{\stackrel{\vee}{\delta}}(z)&=&[P^{\stackrel{\vee}{\delta}}
(z)]^{\star}[P^{\stackrel{\vee}{\delta}}(z)]\\&=&\sum_{j=1}^{d(p_1,q_1,p_2,q_2)}\big|
P^{j}_{p_1q_1p_2q_2}(z)\big|^2\\&=&\prod_{i=1}^{2}(2|
\lambda|)^{-(p_i+q_i)}\frac{\Gamma(n_i)\Gamma(m_i-p_i+1)}
{\Gamma(m_i+n_i+q_i)}r_i^{2(p_i+q_i)}\sum_{j=1}^{d(p_1,q_1,p_2,q_2)}\big|
Y^j_{p_1q_1p_2q_2}(\omega)\big|^2\\&=& \frac{d(p_1,q_1,p_2,q_2)}{\big|
a_1S^{2n_1-1}\times a_2S^{2n_2-1}\big|}\prod_{i=1}^{2}(2|
\lambda|)^{-(p_i+q_i)}\frac{\Gamma(n_i)\Gamma(m_i-p_i+1)}
{\Gamma(m_i+n_i+q_i)}r_i^{2(p_i+q_i)}.\eeas Therefore from Theorem \textbf{\ref{eqn 
t7.9}}, we can show that, for $p_i\leq m_i,i=1,2,$ 
$$P^{\stackrel{\vee}{\delta}}d\mu_{a_1,a_2}\times^{\lambda}
\psi^{\lambda}_{m_1m_2}=b_{p_1q_1p_2q_2}\Psi^{\stackrel{\vee}{\delta}}_{m_1m_2},$$ where 
$b_{p_1q_1p_2q_2}$ is given by $$  b_{p_1q_1p_2q_2}= \prod_{i=1}^{2}
(-1)^{q_i}\frac{\Gamma(n_i)\Gamma(m_i-p_i+1)}{\Gamma(m_i+n_i+q_i)} 
a_i^{2(p_i+q_i)}L^{n_i+p_i+q_i-1}_{m_i-p_i}\big(2|\lambda|a_i^2\big)e^{-|\lambda|
a_i^{2}}.$$ Hence the proof follows.
\end{proof}

\textbf{Acknowledgement.} The author wishes to thank Prof. E. K. Narayanan for suggesting this problem and for many useful discussions. The author is also grateful to Prof. S. Thangavelu and Prof. F. Ricci for several helpful discussions.

\end{document}